\documentclass[a4paper,10pt]{amsart}
\usepackage{amsmath,amsthm,amssymb,enumerate,mathtools,stmaryrd}
\usepackage{epsfig}
\usepackage[utf8]{inputenc}
\usepackage{bm}
\usepackage{bbm}
\usepackage{esint}
\usepackage[T1]{fontenc}
\usepackage{tikz-cd}
\usetikzlibrary{decorations.pathreplacing}
\usepackage{amsfonts}
\usepackage{amsxtra}
\usepackage{euscript,mathrsfs}
\usepackage{color}
\usepackage[left=3.5cm,right=3.5cm,top=2.75cm,bottom=2.25cm]{geometry}
\usepackage[colorlinks=true, linktocpage=true, linkcolor=red!70!black, citecolor=green!50!black, urlcolor=black]{hyperref}
\allowdisplaybreaks
\usepackage{tikz}
\usepackage{diagbox,multirow}

\usepackage{cancel}
\usepackage[normalem]{ulem}
\usepackage[backgroundcolor=yellow, colorinlistoftodos,prependcaption,textsize=small,textwidth=25mm]{todonotes}



\usepackage{enumitem}
\setenumerate{label={\rm (\alph{*})}}

\numberwithin{equation}{section}

\makeatletter
\def\namedlabel#1#2{\begingroup
    #2%
    \def\@currentlabel{#2}%
    \phantomsection\label{#1}\endgroup
}
\makeatother

\newcommand{\dashint}{\fint}

\newcommand{\R}{\mathbb R}

\newcommand{\bd}{\mathrm{BD}}
 \newcommand{\dif}{\operatorname{d}\!}
 \newcommand{\locc}{\mathrm{loc}}
\newcommand{\lebe}{\operatorname{L}}
\newcommand{\sobo}{\operatorname{W}}
 \newcommand{\hold}{\operatorname{C}}
\newcommand{\ball}
{\operatorname{B}}
\newcommand{\A}{\mathbb{A}}

\newcommand{\rsym}{\mathbb{R}_{\mathrm{sym}}}
\newcommand{\dista}{\mathrm{dist}}
 \newcommand{\spt}{\operatorname{supp}}
\newcommand{\sym}{\operatorname{sym}}

\newcommand{\RR}{\mathbb{R}_{\mathrm{sym}}^{2\times 2}}


\newcommand{\E}{\mathbf{E}}

\newcommand{\di}{\mathrm{div}}
\newcommand{\sg}{\varepsilon}
\renewcommand{\E}{\mathrm{E}}
\newcommand{\uu}{\mathbf{u}}
\newcommand{\mres}{\mathbin{\vrule height 1.6ex depth 0pt width
0.13ex\vrule height 0.13ex depth 0pt width 1.3ex}}
\newcommand{\lebeweak}{\mathrm{L}_{\mathrm{w}^{*}}^{1}}
\newcommand{\tocean}{\bm{\tau}_{\mathrm{ocean}}}

\newcommand{\bv}{\mathrm{BV}}
\newcommand{\vv}{\mathbf{v}}
\newcommand{\ld}{\operatorname{LD}}
\newcommand{\bsigma}{\bm{\sigma}}
\newcommand{\ww}{\mathbf{w}}

\newcounter{alphasect}
\def\alphainsection{0}

\let\oldsection=\section
\def\section{%
  \ifnum\alphainsection=1%
    \addtocounter{alphasect}{1}
  \fi%
\oldsection}%
\renewcommand\thesection{%
  \ifnum\alphainsection=1%
    \Alph{alphasect}%
  \else%
    \arabic{section}%
  \fi%
}%

\allowdisplaybreaks



\theoremstyle{plain}

\newtheorem*{theorem*}{Theorem}
\newtheorem{theorem}{Theorem}[section]
\newtheorem{lemma}[theorem]{Lemma}
\newtheorem{proposition}[theorem]{Proposition}

\newtheorem{corollary}[theorem]{Corollary}

\newtheorem{definition}[theorem]{Definition}

\theoremstyle{remark}

\newtheorem{remark}[theorem]{Remark}

\definecolor{fg}{RGB}{34,139,34}  

\begin{document}


\title[On Hibler's model for sea ice in the singular limit]{On the singular limit in  Hibler's sea ice model}
\author[R. Denk]{Robert Denk}
\author[F. Gmeineder]{Franz Gmeineder}
\author[M. Hieber]{Matthias Hieber}
\address{Robert Denk: University of Konstanz, Department of Mathematics and Statistics, Universit\"{a}tsstra\ss e 10, 78464 Konstanz, Germany}
\email{robert.denk@uni-konstanz.de}
\address{Franz Gmeineder: University of Konstanz, Department of Mathematics and Statistics, Universit\"{a}tsstra\ss e 10, 78464 Konstanz, Germany}
\email{franz.gmeineder@uni-konstanz.de}
\address{Matthias Hieber: Technical University of Darmstadt, Department of  Mathematics, Schlossgartenstrasse 7, 64289 Darmstadt, Germany}
\email{hieber@mathematik.tu-darmstadt.de}
\begin{abstract}
We establish the existence of energy-driven solutions to the momentum balance equation in Hibler's sea ice model. As a main novelty and different from previous results, we deal with the singular limit and therefore cover the true unregularized Hibler stress. To this end, we introduce an energy-based notion of solution that is able to capture plasticity effects of sea ice. This requires certain relaxations of the Hibler energies and, by the different function space set-up, comes with novel challenges. In particular, we establish a bulk approximation result of the boundary terms in the evolutionary relaxed Hibler energies. This is achieved by developing a novel reduction scheme for nonlinear trace expressions which should be of independent interest. Finally, based on our main results, we classify our findings within a broader concept of solutions that is applicable to the non-constant mass case too. 

\end{abstract}

\keywords{Singular limit, Hibler's sea ice model, viscous-plastic stresses, variational solution, functions of bounded deformation, functionals of linear growth, relaxations.}

\date{\today}

\subjclass{Primary: 35K65, 35Q86; Secondary: 35K59, 49J45, 49Q20, 86A05.}

\maketitle

\setcounter{tocdepth}{1}
\tableofcontents

\section{Introduction}
\subsection{Hibler's model}\label{sec:hiblerintro}
The mathematical description of sea ice is a fundamental problem that connects fluid and continuum mechanics. On the one hand, sea ice has features of plastic materials and therefore obeys limiting stress-strain relations. On the other hand, by its viscous behaviour, it equally shares aspects of fluids, and the modelling of its visco-plastic properties is an active area of research; see, e.g., Golden et al. 
\cite{Golden,GoldenEtAl}, Eisenman \& Wettlaufer \cite{EisemanWettlaeufer} and the references therein. 

One of the key models widely used in applications is due to Hibler \cite{Hibler}. This system and variations thereof have attracted large attention over the past decades; see, e.g., \cite{GubaLorenzSulsky,Brandt_2022,DingelDisser,Kimmrich,LiuTiti,Meh,MehlmannKorn21,MehlmannRichter2024} for an incomplete list. In view of rigorous existence theories, it is often customary to approximate the stress-strain relations which are typical for plastic materials by (at least locally) linearizable ones. This gives access to powerful machineries such as maximal regularity (see  \cite{Denk} and \cite{Brandt_2022}  and \cite{BBH25} for applications to sea ice) and is implemented in various models, e.g., the ICON modelling framework \cite{Icon,Icon2021}. Yet, it leads to formulations in spatial Sobolev spaces, whereby such techniques are well-suited to grasp plasticity effects of sea ice only to a limited extent. 

Here, instead, we study the momentum balance equation in the \emph{singular limit}, meaning that we do not work with regularized stresses and therefore account for plastic behaviour. In this regard, the main objective of the paper is to come up with a variant of Hibler's original momentum balance equation that allows to rigorously establish the existence of solutions in a suitable sense. Indeed, concerning the singular limit, even a concept of solutions is not known at present. In this  paper, we aim to make a significant first step in this direction and introduce both a notion of solutions  \emph{and} rigorously prove their existence. 

In view of the outline of our results in Section \ref{sec:outline}, we briefly summarise Hibler's original system from \cite{Hibler}. Let $\Omega\subset\R^{2}$ be open and bounded with Lipschitz boundary $\partial\Omega$, and let $T>0$. The motion of sea ice on $\Omega$ is primarily modelled via the key equation of \emph{momentum balance} for the horizontal ice velocity $\uu\colon\Omega_{T}\coloneqq (0,T)\times\Omega\to\R^{2}$. Denoting by $m$ the ice mass per unit area, the latter reads as 
\begin{align}\label{eq:hibler1}
m(\dot{\uu}+\uu\cdot\nabla \uu)=\mathrm{div}(\bm{\sigma})-mc_{\mathrm{Cor}}n\times \uu - mg\nabla H + \bm{\tau}_{\mathrm{atm}}+\bm{\tau}_{\mathrm{ocean}}(\uu)\qquad\text{in}\;\Omega_{T}. 
\end{align}
In \eqref{eq:hibler1}, $\dot{\uu}+\uu\cdot\nabla \uu $ represents the material derivative, whereas the expression involving $c_{\mathrm{Cor}}$ models the Coriolis force. For future reference, we note that by the typically small velocities of sea ice, the term  $\uu\cdot\nabla\uu$ of convective type, describing turbulence, as well as the Coriolis forces can be neglected. The momentum balance equation \eqref{eq:hibler1} then becomes 
\begin{align}\label{eq:hiblerMAIN1}
m\dot{\uu}=\mathrm{div}(\bm{\sigma}) - mg\nabla H + \bm{\tau}_{\mathrm{atm}}+\bm{\tau}_{\mathrm{ocean}}(\uu)\qquad\text{in}\;\Omega_{T}. 
\end{align}
The term $mg\nabla H$, in turn, incorporates the gravity $g$ and the sea surface dynamic height $H\colon\Omega\to\R$. Finally,  $\bm{\tau}_{\mathrm{atm}}$ represents air or atmospheric forces such as winds, whereas $\bm{\tau}_{\mathrm{ocean}}$ models water or oceanic forces in interaction with sea ice. More precisely, for suitable constants and rotations $R_{\mathrm{atm}},R_{\mathrm{ocean}}\in\mathrm{SO}(2)$, these force terms are given by
\begin{align}
&\bm{\tau}_{\mathrm{atm}} = \rho_{\mathrm{atm}}C_{\mathrm{atm}}|\mathbf{U}_{\mathrm{atm}}|R_{\mathrm{atm}}\mathbf{U}_{\mathrm{atm}},\label{eq:hiblo1} \\ 
&\bm{\tau}_{\mathrm{ocean}}(\uu) = \rho_{\mathrm{ocean}}C_{\mathrm{ocean}}|\mathbf{U}_{\mathrm{ocean}}-\uu|R_{\mathrm{ocean}}(\mathbf{U}_{\mathrm{ocean}}-\mathbf{u}). \label{eq:hiblo2}
\end{align}
This particular structure is typical for wind and oceanic force terms. Since, in real world scenarios,  the horizontal sea ice velocities are bounded, it is moreover customary to  consider modifications of such terms regarding their growth behaviour. 

In \eqref{eq:hibler1}, however, the key object is the viscoplastic \emph{stress} $\bsigma$. For sea ice, $\bsigma$ is a highly degenerate nonlinear expression involving the Hibler deformation tensor $\mathbb{T}\uu$, the pressure $P$, masses as well as the \emph{mean ice thickness} $h$ and the \emph{ice compactness} $a$. On the one hand, it is $\bsigma$ which captures plasticity effects. On the other hand, this is the key term that causes complications  when trying to come up with a notion of solution to \eqref{eq:hibler1}. Subject to natural hypotheses, the stresses $\bsigma$ shall be addressed in detail in Section \ref{sec:driving} below.

In \cite{Hibler}, the momentum balance equation \eqref{eq:hibler1} is  augmented with transport equations for the mean ice thickness and ice compactness, reading as 
\begin{align}\label{eq:hibler2}
\begin{split}
\dot{h} + \mathrm{div}(h\uu)=S_{h},\\ 
\dot{a} + \mathrm{div}(a\uu)=S_{a}, 
\end{split}
 \end{align}
 where $S_{h},S_{a}$ are certain non-linear expressions in $h$ and $a$. Lastly, subject to initial data $\uu_{0}$, sea ice is supposed to rest at the boundary $\partial\Omega$. Specifically, the coupled system (\eqref{eq:hibler1},\eqref{eq:hibler2}) or ($\eqref{eq:hiblerMAIN1},\eqref{eq:hibler2}$), respectively, is augmented with zero Dirichlet boundary conditions for $\uu$ and Neumann boundary conditions for $h$ and $a$. 

 In the present paper, we exclusively focus on the momentum balance equation \eqref{eq:hiblerMAIN1}. Most notably, however, we deal with the \emph{true} stress terms \emph{without regularizations}. In particular, we overcome the key difficulties concerning the nonlinear expressions, and establish both notions and existence results that shall be extended towards \eqref{eq:hibler2} in future work. 
\subsection{Main results, notions of solutions and outline of the paper}\label{sec:outline}
We now come to a description of our main results and the outline of the paper. Focusing on \eqref{eq:hiblerMAIN1} subject to a constant mass hypothesis, one may view \eqref{eq:hiblerMAIN1} as a perturbed gradient flow for  Hibler's viscous plastic energy. Incorporating plasticity phenomena, in turn, requires suitable relaxations of the underlying energies to the space $\bd(\Omega)$ of functions of \emph{bounded deformation}. Postponing the precise assumptions and notions of solutions to Section \ref{sec:main}, we formulate the main result of the present paper as a metatheorem:  
\begin{itemize}
\item \textbf{Main result} (Theorems \ref{thm:main} and \ref{thm:relaxedevoleq}). \emph{For the motion of sea ice and subject to constant mass, there is a \emph{natural energy-driven notion of solutions} to the momentum balance equation in the \emph{singular limit}. Such solutions exist on the entire time interval and for all natural initial data. Lastly, they can be obtained as limits of solutions to regularized problems, and especially satisfy the momentum balance equation in a \emph{weak sense}.} 
\end{itemize}
In view of this result, we emphasize three key points:\\ 

(i) \textbf{Plasticity effects.} As a main novelty and different from previous contributions (see, e.g.,  \cite{Brandt_2022,Binz_2023}), our existence result does \emph{not require} a stress regularization. In view of natural plasticity effects such as hardening or the formation of cracks in sea ice, the latter comes with different obstructions towards an existence theory. This shall be discussed in detail in Section \ref{sec:modified} below, and  we highlight two chief points:  First, the incorporation of plasticity effects necessitates a function space set-up which differs from basically all previous existence results. In particular, our main result (Theorem \ref{thm:main}) is based on the space $\bd(\Omega)$ of functions of bounded deformation rather than Sobolev spaces; see Section \ref{sec:prelims} for its precise definition. This is natural, since spatial Sobolev regularity (and thus absolute continuity of the gradients for $\mathscr{L}^{2})$ excludes the description of cracks. The latter might suggest to study our main equation on $\bv(\Omega;\R^{2})$, the $\R^{2}$-valued fields of bounded variation, see, e.g., \cite{AFP,EvansGariepy}. However, in this case, the special form of the Hibler viscous-plastic stress (see Section \ref{sec:driving}) does not admit non-trivial $\lebe^{1}$-estimates. This means that the natural energies driving the sea ice evolution \emph{do not suffice} to obtain control of the full velocity gradients in $\lebe^{1}(\Omega;\R^{2\times 2})$ or the $\R^{2\times 2}$-valued Radon measures, respectively.
This is due to a basic obstruction from harmonic analysis known as Ornstein's Non-Inequality \cite{Ornstein}, see Remark \ref{rem:OrnsteinBV}. In particular, $\bv$-regularity of the horizontal sea ice velocity cannot be expected. By the structure of the Hibler stress, many of the statements below can be formulated more abstractly in the recently established framework of functions of bounded $\mathbb{A}$-variation \cite{BDG}, even though we view the latter primarily as an auxiliary tool. 

(ii) \textbf{Relaxations and lower order energy concentrations.} The space $\bd(\Omega)$ allows to admit velocity fields whose symmetric gradients are $\RR$-valued finite Radon measures. Since we aim to allow for potential cracks of sea ice, this particularly forces us to work with  singular measures with respect to $\mathscr{L}^{2}$. In consequence, this must be reflected in our notion of solution for the modified momentum balance equation. Here we take advantage of the gradient flow formulation or energy-driven evolution. In our situation, the energies are defined in terms of suitable relaxations to the space $\bd(\Omega)$, to be introduced and studied in Section \ref{sec:relaxthebulk}. Such \emph{relaxations} are extensions of the energy functionals which allow for singularity with respect to $\mathscr{L}^{2}$ of the Hibler differential expressions. The latter are conceptually similar to the more classical total variation functionals on $\bv$, and the underlying gradient flows are strongly related to the total variation or mean curvature flow problems on $\bv$ (see, e.g., \cite{ABCM01a,ABCM01b,ABCM02,HZ,LichnewskyTemam}). Yet, the vectorial structure of the problems considered here, but also the different function space set-up comes with novel challenges. One such instance, which is at the heart of the existence proof \emph{under a constant mass hypothesis}, is the limiting behaviour of bulk integrals close to the boundary; see Theorem \ref{thm:bdryapprox} and Remarks \ref{rem:bulk1}--\ref{rem:bulk2} and \ref{rem:bulk3}.

This result, which deals with energy concentrations close to the boundary, should be of independent interest. More precisely, it establishes the principle that \emph{lower order energy concentrations} along Lipschitz hypersurfaces are not affected by Ornstein's Non-Inequality. In particular, it represents the first \emph{non-linear variant} of the metaprinciple from \cite{DieningGmeineder} that Ornstein's Non-Inequality becomes invisible in the context of trace estimates. 

(iii) \textbf{Notion of solutions.} As mentioned above, in this paper we largely work subject to a constant mass hypothesis. At least for short times, this hypothesis  is well-justified from a physical viewpoint, and admits to set up a variational formulation in the spirit of gradient flows for the relaxed viscous-plastic Hibler energies; see Definition \ref{def:varsol1} for the precise notion. Our main  existence result, Theorem \ref{thm:main}, also implies the existence of the canonical \emph{weak solutions} to the momentum balance equation. By this, we understand a relaxed version of \eqref{eq:hiblerMAIN1}, and seems to be novel in the context of sea ice too. 

The existence proof relies on several limit passages for approximate problems, and it is here where the bulk approximation enters crucially. By its very structure, it is particularly accessible for future numerical studies; see, e.g., \cite{Meh,MehlmannKorn21} for available numerical methods. 

With this being the primary motivation of the present work, we conclude the paper in Section \ref{sec:weakvar} by embedding the concept of variational solutions into a broader class that allows for non-constant mass too. To this end, we introduce the notion of \emph{weak-variational solutions} (WVS), which overcomes the deficit of  variational structure in the variable mass case. The precise analysis of the latter shall be the content of future work, in turn strongly relying on the foundational case considered here. To keep the paper at a reasonable length, we discuss their connections to the variational solutions from Definition \ref{def:varsol1} in a simplified scenario. This shall form the basis for future works.

{\small\subsection*{Acknowledgments} 
Franz Gmeineder acknowledges  financial support through the Hector foundation (Project Number FP 626/21). The research of Matthias Hieber is supported by DFG Research Unit FOR 5528 on Geophysical Flows.}

\section{On the momentum balance equation and modelling of stresses}\label{sec:modified}
In this section, we discuss the underlying assumptions that motivate the study of the true Hibler stress terms $\bsigma$ in \eqref{eq:hiblerMAIN1} and natural variants thereof. Based on their specific form, we moreover discuss some of the aforementioned obstructions on a more technical level. 
\subsection{Hibler deformations, stresses and energies}\label{sec:driving}
In order to motivate the specific formulation of the sea ice model in Section \ref{sec:varsol} below, we first identify the driving energy term. In the following, let $T>0$ and suppose that $\Omega\subset\R^{2}$ is an open and bounded Lipschitz domain. For a sufficiently regular map $\mathbf{v}\colon \Omega\to\R^{2}$, we define its symmetric or trace-free symmetric gradient, respectively, by
\begin{align}\label{eq:keydiffops}
\varepsilon\mathbf{v}\coloneqq\frac{1}{2}(\nabla\mathbf{v}+\nabla\mathbf{v}^{\top}),\;\;\;\varepsilon^{D}\mathbf{v}\coloneqq\varepsilon\mathbf{v} - \frac{1}{2}\mathrm{tr}(\nabla\mathbf{v})\mathbbm{1}_{2\times 2}
\end{align}
with the $(2\times 2)$-unit matrix $\mathbbm{1}_{2\times 2}\in\R^{2\times 2}$. Whenever we write $\varepsilon$ in the sequel, we tacitly understand $\varepsilon=(\varepsilon_{ij})_{1\leq i,j\leq 2}=\varepsilon\mathbf{v}$ and analogously for $\varepsilon^{D}$; in longer expressions, we also write $\sg(\vv)$ and $\sg^{D}(\vv)$. For a given ice pressure function $P$ the corresponding nonlinear  \emph{bulk} and \emph{shear viscosities} via 
\begin{align}\label{eq:identity1}
\zeta(\varepsilon,P):=\frac{P}{2\triangle(\varepsilon)}\;\;\;\text{and}\;\;\;\eta(\varepsilon,P):=\frac{1}{e^{2}}\zeta(\varepsilon,P). 
\end{align}
The emerging terms $\triangle(\varepsilon)$ and $e$ stem from the modelling of the viscous-plastic stress $\bsigma$ through a constitutive law involving the deformation tensor $\sg$. More precisely, as in \cite{Hibler} and as is typical in the description of plasticity phenomena, we suppose that the principal stresses belong to an elliptical yield curve. In this situation, the quantity $e$ is the ratio between the major and minor principal axes of the yield curve, and $\triangle(\varepsilon)$ is defined by
\begin{align}\label{eq:triangledef}
\triangle(\varepsilon):=\Big((\varepsilon_{11}^{2}+\varepsilon_{22}^{2})\Big(1+\frac{1}{e^{2}}\Big) + \frac{4}{e^{2}}\varepsilon_{12}^{2} + 2\varepsilon_{11}\varepsilon_{22}\Big(1-\frac{1}{e^{2}}\Big)\Big)^{\frac{1}{2}}. 
\end{align}
In order to model sea ice as an idealised visco-plastic fluid, we consider stresses of the form
\begin{align}\label{eq:stress}
\begin{split}
\bsigma &=\frac{2}{e^{2}}\zeta(\varepsilon,P)\varepsilon + [\zeta(\varepsilon,P)-\eta(\varepsilon,P)]\mathrm{tr}(\varepsilon)\mathbbm{1}_{2\times 2} - \frac{P}{2}\mathbbm{1}_{2\times 2} \\
& \!\!\!\!\stackrel{\eqref{eq:identity1}_{2}}{=}\frac{2}{e^{2}}\zeta(\varepsilon,P)\varepsilon + \Big(\Big(1-\frac{1}{e^{2}}\Big)\zeta(\varepsilon,P)\mathrm{tr}(\varepsilon) - \frac{P}{2}\Big)\mathbbm{1}_{2\times 2} \\
& = \frac{2}{e^{2}}\zeta(\varepsilon,P)\varepsilon^{D} + \Big(\zeta(\varepsilon,P)\mathrm{tr}(\varepsilon) - \frac{P}{2}\Big)\mathbbm{1}_{2\times 2} \\
& \!\!\!\!\stackrel{\eqref{eq:identity1}_{1}}{=}\frac{2}{e^{2}}\zeta(\varepsilon,P)\varepsilon^{D} +  \zeta(\varepsilon,P)\Big(\mathrm{tr}(\varepsilon) -\triangle(\varepsilon)\Big)\mathbbm{1}_{2\times 2}.
\end{split}
\end{align}
This constitutive law  particularly reflects the fact that, for small values of  $\triangle(\varepsilon)$, singularities in the viscosities might occur and plasticity effects enter. 

Subject to the additional {constant pressure assumption} $P=\mathrm{const.}$, we now let $\bm{\varphi}\in\hold_{c}^{\infty}(\Omega;\R^{2})$ be arbitrary but fixed. In the following, we crucially employ that, with respect to the usual Hilbert-Schmidt inner product on $\R^{2\times 2}$, the trace-free symmetric matrices are orthogonal to $\R\mathbbm{1}_{2\times 2}$. We then have, with $\langle\cdot,\cdot\rangle$ denoting the usual integral pairing, 
\begin{align}\label{eq:sigmacomp}
\begin{split}
 -\langle\di(\bsigma),\bm{\varphi}\rangle & =  \langle \bsigma,\varepsilon\bm{\varphi}\rangle \\
& = \left\langle \frac{2}{e^{2}}\zeta(\varepsilon,P)\Big(\varepsilon^{D}\mathbf{v} +  \frac{e^{2}}{2}\mathrm{tr}(\varepsilon\mathbf{v}) \mathbbm{1}_{2\times 2}\Big),\varepsilon\bm{\varphi}\right\rangle\\
& = \left\langle \frac{2}{e^{2}}\zeta(\varepsilon,P)\Big(\varepsilon^{D}\mathbf{v} +  \frac{e^{2}}{2}\mathrm{tr}(\varepsilon\mathbf{v}) \mathbbm{1}_{2\times 2}\Big),\Big(\varepsilon^{D}\bm{\varphi}+\frac{1}{2}\mathrm{tr}(\varepsilon\bm{\varphi})\mathbbm{1}_{2\times 2}\Big)\right\rangle \\
& = \left\langle\frac{2}{e^{2}}\zeta(\varepsilon,P)\varepsilon^{D}\mathbf{v},\varepsilon^{D}\bm{\varphi}\right\rangle +  \zeta(\varepsilon,P)\mathrm{tr}(\varepsilon\mathbf{v})\mathrm{tr}(\varepsilon\bm{\varphi}).
\end{split}
\end{align}
Let us define, for $\alpha>0$, 
\begin{align}\label{eq:Tdefine}
\mathbb{T}\mathbf{v} := \frac{\sqrt{2}}{e}\varepsilon^{D}\mathbf{v}+\alpha\mathrm{tr}(\varepsilon\mathbf{v})\mathbbm{1}_{2\times 2}.
\end{align}
We compute
\begin{align}\label{eq:Tcomp}
\begin{split}
\langle -\mathbb{T}^{*}(\zeta(\varepsilon,P)\mathbb{T}\mathbf{v}),\bm{\varphi}\rangle & = \langle \zeta(\varepsilon,P)\mathbb{T}\mathbf{v},\mathbb{T}\bm{\varphi}\rangle \\
& = \left\langle \frac{2}{e^{2}}\zeta(\varepsilon,P)\varepsilon^{D}\mathbf{v},\varepsilon^{D}\bm{\varphi}\right\rangle  + 2\alpha^{2}\langle\zeta(\varepsilon,P)\mathrm{tr}(\varepsilon\mathbf{v}),\mathrm{tr}(\varepsilon\bm{\varphi})\rangle,
\end{split}
\end{align}
whereby setting $\alpha=\frac{1}{\sqrt{2}}$ in \eqref{eq:Tdefine} yields
\begin{align}\label{eq:centralstressidentity}
\langle -\di(\bsigma),\bm{\varphi}\rangle = \langle -\mathbb{T}^{*}(\zeta(\varepsilon,P)\mathbb{T}\mathbf{v}),\bm{\varphi}\rangle
\end{align}
in view of \eqref{eq:sigmacomp} and \eqref{eq:Tcomp}. The canonical choice of $\mathbb{T}$ in this situation therefore is
\begin{align}\label{eq:Tdefine1}
\mathbb{T}\mathbf{v} \coloneqq \frac{\sqrt{2}}{e}\varepsilon^{D}\mathbf{v}+\frac{1}{\sqrt{2}}\mathrm{tr}(\varepsilon\mathbf{v})\mathbbm{1}_{2\times 2}, 
\end{align}
and we call $\mathbb{T}\vv$ the \emph{Hibler deformation tensor} or simply \emph{Hibler deformation}. The nonlinear operator $\mathbb{T}^{*}(\zeta(\varepsilon,P)\mathbb{T}\mathbf{v})$, in turn, is a generalised divergence-form operator, and shall be referred to as the \emph{Hibler operator}. This operator agrees with variants considered previously in \cite{Hibler} or \cite{Brandt_2022}.  

In order to derive the specific form of the underlying Hibler energy, we confirm that $|\mathbb{T}\vv|=\triangle(\varepsilon)$ with $\triangle(\varepsilon)$ as in \eqref{eq:triangledef}. We write 
\begin{align}\label{eq:hiblerwritedown}
\mathbb{T}\vv = \left(\begin{matrix} \Big(\frac{1}{\sqrt{2}e}+\frac{1}{\sqrt{2}}\Big)\varepsilon_{11} + \Big(\frac{1}{\sqrt{2}}-\frac{1}{\sqrt{2}e}\Big)\varepsilon_{22} & \frac{\sqrt{2}}{e}\varepsilon_{12} \\ \frac{\sqrt{2}}{e}\varepsilon_{12} & \Big(\frac{1}{\sqrt{2}}-\frac{1}{\sqrt{2}e} \Big)\varepsilon_{11}+\Big(\frac{1}{\sqrt{2}}+\frac{1}{\sqrt{2}e} \Big)\varepsilon_{22}
\end{matrix} \right), 
\end{align}
whereby we arrive at 
\begin{align*}
|\mathbb{T}\vv|^{2} & = \Big(\frac{1}{\sqrt{2}e}+\frac{1}{\sqrt{2}}\Big)^{2}\varepsilon_{11}^{2} + 2\Big(\frac{1}{\sqrt{2}e}+\frac{1}{\sqrt{2}}\Big)\Big(\frac{1}{\sqrt{2}}-\frac{1}{\sqrt{2}e}\Big)\varepsilon_{11}\varepsilon_{22} + \Big(\frac{1}{\sqrt{2}}-\frac{1}{\sqrt{2}e}\Big)^{2}\varepsilon_{22}^{2} \\ 
& + \frac{4}{e^{2}}\varepsilon_{12}^{2} \\ 
& + \Big(\frac{1}{\sqrt{2}}-\frac{1}{\sqrt{2}e}\Big)^{2}\varepsilon_{11}^{2} + 2\Big(\frac{1}{\sqrt{2}}-\frac{1}{\sqrt{2}e}\Big)\Big(\frac{1}{\sqrt{2}}+\frac{1}{\sqrt{2}e}\Big)\varepsilon_{11}\varepsilon_{22} + \Big(\frac{1}{\sqrt{2}}+\frac{1}{\sqrt{2}e}\Big)^{2}\varepsilon_{22}^{2} \\ 
& = \Big(1+\frac{1}{e^{2}}\Big)(\varepsilon_{11}^{2}+\varepsilon_{22}^{2}) + 2\Big(1-\frac{1}{e^{2}}\Big)\varepsilon_{11}\varepsilon_{22} + \frac{4}{e^{2}}\varepsilon_{12}^{2} = \triangle(\varepsilon)^{2}. 
\end{align*}
In consequence, the generalized divergence form operator from \eqref{eq:centralstressidentity} becomes 
\begin{align}\label{eq:concreteHibler}
\mathbb{T}^{*}(\zeta(\varepsilon,P)\mathbb{T}\vv) & = \mathbb{T}^{*}\Big(\frac{P}{2\triangle(\varepsilon)}\mathbb{T}\vv) = \mathbb{T}^{*}\Big(\frac{P}{2}\frac{\mathbb{T}\vv}{|\mathbb{T}\vv|} \Big). 
\end{align}
It is then clear that the canonical energy leading to \eqref{eq:concreteHibler} is given by 
\begin{align}\label{eq:hiblerenergy1}
\mathscr{F}[\vv;\Omega] \coloneqq \int_{\Omega}\frac{P}{2}|\mathbb{T}\vv|\dif x,  
\end{align}
and \eqref{eq:hiblerenergy1} shall be referred to as \emph{Hibler-type energy}. For future reference, we note that, a priori, \eqref{eq:hiblerenergy1} only makes sense if $\mathbb{T}\vv$ is an integrable $\RR$-valued map. 

The Hibler deformation tensor $\mathbb{T}$ from \eqref{eq:concreteHibler} has a more difficult structure than the usual symmetric gradient. For the following, it is thus useful to record that there exists $1<c<\infty$ such that
\begin{align}\label{eq:pointwisecompa}
\frac{1}{c}|z^{\mathrm{sym}}| \leq |T[z^{\sym}]| \leq c|z^{\mathrm{sym}}|
\end{align}
holds for all $z\in\R^{2\times 2}$, where we have put
\begin{align}\label{eq:Tdef}
T[z] = \frac{\sqrt{2}}{e}z^{D} + \frac{1}{\sqrt{2}}\mathrm{tr}(z)\mathbbm{1}_{2\times 2}
\end{align}
with the deviatoric part $z^{D}\coloneqq z-\frac{1}{2}\mathrm{tr}(z)\mathbbm{1}_{2\times 2}$. Even though elementary, \eqref{eq:pointwisecompa} is essential for the sequel, and so we briefly pause to give the argument: Put 
\begin{align*}
\bm{\xi}^{(1)}\coloneqq \left(\begin{matrix} 0 & 1 \\ 1 & 0 \end{matrix}\right),\;\;\;\bm{\xi}^{(2)}\coloneqq \left(\begin{matrix} -1 & 0 \\ 0 & 1 \end{matrix}\right)\;\;\;\text{and}\;\;\;\bm{\xi}^{(3)}\coloneqq \mathbbm{1}_{2\times 2} =\left(\begin{matrix} 1 & 0 \\ 0 & 1 \end{matrix} \right). 
\end{align*}
An arbitrary element $\mathbf{z}\in\RR$ can be written, for $\alpha,\beta,\gamma\in\R$, as 
\begin{align*}
\mathbf{z} \coloneqq \left(\begin{matrix} \beta & \alpha \\ \alpha & \gamma \end{matrix} \right) \;\;\;\text{and so}\;\;\; T[\mathbf{z}] = \frac{\sqrt{2}}{e}\alpha \bm{\xi}^{(1)} + \frac{1}{\sqrt{2}e}(\beta-\gamma)\bm{\xi}^{(2)} + \frac{1}{\sqrt{2}}(\beta+\gamma)\bm{\xi}^{(3)}.
\end{align*}
In particular, because the matrices $\frac{\sqrt{2}}{e}\bm{\xi}^{(1)},\frac{1}{\sqrt{2}e}\bm{\xi}^{(2)}$ and $\frac{1}{\sqrt{2}}\bm{\xi}^{(3)}$ are linearly independent, $\mathbf{z}\mapsto |T[\mathbf{z}]|$ is a norm on $\RR$. Since all norms on $\RR$ are equivalent, we arrive at the lower bound of \eqref{eq:pointwisecompa}. The upper bound is trivial, and so \eqref{eq:pointwisecompa} follows.

\subsection{The momentum balance equation} 
Based on the specific Hibler stresses $\bsigma$ and the Hibler deformation tensors $\mathbb{T}\vv$, our primary focus is the momentum balance equation \eqref{eq:hiblerMAIN1}. The latter can be written in the form 
\begin{align}\label{eq:hibler1a}
\begin{cases}
\partial_{t}{\uu} = \mathbb{T}^{*}(\bm{\sigma}) + \mathbf{f} + \bm{\tau}_{\mathrm{ocean}}(\uu)&\;\text{in}\;(0,T)\times\Omega,\\ 
\uu=0&\;\text{on}\;(0,T)\times\partial\Omega,\\ 
\uu(0,\cdot) = \uu_{0}&\;\text{on}\;\{0\}\times\Omega, 
\end{cases}
\end{align}
where we write $\mathbf{f}$ for the sum of gravitational and atmospheric forces. 

As mentioned above, we shall largely work under a \emph{constant mass} and \emph{constant pressure} assumption. In principle and as long they are given, they can be incorporated as coefficients in the Hibler energy, see Remark \ref{rem:xdependence}. Following the modelling from Section \ref{sec:modelling},  we allow $\bm{\sigma}$ to be \emph{singular}, see also \eqref{eq:truestress} below. In essence, this means that the problem \eqref{eq:hibler1a} cannot be treated by linearization techniques. Since we shall approximate the true singular Hibler stress by stabilized  stresses and study their limiting behaviour, we speak of the \emph{singular limit}. To this end, \eqref{eq:hibler1a} shall be treated by energy-based methods, giving us access to variational techniques.

To keep the paper at a reasonable length, we moreover consider the ocean forces with cut-off, meaning that 
\begin{align}\label{eq:cutoff} 
\bm{\tau}_{\mathrm{ocean}}(\uu) \coloneqq \rho_{\mathrm{ocean}}C_{\mathrm{ocean}}\eta(|\mathbf{U}_{\mathrm{ocean}}-\uu|)R_{\mathrm{ocean}}(\mathbf{U}_{\mathrm{ocean}}-\uu), 
\end{align}
where $\eta\colon\R_{\geq 0}\to \R_{>0}$ is a function which satisfies for some fixed $\gamma\in(0,1)$
\begin{align}\label{eq:simpleeta}
\eta(0)=0,\;\;\;\eta\;\text{is linear on some interval $[0,N]$}\;\text{and}\;\eta(s)=s^{-\gamma}\;\text{for all large}\;s.
\end{align}
The reason for this modification  is as follows. We recall that our primary focus is on the singular limit. As shall be discussed in Section \ref{sec:modelling} below, this a priori leads to velocities of spatial $\bd$-regularity. By the Sobolev-Strauss embedding on domains \cite{Strauss} (see Lemma \ref{lem:poincaresobolev} and also \cite{GmRa}), our solutions are a priori of spatial $\lebe^{2}$-integrability in $n=2$ dimensions. When deriving useful energy estimates, this suggests that \eqref{eq:cutoff} should be at most of linear growth at infinity. To strike a balance, we do not fully neglect this term, but employ a cut-off; note that \eqref{eq:cutoff} still carries the typical structure of oceanic force terms. 

We believe that this can be improved, but requires different tools on the higher integrability of solutions. This is a point where our theory heavily departs from related full gradient scenarios. For instance,  tools such as maximum principles or parabolic Moser-type iteration strategies are neither available nor can be expected; see Remark \ref{rem:criticalcutoff} for more detail. Yet, since the cut-off incorporates some of the underlying ocean-driven effects, this might still be sufficient to reflect such effects in future numerical simulations.

\subsection{The true Hibler stress and plasticity effects}\label{sec:modelling}
We now address the stress terms $\bsigma$ from \eqref{eq:hibler1a} from a mathematical perspective. By our discussion in Section \ref{sec:driving}, the stress terms are typically not linearizable. Thus, they allow for the incorporation of plasticity phenomena which are largely absent in previously studied \emph{regularised} models. Most notably, {Brandt} et al. \cite{Brandt_2022} and {Liu} et al. \cite{LiuTiti} consider stress-strain relations  roughly of the form 
	\begin{align}\label{eq:stressstrain}
		\bm{\sigma} \approx  \frac{\mathbb{T}\mathbf{u}}{\sqrt{\varepsilon +|\mathbb{T}\mathbf{u}|^{2}}}, 
	\end{align}
	where $\mathbb{T}\uu$ is Hibler's deformation tensor from \eqref{eq:Tdefine1}. From an energy perspective, stress terms such as \eqref{eq:stressstrain} arise from energy functionals with elliptic $\hold^{2}$-integrands. For instance, \eqref{eq:stressstrain} stems from the energy density  $F_{\varepsilon}(\mathbb{T}\uu)\coloneqq\sqrt{\varepsilon+|\mathbb{T}\uu|^{2}}$. The smoothness and strict convexity of $F_{\varepsilon}$ allows to linearize the accordingly modified Hibler model and to employ the powerful machinery of maximal regularity, see, e.g., \cite{Denk}. By its elliptic nature, the regularisation \eqref{eq:stressstrain} comes with smoothing effects and spatial $\sobo^{1,p}$-regularity, $p>1$. 
	
As one the of the key points of the present paper, when modelling sea ice, this is \emph{not} what is to be expected. Indeed, sea ice is a typical instance  where fractures can occur. Such plasticity effects are fully excluded in the presence of \emph{any} result that entails the spatial $\sobo^{1,1}$-regularity of solutions; in other words, posing any model of sea ice on Sobolev-type spaces \emph{cannot account for} plasticity effects. This is a routine insight dating back to, e.g., \cite{AnzGia,FuchsSeregin,Suq,TemamStrang}, and is reflected by \emph{true} stress terms roughly of the form 
	\begin{align}\label{eq:truestress}
	\bm{\sigma} \approx \frac{\mathbb{T}\uu}{|\mathbb{T}\mathbf{u}|}, 
	\end{align}
see \eqref{eq:stress}ff.. This stress term corresponds to the Lipschitz, yet non-differentiable energy density $F(\mathbb{T}\uu)\coloneqq|\mathbb{T}\uu|$. 

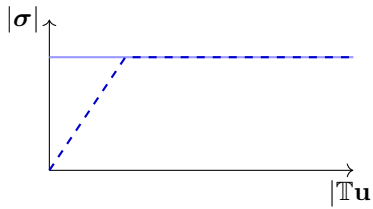
\begin{figure}
\begin{tikzpicture}
\draw[->] (0,0) -- (4,0);
\draw[->] (0,0) -- (0,2); 
\node[left] at (0,2) {$|\bm{\sigma}|$};
\draw[blue!40!white,thick] (0,1.5) -- (4,1.5);
\draw[blue!80!black,dashed,thick] (0,0) -- (1,1.5) -- (4,1.5);
\node[below] at (4,0){$|\mathbb{T}\uu|$};
\end{tikzpicture}
\caption{Two variants of stress-strain relations based on the Hibler deformation tensor $\mathbb{T}\uu$, see \eqref{eq:hiblerwritedown}. The light blue line corresponds to the standard Hibler stress, whereas the dashed blue line corresponds to a modified Hibler stress \`{a} la Mohr-Coulomb.}\label{fig:stressstrain}
    \end{figure}

From a modelling perspective, modifications of \eqref{eq:truestress} are equally reasonable. For instance, we might consider constitutive relations which reflect elastic behaviour for small Hibler deformations and plastic behaviour for large Hibler deformations. Conceptually, this is similar to the Drucker-Prager or Mohr-Coloumb models for rocks or soils, see  \cite{DruckerPrager,Lubliner}. In particular, the latter leads to stresses $\bm{\sigma}$ which are linear in $|\mathbb{T}\uu|$ close to zero, whereas they coincide with \eqref{eq:truestress} for large values of $|\mathbb{T}\uu|$. This conceptual difference is visible from the underlying stress-strain diagrams displayed in Figure \ref{fig:stressstrain}. To provide a unifying perspective on such models, it is thus reasonable to assume stress-strain relations of the form $\bm{\sigma}=F(\mathbb{T}\uu)$, where the potential $F\colon\RR\to\R$ has \emph{linear growth}. By this we understand that there exist constants $c_{1},c_{3}>0$ and $c_{2}\in\R$ such that 
    \begin{align}\label{eq:lingrowthmodelling}
    c_{1}|z|-c_{2} \leq F(z) \leq c_{3}(1+|z|)\qquad\text{for all}\;z\in\RR. 
    \end{align}
    Based on the above stress-strain relation, this leads to energies of the form 
    \begin{align}\label{eq:Hiblerenergymodel}
    \mathscr{F}[\uu;\Omega] \coloneqq \int_{\Omega}F(\mathbb{T}\uu)\dif x,\qquad \uu\colon\Omega\to\R^{2}. 
    \end{align}
    In order to incorporate  plasticity effects of sea ice, it is moreover clear that the distributional expression $\mathbb{T}\uu$ should be allowed to be a finite $\RR$-valued measure which may be singular with respect to $\mathscr{L}^{2}$, in formulas $\mathbb{T}\uu\in\mathrm{RM}_{\mathrm{fin}}(\Omega;\RR)$. Whereas this describes the physical perspective, measures are required from a compactness viewpoint to deal with energy concentrations on small sets.
    
   In turn, if the distributional expression $\mathbb{T}\uu$ is a measure, then it is a priori unclear how to interpret the stress (e.g., \eqref{eq:truestress}) or the associated Hibler energies \eqref{eq:Hiblerenergymodel}. The latter is, in some sense, easier to handle. Still, it  requires an extension of the original Hibler energy by lower semicontinuity, letting us pass to the limits in the critical nonlinear expressions. 

    This is achieved by \emph{relaxations}. By the latter, we mean extensions of the Hibler energies to spaces where compactness is available while maintaining  lower semicontinuity properties. As we discuss as a consequence of Reshetnyak's lower semicontinuity theorem  \cite{Reshetnyak} (see also Goffman \& Serrin \cite{GoffmanSerrin}) in Section \ref{sec:relaxthebulk}, this leads to relaxed energies of the form \begin{align}\label{eq:relaxintro}
		\begin{split}
	\mathscr{F}_{0}^{*}[\uu;\Omega]  \coloneqq 	\int_{\Omega}F\Big(\frac{\dif\mathbb{T}\uu}{\dif\mathscr{L}^{2}}\Big)\dif x & + \int_{\Omega}F^{\infty}\Big(\frac{\dif\mathbb{T}\uu}{\dif|\mathbb{T}^{s}\uu|}\Big)\dif|\mathbb{T}^{s}\uu| \\ & + \int_{\partial\Omega}F^{\infty}(-\mathrm{tr}_{\partial\Omega}(\uu)\otimes_{\mathbb{T}}\nu_{\partial\Omega})\dif\mathscr{H}^{1}.
	\end{split}
	\end{align}
	Here, $\mathbb{T}\uu = \frac{\dif\mathbb{T}\uu}{\dif\mathscr{L}^{2}}\mathscr{L}^{2}+\frac{\dif\mathbb{T}\uu}{\dif|\mathbb{T}^{s}\uu|}|\mathbb{T}^{s}\uu|$ is the Lebesgue-Radon-Nikod\'{y}m decomposition of $\mathbb{T}\uu\in\mathrm{RM}_{\mathrm{fin}}(\Omega;\RR)$. Moreover, $F^{\infty}(z)\coloneqq \lim_{t\searrow 0}tF(\frac{z}{t})$ is the recession function of the integrand $F$, and is precisely designed to capture the potential singularity of $\mathbb{T}\uu$ with respect to $\mathscr{L}^{2}$. Hence, when constructing approximate solutions, the $F^{\infty}$-terms deal with concentration effects. This also concerns the boundary integral in \eqref{eq:relaxintro}, see Remark \ref{rem:bdryintegral} below. At this stage, we point out that controlling $\mathbb{T}\uu$ is essentially equivalent to controlling\footnote{If $\sg\uu$ is a Radon measure and not necessarily a function, we write $\E\uu$ to highlight this difference.} $\mathrm{E}\uu$, see \eqref{eq:pointwisecompa}. In conclusion, the coercivity of $\mathscr{F}_{0}^{*}$ can only ensured on the space 
    \begin{align}
    \bd(\Omega)\coloneqq \{\uu\in\lebe^{1}(\Omega;\R^{2})\colon\;\mathrm{E}\uu\in\mathrm{RM}_{\mathrm{fin}}(\Omega;\RR)\}
    \end{align}
    of \emph{functions of bounded deformation}. As a consequence of a basic obstruction from harmonic analysis, there are no Korn-type inequalities in the $\lebe^{1}$-framework as considered here. This principle is known as Ornstein's \emph{Non-Inequality}  \cite{Ornstein,Conti}, and implies that $\bv(\Omega;\R^{2})$ (the functions of bounded variation, see \cite{AFP,EvansGariepy}) is a proper subspace of $\bd(\Omega)$. In particular, for a generic $\bd$-map, its full distributional gradient does not need to belong to $\mathrm{RM}_{\mathrm{fin}}(\Omega;\R^{2\times 2})$. Therefore, $\sobo^{1,p}(\Omega;\R^{2})\subsetneq \bv(\Omega;\R^{2})\subsetneq\bd(\Omega)$ for all $1\leq p<\infty$, and so plasticity models as considered here cannot be embedded into the framework of Sobolev maps.  In particular, this allows to model scenarios where solutions \emph{do not regularize} instantaneously and hardening or cracks may occur indeed. 
    
    From a conceptual perspective, the diffusion described by \eqref{eq:hibler1a} is similar to variational solutions of the $1$-Laplacian equation as studied in, e.g., \cite{LichnewskyTemam,HZ}. As a key point, we do not employ nonlinear semigroup methods, but rather an energy-driven evolution that keeps track of the plastic behaviour in the relaxed energies, see Definition \ref{def:varsol1}. 

\subsection{Strategy and selected obstructions} 
We conclude the present section by discussing the underlying overall strategy of proof and obstructions. In the context considered here, it is natural to impose homogeneous Dirichlet boundary conditions along $\partial\Omega$. This is also the generic situation in Hibler's original model \cite{Hibler} and reflects the fact that sea ice rests at the boundary. In establishing the existence of energy-driven solutions, we employ a three-layer approximation scheme. This corresponds to regularizing the initial values, a viscosity stabilization and a stress regularization. Finally, we pass to the limit in these approximations, where the limit in the stress regularization is the singular limit. 

By the $\bd$-set-up, the relaxed energies \eqref{eq:relaxintro} incorporate the boundary values as penalization terms. In particular, it is essential to admit competitors with non-zero boundary values to obtain a proper energy-driven evolution. Now, the weak approximate solutions to the stabilized or regularized problems automatically come with zero boundary values in the trace sense. The passage from zero to general traces as required for the energy-driven formulation then requires a careful approximation of the boundary penalization terms by bulk integrals. Since, in the present situation, the full gradients of the horizontal sea ice velocities are not available, this obstruction cannot be approached as in the full gradient case; see Section \ref{sec:relaxthebulk} for the resolution of this matter. This, in turn, comes with a variety of other complications and is fully solved in view of the underlying evolution equation in Section \ref{sec:main}. 

\section{Function spaces and auxiliary results}\label{sec:prelims}
\subsection{General notation}
 Unless stated otherwise, $\Omega\subset\R^{n}$ is an open and bounded set with Lipschitz boundary $\partial\Omega$; typically, $n=2$. Once the boundary is locally flattened and written as the finite union of rotated and translated graphs of Lipschitz functions, the maximum of the underlying Bi-Lipschitz constants shall be referred to as the \emph{Lipschitz character} of $\partial\Omega$.

 For $T>0$, we denote the space-time cylinder by $\Omega_{T}:=\Omega\times(0,T)$. We then denote, for a finite dimensional inner product space $X$, by $\mathrm{RM}_{\mathrm{fin}}(\Omega;X)$ the finite, $X$-valued Radon measures on $\Omega$. Here, finiteness is understood in the sense that $|\mu|(\Omega)<\infty$ with the total variation measure $|\mu|$. If $X=\R$, we write $\mathrm{RM}_{\mathrm{fin}}(\Omega):=\mathrm{RM}_{\mathrm{fin}}(\Omega;\R)$ for simplicity. The $n$-dimensional Lebesgue and the $(n-1)$-dimensional Hausdorff measures are denoted by $\mathscr{L}^{n}$ or $\mathscr{H}^{n-1}$, respectively. Lastly, for a Borel subset $B\subset\Omega$, the restriction $\mu\mres B$ of $\mu\in\mathrm{RM}_{\mathrm{fin}}(\Omega;X)$ is given by $(\mu\mres B)(A):=\mu(A\cap B)$. 
 
 For a Banach space $(X,\|\cdot\|_{X})$, we say that $\vv\colon [0,T]\to X$ is \emph{strongly measurable} if there exists a sequence $(\vv_{j})$ of $X$-valued simple functions such that $\|\vv_{j}(t,\cdot)-\vv(t,\cdot)\|_{X}\to 0$ as $j\to\infty$ for $\mathscr{L}^{1}$-a.e. $0<t<T$. If $X$ is separable, we call $\vv\colon [0,T]\to X'$ \emph{weakly*-measurable} if, for any $z\in X$, the map $[0,T]\ni t \mapsto \langle \vv(t,\cdot),z\rangle$ is measurable; the space $\lebeweak(0,T;X')$ is defined as the collection of all weakly*-measurable maps $\vv\colon [0,T]\to X'$ such that 
\begin{align}\label{eq:weak*-bochner}
\int_{0}^{T}\|\vv(t,\cdot)\|_{X'}\dif t<\infty. 
\end{align}
We refer the reader to the Appendix, Section \ref{sec:appendix}, for more detail. For two vectors $a,b\in\R^{n}$, we define their symmetric tensor product via
\begin{align}\label{eq:symtensor}
a\odot b :=\frac{1}{2}(a\otimes b + b\otimes a)
\end{align}
with the usual tensor product $a\otimes b:=ab^{\top}$. For $z\in\R^{n\times n}$, we write $|z|$ for the usual Hilbert-Schmidt norm. Finally, $c>0$ denotes constants which may change from one line to the other; we shall only specify them if their exact value is required.

\subsection{Functions of bounded deformation}\label{sec:BD}
Let $\Omega\subset\R^{n}$ be open. For a vector field $\uu\in\lebe_{\locc}^{1}(\Omega;\R^{n})$, we define its \emph{total deformation} by
\begin{align}\label{eq:totdef}
|\E\uu|(\Omega):=\sup\left\{ \int_{\Omega}\uu\cdot\mathrm{div}(\bm{\varphi})\dif x \colon\;\bm{\varphi}\in\hold_{c}^{\infty}(\Omega;\R_{\mathrm{sym}}^{n\times n}),\;|\bm{\varphi}|\leq 1\right\}.
\end{align}
Here, $\mathrm{div}(\bm{\varphi})$ denotes the row-wise divergence. We say that $u$ is \emph{of bounded deformation} provided $|\mathrm{E}\uu|(\Omega)<\infty$, and define the \emph{space of functions of bounded deformation} via
\begin{align}\label{eq:BDdef}
\bd(\Omega):=\{\uu\in\lebe^{1}(\Omega;\R^{n})\colon\;|\E\uu|(\Omega)<\infty\}.
\end{align}
The symmetric distributional gradient of a vector field $\uu\in\bd(\Omega)$ thus is a finite, $\R_{\mathrm{sym}}^{n\times n}$-valued Radon measure. The relation of $\bd$ and $\bv$ is clarified in the next remark:
\begin{remark}\label{rem:OrnsteinBV}
Ornstein's Non-Inequality \cite{Ornstein,Conti} asserts that there exists a sequence $(\uu_{j})\subset\hold_{c}^{\infty}(\Omega;\R^{n})$ such that $\sup_{j\in\mathbb{N}}\|\sg\uu_{j}\|_{\lebe^{1}(\Omega)}<\infty$, whereas $\limsup_{j\to\infty}\|\nabla\uu_{j}\|_{\lebe^{1}(\Omega)}=+\infty$. In particular, Korn's inequality fails for $p=1$. In consequence, $\bv(\Omega;\R^{n})\subsetneq\bd(\Omega)$. 
\end{remark}
Now let $\uu\in\bd(\Omega)$. The corresponding Lebesgue-Radon-Nikod\'{y}m decomposition of $\mathrm{E}\uu$ into its absolutely continuous and singular parts with respect to $\mathscr{L}^{n}$ reads as
\begin{align}\label{eq:symgraddecomp}
\begin{split}
\E\uu  = \E^{a}\uu + \E^{s}\uu & = \frac{\dif\E^{a}\uu}{\dif\mathscr{L}^{n}}\mathscr{L}^{n} + \frac{\dif \E^{s}\uu}{\dif |\E^{s}\uu|}|\E^{s}\uu| = \mathscr{E}\uu\,\mathscr{L}^{n}  +  \frac{\dif \E^{s}\uu}{\dif |\E^{s}\uu|}|\E^{s}\uu|
\end{split}
\end{align}
where $\mathscr{E}\uu$ is the symmetric part of the approximate gradient $\nabla\uu$ of $\uu$; see, e.g. \cite{ACD,GmRa2018}. In view of the Hibler deformation $\mathbb{T}\uu$ in $n=2$ dimensions, see \eqref{eq:hiblerwritedown}--\eqref{eq:Tdef}, this gives rise to the decomposition 
\begin{align}\label{eq:hiblerdecomp}
\begin{split}
\mathbb{T}\uu  = \mathbb{T}^{a}\uu + \mathbb{T}^{s}\uu & = \frac{\dif\mathbb{T}^{a}\uu}{\dif\mathscr{L}^{2}}\mathscr{L}^{2} + \frac{\dif \mathbb{T}^{s}\uu}{\dif |\mathbb{T}^{s}\uu|}|\mathbb{T}^{s}\uu| = \mathscr{T}\uu\,\mathscr{L}^{2}  +  \frac{\dif \mathbb{T}^{s}\uu}{\dif |\mathbb{T}^{s}\uu|}|\mathbb{T}^{s}\uu|, 
\end{split}
\end{align}
where $\mathscr{T}\uu = T[\mathscr{E}\uu]$. For future reference, we remark that endowing $\bd(\Omega)$ with the \emph{$\bd$-norm}
\begin{align}\label{eq:BDnormdef}
\|\uu\|_{\bd(\Omega)}:=\|\uu\|_{\lebe^{1}(\Omega)}+|\E\uu|(\Omega),\qquad\uu\in\bd(\Omega),
\end{align}
makes $\bd(\Omega)$ into a non-separable Banach space. The norm topology defined by \eqref{eq:BDnormdef} is too strong for most applications: For instance, smooth approximation fails with respect to \eqref{eq:BDnormdef}.  Hence, it is natural to pass to weaker notions of convergence as follows: Given $\uu,\uu_{1},\uu_{2},...\in\bd(\Omega)$, we say that $(\uu_{j})$ converges to $\uu$ in the 
\begin{itemize}
\item \emph{weak*-sense} to $\uu$ (and write $\uu_{j}\stackrel{*}{\rightharpoonup}\uu$ in $\bd(\Omega)$) if and only if $\uu_{j}\to \uu$ strongly in $\lebe^{1}(\Omega;\R^{n})$ and $\E \uu_{j}\stackrel{*}{\rightharpoonup} \E\uu$ in $\mathrm{RM}_{\mathrm{fin}}(\Omega;\R_{\mathrm{sym}}^{n\times n})\cong\hold_{0}(\Omega;\R_{\mathrm{sym}}^{n\times n})'$ as $j\to\infty$.
\item \emph{strict sense} or \emph{strictly} to $\uu$ (and write $\uu_{j}\stackrel{s}{\rightharpoonup}\uu$) if and only if $d_{s}(\uu_{j},\uu)\to 0$ as $j\to\infty$ with the \emph{strict metric} $d_{s}$ defined by
\begin{align*}
d_{s}(\uu,\vv):=\|\uu-\vv\|_{\lebe^{1}(\Omega)}+||\mathrm{E}\uu|(\Omega)-|\mathrm{E}\vv|(\Omega)|,\qquad \uu,\vv\in\bd(\Omega).
\end{align*}
\end{itemize}
We then have that norm convergence implies strict convergence and strict convergence implies weak*-convergence, but in general none of the reverse implications hold true. The space $\mathrm{LD}(\Omega)$ is the Sobolev analogue of $\bd(\Omega)$, meaning that 
\begin{align*}
\mathrm{LD}(\Omega)\coloneqq \{\uu\in\bd(\Omega)\colon\;\mathrm{E}\uu\ll\mathscr{L}^{n}\}, 
\end{align*}
and we define a variant with zero boundary values via 
\begin{align*}
\mathrm{LD}_{0}(\Omega) \coloneqq \overline{\hold_{c}^{\infty}(\Omega;\R^{n})}^{\|\cdot\|_{\bd(\Omega)}}. 
\end{align*}
The following lemma collects some useful properties of weak*- and strict convergences which will crucially enter the proofs of our main results. 
\begin{lemma}[Weak*-compactness and smooth approximation] \label{lem:auxBD}
Let $\Omega\subset\R^{n}$ be open and bounded with Lipschitz boundary $\partial\Omega$. Then the following hold:
\begin{enumerate}
\item\label{item:aux0} There exists a separable Banach space $(X,\|\cdot\|_{X})$ such that $X'\cong\bd(\Omega)$.
\item\label{item:aux0A} If $\uu,\uu_{1},\uu_{2},...\in\bd(\Omega)$ are such that $\uu_{j}\to\uu$ in $\lebe_{\locc}^{1}(\Omega;\R^{2})$, then 
\begin{align*}
|\mathbb{T}\uu|(\Omega)\leq \liminf_{j\to\infty}|\mathbb{T}\uu_{j}|(\Omega). 
\end{align*}
\item\label{item:aux1} \emph{Weak*-compactness:} If $(\uu_{j})\subset\bd(\Omega)$ satisfies $\sup_{j\in\mathbb{N}}\|\uu_{j}\|_{\bd(\Omega)}<\infty$, then there exists $\uu\in\bd(\Omega)$ and a subsequence $(\uu_{j(k)})\subset (\uu_{j})$ such that we have $\uu_{j(k)}\stackrel{*}{\rightharpoonup}\uu$ in $\bd(\Omega)$ as $k\to\infty$.
\item\label{item:aux2} \emph{Smooth approximation:} For any $\uu\in\bd(\Omega)$, there exists $(\uu_{j})\subset \bd(\Omega)\cap\hold^{\infty}(\Omega;\R^{n})$ such that $d_{s}(\uu,\uu_{j})\to 0$ as $j\to\infty$.
\end{enumerate}
\end{lemma}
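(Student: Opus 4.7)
\textit{Parts (a) and (b).} For (a), I would invoke the classical realisation of $\bd(\Omega)$ as a separable dual space (essentially due to Temam, cf.\ his monograph on plasticity). The construction identifies $\bd(\Omega)$ with the dual of the quotient $X=(\hold_{0}(\Omega;\R^{2})\oplus\hold_{0}(\Omega;\RR))/N$, where $N$ is the annihilator of $\bd(\Omega)$ under the natural pairing $\langle\uu,(\bm\phi,\bm\psi)\rangle=\int_{\Omega}\uu\cdot\bm\phi\,\dif x+\int_{\Omega}\bm\psi\cdot\dif\E\uu$. Separability of $X$ is inherited from that of $\hold_{0}$ on bounded Lipschitz domains. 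Part (b) is immediate from the dual representation
\begin{equation*}
|\mathbb{T}\uu|(\Omega)=\sup\Bigl\{\int_{\Omega}\uu\cdot\mathbb{T}^{*}\bm\phi\,\dif x\colon\bm\phi\in\hold_{c}^{\infty}(\Omega;\RR),\ |\bm\phi|\le 1\Bigr\},
\end{equation*}
since for each fixed $\bm\phi$ the integral is $\lebe^{1}_{\locc}$-continuous in $\uu$ (as $\mathbb{T}^{*}\bm\phi\in\hold_{c}^{\infty}$), and pointwise suprema of continuous functionals are lower semicontinuous.

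\textit{Part (c).} The plan is to combine (a) with Rellich-type compactness. The Sobolev--Strauss embedding gives that $\bd(\Omega)$ embeds continuously into $\lebe^{2}(\Omega;\R^{2})$ for bounded Lipschitz $\Omega\subset\R^{2}$; a standard Fr\'echet--Kolmogorov argument upgrades this to a compact embedding into $\lebe^{1}(\Omega;\R^{2})$. Extract a subsequence $\uu_{j(k)}\to\uu$ strongly in $\lebe^{1}$. Since $(\E\uu_{j(k)})$ is bounded in total variation and $\hold_{0}(\Omega;\RR)$ is separable, a further diagonal extraction via Banach--Alaoglu yields $\E\uu_{j(k)}\stackrel{*}{\rightharpoonup}\mu$ in $\mathrm{RM}_{\mathrm{fin}}(\Omega;\RR)$. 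Pairing against $\bm\psi\in\hold_{c}^{\infty}(\Omega;\RR)$ and integrating by parts identifies $\mu=\E\uu$ distributionally, so $\uu\in\bd(\Omega)$ and $\uu_{j(k)}\stackrel{*}{\rightharpoonup}\uu$ in the sense of the excerpt.

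\textit{Part (d).} I would use the Anzellotti--Giaquinta mollification scheme. Fix a countable open cover $(U_{k})_{k\ge 0}$ of $\Omega$ with $U_{0}\Subset\Omega$ and $(U_{k})_{k\ge 1}$ thin boundary neighbourhoods shrinking to $\partial\Omega$, together with a subordinate smooth partition of unity $(\theta_{k})$. For $\delta>0$, select mollification radii $\varepsilon_{k}>0$ small enough that each $\rho_{\varepsilon_{k}}\ast(\theta_{k}\uu)$ is supported in a fixed thickening of $\spt\theta_{k}$ and satisfies $\|\rho_{\varepsilon_{k}}\ast(\theta_{k}\uu)-\theta_{k}\uu\|_{\lebe^{1}}\le 2^{-k}\delta$. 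Then $\uu_{\delta}\coloneqq\sum_{k}\rho_{\varepsilon_{k}}\ast(\theta_{k}\uu)\in\hold^{\infty}(\Omega;\R^{2})$ satisfies $\|\uu_{\delta}-\uu\|_{\lebe^{1}}\le\delta$. Expanding $\E\uu_{\delta}$ via Leibniz yields convolved pieces of $\E\uu$ plus commutator remainders of the form $\rho_{\varepsilon_{k}}\ast(\nabla\theta_{k}\odot\uu)-\nabla\theta_{k}\odot\uu$; the usual Anzellotti--Giaquinta computation shows these commutators vanish in total variation as $\delta\to 0$, delivering the upper bound $\limsup_{\delta\to 0}|\E\uu_{\delta}|(\Omega)\le|\E\uu|(\Omega)$. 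Combined with the lower bound from part (b), this yields $d_{s}(\uu_{\delta},\uu)\to 0$.

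\textbf{Main obstacle.} I expect the delicate step to be part (d): strict (rather than merely weak-*) convergence requires total-variation control on the commutator terms $\sum_{k}\bigl(\rho_{\varepsilon_{k}}\ast(\nabla\theta_{k}\odot\uu)-\nabla\theta_{k}\odot\uu\bigr)$ uniformly up to $\partial\Omega$. The standard remedy is to arrange $(U_{k})$ so that each point of $\Omega$ lies in at most finitely many (in fact, two) $U_{k}$'s, ensuring the commutator sum collapses locally, and to choose the annuli thin enough that $|\E\uu|(U_{k}\cap U_{k+2})\to 0$, exploiting that $|\E\uu|$ is a finite Radon measure on $\Omega$.
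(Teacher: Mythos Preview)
Your proposal is correct and aligns with the paper's treatment. For part (a), the paper's Appendix (Proposition~\ref{prop:predual}) carries out precisely the quotient/annihilator construction you describe, phrased for general $\mathbb{C}$-elliptic operators $\mathbb{A}$: one takes $\mathcal{X}=\hold_{0}(\Omega;V\times W)$, defines $\mathcal{Y}$ as the closure of $\{(\mathbb{A}^{*}\bm{\psi}_{1},\bm{\psi}_{1}):\bm{\psi}_{1}\in\hold_{c}^{\infty}\}$, and identifies $\mathcal{Y}^{\bot}\cong\bv^{\mathbb{A}}(\Omega)$ via the abstract isomorphism $(\mathcal{X}/\mathcal{Y})'\cong\mathcal{Y}^{\bot}$; the only additional ingredient is that $\mathbb{C}$-ellipticity is needed to ensure the weak*-compactness step when showing $\mathcal{Y}^{\bot}\subset\mathscr{J}(\bv^{\mathbb{A}}(\Omega))$. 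For parts (b)--(d) the paper gives no argument at all but simply cites \cite[\S 2]{BDG}; your sketches (supremum representation for (b), Rellich plus Banach--Alaoglu for (c), Anzellotti--Giaquinta partition-of-unity mollification for (d)) are exactly the standard proofs one finds in that reference, and the obstacle you flag in (d) together with its resolution via locally finite annular covers is the usual one.
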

Property \ref{item:aux0} shall be established in the Appendix, Section \ref{sec:appendix}; properties \ref{item:aux0A}--\ref{item:aux2} are collected from \cite[\S 2]{BDG}.

In order to formulate a suitable version of Poincar\'{e}'s inequality, we first recall that on open and connected subsets $\Omega\subset\R^{n}$, the nullspace of $\mathrm{E}$ is given by the \emph{rigid deformations} 
\begin{align*}
\mathscr{R}:=\{\pi\colon x \mapsto Ax+b\colon\;A\in\R_{\mathrm{skew}}^{n\times n},\;b\in\R^{n}\}. 
\end{align*}
\begin{lemma}[Poincar\'{e}-Sobolev]\label{lem:poincaresobolev} Let $\Omega\subset\R^{n}$ be open and bounded with Lipschitz boundary $\partial\Omega$. Then the following hold: 
\begin{enumerate}
\item\label{item:PoincareSobolev1} $\bd(\Omega)\hookrightarrow \lebe^{\frac{n}{n-1}}(\Omega;\R^{n})$ and $\bd(\Omega)\hookrightarrow\hookrightarrow\lebe^{q}(\Omega;\R^{2})$ for all $1\leq q<\frac{n}{n-1}$. Here, '$\hookrightarrow\hookrightarrow$' denotes compactness of the embedding.  
\item Suppose moreover that $\Omega$ is connected. Then there exists a constant $c>0$ independent of $\uu$ and an element $\pi\in\mathscr{R}$ such that 
\begin{align}
\|\uu-\pi\|_{\lebe^{\frac{n}{n-1}}(\Omega)}\leq c\, |\E\uu|(\Omega). 
\end{align}
\end{enumerate}
\end{lemma}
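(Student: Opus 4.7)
My plan is to first establish the endpoint Sobolev embedding of part (a) by reducing to the smooth case via the strict approximation supplied by Lemma \ref{lem:auxBD}, then to upgrade it to a compact embedding by a Fr\'{e}chet--Kolmogorov-type argument, and finally to derive the Poincar\'e inequality in part (b) by a standard compactness-contradiction argument using the nullspace $\mathscr{R}$.

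First I would prove the continuous embedding $\bd(\Omega)\hookrightarrow\lebe^{n/(n-1)}(\Omega;\R^{n})$. The key analytic input is the endpoint Sobolev--Korn estimate of Strauss \cite{Strauss},
\begin{equation*}
\|\bm{\varphi}\|_{\lebe^{n/(n-1)}(\R^{n})}\leq c\,\|\varepsilon\bm{\varphi}\|_{\lebe^{1}(\R^{n})}\qquad\text{for all }\bm{\varphi}\in\hold_{c}^{\infty}(\R^{n};\R^{n}),
\end{equation*}
which, despite Ornstein's non-inequality (Remark \ref{rem:OrnsteinBV}), does hold via a Riesz-potential representation of $\bm{\varphi}$ that bypasses the full gradient. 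Given $\uu\in\bd(\Omega)$, the Lipschitz character of $\partial\Omega$ provides a bounded linear extension $\widetilde{\uu}\in\bd(\R^{n})$ supported in a neighbourhood of $\overline{\Omega}$ with $\|\widetilde{\uu}\|_{\bd(\R^{n})}\leq c\|\uu\|_{\bd(\Omega)}$. Mollifying in the standard way yields $\bm{\varphi}_{j}\in\hold_{c}^{\infty}(\R^{n};\R^{n})$ with $\bm{\varphi}_{j}\to\widetilde{\uu}$ in $\lebe^{1}$ and $\|\varepsilon\bm{\varphi}_{j}\|_{\lebe^{1}(\R^{n})}\to|\E\widetilde{\uu}|(\R^{n})$. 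Applying Strauss's inequality to the differences $\bm{\varphi}_{j}-\bm{\varphi}_{k}$ exhibits $(\bm{\varphi}_{j})$ as Cauchy in $\lebe^{n/(n-1)}(\R^{n};\R^{n})$; the limit coincides with $\widetilde{\uu}$, and passing to the limit in the inequality gives the desired bound. For the compact embedding into $\lebe^{q}(\Omega;\R^{n})$, $1\leq q<n/(n-1)$, I would combine the weak*-compactness from Lemma \ref{lem:auxBD} with a Fr\'{e}chet--Kolmogorov argument applied to the mollified extensions $\widetilde{\uu_{j}}$, where control over translates again comes from Strauss's inequality in $\lebe^{1}$. This yields strong $\lebe^{1}$-convergence along a subsequence; interpolation
\begin{equation*}
\|\uu_{j}-\uu\|_{\lebe^{q}(\Omega)}\leq\|\uu_{j}-\uu\|_{\lebe^{1}(\Omega)}^{\theta}\,\|\uu_{j}-\uu\|_{\lebe^{n/(n-1)}(\Omega)}^{1-\theta}
\end{equation*}
with $\theta\in(0,1]$ determined by $q$, combined with the already established continuous embedding, then promotes this to strong $\lebe^{q}$-convergence.

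For part (b) I would argue by contradiction. Assume the existence of $\uu_{j}\in\bd(\Omega)$ with $\inf_{\pi\in\mathscr{R}}\|\uu_{j}-\pi\|_{\lebe^{n/(n-1)}(\Omega)}=1$ and $|\E\uu_{j}|(\Omega)\to 0$. Since $\mathscr{R}$ is finite-dimensional, the infimum is attained at some $\pi_{j}\in\mathscr{R}$; replacing $\uu_{j}$ by $\uu_{j}-\pi_{j}$ preserves both the infimum and $\E\uu_{j}$, while ensuring $\|\uu_{j}\|_{\lebe^{n/(n-1)}(\Omega)}=1$. Hence $(\uu_{j})$ is bounded in $\bd(\Omega)$, and the compact embedding from (a) yields a subsequence with $\uu_{j}\to\uu$ in $\lebe^{1}(\Omega;\R^{n})$. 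Since $|\E(\uu_{j}-\uu_{k})|(\Omega)\leq|\E\uu_{j}|(\Omega)+|\E\uu_{k}|(\Omega)\to 0$, the continuous embedding applied to the differences promotes this to $\uu_{j}\to\uu$ in $\lebe^{n/(n-1)}(\Omega;\R^{n})$. Lower semicontinuity of the total deformation under $\lebe^{1}_{\locc}$ convergence, which is immediate from the dual definition \eqref{eq:totdef}, yields $\E\uu=0$, hence $\uu\in\mathscr{R}$ by connectedness of $\Omega$. But then $\inf_{\pi}\|\uu-\pi\|_{\lebe^{n/(n-1)}}=0$, while continuity of the quotient norm gives $\inf_{\pi}\|\uu-\pi\|_{\lebe^{n/(n-1)}}=\lim_{j}\inf_{\pi}\|\uu_{j}-\pi\|_{\lebe^{n/(n-1)}}=1$, a contradiction. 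Specialising to a given $\uu\in\bd(\Omega)$ and choosing $\pi$ to attain the (finite-dimensional) infimum produces the stated pointwise Poincar\'e inequality.

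The main obstacle is the endpoint Strauss inequality itself, which is the only step that genuinely sidesteps Ornstein's non-inequality; everything else is a routine adaptation of the corresponding $\bv$-arguments once this critical-exponent ingredient is in hand.
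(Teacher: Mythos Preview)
The paper does not actually prove this lemma; it is stated as a known result (implicitly attributed to Strauss \cite{Strauss} and the $\bd$/$\bv^{\mathbb{A}}$ literature such as \cite{GmRa}) and used freely thereafter. So there is nothing to compare your argument against, and I assess your sketch on its own merits.

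Your overall strategy is sound, but there is one genuine slip in part (a). You write that applying Strauss's inequality to the differences $\bm{\varphi}_{j}-\bm{\varphi}_{k}$ shows $(\bm{\varphi}_{j})$ is Cauchy in $\lebe^{n/(n-1)}$. This requires $\|\varepsilon(\bm{\varphi}_{j}-\bm{\varphi}_{k})\|_{\lebe^{1}(\R^{n})}\to 0$, i.e.\ that the mollifications $\rho_{\varepsilon_{j}}*\E\widetilde{\uu}$ form a Cauchy sequence in $\lebe^{1}$. But mollifications of a measure with nontrivial singular part are \emph{not} Cauchy in $\lebe^{1}$; you only have strict convergence $\|\varepsilon\bm{\varphi}_{j}\|_{\lebe^{1}}\to|\E\widetilde{\uu}|(\R^{n})$, which is strictly weaker. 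The fix is immediate: apply Strauss to each $\bm{\varphi}_{j}$ individually and pass to the limit by Fatou's lemma (using $\bm{\varphi}_{j}\to\widetilde{\uu}$ a.e.\ along a subsequence), which gives $\|\widetilde{\uu}\|_{\lebe^{n/(n-1)}}\leq c\,|\E\widetilde{\uu}|(\R^{n})$ directly.

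For the compact embedding you invoke Fr\'echet--Kolmogorov, which works but is heavier than necessary: the paper's own Lemma \ref{lem:auxBD}\ref{item:aux1} already gives weak*-sequential compactness in $\bd(\Omega)$, and by the very definition of weak*-convergence adopted here this \emph{includes} strong $\lebe^{1}$-convergence. The interpolation step you describe then finishes the job. Your argument for part (b) is correct as written; in particular, the Cauchy-in-$\bd$ claim for the normalised sequence is valid there because $|\E\uu_{j}|(\Omega)\to 0$ genuinely, unlike the mollification situation above.
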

The following trace theorem is due to Temam \& Strang \cite{TemamStrang}, see also \cite{BDG}. 
\begin{lemma}[Trace operator and extensions]\label{lem:traceoperator}
Let $\Omega\subset\R^{n}$ be open and bounded with Lipschitz boundary oriented by the outer unit normal $\nu_{\partial\Omega}\colon\partial\Omega\to\mathbb{S}^{1}$. Then the following hold: 
\begin{enumerate}
    \item\label{item:tracex1} There exists a surjective,  linear \emph{trace operator} $\mathrm{tr}_{\partial\Omega}\colon\bd(\Omega)\to\lebe^{1}(\partial\Omega;\R^{n})$ which is bounded with respect to the $\bd$-norm and continuous with respect to strict convergence on $\bd(\Omega)$. In particular, we have $\mathrm{tr}_{\partial\Omega}(\uu)=\uu|_{\partial\Omega}$ $\mathscr{H}^{n-1}$-a.e. on $\partial\Omega$ provided that $\uu\in\bd(\Omega)\cap\hold(\overline{\Omega};\R^{n})$. 
    \item\label{item:tracex2} Let $\widetilde{\Omega}\subset\R^{n}$ be open and bounded with Lipschitz boundary such that  $\Omega\Subset\widetilde{\Omega}$. If $\uu\in\bd(\Omega)$ and $\vv\in\bd(\widetilde{\Omega}\setminus\overline{\Omega})$, then the \emph{glued} function $\mathbf{w}\coloneqq \mathbbm{1}_{\Omega}\uu + \mathbbm{1}_{\widetilde{\Omega}\setminus\overline{\Omega}}\mathbf{v}$ belongs to $\bd(\widetilde{\Omega})$ together with 
    \begin{align*}
    \E\mathbf{w} = \E\uu\mres\Omega + \E\mathbf{v}\mres(\widetilde{\Omega}\setminus\overline{\Omega}) + (\mathrm{tr}_{\partial\Omega}(\vv-\uu)\odot\nu_{\partial\Omega})\mathscr{H}^{n-1}\mres\partial\Omega.
    \end{align*}
\end{enumerate}
\end{lemma}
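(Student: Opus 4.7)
For any test field $\bm{\varphi}\in\hold_{c}^{\infty}(\widetilde{\Omega};\R_{\mathrm{sym}}^{n\times n})$, I split
\[
-\int_{\widetilde{\Omega}}\mathbf{w}\cdot\di(\bm{\varphi})\,\dif x = -\int_{\Omega}\uu\cdot\di(\bm{\varphi})\,\dif x - \int_{\widetilde{\Omega}\setminus\overline{\Omega}}\vv\cdot\di(\bm{\varphi})\,\dif x
\]
and apply the integration-by-parts formula that is dual to part (a) on each piece. Since $\bm{\varphi}$ has compact support in $\widetilde{\Omega}$, the boundary contribution on $\partial\widetilde{\Omega}$ vanishes; on $\partial\Omega$ the outward unit normal of $\widetilde{\Omega}\setminus\overline{\Omega}$ is $-\nu_{\partial\Omega}$, so the two boundary terms combine into a single integral with factor $\mathrm{tr}_{\partial\Omega}(\vv-\uu)$. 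Using symmetry of $\bm{\varphi}$ to rewrite $a\cdot\bm{\varphi}b=\bm{\varphi}:(a\odot b)$ then identifies this contribution with $(\mathrm{tr}_{\partial\Omega}(\vv-\uu)\odot\nu_{\partial\Omega})\mathscr{H}^{n-1}\mres\partial\Omega$, yielding both $\mathbf{w}\in\bd(\widetilde{\Omega})$ and the claimed decomposition of $\E\mathbf{w}$.

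\textbf{For part (a), which is the classical Temam--Strang trace theorem, the plan splits into three steps.}
First, for $\uu\in\hold^{\infty}(\overline{\R_{+}^{n}};\R^{n})$ on the upper half-space I establish the a priori estimate
\[
\|\mathrm{tr}_{\{x_{n}=0\}}(\uu)\|_{\lebe^{1}(\R^{n-1})} \leq c\bigl(\|\uu\|_{\lebe^{1}(\R_{+}^{n})}+|\E\uu|(\R_{+}^{n})\bigr).
\]
For the normal component $u_{n}$ this is immediate from $\partial_{n}u_{n}=(\E\uu)_{nn}$ and a one-dimensional fundamental theorem of calculus. The tangential components $u_{i}$, $i<n$, are more delicate since $\partial_{n}u_{i}$ is not a component of $\E\uu$; I plan to test the Gauss--Green identity against $\bm{\varphi}(x',x_{n})=\psi(x')\chi(x_{n})(\eI{i}\odot\eI{n})$ with suitable $\chi$ and integrate by parts in the $x_{i}$-direction, so that the missing piece $\partial_{i}u_{n}$ lands on the tangential derivative of $\psi$ and is absorbed into $\|\uu\|_{\lebe^{1}}$. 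Second, a Lipschitz atlas with partition of unity transfers the half-space bound to a global estimate on $\partial\Omega$. Third, strict density of $\hold^{\infty}\cap\bd$ in $\bd$, see Lemma~\ref{lem:auxBD}\ref{item:aux2}, together with the uniform bound produces a unique bounded linear extension $\mathrm{tr}_{\partial\Omega}\colon\bd(\Omega)\to\lebe^{1}(\partial\Omega;\R^{n})$ that agrees with the pointwise restriction for $\uu\in\hold(\overline{\Omega};\R^{n})\cap\bd(\Omega)$; surjectivity is inherited from surjectivity of the trace on $\sobo^{1,1}(\Omega;\R^{n})\subset\bd(\Omega)$.

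\textbf{The main obstacle is the continuity with respect to strict convergence.} The a priori trace bound alone is not enough, since strict convergence $\uu_{j}\stackrel{s}{\to}\uu$ does not force $|\E(\uu_{j}-\uu)|(\Omega)\to 0$. I plan to localize the half-space estimate to a thin boundary tube $\Omega_{h}\coloneqq\{x\in\Omega\colon\dista(x,\partial\Omega)<h\}$, where, after flattening and introducing a normal cutoff, one obtains
\[
\|\mathrm{tr}_{\partial\Omega}(\uu_{j}-\uu)\|_{\lebe^{1}(\partial\Omega)}\leq \frac{c}{h}\|\uu_{j}-\uu\|_{\lebe^{1}(\Omega_{h})}+c\,|\E(\uu_{j}-\uu)|(\Omega_{h}).
\]
Strict convergence improves weak* convergence to convergence of the total variations without mass loss at the boundary, so that $|\E\uu_{j}|\stackrel{*}{\rightharpoonup}|\E\uu|$ as finite Radon measures on $\overline{\Omega}$. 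A diagonal argument then lets me first choose $h$ so that $|\E\uu|(\Omega_{h})$ is arbitrarily small, and afterwards send $j\to\infty$; the $\lebe^{1}$-term vanishes in the limit by strict-implies-$\lebe^{1}$ convergence. This is the delicate point where strict (rather than merely weak*) convergence is genuinely used, and it is the step I would devote the most care to.
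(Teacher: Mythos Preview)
The paper does not prove this lemma; it quotes it as the classical Temam--Strang trace theorem, citing \cite{TemamStrang} and \cite{BDG}. Your proposal is therefore not a comparison against the paper's argument but an attempted reconstruction of a result the paper imports wholesale. Your derivation of part~(b) from part~(a) via the Gauss--Green identity is correct and standard, and your plan for strict continuity of the trace---localising the a~priori estimate to a tube $\Omega_h$ and exploiting that strict convergence of $\E\uu_j$ forces $\limsup_j|\E\uu_j|(\Omega_h)\le|\E\uu|(\overline{\Omega_h})$, so no mass escapes to $\partial\Omega$---is the right mechanism and is essentially how the proof in the cited references proceeds.

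There is, however, a genuine gap in your half-space estimate for the tangential components. Testing against $\bm{\varphi}=\psi(x')\chi(x_n)(\eI{i}\odot\eI{n})$ and integrating by parts in $x_i$ does move $\partial_i u_n$ onto $\partial_i\psi$, but the resulting term $\int u_n\,(\partial_i\psi)\,\chi\,\dif x$ is \emph{not} bounded by $c\|\uu\|_{\lebe^{1}}$ uniformly over $|\psi|\le 1$: the tangential derivative $\partial_i\psi$ is completely uncontrolled, so this term cannot be ``absorbed into $\|\uu\|_{\lebe^{1}}$'' as you claim. The classical remedy is diagonal slicing: for $\xi=\eI{i}+\eI{n}$ one has $\partial_\xi(\uu\cdot\xi)=\xi^\top(\varepsilon\uu)\,\xi$, which depends only on the symmetric gradient, so the scalar function $u_i+u_n$ is $\bv$ along almost every line in the $\xi$-direction with variation controlled by $|\E\uu|$. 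This yields a trace for $u_i+u_n$ on $\{x_n=0\}$, and subtracting the already-established trace of $u_n$ recovers that of $u_i$. Without this device (or an equivalent one) your Step~1 does not close, and the rest of the argument cannot get off the ground.
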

The trace operator from Lemma \ref{lem:traceoperator} is \emph{not} continuous with respect to weak*-convergence on $\bd(\Omega)$. Indeed, the $\hold_{c}^{\infty}(\Omega;\R^{n})$-maps are dense in $\bd(\Omega)$ with respect to weak*-convergence. For future reference, we moreover single out the following remark.
\begin{remark}[On the right-inverse of the trace operator]\label{rem:peetre}
In the situation of Lemma \ref{lem:traceoperator}, there exists a bounded operator $\mathrm{ext}\colon\lebe^{1}(\partial\Omega;\R^{n})\to\sobo^{1,1}(\Omega;\R^{n})$ such that $\mathrm{tr}_{\partial\Omega}(\mathrm{ext}(\mathbf{v}))=\mathbf{v}$ for all $\mathbf{v}\in\lebe^{1}(\partial\Omega;\R^{n})$. In particular, $\|\mathrm{ext}(\mathbf{v})\|_{\sobo^{1,1}(\Omega)}\leq c\|\mathbf{v}\|_{\lebe^{1}(\partial\Omega)}$ holds for all $\mathbf{v}\in\lebe^{1}(\partial\Omega;\R^{n})$ with a constant $c>0$ independent of $\vv$. However, in general, it is not possible to choose $\mathrm{ext}$ to be bounded and linear; see Peetre \cite{Peetre}. 
\end{remark}
The bulk approximation of boundary terms in Section \ref{sec:relaxthebulk} moreover requires a Poincar\'{e}-type inequality as established  recently by the second named author, S\"{u}li and Tscherpel \cite{GmeSulTsc}: 
\begin{lemma}[of Poincar\'{e}-type]\label{lem:GSTLemma}
Let $U\subset\R^{n}$ be open and bounded with Lipschitz boundary $\partial U$. Moreover, let $\Gamma\subset\partial U$ be $\mathscr{H}^{n-1}$-measurable such that $\mathscr{H}^{n-1}(\partial U_{k}\cap\Gamma)>0$ holds for every connected component $U_{k}$ of $U$. Then there exists a constant $c>0$ such that 
\begin{align}\label{eq:tabeabound}
\|\uu\|_{\lebe^{1}(U)}\leq c\Big(\|\mathrm{tr}_{\partial U}(\uu)\|_{\lebe^{1}(\Gamma)}+|\mathrm{E}\uu|(U)\Big)\qquad\text{for all}\;\uu\in\bd(U). 
\end{align}
In particular, the constant $c>0$ is monotone in the ratio $\mathscr{H}^{n-1}(\partial U)/\mathscr{H}^{n-1}(\Gamma)$ and is monotone in the Lipschitz character of $\partial U$.
\end{lemma}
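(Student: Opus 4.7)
The plan is to combine the Poincar\'{e}--Sobolev inequality for $\bd$ modulo rigid deformations (Lemma~\ref{lem:poincaresobolev}) with a finite-dimensional norm comparison on $\mathscr{R}$ that reads off the $\Gamma$-trace. After treating each connected component separately, I may assume $U$ connected. Lemma~\ref{lem:poincaresobolev}\ref{item:PoincareSobolev1} then furnishes $\pi\in\mathscr{R}$ with $\|\uu-\pi\|_{\lebe^{n/(n-1)}(U)}\leq c_{1}|\E\uu|(U)$, whence also $\|\uu-\pi\|_{\lebe^{1}(U)}\leq c_{1}'|\E\uu|(U)$ by boundedness of $U$. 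Since the trace operator of Lemma~\ref{lem:traceoperator}\ref{item:tracex1} is $\bd$-bounded and $\E\pi=0$, this gives
\[
\|\mathrm{tr}_{\partial U}(\uu-\pi)\|_{\lebe^{1}(\partial U)} \leq c_{2}\bigl(\|\uu-\pi\|_{\lebe^{1}(U)}+|\E\uu|(U)\bigr) \leq c_{3}|\E\uu|(U),
\]
and the triangle inequality on $\Gamma$ yields
\[
\|\mathrm{tr}_{\partial U}(\pi)\|_{\lebe^{1}(\Gamma)} \leq \|\mathrm{tr}_{\partial U}(\uu)\|_{\lebe^{1}(\Gamma)}+c_{3}|\E\uu|(U).
\]

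Next I bound $\|\pi\|_{\lebe^{1}(U)}$ by $\|\mathrm{tr}_{\partial U}(\pi)\|_{\lebe^{1}(\Gamma)}$. The space $\mathscr{R}$ is finite-dimensional and, under the measure hypothesis, $\pi\mapsto\|\mathrm{tr}_{\partial U}(\pi)\|_{\lebe^{1}(\Gamma)}$ is in fact a norm on $\mathscr{R}$: a nontrivial $\pi(x)=Ax+b$ with skew $A$ has a zero set that is either empty or affine of dimension at most $n-2$ (as $A$ has even rank, hence rank at least $2$ when nonzero), therefore of $\mathscr{H}^{n-1}$-measure zero, so $\pi|_{\Gamma}=0$ $\mathscr{H}^{n-1}$-a.e.\ forces $\pi\equiv 0$. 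Equivalence of norms on $\mathscr{R}$ then provides some $c_{4}>0$ with $\|\pi\|_{\lebe^{1}(U)}\leq c_{4}\|\mathrm{tr}_{\partial U}(\pi)\|_{\lebe^{1}(\Gamma)}$. Combining this with $\|\uu\|_{\lebe^{1}(U)}\leq\|\uu-\pi\|_{\lebe^{1}(U)}+\|\pi\|_{\lebe^{1}(U)}$ closes \eqref{eq:tabeabound}.

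The main obstacle will be quantitative: abstract norm equivalence on $\mathscr{R}$ gives no handle on $c_{4}$, whereas the claimed monotonicity in $\mathscr{H}^{n-1}(\partial U)/\mathscr{H}^{n-1}(\Gamma)$ and in the Lipschitz character of $\partial U$ demands tracking this dependence explicitly. My approach here is to reduce $\partial U$ to finitely many bi-Lipschitz flattening charts whose number is controlled by the Lipschitz character, and on each chart to construct explicit test functionals on $\mathscr{R}$, realised by integration against densities supported on a controlled positive fraction of $\Gamma$, whose joint nondegeneracy on $\mathscr{R}$ persists as $\mathscr{H}^{n-1}(\Gamma)$ shrinks. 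A compactness-and-contradiction argument based on Lemma~\ref{lem:auxBD}\ref{item:aux1} (extracting a weak-$^{*}$ limit and using that $|\E\uu_j|(U)\to 0$ together with $\uu_j\to\uu$ in $\lebe^{1}$ gives strict convergence, so that the trace continuity of Lemma~\ref{lem:traceoperator}\ref{item:tracex1} applies) would yield mere existence of $c$ but is blind to monotonicity, so the constructive route is essential.
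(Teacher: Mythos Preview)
The paper does not actually prove this lemma; it is stated with attribution to \cite[Prop.~3.9]{GmeSulTsc}, and the remark following the statement notes that the cited result is in fact stronger (with $\lebe^{n/(n-1)}$ on the left). So there is no in-paper proof to compare against.

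Your existence argument is correct and is the standard route: Poincar\'{e} modulo $\mathscr{R}$, trace boundedness, and norm equivalence on the finite-dimensional kernel. The key observation that a nonzero skew matrix has rank at least two, so that the zero set of a nontrivial rigid deformation is affine of dimension at most $n-2$ and hence $\mathscr{H}^{n-1}$-null, is exactly what makes $\pi\mapsto\|\pi\|_{\lebe^{1}(\Gamma)}$ a genuine norm on $\mathscr{R}$. One small point: reducing to connected components is fine because a bounded Lipschitz domain has only finitely many, but you should say so and take the maximum of the componentwise constants.

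Where your proposal is genuinely incomplete is the quantitative part, and you flag this yourself. The sentence about ``explicit test functionals on $\mathscr{R}$ realised by integration against densities supported on a controlled positive fraction of $\Gamma$'' is a plan, not an argument: you have not shown how the nondegeneracy constant of such a family of functionals depends only on $\mathscr{H}^{n-1}(\partial U)/\mathscr{H}^{n-1}(\Gamma)$ and the Lipschitz character, nor why it degrades monotonically. Abstract norm equivalence on $\mathscr{R}$ gives a constant depending on the \emph{specific} set $\Gamma$, not merely on its measure; two sets $\Gamma_1,\Gamma_2\subset\partial U$ of equal $\mathscr{H}^{n-1}$-measure can yield very different constants $c_4$ (think of $\Gamma$ concentrated near the zero set of some fixed $\pi\in\mathscr{R}$). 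So the monotonicity claim, as stated, cannot follow from your outline without substantial additional work---this is precisely what the cited reference supplies. If you only need the qualitative inequality \eqref{eq:tabeabound} (which is all the paper actually uses downstream, together with a separate scaling argument in the proof of Theorem~\ref{thm:bdryapprox}), your argument is complete; if you are also claiming the monotonicity addendum, it is not.
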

Note that \cite[Prop. 3.9]{GmeSulTsc} establishes inequality \eqref{eq:tabeabound} even with the stronger $\lebe^{\frac{n}{n-1}}$-norm of $\uu$ on its left-hand side. For us, however, \eqref{eq:tabeabound} is sufficient. Moreover, we remark that it is important indeed to require $\mathscr{H}^{n-1}(\partial U_{k}\cap\Gamma)>0$ for every connected component $U_{k}$ of $U$: If, e.g., $U_{1}$ is a connected component with $\mathscr{H}^{n-1}(\partial U_{1}\cap\Gamma)=0$, then the inequality \eqref{eq:tabeabound} is violated for any element of the form $\uu=\mathbbm{1}_{U_{1}}\pi$, where $\pi\in\mathscr{R}\setminus\{0\}$ is arbitrary. \\

In the following, it is convenient to adopt a slightly broader viewpoint \`{a} la \cite{BDG} and to consider integral formulas that apply to the Hibler operator $\mathbb{T}$ as a special case. To this end, let $V,W$ be two finite dimensional inner product spaces and let $\mathbb{A}$ be an $W$-valued differential operator of the form 
\begin{align}\label{eq:diffopform}
\mathbb{A}=\sum_{k=1}^{n}\mathbb{A}_{k}\partial_{k}\qquad\text{acting on}\;u\colon\R^{n}\to V, 
\end{align}
where $\mathbb{A}_{k}\colon V\to W$ is linear for each $k\in\{1,...,n\}$; typically, $V=\R^{n}$ and $W=\R_{\mathrm{sym}}^{n\times n}$. We define its formal $\lebe^{2}$-adjoint by 
\begin{align}\label{eq:formaladjoint}
\A^{*} \coloneqq \sum_{k=1}^{n}\A_{k}^{\top}\partial_{k}\qquad\text{acting on $u\colon W\cong W'\to V'\cong V$.}
\end{align}
For $\mathbf{a}\in V$ and $\mathbf{b}=(b_{1},...,b_{n})\in\R^{n}$, its Fourier symbol then gives rise to the generalized tensor product 
\begin{align}\label{eq:pairingA}
\mathbf{a}\otimes_{\mathbb{A}}\mathbf{b} \coloneqq \mathbb{A}[\mathbf{b}]\mathbf{a} := \sum_{k=1}^{n}b_{k}\mathbb{A}_{k}\mathbf{a}.
\end{align}
In the special case where $\mathbb{A}$ is the gradient or the symmetric gradient, we have $\otimes_{\nabla}=\otimes$ and $\otimes_{\varepsilon}=\odot$, see \eqref{eq:symtensor}. For the Hibler operator $\mathbb{A}=\mathbb{T}$, where $V=\R^{2}$ and $W=\RR$, the associated generalized tensor product is consequently denoted by $\otimes_{\mathbb{T}}$. Following \cite{BDG}, we put 
\begin{align*}
\bv^{\mathbb{A}}(\Omega) \coloneqq \{\uu\in\lebe^{1}(\Omega;V)\colon\;\mathbb{A}\uu\in\mathrm{RM}_{\mathrm{fin}}(\Omega;W)\}. 
\end{align*}
Based on \eqref{eq:pairingA}, we have the following weak product rule for $\uu\in\bv^{\mathbb{A}}(\Omega)$ and $\eta\in\sobo^{1,\infty}(\Omega)$: 
\begin{align}\label{eq:prodruleweak}
\mathbb{A}(\eta\uu)=\eta\,\mathbb{A}\uu + \uu\otimes_{\mathbb{A}}\nabla\eta\mathscr{L}^{n}\qquad\text{as vectorial measures on $\Omega$}. 
\end{align}
Now assume that $\mathbb{A}$ is $\mathbb{C}$-elliptic, meaning that the complexified symbol $\mathbb{A}[\xi]\colon V+\mathrm{i}V\to W + \mathrm{i}W$ is injective for all $\xi\in\mathbb{C}^{n}\setminus\{0\}$. Moreover, we suppose that $\Omega\subset\R^{n}$ is open and bounded with Lipschitz boundary oriented by the outer unit normal $\nu_{\partial\Omega}\colon\partial\Omega\to\mathbb{S}^{n-1}$. By the generalization \cite[Theorem 1.1]{BDG} of Lemma \ref{lem:traceoperator}, there exists a surjective trace operator $\mathrm{tr}_{\partial\Omega}\colon\bv^{\mathbb{A}}(\Omega)\to\lebe^{1}(\partial\Omega;V)$ with the accordingly modified continuity properties. Based on the pairing \eqref{eq:pairingA}, one then has the \emph{Gauss-Green formula}
\begin{align}\label{eq:gaussgreen}
\int_{\partial\Omega}\varphi (\mathrm{tr}_{\partial\Omega}(\uu)\otimes_{\mathbb{A}}\nu_{\partial\Omega})\dif\mathscr{H}^{n-1} = \int_{\Omega}\varphi\dif\mathbb{A}\uu + \int_{\Omega}\uu\otimes_{\mathbb{A}}\nabla\varphi\dif x
\end{align}
whenever $\varphi\in\hold^{0,1}(\Omega)$ and $\uu\in\bv^{\mathbb{A}}(\Omega)$, see \cite[\S 4]{BDG}. The equation \eqref{eq:gaussgreen} is understood in $W$. For an equation in $\R$ and hereafter $\bm{\varphi}\in\hold^{0,1}(\Omega;W)$, the scalar variant is given by 
\begin{align}\label{eq:IBPscalar}
\int_{\partial\Omega}\bm{\varphi}\cdot(\mathrm{tr}_{\partial\Omega}(\uu)\otimes_{\mathbb{A}}\nu_{\partial\Omega})\dif\mathscr{H}^{n-1} = \int_{\Omega}\bm{\varphi}\cdot \dif\mathbb{A}\uu + \int_{\Omega}\uu\cdot\mathbb{A}^{*}\bm{\varphi}\dif x.
\end{align}
\begin{remark}\label{rem:GGapplication}
For the main purposes of this paper, one could directly work with the $\mathbb{C}$-elliptic Hibler operator $\mathbb{A}=\mathbb{T}$. However, due to \eqref{eq:pointwisecompa}, the associated $\bv^{\mathbb{A}}$-space is just $\bd(\Omega)$; in this sense, \eqref{eq:gaussgreen} primarily serves to yield the requisite Gauss-Green formula which shall enter Section \ref{sec:weakvar} below. In particular, \eqref{eq:gaussgreen} holds with $\mathbb{A}=\mathbb{T}$ and $\uu\in\bd(\Omega)$. We remark that, due to \cite{BDG,GmRa}, all crucial properties available for $\bd$ carry over to $\bv^{\mathbb{A}}$ with the natural modifications.  
\end{remark}

\subsection{Integrands of linear growth}
We now collect some background definitions and facts on convex functionals of linear growth. To this end, let $F\colon\rsym^{2\times 2}\to\R$ be a convex integrand which satisfies 
\begin{align}\label{eq:lingrowthprelims}
c_{1}|z|-c_{2}\leq F(z)\leq c_{3}(1+|z|)\qquad\text{for all}\;z\in\rsym^{2\times 2}, 
\end{align}
for certain constants $c_{1},c_{2},c_{3}>0$. In the context considered here, $F$ is not required to be differentiable. For $z_{0}\in\mathbb{R}_{\mathrm{sym}}^{2\times 2}$, we call $\bm{\xi}\in\mathbb{R}_{\mathrm{sym}}^{2\times 2}$ a \emph{subgradient} at $z_{0}$ provided that 
\begin{align}\label{eq:subgradientdef}
F(z)\geq F(z_{0})+\bm{\xi}\cdot(z-z_{0})\qquad\text{for all}\;z\in\R_{\mathrm{sym}}^{2\times 2}. 
\end{align}
The \emph{subdifferential} $\partial F(z_{0})$ is the collection of all subgradients at $z_{0}$. If $F$ is differentiable at $z_{0}$, we have that $\partial F(z_{0})=\{F'(z_{0})\}$.

Subject to the convexity and linear growth assumption \eqref{eq:lingrowthprelims}, the associated recession function 
\begin{align}\label{eq:recessionfunction}
F^{\infty}(z):=\lim_{t\searrow 0}tF\Big(\frac{z}{t}\Big)
\end{align}
exists for every $z\in\rsym^{2\times 2}$. The function $F^{\infty}$ is designed to capture the behaviour of the integrand for large values of the argument. In particular, in the relaxation of convex linear growth functionals to $\bd(\Omega)$, it serves to describe the singular part $\mathrm{E}^{s}\uu$. For future reference, we record the following properties of $F$ and $F^{\infty}$:
\begin{lemma}\label{lem:subdif}
Let $F\colon\rsym^{2\times 2}\to\R$ be convex and satisfy \eqref{eq:lingrowthprelims}. Then the following hold: 
\begin{enumerate}
\item\label{item:RobertDenk} $F$ and $F^{\infty}$ are  Lipschitz on $\rsym^{2\times 2}$.
\item\label{item:RobertDenk1} The recession dominates the subgradients in the sense that 
\begin{align*}
\bm{\xi}\cdot \bm{\eta} \leq F^{\infty}(\bm{\eta})\qquad\text{for all}\;z,\bm{\eta}\in\rsym^{2\times 2}\;\text{and all}\;\bm{\xi}\in\partial F(z).
\end{align*}
\end{enumerate}
\end{lemma}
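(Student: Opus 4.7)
For part (a), the strategy is to show that every subgradient of $F$ at every point has norm at most $c_{3}$, from which Lipschitz continuity follows by the standard two-sided subgradient argument. Since $F\colon\rsym^{2\times 2}\to\R$ is convex and finite everywhere, it is subdifferentiable at each point, so $\partial F(z)\neq\emptyset$ for every $z$. Fix $z\in\rsym^{2\times 2}$ and $\bm{\xi}\in\partial F(z)$; we may assume $\bm{\xi}\neq 0$. Setting $e\coloneqq\bm{\xi}/|\bm{\xi}|$, the defining inequality \eqref{eq:subgradientdef} yields $F(z+te)\geq F(z)+t|\bm{\xi}|$ for every $t>0$. Combining the upper bound $F(z+te)\leq c_{3}(1+|z|+t)$ with $F(z)\geq c_{1}|z|-c_{2}$ from \eqref{eq:lingrowthprelims}, dividing by $t$, and sending $t\to\infty$ gives $|\bm{\xi}|\leq c_{3}$. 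For arbitrary $z_{1},z_{2}$, applying \eqref{eq:subgradientdef} with subgradients at $z_{1}$ and $z_{2}$ then yields $|F(z_{1})-F(z_{2})|\leq c_{3}|z_{1}-z_{2}|$.

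The Lipschitz property of $F^{\infty}$ follows as a corollary of that of $F$: from the definition \eqref{eq:recessionfunction} and the just-established $c_{3}$-Lipschitz bound,
\[
\bigl|tF(z/t)-tF(w/t)\bigr|\leq t\cdot c_{3}\,|z/t-w/t|=c_{3}\,|z-w|
\]
for all $t>0$, and passing to the limit $t\searrow 0$ yields $|F^{\infty}(z)-F^{\infty}(w)|\leq c_{3}|z-w|$.

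For part (b), fix $z,\bm{\eta}\in\rsym^{2\times 2}$ and $\bm{\xi}\in\partial F(z)$. Applying \eqref{eq:subgradientdef} with test point $z+t\bm{\eta}$ for $t>0$ gives
\[
\bm{\xi}\cdot\bm{\eta}\leq\frac{F(z+t\bm{\eta})-F(z)}{t}.
\]
Now $F(z)/t\to 0$ trivially, and the Lipschitz bound from part (a) yields $|F(z+t\bm{\eta})-F(t\bm{\eta})|\leq c_{3}|z|$, so that $\bigl(F(z+t\bm{\eta})-F(t\bm{\eta})\bigr)/t\to 0$ as $t\to\infty$. The substitution $s=1/t$ in \eqref{eq:recessionfunction} identifies $F(t\bm{\eta})/t\to F^{\infty}(\bm{\eta})$ as $t\to\infty$. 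Combining these three limits in the displayed inequality yields $\bm{\xi}\cdot\bm{\eta}\leq F^{\infty}(\bm{\eta})$, as claimed.

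No serious obstacle is anticipated: both assertions are essentially classical facts of convex analysis for integrands of linear growth. The only conceptual point worth emphasising is that the \emph{global} (as opposed to merely local) Lipschitz property of $F$ crucially depends on the upper linear growth bound in \eqref{eq:lingrowthprelims}; without it, subgradients could blow up along directions where $F$ grows superlinearly, and the argument sketched above would only yield local Lipschitz continuity. The same mechanism explains why the dominance statement in (b) requires the full strength of \eqref{eq:recessionfunction}: the subgradient at a fixed $z$ probes the asymptotic slope of $F$, which is exactly captured by $F^{\infty}$.
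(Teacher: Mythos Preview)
Your proof is correct and follows essentially the same route as the paper: for (b) you reproduce exactly the paper's three-term splitting $\frac{F(z+t\bm{\eta})-F(z)}{t}=\frac{F(z+t\bm{\eta})-F(t\bm{\eta})}{t}+\frac{F(t\bm{\eta})}{t}-\frac{F(z)}{t}$ and pass to the limit using the global Lipschitz bound, while for (a) the paper simply asserts that the claim ``directly follows from the linear growth and convexity of $F$'' without supplying the subgradient-bound argument you spell out. Your treatment is thus a fleshed-out version of the paper's own proof rather than a genuinely different approach.
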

\begin{proof} Assertion \ref{item:RobertDenk} directly follows from the linear growth and convexity of $F$. For \ref{item:RobertDenk1}, we conclude by \eqref{eq:lingrowthprelims} that $t\bm{\xi}\cdot\bm{\eta}\leq F(z+t\bm{\eta})-F(z)$ for $t>0$. We split
\begin{align*}
\frac{F(z+t\bm{\eta})-F(z)}{t} = \frac{F(z+t\bm{\eta})-F(t\bm{\eta})}{t} + \frac{F(t\bm{\eta})}{t} - \frac{F(z)}{t} \eqqcolon \mathrm{I}(t) + \mathrm{II}(t) + \mathrm{III}(t). 
\end{align*}
By convexity and the linear growth assumption on $F$, $F$ is globally Lipschitz. Hence,  $\mathrm{I}(t)\to 0$ as $t\to\infty$. Since $\mathrm{III}(t)\to 0$ as $t\to \infty$, the claim follows by the very definition of $F^{\infty}$. 
\end{proof}
We conclude this subsection by gathering some elementary facts on convex functionals of measures; see \cite{GoffmanSerrin,Reshetnyak} for more detail. To this end, it is convenient to adopt a slightly more general viewpoint and thereby let $\Omega\subset\R^{n}$ be open and bounded. For a finite dimensional inner product space $W$ and a convex  integrand $F\colon W\to\R$ satisfying  \eqref{eq:lingrowthprelims} with the natural modifications, we define the \emph{linear perspective integrand}
\begin{align}\label{eq:linearperspective}
F^{\#}(t,z)\coloneqq \begin{cases}
tF(\frac{z}{t})&\;\text{if}\;t>0,\;z\in W,\\ 
F^{\infty}(z) &\;\text{if}\;t=0,\;z\in W. 
\end{cases}
\end{align}
Then $F^{\#}\colon [0,\infty)\times W\to\R$ is continuous, of linear growth and positively $1$-homogeneous. For $\bm{\mu}\in\mathrm{RM}_{\mathrm{fin}}(\Omega;W)$ with a Lebesgue-Radon-Nikod\'{y}m decomposition $\bm{\mu}=\bm{\mu}^{a}+\bm{\mu}^{s}$ with respect to $\mathscr{L}^{n}$, we define a new measure $F(\bm{\mu})$ on $\mathscr{B}(\Omega)$ by 
\begin{align*}
F(\bm{\mu})(A)  \coloneqq \int_{A}\dif F(\bm{\mu}) & \coloneqq \int_{A}F^{\#}\Big(\frac{\dif\,(\mathscr{L}^{n},\bm{\mu})}{\dif|(\mathscr{L}^{n},\bm{\mu})|}\Big)\dif\,|(\mathscr{L}^{n},\bm{\mu})| \\ & = \int_{A}F\Big(\frac{\dif\bm{\mu}}{\dif\mathscr{L}^{n}}\Big)\dif\mathscr{L}^{n} + \int_{A}F^{\infty}\Big(\frac{\dif\bm{\mu}}{\dif|\bm{\mu}^{s}|}\Big)\dif|\bm{\mu}^{s}|
\end{align*}
for $A\in\mathscr{B}(\Omega)$. The central lower semicontinuity result for functionals of measures now is
\begin{lemma}[Reshetnyak's  lower semicontinuity theorem, {\cite{Reshetnyak}}]\label{lem:resh}
Let $\Omega\subset\R^{n}$ be open and bounded, and let $F\colon W\to \R$ be a convex integrand with \eqref{eq:lingrowthprelims}. If $\bm{\mu},\bm{\mu}_{1},\bm{\mu}_{2},...\in\mathrm{RM}_{\mathrm{fin}}(\Omega;W)$ are such that $\bm{\mu}_{j}\stackrel{*}{\rightharpoonup}\bm{\mu}$ in the weak*-sense in $\mathrm{RM}_{\mathrm{fin}}(\Omega;W)\cong\hold_{0}(\Omega;W)'$, then 
\begin{align}\label{eq:LSCResh}
F(\bm{\mu})(\Omega) \leq \liminf_{j\to\infty} F(\bm{\mu}_{j})(\Omega). 
\end{align}
\end{lemma}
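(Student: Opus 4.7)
The plan is to derive \eqref{eq:LSCResh} from a dual representation of $F(\bm{\mu})(\Omega)$ as a supremum of weak*-continuous functionals. Put
\[
\mathcal{A} := \Bigl\{ (\psi,\varphi) \in \hold_{0}(\Omega) \times \hold_{0}(\Omega;W) : \psi(x) + \varphi(x)\cdot z \leq F(z)\;\text{for all}\;(x,z) \in \Omega\times W \Bigr\},
\]
and, for $\bm{\nu}\in\mathrm{RM}_{\mathrm{fin}}(\Omega;W)$, $\mathcal{L}_{\psi,\varphi}(\bm{\nu}) := \int_{\Omega}\psi\dif\mathscr{L}^{n} + \int_{\Omega}\varphi\cdot\dif\bm{\nu}$. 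The first goal would be the duality formula
\begin{align*}
F(\bm{\nu})(\Omega) \;=\; \sup_{(\psi,\varphi)\in\mathcal{A}}\mathcal{L}_{\psi,\varphi}(\bm{\nu}) \qquad \text{for every }\bm{\nu}\in\mathrm{RM}_{\mathrm{fin}}(\Omega;W).
\end{align*}
Granted this, the theorem is immediate: since the $\mathscr{L}^{n}$-term in $\mathcal{L}_{\psi,\varphi}$ does not depend on $\bm{\nu}$, each map $\bm{\nu} \mapsto \mathcal{L}_{\psi,\varphi}(\bm{\nu})$ is weak*-continuous on $\mathrm{RM}_{\mathrm{fin}}(\Omega;W) \cong \hold_{0}(\Omega;W)'$. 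Chaining "$\leq$" of the duality applied to $\bm{\mu}$ with "$\geq$" applied to every $\bm{\mu}_{j}$ then gives
\[
F(\bm{\mu})(\Omega) \;=\; \sup_{(\psi,\varphi)\in\mathcal{A}} \lim_{j\to\infty}\mathcal{L}_{\psi,\varphi}(\bm{\mu}_{j}) \;\leq\; \liminf_{j\to\infty} F(\bm{\mu}_{j})(\Omega).
\]

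The bound "$\geq$" would be addressed first and is routine. For $(\psi,\varphi) \in \mathcal{A}$, substituting $tz$ for $z$, dividing by $t > 0$ and letting $t \nearrow \infty$ produces the pointwise recession bound $\varphi(x) \cdot z \leq F^{\infty}(z)$ for all $(x,z) \in \Omega \times W$, in analogy with Lemma \ref{lem:subdif}\ref{item:RobertDenk1}. Applying the admissibility inequality pointwise at $z = \tfrac{\dif\bm{\nu}^{a}}{\dif\mathscr{L}^{n}}(x)$ and integrating against $\mathscr{L}^{n}$, applying the recession bound at $z = \tfrac{\dif\bm{\nu}^{s}}{\dif|\bm{\nu}^{s}|}(x)$ and integrating against $|\bm{\nu}^{s}|$, and summing the two yields $\mathcal{L}_{\psi,\varphi}(\bm{\nu}) \leq F(\bm{\nu})(\Omega)$.

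The heart of the matter is the reverse bound "$\leq$", which I expect to be the main obstacle. The route I would take is this: by convexity and Lipschitz continuity of $F$ (Lemma \ref{lem:subdif}\ref{item:RobertDenk}) one realises $F$ as the pointwise supremum of a countable family of affine minorants $z \mapsto \alpha_{k} + \beta_{k}\cdot z$ arising from subgradients at a countable dense subset of $W$; each such pair $(\alpha_{k},\beta_{k})$ gives a constant admissible test pair $(\psi, \varphi) \equiv (\alpha_{k},\beta_{k}) \in \mathcal{A}$. Given $\varepsilon > 0$, one partitions $\Omega$, outside an $(\mathscr{L}^{n} + |\bm{\mu}|)$-null set of mass at most $\varepsilon$, into finitely many Borel pieces on which the relevant densities are $\varepsilon$-close to fixed values (Lusin), selects on each piece an index $k$ whose affine minorant $\varepsilon$-saturates $F$ at the pertinent density value, and glues the selections by a continuous partition of unity subordinate to a slight open thickening of the pieces. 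The conceptual difficulty is that $(\psi,\varphi)$ has to be continuous on $\Omega$, while the absolutely continuous and singular parts of $\bm{\mu}$ live on mutually singular reference measures; here the linear perspective integrand $F^{\#}$ from \eqref{eq:linearperspective} is the unifying device, as its joint continuity and positive $1$-homogeneity allow $F(\bm{\mu})(\Omega)$ to be recast as a single integral of $F^{\#}$ against the nonnegative control measure $|(\mathscr{L}^{n},\bm{\mu})|$ of the density $\tfrac{\dif(\mathscr{L}^{n},\bm{\mu})}{\dif|(\mathscr{L}^{n},\bm{\mu})|}$, on which a single Lusin-type approximation simultaneously handles both pieces.
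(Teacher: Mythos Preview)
The paper does not supply its own proof of this lemma: it is stated with a citation to \cite{Reshetnyak} and used as a black box. There is therefore nothing in the paper to compare your argument against beyond the reference itself.

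Your approach---realising $F(\bm{\mu})(\Omega)$ as a supremum of affine weak*-continuous functionals via the dual representation---is the standard route (essentially the Goffman--Serrin argument the paper also cites alongside \cite{Reshetnyak}), and the overall strategy is sound. One technical slip: you place $(\psi,\varphi)$ in $\hold_{0}(\Omega)\times\hold_{0}(\Omega;W)$ but then assert that the constant pairs $(\alpha_{k},\beta_{k})$ lie in $\mathcal{A}$; nonzero constants are not in $\hold_{0}(\Omega)$. This is easily repaired---only $\varphi$ needs to vanish at infinity for weak*-continuity in $\bm{\nu}$, while the $\psi$-term is constant in $\bm{\nu}$ and so $\psi\in\hold_{b}(\Omega)$ suffices; alternatively, one multiplies by cut-offs and exhausts $\Omega$ from within, using finiteness of the measures. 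Your outline of the harder ``$\leq$'' direction via Lusin against the control measure $|(\mathscr{L}^{n},\bm{\mu})|$ and the perspective integrand $F^{\#}$ is exactly the right unifying device; just be aware that the passage from a Borel partition with piecewise-constant selections to a genuinely continuous $(\psi,\varphi)$ satisfying the pointwise constraint for \emph{all} $z$ requires some care (convex combinations of admissible affine minorants remain admissible, which is what makes the partition-of-unity gluing work).
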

The following inequality of Jensen type might be clear to the experts; to keep the paper self-contained, a proof is offered in the Appendix, Section \ref{sec:jensen}. 
\begin{lemma}[of Jensen-type]\label{lem:jensen}
Let $\bm{\mu}\in\mathrm{RM}_{\mathrm{fin}}(\Omega;W)$ and denote, for $\varepsilon>0$, by $\rho_{\varepsilon}$ the $\varepsilon$-rescaled variant of a standard mollifier $\rho\in\hold_{c}^{\infty}(\ball_{1}(0);\R_{\geq 0})$. Then, for any convex function $F\colon W\to\R$ with \eqref{eq:lingrowthprelims} and any open set $U\subset\Omega$ with $\mathrm{dist}(U,\partial\Omega)>\varepsilon$, we have \emph{Jensen's inequality}
\begin{align}\label{eq:Jensen}
\int_{U}F(\rho_{\varepsilon}*\bm{\mu})\dif x \leq \int_{U_{\varepsilon}} F(\bm{\mu}), 
\end{align}
where $U_{\varepsilon}\coloneqq\{x\in\Omega\colon\;\dista(x,U)<\varepsilon\}$. 
\end{lemma}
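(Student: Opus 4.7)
The plan is to reduce the assertion to the classical scalar Jensen inequality by exploiting the joint convexity and positive $1$-homogeneity of the linear perspective integrand $F^{\#}$ from \eqref{eq:linearperspective}. Recall that, whenever $F$ is convex, the perspective map $(t,z)\mapsto tF(z/t)$ is jointly convex on $(0,\infty)\times W$, and the definition $F^{\#}(0,z)\coloneqq F^{\infty}(z)$ extends $F^{\#}$ to a convex, positively $1$-homogeneous function on $[0,\infty)\times W$; the linear growth \eqref{eq:lingrowthprelims} ensures that $F^{\#}$ is real-valued and Lipschitz on bounded subsets. The second ingredient is that, for every $x\in U$, one has $\rho_{\varepsilon}*\mathscr{L}^{n}(x)=\int\rho_{\varepsilon}(x-y)\dif y=1$, since $\mathrm{supp}(\rho_{\varepsilon}(x-\cdot))\subset\ball_{\varepsilon}(x)\subset\Omega$ by the assumption $\dista(U,\partial\Omega)>\varepsilon$.

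The core pointwise estimate is then obtained by lifting to the pair measure $(\mathscr{L}^{n},\bm{\mu})$: introduce its total variation $\sigma\coloneqq|(\mathscr{L}^{n},\bm{\mu})|\in\mathrm{RM}_{\mathrm{fin}}(\Omega)$ and a unit Radon--Nikod\'{y}m density $(a,b)\colon\Omega\to\R\times W$ so that $(\mathscr{L}^{n},\bm{\mu})=(a,b)\sigma$ with $|(a,b)|=1$ $\sigma$-a.e. Since $\mathscr{L}^{n}(A)\leq|(\mathscr{L}^{n},\bm{\mu})|(A)$ for every Borel $A\subset\Omega$, setting $C_{x}\coloneqq\rho_{\varepsilon}*\sigma(x)$ yields $C_{x}\geq\rho_{\varepsilon}*\mathscr{L}^{n}(x)=1>0$ for each $x\in U$, so that $\dif\nu_{x}\coloneqq C_{x}^{-1}\rho_{\varepsilon}(x-\cdot)\dif\sigma$ is a probability measure supported in $U_{\varepsilon}$. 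Combining $\rho_{\varepsilon}*\mathscr{L}^{n}(x)=1$ with positive $1$-homogeneity of $F^{\#}$ and then invoking Jensen's inequality for the convex $F^{\#}$ against $\nu_{x}$ now gives
\begin{align*}
F(\rho_{\varepsilon}*\bm{\mu}(x))
&=F^{\#}\Big(\int_{U_{\varepsilon}}\rho_{\varepsilon}(x-y)\dif(\mathscr{L}^{n},\bm{\mu})(y)\Big)\\
&=C_{x}\,F^{\#}\Big(\int_{U_{\varepsilon}}(a(y),b(y))\dif\nu_{x}(y)\Big)\\
&\leq\int_{U_{\varepsilon}}F^{\#}(a(y),b(y))\,\rho_{\varepsilon}(x-y)\dif\sigma(y).
\end{align*}

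Finally, one integrates this pointwise estimate over $x\in U$, applies Fubini (which is legitimate since $F^{\#}(a,b)\in\lebe^{1}(\sigma)$ thanks to the linear growth of $F$ and the finiteness of $\sigma$), and uses the bound $\int_{U}\rho_{\varepsilon}(x-y)\dif x\leq\int_{\R^{n}}\rho_{\varepsilon}(x-y)\dif x=1$ for every fixed $y\in U_{\varepsilon}$ to arrive at
\begin{align*}
\int_{U}F(\rho_{\varepsilon}*\bm{\mu})\dif x\leq\int_{U_{\varepsilon}}F^{\#}(a,b)\dif\sigma=\int_{U_{\varepsilon}}F(\bm{\mu}),
\end{align*}
which is precisely the inequality \eqref{eq:Jensen}; the last equality is the very definition of the measure $F(\bm{\mu})$ stated above Lemma \ref{lem:resh}. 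The only genuine obstacle is the conceptual one, namely that $F$ itself is neither positively $1$-homogeneous nor non-negative, so that a direct Jensen inequality against the probability measure $\rho_{\varepsilon}(x-\cdot)\dif x$ is unavailable. This is what forces one to lift the problem to the pair $(\mathscr{L}^{n},\bm{\mu})$ and to work with the perspective $F^{\#}$; once the uniform lower bound $C_{x}\geq 1$ on $U$ is in place, the standard Jensen argument goes through without further complication.
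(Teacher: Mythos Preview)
Your proof is correct and follows essentially the same route as the paper: lift to the perspective integrand $F^{\#}$, exploit its convexity and positive $1$-homogeneity to run the classical Jensen inequality against a mollifier-weighted probability measure, and conclude by Fubini. The only cosmetic difference is that you take $\sigma=|(\mathscr{L}^{n},\bm{\mu})|$ as the reference measure, whereas the paper uses $\nu=\mathscr{L}^{n}+|\bm{\mu}^{s}|$ and then splits into the sets where $\nu=\mathscr{L}^{n}$ and $\nu=|\bm{\mu}^{s}|$; your choice plugs directly into the definition of $F(\bm{\mu})$ and avoids that case distinction.
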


\subsection{The coarea formula}
We now record a version of Federer's coarea formula: 
\begin{lemma}[Coarea formula, {\cite[Chpt. 3.4.3, Thm. 2]{EvansGariepy}}]\label{lem:coarea}
Let $f\colon\R^{n}\to\R^{m}$ be Lipschitz, where $n\geq m$. Then, for each $\mathscr{L}^{n}$-integrable function $g\colon\R^{n}\to\R^{m}$, $g|_{f^{-1}(\{y\})}$ is $\mathscr{H}^{n-m}$-integrable for $\mathscr{L}^{m}$-a.e. $y$, and 
\begin{align*}
\int_{\R^{n}}g(x)\mathbf{J}_{f}(x)\dif x = \int_{\R^{m}}\int_{f^{-1}(\{y\})}g\dif\mathscr{H}^{n-m}\dif y. 
\end{align*}
Here, $\mathbf{J}_{f}$ is the Jacobian determinant of $f$. In particular, 
if $m=1$, then $\mathbf{J}_{f}=|\nabla f|$. 
\end{lemma}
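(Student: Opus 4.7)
The result is classical, being Federer's coarea formula as cited from Evans--Gariepy; hence there is no genuine novelty here, but the standard proof strategy can be outlined in three steps. First, by approximating $g$ with simple functions and invoking monotone convergence, I would reduce to the case $g=\mathbbm{1}_E$ for a Borel set $E\subset\R^n$. The claim then asserts that the two set functions $E\mapsto \int_E \mathbf{J}_f\dif x$ and $E\mapsto\int_{\R^m}\mathscr{H}^{n-m}(E\cap f^{-1}(\{y\}))\dif y$ coincide as Borel measures. Invoking Rademacher's theorem, the total derivative $Df$ exists $\mathscr{L}^n$-almost everywhere. A Whitney--Lusin-type decomposition then partitions $\{\mathbf{J}_f>0\}$, up to a null set, into countably many pairwise disjoint Borel pieces $E_k$ on each of which $f$ agrees with a $\hold^1$-map $f_k$ whose differential is $(1+\varepsilon)$-close in operator norm to a fixed linear map $L_k\colon\R^n\to\R^m$.

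On each piece $E_k$ the coarea identity reduces to its linear-algebraic prototype: both sides can be computed explicitly in terms of $\sqrt{\det(L_k L_k^\top)}$ and the $(n-m)$-dimensional Hausdorff measure of the intersection $E_k\cap L_k^{-1}(\{y\})$, with an error factor tending to $1$ as $\varepsilon\searrow 0$. The singular set $N\coloneqq \{\mathbf{J}_f=0\}$ contributes zero on the left by definition; on the right, zero contribution is recovered via the Eilenberg inequality $\int_{\R^m}\mathscr{H}^{n-m}(N\cap f^{-1}(\{y\}))\dif y\leq c\int_N \mathbf{J}_f\dif x=0$. Summation in $k$, $\sigma$-additivity, and the limit $\varepsilon\to 0$ then reassemble the global identity.

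The principal obstacle is the Lusin-type decomposition together with the uniform control of the $(n-m)$-dimensional slice measure under the piecewise linearisation; a subsidiary delicate point is the $\mathscr{H}^{n-m}$-measurability of $g|_{f^{-1}(\{y\})}$ for $\mathscr{L}^m$-a.e.\ $y$, which again rests on the Eilenberg inequality to enforce $\sigma$-finiteness of the slicing. The concluding assertion for $m=1$ is immediate: the singular value decomposition of $Df\in\R^{1\times n}$ reduces $\sqrt{\det(Df (Df)^\top)}$ to $|\nabla f|$, whence $\mathbf{J}_f=|\nabla f|$ as stated.
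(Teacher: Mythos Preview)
The paper does not provide a proof of this lemma at all: it is stated with an explicit citation to Evans--Gariepy \cite[Chpt.~3.4.3, Thm.~2]{EvansGariepy} and is treated as a classical background result to be invoked, not established. Your outline is a faithful sketch of the standard Federer/Evans--Gariepy argument, so there is nothing to compare against in the paper itself; your approach is simply the textbook one that the citation points to.
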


\subsection{Monotone operators and compactness}
In this ultimate subsection, we collect background facts that will turn out useful when studying approximative problems.

Let $H$ be a real Hilbert space, and suppose that $\Phi\colon H\to(-\infty,\infty]$ satisfies the following:
\begin{enumerate}[label={(C\arabic*)}]
\item\label{item:hikaru1} $\Phi\colon H\to (-\infty,\infty]$ is convex and proper, meaning that  $\Phi\not\equiv\infty$, 
\item\label{item:hikaru2} $E_{\lambda}\coloneqq \{x\in \mathrm{dom}(\Phi)\colon\;\Phi(x)\leq \lambda\}$ is compact in $H$ for any $\lambda\in\R$, where the effective domain of $\Phi$ is given by 
\begin{align*}
\mathrm{dom}(\Phi)\coloneqq \{x\in H\colon\;\Phi(x)\neq \infty\}. 
\end{align*}
\item\label{item:hikaru3} $g\colon \lebe^{2}(0,T;H)\to \lebe^{2}(0,T;H)$ is continuous with respect to $\|\cdot\|_{\lebe^{2}(0,T;H)}$ and of sublinear growth, that is, there exist $L\geq 0$ and $b\in\lebe^{2}(0,T;\R_{\geq 0})$ such that 
\begin{align}\label{eq:gbounds}
\|g(\mathbf{v}(t))\|_{H} \leq L\|\mathbf{v}(t)\|_{H} + b(t)\qquad\text{for all}\;\mathbf{v}\in \lebe_{\locc}^{1}(0,T;H)\;\text{and $\mathscr{L}^{1}$-a.e.}\;0<t<T. 
\end{align}
\end{enumerate}
\begin{proposition}[Arendt \& Hauer, {\cite[Thm. 1.2, Rem. 1.3]{ArendtHauer}}]\label{prop:arendt}
Let $H$ be a real Hilbert space, and suppose that $\Phi\colon H\to (-\infty,\infty]$ satisfies  \emph{\ref{item:hikaru1}--\ref{item:hikaru3}}. Moreover, let $\uu_{0}\in\mathrm{dom}(\Phi)$. Then, for every $0<T<\infty$, there exists a solution $\uu\in\sobo^{1,2}(0,T;H)$ of the evolution equation 
\begin{align}\label{eq:auxsystem1}
\begin{cases}
\partial_{t}\uu + \mathcal{A}\uu + \mathbf{f} = g(\uu)&\quad\text{$\mathscr{L}^{1}$-a.e. in $(0,T)$}, \\ 
\uu(0,\cdot)= \uu_{0}&\quad\text{for}\;t=0, 
\end{cases}
\end{align}
where $\mathcal{A}\coloneqq\partial\Phi$ is the subdifferential of $\Phi$ defined in analogy with \eqref{eq:subgradientdef}. Moreover, we have $\Phi(\uu)\in\lebe^{1}((0,T))$. 
\end{proposition}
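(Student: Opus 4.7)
The plan is to combine the classical Brézis--Komura existence theory for subgradient flows with a Schauder-type fixed-point argument that absorbs the non-monotone perturbation $g$. The crucial observation is that assumption \ref{item:hikaru2} upgrades the mere lower semicontinuity of $\Phi$ to strong compactness of the sublevel sets $E_{\lambda}$, compensating for the fact that $g$ is only continuous and sublinear rather than monotone or contractive; consequently, a direct contraction argument is unavailable and compactness must do the work.

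For a frozen $\vv\in\lebe^{2}(0,T;H)$ I would first consider the auxiliary problem
\begin{equation*}
\partial_{t}\ww + \partial\Phi(\ww) \ni g(\vv) - \mathbf{f}, \qquad \ww(0) = \uu_{0}\in\mathrm{dom}(\Phi).
\end{equation*}
Since $\partial\Phi$ is maximal monotone as the subdifferential of a proper convex lower semicontinuous functional on $H$, the classical theorem of Brézis yields a unique strong solution $\ww = \Lambda(\vv)\in\sobo^{1,2}(0,T;H)$ together with the sharp regularity $\Phi(\ww(\cdot))\in\lebe^{\infty}(0,T)$ and the a priori bound
\begin{equation*}
\|\partial_{t}\ww\|_{\lebe^{2}(0,T;H)}^{2} + \sup_{t\in[0,T]}\Phi(\ww(t)) \leq C\bigl(\Phi(\uu_{0}) + \|g(\vv)-\mathbf{f}\|_{\lebe^{2}(0,T;H)}^{2}\bigr).
\end{equation*}
Inserting the sublinear growth \eqref{eq:gbounds} on the right-hand side and testing the equation with $\ww$ itself, using $\langle\xi,\ww\rangle\geq \Phi(\ww)-\Phi(0)$ for $\xi\in\partial\Phi(\ww)$, Grönwall's inequality furnishes a uniform $\lebe^{\infty}(0,T;H)$-bound on $\Lambda(\vv)$ whenever $\vv$ ranges over a sufficiently large closed ball $B\subset\lebe^{2}(0,T;H)$, so that $\Lambda$ maps $B$ into itself.

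The main obstacle, and where assumption \ref{item:hikaru2} enters decisively, is the compactness of $\Lambda(B)$ in $\lebe^{2}(0,T;H)$. Pointwise in $t$, the bound $\Phi(\ww(t))\leq\lambda$ confines $\ww(t)$ to the compact set $E_{\lambda}\subset H$; combined with the uniform $\lebe^{2}$-bound on $\partial_{t}\ww$, which gives equicontinuity in $t$, an Aubin--Lions-type argument produces relative compactness of $\Lambda(B)$ in $\hold([0,T];H)$, hence in $\lebe^{2}(0,T;H)$. Continuity of $\Lambda$ follows from the standard fact that the map sending a right-hand side in $\lebe^{2}(0,T;H)$ to the solution of the subgradient flow is $1$-Lipschitz (a direct consequence of the monotonicity of $\partial\Phi$), together with the continuity of $g$ on $\lebe^{2}(0,T;H)$ postulated in \ref{item:hikaru3}. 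Schauder's fixed-point theorem applied to $\Lambda\colon B\to B$ then produces $\uu\in\sobo^{1,2}(0,T;H)$ with $\uu=\Lambda(\uu)$, i.e.\ a solution of \eqref{eq:auxsystem1}.

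Finally, $\Phi(\uu)\in\lebe^{1}(0,T)$ is a byproduct of the construction: the uniform bound $\sup_{t}\Phi(\uu(t))<\infty$ survives the fixed-point passage, as does the chain-rule identity
\begin{equation*}
\frac{\dif}{\dif t}\Phi(\uu(t)) = \langle\partial_{t}\uu(t),\xi(t)\rangle, \qquad \xi(t)\in\partial\Phi(\uu(t)),\quad\text{for $\mathscr{L}^{1}$-a.e.\ $t\in(0,T)$},
\end{equation*}
which is available since $\uu\in\sobo^{1,2}(0,T;H)$ and $\partial\Phi$ is maximal monotone. The genuinely delicate point in executing this plan is verifying continuity of $\Lambda$ in the absence of any Lipschitz hypothesis on $g$; here the $\lebe^{2}$-to-$\lebe^{2}$ continuity in \ref{item:hikaru3} is precisely tailored to make the composition $\vv\mapsto g(\vv)\mapsto \Lambda(\vv)$ continuous on bounded sets, closing the argument.
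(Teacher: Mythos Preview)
The paper does not supply its own proof of this proposition: it is quoted verbatim as \cite[Thm.~1.2, Rem.~1.3]{ArendtHauer} and used as a black box, with only the remark that the case $g\equiv 0$ can be retrieved from Showalter. There is therefore nothing in the paper to compare your argument against.

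Your sketch is a reasonable reconstruction of how such a result is typically proved, and the overall architecture---freeze $\vv$, solve the subgradient flow via Br\'ezis, then close by Schauder using the compactness of sublevel sets \ref{item:hikaru2} to substitute for the missing monotonicity of $g$---is sound. One point deserves more care: the self-mapping $\Lambda(B)\subset B$ for a \emph{fixed} ball $B\subset\lebe^{2}(0,T;H)$, valid for \emph{every} $T<\infty$. Testing with $\ww$ and invoking $\langle\xi,\ww\rangle\geq\Phi(\ww)-\Phi(0)$ yields an estimate of the form $\|\ww\|_{\lebe^{\infty}(0,T;H)}\leq C(\uu_{0},b,\mathbf{f})+CL\|\vv\|_{\lebe^{2}(0,T;H)}$, which does not obviously close for large $T$ unless $L$ is small. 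The standard remedies---short-time existence followed by continuation (legitimate here because the bound $\sup_{t}\Phi(\uu(t))<\infty$ keeps $\uu(T_{0})\in\mathrm{dom}(\Phi)$), or a Leray--Schauder homotopy argument in place of plain Schauder---should be mentioned to make the global-in-time claim watertight. Apart from this, the compactness step via Arzel\`a--Ascoli (values in the compact $E_{\lambda}$, equicontinuity from the $\sobo^{1,2}$-bound) and the continuity of $\Lambda$ via the $1$-Lipschitz dependence of subgradient flows on the right-hand side are correctly identified.
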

\begin{remark}
In the case where $g\equiv 0$, the previous proposition can be inferred from the slightly less general results to be found, e.g., in Showalter \cite[Chapter 3]{Showalter}. Let us moreover note that the aforementioned by Arendt and Hauer also works in the non-smooth context. Even though our functionals are non-smooth in the context considered here, this is only of limited use; see Remark \ref{rem:AHlimitations} for more detail. 
\end{remark}
\section{Relaxations and the bulk approximation of boundary terms}\label{sec:relaxthebulk}
In this section, we collect lower semicontinuity results for the relaxed energies and establish a bulk approximation of the boundary penalization terms in the relaxed energy. 

\subsection{The relaxed Hibler energy} 
We begin with recording various background facts on relaxations. The subsequent results might be clear to the experts, but since we require them in the sequel, we pause to give the underlying framework in detail. To this end, we recall from \eqref{eq:lingrowthmodelling} that we consider energy integrands $F\colon\RR\to\R$ which are of \emph{linear growth} in the sense that there exist constants $c_{1},c_{2},c_{3}>0$ such that 
\begin{align}\label{eq:lgresh}
c_{1}|z|-c_{2}\leq F(z)\leq c_{3}(1+|z|)\qquad\text{for all}\;z\in\rsym^{2\times 2}. 
\end{align}
In the case of constant pressure, the standard Hibler energy fits into this setting by putting $F(z)\coloneqq\frac{P}{2}|z|$, see \eqref{eq:hiblerenergy1}.

By the linear growth hypothesis on $F$ (see, e.g., \eqref{eq:lgresh} below) and Lemma \ref{lem:auxBD}\ref{item:aux1}, we need to extend the Hibler energy to $\bd(\Omega)$. In order to accomplish limit passages in the nonlinear energies, it is thus natural to extend $\mathscr{F}[-;\Omega]$ from $\ld(\Omega)$ to $\bd(\Omega)$ by lower semicontinuity.  
\begin{lemma}[Lower semicontinuity]\label{lem:LSC}
Let $F\in\hold(\rsym^{2\times 2})$ be convex and of linear growth in the sense of \eqref{eq:lgresh}. Moreover, define the recession function $F^{\infty}\in\hold(\RR)$ as in \eqref{eq:recessionfunction}. If $\Omega\subset\R^{2}$ is open and bounded and $\uu\in\bd(\Omega)$, let 
\begin{align*}
\mathbb{T}\uu = \mathbb{T}^{a}\uu + \mathbb{T}^{s}\uu \eqqcolon \mathscr{T}\uu\, \mathscr{L}^{2} + \frac{\dif\mathbb{T}^{s}\uu}{\dif|\mathbb{T}^{s}\uu|}|\mathbb{T}^{s}\uu| 
\end{align*}
be the Radon-Nikod\'{y}m decomposition of $\mathbb{T}\uu\in\mathrm{RM}_{\mathrm{fin}}(\Omega;\RR)$ with respect to $\mathscr{L}^{2}$, see \eqref{eq:hiblerdecomp}, and put
\begin{align*}
{\mathscr{F}}^{*}[\uu;\Omega] \coloneqq \int_{\Omega} F(\mathbb{T}\uu) \coloneqq \int_{\Omega}F(\mathscr{T}\mathbf{u})\dif x + \int_{\Omega}F^{\infty}\Big(\frac{\dif\mathbb{T}^{s}\mathbf{u}}{\dif|\mathbb{T}^{s}\mathbf{u}|}\Big)\dif|\mathbb{T}^{s}\mathbf{u}|. 
\end{align*}
Then $\mathscr{F}^{*}[-;\Omega]$ is \emph{lower semicontinuous with respect to weak*-convergence on} $\bd(\Omega)$ and satisfies $\mathscr{F}[\uu;\Omega]=\mathscr{F}^{*}[\uu;\Omega]$ provided that $\uu\in\ld(\Omega)$. 
\end{lemma}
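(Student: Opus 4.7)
The plan is to reduce the claim to Reshetnyak's lower semicontinuity theorem (Lemma~\ref{lem:resh}). The decisive structural observation, already recorded in \eqref{eq:hiblerwritedown} and \eqref{eq:Tdef}, is that the Hibler deformation operator is obtained from the symmetric gradient by a fixed $\R$-linear map $T\colon\RR\to\RR$. In particular, for every $\uu\in\bd(\Omega)$ the distributional expression $\mathbb{T}\uu$ is automatically a finite $\RR$-valued Radon measure, and by \eqref{eq:pointwisecompa} the total variation $|\mathbb{T}\uu|(\Omega)$ is comparable to $|\mathrm{E}\uu|(\Omega)$. Moreover, the Lebesgue--Radon--Nikod\'{y}m decomposition with respect to $\mathscr{L}^{2}$ is intertwined with that of $\mathrm{E}\uu$ via $T$, so that \eqref{eq:hiblerdecomp} is well-posed and $\mathscr{T}\uu=T[\mathscr{E}\uu]$ as well as $\frac{\dif\mathbb{T}^{s}\uu}{\dif|\mathbb{T}^{s}\uu|}$ is genuinely a unit-norm Borel direction for $|\mathbb{T}^{s}\uu|$-a.e. point.

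For the identification $\mathscr{F}^{*}[\uu;\Omega]=\mathscr{F}[\uu;\Omega]$ on $\ld(\Omega)$, I would simply observe that $\uu\in\ld(\Omega)$ forces $\mathrm{E}\uu\ll\mathscr{L}^{2}$, hence $\mathbb{T}\uu\ll\mathscr{L}^{2}$ by linearity of $T$. Thus $\mathbb{T}^{s}\uu=0$ and $\mathscr{T}\uu=\mathbb{T}\uu$ pointwise $\mathscr{L}^{2}$-a.e., making the singular contribution in the definition of $\mathscr{F}^{*}$ vanish and reducing the bulk term to $\int_{\Omega}F(\mathbb{T}\uu)\dif x=\mathscr{F}[\uu;\Omega]$.

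For the lower semicontinuity part, assume $\uu_{j}\stackrel{*}{\rightharpoonup}\uu$ in $\bd(\Omega)$. By the very definition of weak*-convergence recorded in Section~\ref{sec:BD}, $\uu_{j}\to\uu$ in $\lebe^{1}(\Omega;\R^{2})$ and $\mathrm{E}\uu_{j}\stackrel{*}{\rightharpoonup}\mathrm{E}\uu$ in $\mathrm{RM}_{\mathrm{fin}}(\Omega;\RR)\cong\hold_{0}(\Omega;\RR)'$. Applying the fixed linear map $T$ entrywise, the induced pushforward $\bm{\mu}\mapsto T_{\#}\bm{\mu}=T\circ\bm{\mu}$ is weak*-continuous on $\mathrm{RM}_{\mathrm{fin}}(\Omega;\RR)$, and hence $\mathbb{T}\uu_{j}\stackrel{*}{\rightharpoonup}\mathbb{T}\uu$ in $\mathrm{RM}_{\mathrm{fin}}(\Omega;\RR)$. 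At this stage I would invoke Reshetnyak's lower semicontinuity theorem (Lemma~\ref{lem:resh}), applied with the convex linear-growth integrand $F$ satisfying \eqref{eq:lgresh}: this immediately gives
\begin{align*}
\mathscr{F}^{*}[\uu;\Omega] = \int_{\Omega}F(\mathbb{T}\uu) \leq \liminf_{j\to\infty}\int_{\Omega}F(\mathbb{T}\uu_{j}) = \liminf_{j\to\infty}\mathscr{F}^{*}[\uu_{j};\Omega],
\end{align*}
where the outer identities use the very definition of the measure-valued functional $F(\mathbb{T}\uu)$ together with the decomposition \eqref{eq:hiblerdecomp} and the description of $F^{\infty}$ via \eqref{eq:linearperspective}.

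No serious obstruction is expected: the only point requiring care is the weak*-continuity of pushforward by $T$, which is a consequence of the fact that $T$ is a bounded linear endomorphism of the finite-dimensional target space $\RR$ and thereby induces a bounded linear operator on $\hold_{0}(\Omega;\RR)$ via composition. Once this is noted, Reshetnyak's theorem and the pointwise a.e. identity $\mathscr{T}\uu=T[\mathscr{E}\uu]$ close the argument.
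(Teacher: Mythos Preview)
Your proof is correct and follows essentially the same approach as the paper: both argue that weak*-convergence in $\bd(\Omega)$ yields $\mathrm{E}\uu_{j}\stackrel{*}{\rightharpoonup}\mathrm{E}\uu$, hence $\mathbb{T}\uu_{j}=T[\mathrm{E}\uu_{j}]\stackrel{*}{\rightharpoonup}T[\mathrm{E}\uu]=\mathbb{T}\uu$ in $\mathrm{RM}_{\mathrm{fin}}(\Omega;\RR)$, and then invoke Reshetnyak's theorem (Lemma~\ref{lem:resh}); the identification on $\ld(\Omega)$ is handled identically by noting that the singular part vanishes. Your additional remark on why postcomposition with $T$ preserves weak*-convergence is a helpful elaboration but not a different method.
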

\begin{proof}
If $\uu,\uu_{1},\uu_{2},...\in\bd(\Omega)$
 are such that $\uu_{j}\stackrel{*}{\rightharpoonup}\uu$ in the weak*-sense on $\bd(\Omega)$, then we particularly have $\E\uu_{j}\stackrel{*}{\rightharpoonup}\E\uu$ and so $\mathbb{T}\uu_{j}\stackrel{*}{\rightharpoonup}\mathbb{T}\uu$ in $\mathrm{RM}_{\mathrm{fin}}(\Omega;\RR)$; note that $\mathbb{T}\uu=T[\E\uu]$ with the linear map $T$ as in \eqref{eq:Tdef}. Hence, the claimed lower semicontinuity directly follows from Reshetnyak's lower semicontinuity theorem, see Lemma \ref{lem:resh}. Finally, if $\uu\in\ld(\Omega)$, then $\mathbb{T}\uu = \mathbb{T}^{a}\uu$, and we immediately obtain $\mathscr{F}[\uu;\Omega]=\mathscr{F}^{*}[\uu;\Omega]$. The proof is complete.  
 \end{proof}
The proof of the following result is probably clear to the experts; for the sake of completeness, we include the detailed argument.
\begin{corollary}\label{cor:LSCplussbdryvalues}
Let $F\in\hold(\R_{\mathrm{sym}}^{2\times 2})$ be convex and of linear growth, see \eqref{eq:lgresh}, and let $\Omega\subset\R^{2}$ be open and bounded with Lipschitz boundary, the latter being oriented by the outer unit normal $\nu_{\partial\Omega}\colon\partial\Omega\to\mathbb{S}^{1}$. Then the following hold: 
\begin{enumerate}
    \item\label{item:relaxdoit1} The \emph{relaxed functional with boundary terms}
\begin{align}\label{eq:relaxedbdry}
\mathscr{F}_{0}^{*}[\mathbf{u};\Omega] \coloneqq \int_{\Omega} F(\mathbb{T}\uu) + \int_{\partial\Omega}F^{\infty}(\mathrm{tr}_{\partial\Omega}(-\mathbf{u})\otimes_{\mathbb{T}}\nu_{\partial\Omega})\dif\mathscr{H}^{1},\qquad\uu\in\bd(\Omega), 
\end{align}
is \emph{lower semicontinuous with respect to weak*-convergence on $\bd(\Omega)$}. 
\item\label{item:relaxdoit2} Let $\rho_{\varepsilon}(x)\coloneqq \varepsilon^{-2}\rho(\frac{x}{\varepsilon})$ be the $\varepsilon$-rescaled variant of a rotationally symmetric standard mollifier $\rho\in\hold_{c}^{\infty}(\R^{2};\R_{\geq 0})$ on $\R^{2}$. If $\uu\in\bd(\Omega)$ has compact essential support in $\Omega$, in formulas $\uu\in\bd_{c}(\Omega)$, then 
\begin{align}\label{eq:relaxedbdrycont}
\lim_{\varepsilon\searrow 0} \mathscr{F}_{0}^{*}[\rho_{\varepsilon}*\uu;\Omega] = \mathscr{F}_{0}^{*}[\uu;\Omega] = \int_{\Omega} F(\mathbb{T}\uu). 
\end{align}
\end{enumerate}
\end{corollary}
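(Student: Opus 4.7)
The plan for part (a) is to eliminate the boundary penalization by extending competitors to a slightly larger Lipschitz domain $\widetilde{\Omega} \Supset \Omega$. Given $\uu \in \bd(\Omega)$, let $\widetilde{\uu} \in \bd(\widetilde{\Omega})$ be its extension by zero outside $\Omega$; the gluing formula of Lemma~\ref{lem:traceoperator}(b) with $\vv \equiv 0$ yields
\[
\mathbb{T}\widetilde{\uu} = \mathbb{T}\uu\mres\Omega + (\mathrm{tr}_{\partial\Omega}(-\uu) \otimes_{\mathbb{T}} \nu_{\partial\Omega})\,\mathscr{H}^1\mres\partial\Omega,
\]
and the $\mathscr{H}^1$-piece, being $\mathscr{L}^2$-singular, is absorbed by $F^{\infty}$ in the measure-theoretic definition of $F(\mathbb{T}\widetilde{\uu})$. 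Unpacking the Lebesgue--Radon--Nikod\'{y}m decomposition then gives the identity
\[
\int_{\widetilde{\Omega}} F(\mathbb{T}\widetilde{\uu}) = \mathscr{F}_0^*[\uu; \Omega] + F(0)\,\mathscr{L}^2(\widetilde{\Omega}\setminus\overline{\Omega}).
\]
If $\uu_j \stackrel{*}{\rightharpoonup} \uu$ in $\bd(\Omega)$, then $\widetilde{\uu}_j \to \widetilde{\uu}$ in $\lebe^1(\widetilde{\Omega})$, and the extensions are uniformly bounded in $\bd(\widetilde{\Omega})$ because Lemma~\ref{lem:traceoperator}(a) controls $\|\mathrm{tr}_{\partial\Omega}(\uu_j)\|_{\lebe^1(\partial\Omega)}$ in terms of the bounded $\bd$-norms. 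A subsequence-of-subsequences argument via weak*-compactness (Lemma~\ref{lem:auxBD}(c)) together with $\lebe^1$-uniqueness of the limit then upgrades this to $\widetilde{\uu}_j \stackrel{*}{\rightharpoonup} \widetilde{\uu}$ in $\bd(\widetilde{\Omega})$. Applying Lemma~\ref{lem:LSC} on $\widetilde{\Omega}$ and subtracting the additive constant delivers the claim of part (a).

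For part (b), the compact essential support of $\uu$ forces $\mathrm{tr}_{\partial\Omega}(\uu) = 0$, and since $F^{\infty}(0) = 0$ the identity $\mathscr{F}_0^*[\uu;\Omega] = \int_\Omega F(\mathbb{T}\uu)$ is immediate. For $\varepsilon$ smaller than the distance from the essential support of $\uu$ to $\partial\Omega$, $\rho_\varepsilon * \uu$ is smooth with compact support in $\Omega$, so the same reduction applies and the task collapses to
\[
\int_\Omega F(\rho_\varepsilon * \mathbb{T}\uu)\,\dif x \to \int_\Omega F(\mathbb{T}\uu),
\]
using that $\mathbb{T}$ commutes with convolution. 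The lower bound is Reshetnyak's Lemma~\ref{lem:resh} applied to the weak*-convergence $(\rho_\varepsilon * \mathbb{T}\uu)\,\mathscr{L}^2 \stackrel{*}{\rightharpoonup} \mathbb{T}\uu$. For the matching upper bound, fix an open $U$ with $\spt(\uu) \Subset U \Subset \Omega$ and $\mathrm{dist}(U,\partial\Omega) > \varepsilon$, and apply Lemma~\ref{lem:jensen}:
\[
\int_U F(\rho_\varepsilon * \mathbb{T}\uu)\,\dif x \le \int_{U_\varepsilon} F(\mathbb{T}\uu).
\]
Because $\mathbb{T}\uu$ is supported in $U$ and $\rho_\varepsilon * \mathbb{T}\uu$ vanishes on $\Omega\setminus U$, both sides differ from the corresponding integrals over $\Omega$ only by $F(0)$ times Lebesgue measures of thin shells, which vanish as $\varepsilon \searrow 0$. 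Hence $\limsup_\varepsilon \int_\Omega F(\rho_\varepsilon * \mathbb{T}\uu)\,\dif x \le \int_\Omega F(\mathbb{T}\uu)$, closing part (b).

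The only genuinely delicate point is in (a): the trace operator is \emph{not} weak*-continuous on $\bd(\Omega)$, so one cannot hope to pass to the limit directly inside the boundary integral of $\mathscr{F}_0^*$. The extension to $\widetilde{\Omega}$ is precisely the device that recasts the non-continuous trace as a weak*-continuous internal jump on a single larger domain, after which identification of this jump with the $F^{\infty}$-boundary penalization via Lemma~\ref{lem:traceoperator}(b) makes Reshetnyak-type lower semicontinuity (Lemma~\ref{lem:LSC}) directly applicable. Everything else is bookkeeping with the Lebesgue--Radon--Nikod\'{y}m decomposition and the convergence statements of Lemmas~\ref{lem:resh} and \ref{lem:jensen}.
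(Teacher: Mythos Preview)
Your proof is correct and follows essentially the same strategy as the paper's: in (a) you extend by zero to a larger domain, identify the boundary penalization as the internal jump via Lemma~\ref{lem:traceoperator}\ref{item:tracex2}, and apply Reshetnyak lower semicontinuity on $\widetilde{\Omega}$; in (b) you kill the boundary term by compact support and sandwich the mollified functional between the Reshetnyak liminf and the Jensen upper bound from Lemma~\ref{lem:jensen}. The one minor deviation is how you establish $\widetilde{\uu}_j \stackrel{*}{\rightharpoonup} \widetilde{\uu}$ in $\bd(\widetilde{\Omega})$: you argue via uniform $\bd(\widetilde{\Omega})$-bounds (using the trace estimate), weak*-compactness, and uniqueness of the $\lebe^1$-limit, whereas the paper obtains this in one line by testing $\E\widetilde{\uu}_j$ against $\bm{\varphi}\in\hold_c^1(\widetilde{\Omega};\R_{\mathrm{sym}}^{2\times 2})$, integrating by parts, and using the strong $\lebe^1$-convergence directly --- a route that avoids the trace bound altogether.
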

\begin{proof}
On \ref{item:relaxdoit1}. 
Let $\widetilde{\Omega}\subset\R^{2}$ be open and bounded such that $\Omega\Subset\widetilde{\Omega}$. For $\mathbf{v}\in\bd(\Omega)$, we let 
\begin{align*}
\widetilde{\mathbf{v}}\coloneqq \begin{cases} \mathbf{v}&\;\text{in}\;\Omega,\\ 
0&\;\text{in}\;\widetilde{\Omega}\setminus\overline{\Omega} 
\end{cases}
\end{align*}
be its trivial extension to $\widetilde{\Omega}$. By Lemma \ref{lem:traceoperator}\ref{item:tracex2}, we have $\widetilde{\vv}\in\bd(\widetilde{\Omega})$ and 
\begin{align}\label{eq:tabeadecomposes}
\mathbb{T}\widetilde{\mathbf{v}}=\mathbb{T}\mathbf{v}\mres\Omega -\mathrm{tr}_{\partial\Omega}(\mathbf{v})\otimes_{\mathbb{T}}\nu_{\partial\Omega}\,\mathscr{H}^{1}\mres\partial\Omega\qquad\text{in}\;\mathscr{D}'(\widetilde{\Omega};\R_{\mathrm{sym}}^{2\times 2}). 
\end{align}
Now let $\mathbf{u},\mathbf{u}_{1},...\in\bd(\Omega)$ such that $\mathbf{u}_{j}\stackrel{*}{\rightharpoonup}\mathbf{u}$ in $\bd(\Omega)$. We claim that $\widetilde{\mathbf{u}}_{j}\stackrel{*}{\rightharpoonup}\widetilde{\mathbf{u}}$ in $\bd(\widetilde{\Omega})$. Indeed, we clearly have $\widetilde{\mathbf{u}}_{j}\to\widetilde{\mathbf{u}}$ strongly in $\lebe^{1}(\widetilde{\Omega};\R^{2})$, and thus conclude for an arbitrary $\bm{\varphi}\in\hold_{c}^{1}(\widetilde{\Omega};\R_{\mathrm{sym}}^{2\times 2})$ that 
\begin{align}\label{eq:tabeaprovesweak*}
\int_{\widetilde{\Omega}}\bm{\varphi}\cdot\dif \mathrm{E}\widetilde{\mathbf{u}}= -\int_{\widetilde{\Omega}}\mathrm{div}(\bm{\varphi})\cdot\mathbf{\widetilde{u}}\dif x = -\lim_{j\to\infty} \int_{\widetilde{\Omega}}\mathrm{div}(\bm{\varphi})\cdot\mathbf{\widetilde{u}}_{j}\dif x = \lim_{j\to\infty}\int_{\widetilde{\Omega}}\bm{\varphi}\cdot\dif\mathrm{E}\mathbf{u}_{j}.
\end{align}
Since $\hold_{c}^{1}(\widetilde{\Omega};\R_{\mathrm{sym}}^{2\times 2})$ is dense in $(\hold_{0}(\widetilde{\Omega};\R_{\mathrm{sym}}^{2\times 2}),\|\cdot\|_{\sup})$, the claimed weak*-convergence follows. Hence, Lemma \ref{lem:LSC} gives us 
\begin{align*}
\int_{\widetilde{\Omega}}F(\mathbb{T}\widetilde{\mathbf{u}}) \leq \liminf_{j\to\infty}\int_{\widetilde{\Omega}} F(\mathbb{T}\widetilde{\mathbf{u}}_{j}). 
\end{align*}
Rewriting both sides by use of \eqref{eq:tabeadecomposes} and cancelling the integrals over $\widetilde{\Omega}\setminus\overline{\Omega}$, this yields the claimed lower semicontinuity for $\mathscr{F}_{0}^{*}[-;\Omega]$.

On \ref{item:relaxdoit2}. For $\uu\in\bd_{c}(\Omega)$, we have $\mathscr{F}_{0}[\uu;\Omega]=\mathscr{F}^{*}[\uu;\Omega]$. Because $\spt(\uu)$ is compactly contained in $\Omega$, there exists $\varepsilon_{0}>0$ such that $\spt(\rho_{\varepsilon}*\uu_{0})\subset\Omega$ for all $0<\varepsilon<\varepsilon_{0}$. We choose a sequence $(\varepsilon_{j})\subset (0,1)$ such that $\varepsilon_{j}\searrow 0$ and $\lim_{j\to\infty}\mathscr{F}_{0}^{*}[\rho_{\varepsilon_{j}}*\uu;\Omega]=\liminf_{\varepsilon\searrow 0}\mathscr{F}_{0}^{*}[\rho_{\varepsilon}*\uu;\Omega]$. An elementary computation confirms that $\uu_{j}\coloneqq \rho_{\varepsilon_{j}}*\uu$ satisfies $\uu_{j}\stackrel{*}{\rightharpoonup}\uu$ in the weak*-sense on $\bd(\Omega)$. By \ref{item:relaxdoit1} and $\mathrm{tr}_{\partial\Omega}(\uu)=\mathrm{tr}_{\partial\Omega}(\uu_{j})=0$ for all $j\in\mathbb{N}$, we infer that 
\begin{align*}
\mathscr{F}_{0}^{*}[\uu;\Omega] \leq \liminf_{j\to\infty}\mathscr{F}_{0}^{*}[\uu_{j};\Omega] & = \liminf_{j\to\infty}\mathscr{F}^{*}[\uu_{j};\Omega] \\ & = \lim_{j\to\infty}\int_{\Omega} F(\rho_{\varepsilon_{j}}*\mathbb{T}\uu)\dif x \stackrel{\eqref{eq:Jensen}}{\leq}  \int_{\Omega} F(\mathbb{T}\uu) = \mathscr{F}_{0}^{*}[\uu;\Omega], 
\end{align*}
where we have used Jensen's inequality \eqref{eq:Jensen} for measures from Lemma \ref{lem:jensen}. In particular, note that we may integrate over the entire $\Omega$ (instead of passing to $\Omega_{\varepsilon}$ as in \eqref{eq:Jensen}) since $\uu$ is compactly supported and $0<\varepsilon<\varepsilon_{0}$. The proof is complete. 
\end{proof}
\begin{remark}\label{rem:bdryintegral}
As in the maybe more familiar $\bv$-case, the boundary integral acts as a term which \emph{penalizes} the deviation from zero boundary values. Such terms are necessary in the relaxed energy functionals considered here because the trace operator is \emph{not} continuous with respect to weak*-convergence; see Remark \ref{rem:boundarypenal} for more detail. 
\end{remark}
\subsection{Bulk approximation of boundary terms}
We now come to the main result of the present section, which shall play an important role in the existence proof in Section \ref{sec:main}.
\begin{theorem}[Boundary term approximation]\label{thm:bdryapprox}
Let $\Omega\subset\R^{2}$ be open and bounded with Lipschitz boundary, the latter being oriented by the outer unit normal $\nu_{\partial\Omega}\colon\partial\Omega\to\mathbb{S}^{1}$. Moreover, let $F\in\hold(\R_{\mathrm{sym}}^{2\times 2})$ be convex and of linear growth, see \eqref{eq:lgresh}. Then there exists a sequence $(\eta_{\delta})\subset\mathrm{Lip}_{0}(\Omega;[0,1])$ such that 
\begin{align}\label{eq:recessomaino}
\lim_{\delta\searrow 0}\int_{\Omega}F(\mathbb{T}(\eta_{\delta}\mathbf{u})) = \mathscr{F}_{0}^{*}[\uu;\Omega] =\int_{\Omega}F(\mathbb{T}\mathbf{u}) + \int_{\partial\Omega}F^{\infty}(-\mathrm{tr}_{\partial\Omega}(\mathbf{u})\otimes_{\mathbb{T}}\nu_{\partial\Omega})\dif\mathscr{H}^{1}
\end{align}
holds for all $\uu\in\bd(\Omega)$. Moreover, it is possible to arrange that each $\eta_{\delta}$ is compactly supported in $\Omega$. 
\end{theorem}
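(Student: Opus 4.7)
The inequality $\mathscr{F}_{0}^{*}[\uu;\Omega]\leq\liminf_{\delta\searrow 0}\int_{\Omega}F(\mathbb{T}(\eta_{\delta}\uu))$ will follow directly from the lower semicontinuity recorded in Corollary~\ref{cor:LSCplussbdryvalues}\ref{item:relaxdoit1}: for the family $(\eta_{\delta})$ to be constructed, a brief calculation using \eqref{eq:prodruleweak} confirms $\eta_{\delta}\uu\stackrel{*}{\rightharpoonup}\uu$ in $\bd(\Omega)$, and since $\mathrm{tr}_{\partial\Omega}(\eta_{\delta}\uu)=0$ and $F^{\infty}(0)=0$, the boundary term of $\mathscr{F}_{0}^{*}[\eta_{\delta}\uu;\Omega]$ vanishes, leaving $\mathscr{F}_{0}^{*}[\eta_{\delta}\uu;\Omega]=\int_{\Omega}F(\mathbb{T}(\eta_{\delta}\uu))$. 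The crux is therefore the matching $\limsup$-bound, which I secure by a distance-function construction.

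\textbf{Construction and algebraic decomposition.} Put $d(x)\coloneqq\dista(x,\partial\Omega)$, which is $1$-Lipschitz on $\overline{\Omega}$ with $|\nabla d|=1$ $\mathscr{L}^{2}$-a.e.\ on $\Omega$. For $\delta>0$ take the ramp $\phi_{\delta}(s)\coloneqq\min\{s/\delta,1\}$ and set $\eta_{\delta}\coloneqq\phi_{\delta}\circ d$, so that $\eta_{\delta}\in\mathrm{Lip}_{0}(\Omega;[0,1])$ and $\nabla\eta_{\delta}=\delta^{-1}\nabla d$ on the strip $S_{\delta}\coloneqq\{0<d<\delta\}$. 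The weak product rule \eqref{eq:prodruleweak}, together with the positive $1$-homogeneity of $F^{\infty}$ on the singular component, yields
\begin{align*}
\int_{\Omega}F(\mathbb{T}(\eta_{\delta}\uu))=\int_{I_{\delta}}F(\mathbb{T}\uu)+\int_{S_{\delta}}F\bigl(\phi_{\delta}(d)\mathscr{T}\uu+\tfrac{1}{\delta}\uu\otimes_{\mathbb{T}}\nabla d\bigr)\dif x+\int_{S_{\delta}}\phi_{\delta}(d)F^{\infty}\bigl(\tfrac{\dif\mathbb{T}^{s}\uu}{\dif|\mathbb{T}^{s}\uu|}\bigr)\dif|\mathbb{T}^{s}\uu|,
\end{align*}
where $I_{\delta}\coloneqq\{d>\delta\}$. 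The interior piece tends to $\int_{\Omega}F(\mathbb{T}\uu)$ by monotone convergence since $|\mathbb{T}\uu|(S_{\delta})\searrow 0$, and the singular strip contribution is controlled by $c\,|\mathbb{T}^{s}\uu|(S_{\delta})=o(1)$.

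\textbf{Bulk-strip analysis.} In the middle term the perturbation $\phi_{\delta}(d)\mathscr{T}\uu$ costs at most $L_{F}\int_{S_{\delta}}|\mathscr{T}\uu|\dif x=o(1)$ by Lipschitz continuity of $F$ (Lemma~\ref{lem:subdif}\ref{item:RobertDenk}). For the leading piece, positive $1$-homogeneity of the perspective $F^{\#}$ from \eqref{eq:linearperspective} gives $F\bigl(\tfrac{1}{\delta}\uu\otimes_{\mathbb{T}}\nabla d\bigr)=\tfrac{1}{\delta}F^{\#}(\delta,\uu\otimes_{\mathbb{T}}\nabla d)$, and the coarea formula (Lemma~\ref{lem:coarea}) applied to $f=d$ with $|\nabla d|=1$ rewrites
\begin{align*}
\int_{S_{\delta}}F\bigl(\tfrac{1}{\delta}\uu\otimes_{\mathbb{T}}\nabla d\bigr)\dif x=\frac{1}{\delta}\int_{0}^{\delta}\int_{\{d=t\}}F^{\#}(\delta,\uu\otimes_{\mathbb{T}}\nabla d)\dif\mathscr{H}^{1}\dif t.
\end{align*}
Using $F^{\#}(\delta,z)-F^{\infty}(z)=\delta[F(z/\delta)-F^{\infty}(z/\delta)]$ together with the uniform boundedness of $|F-F^{\infty}|$ on $\RR$ (a routine consequence of convexity and the linear growth \eqref{eq:lgresh}), $F^{\#}(\delta,\cdot)$ may be replaced by $F^{\infty}(\cdot)$ up to an $o(1)$ error. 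Setting $g(t)\coloneqq\int_{\{d=t\}}F^{\infty}(\uu\otimes_{\mathbb{T}}\nabla d)\dif\mathscr{H}^{1}$, the elementary mean-value property for averages then reduces the problem to establishing
\begin{align*}
\lim_{t\searrow 0}g(t)=\int_{\partial\Omega}F^{\infty}(-\mathrm{tr}_{\partial\Omega}(\uu)\otimes_{\mathbb{T}}\nu_{\partial\Omega})\dif\mathscr{H}^{1},
\end{align*}
where the sign arises via $\nabla d|_{\partial\Omega}=-\nu_{\partial\Omega}$ and bilinearity of $\otimes_{\mathbb{T}}$.

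\textbf{Main obstacle and the compactly supported refinement.} The decisive technical point is the joint convergence $\mathrm{tr}_{\{d=t\}}(\uu)\to\mathrm{tr}_{\partial\Omega}(\uu)$ in $\lebe^{1}(\partial\Omega;\R^{2})$ together with $\nabla d|_{\{d=t\}}\to-\nu_{\partial\Omega}$ as $t\searrow 0$, from which $g(t)\to g(0^{+})$ is deduced. The obstacle is that Korn-type inequalities on $\lebe^{1}$ fail by Ornstein's Non-Inequality (Remark~\ref{rem:OrnsteinBV}), so the standard full-gradient route is unavailable; I would proceed by localising to a boundary chart in which $\partial\Omega$ is a Lipschitz graph, straightening to a flat slab via a bi-Lipschitz change of variables (under which level sets of $d$ become comparable to affine translates of $\partial\Omega$), and then combining the interior trace theory of $\bd$ (Lemma~\ref{lem:traceoperator}) with the Poincar\'{e}-type inequality of Lemma~\ref{lem:GSTLemma} to obtain the requisite uniform $\lebe^{1}$-bounds on the inner traces. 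For the ``Moreover''-clause, replace $\phi_{\delta}$ by the trapezoid that vanishes on $[0,\delta^{2}]$, is affine with slope $(\delta-\delta^{2})^{-1}$ on $[\delta^{2},\delta]$, and equals $1$ on $[\delta,\infty)$; the resulting $\eta_{\delta}$ has $\spt\eta_{\delta}\subset\{d\geq\delta^{2}\}\Subset\Omega$, and the extra inner sublayer $\{d<\delta^{2}\}$ contributes only the vanishing error $|\mathbb{T}\uu|(\{d<\delta^{2}\})+O(\delta^{2})$, so the entire argument above carries over verbatim.
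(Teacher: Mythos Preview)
Your overall architecture (lower bound by weak*-lower semicontinuity, upper bound by product rule plus coarea) matches the paper. However, there is a genuine gap in the ``Main obstacle'' paragraph, and a secondary error in the bulk-strip analysis.

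\textbf{The main gap.} You propose to obtain $g(t)\to g(0^{+})$ by using ``the interior trace theory of $\bd$ (Lemma~\ref{lem:traceoperator}) with the Poincar\'e-type inequality of Lemma~\ref{lem:GSTLemma} to obtain the requisite uniform $\lebe^{1}$-bounds on the inner traces''. Uniform bounds are not convergence, and more importantly the paper explicitly shows that attacking the inner traces directly in $\bd$ fails: the comparison of traces on nearby level surfaces forces a fundamental-theorem-of-calculus estimate in which the \emph{full} gradient appears (see the estimate for $\mathrm{VI}_{k,2}$ at \eqref{eq:christine}), and Ornstein's Non-Inequality then renders this useless for $\bd$-maps. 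The decisive idea you are missing is the reduction to $\sobo^{1,1}$: choose $\vv\coloneqq\mathrm{ext}(\mathrm{tr}_{\partial\Omega}(\uu))\in\sobo^{1,1}(\Omega;\R^{2})$ via Remark~\ref{rem:peetre}, so that $\mathrm{tr}_{\partial\Omega}(\uu-\vv)=0$, and then apply Lemma~\ref{lem:GSTLemma} on the strip $S_{\delta}$ with the correct scaling to get $\|\uu-\vv\|_{\lebe^{1}(S_{\delta})}\leq c\,\delta\,(|\E\uu|(S_{\delta})+\|\nabla\vv\|_{\lebe^{1}(S_{\delta})})$. Combined with $\|\nabla\eta_{\delta}\|_{\lebe^{\infty}}\leq c/\delta$ and the Lipschitz bound on $F$, this shows that swapping $\uu$ for $\vv$ in $\int_{S_{\delta}}F(\uu\otimes_{\mathbb{T}}\nabla\eta_{\delta})\dif x$ costs only $o(1)$. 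Once you are in $\sobo^{1,1}$, the interior trace operator \emph{is} norm-continuous along Lipschitz hypersurfaces (cf.\ \eqref{eq:kennzeichen}), and the full-gradient estimates go through. This is precisely the content of Remarks~\ref{rem:bulk1}--\ref{rem:bulk2}: the passage to $\sobo^{1,1}$ is not a convenience but the mechanism by which Ornstein's obstruction is circumvented. Your sketch does not contain it.

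\textbf{A secondary error.} Your claim that $|F-F^{\infty}|$ is uniformly bounded on $\RR$ is false: take $F(z)=|z|-\log(1+|z|)$, which is convex, of linear growth, with $F^{\infty}(z)=|z|$, yet $F(z)-F^{\infty}(z)=-\log(1+|z|)\to-\infty$. What \emph{is} true (and what the paper uses at \eqref{eq:tabea9a}) is the one-sided bound $F(z)-F^{\infty}(z)\leq F(0)$, which follows from monotonicity of difference quotients of convex functions. After reducing without loss of generality to $F(0)=0$, this gives $F(z/\delta)\leq \tfrac{1}{\delta}F^{\infty}(z)$, which suffices for the $\limsup$-inequality you need. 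Your two-sided replacement is both unnecessary and unavailable.

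A minor further point: the raw distance function $d(x)=\dista(x,\partial\Omega)$ need not produce level sets with uniformly controlled Lipschitz character or with $\nabla d|_{\{d=t\}}\to-\nu_{\partial\Omega}$ for merely Lipschitz $\partial\Omega$; the paper instead uses a collar map $\Phi\colon[0,1]\times\partial\Omega\to\overline{\Omega}$ with the properties (P1)--(P4), in particular the a.e.\ gradient convergence $\nabla f_{t}\to\nabla f$ in local graph coordinates, which is exactly what drives the normal-convergence part of the argument.
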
 
\begin{figure}
\begin{center}
\begin{tikzpicture}[scale=1.3]
\draw[-,fill=red!10!white,black!10!white] (-1,1) [out = 0, in = 180] to (2.5,-0.5) -- (3,0) [out=180, in = 0] to (-0.5,1.5) -- (-1,1);
\draw[-,black] (-1,1) [out = 0, in = 180] to (2.5,-0.5);
\draw[-] (-0.9,1.1) [out = 0, in = 180] to (2.6,-0.4);
\draw[-] (-0.8,1.2) [out = 0, in = 180] to (2.7,-0.3);
\draw[-] (-0.7,1.3) [out = 0, in = 180] to (2.8,-0.2);
\draw[-] (-0.6,1.4) [out = 0, in = 180] to (2.9,-0.1);
\draw[-,black!10!white,fill=black!10!white,opacity=0.3] (-0.5,1.5) [out=180,in=0] to (-2,1.75) -- (-2,-1) -- (4,-1) -- (4,-0.25) [out= 150, in = 0] to (3,0) [out=180, in = 0] to (-0.5,1.5);
\draw[-] (-1,1) [out =180, in =0] to (-2,1.25);
\draw[-] (2.5,-0.5) [out =0, in =180] to (4,-0.75);
\draw[-,black,thick] (3,0) [out=180, in = 0] to (-0.5,1.5);
\draw[-,black,thick] (-0.5,1.5) [out=180,in=0] to (-2,1.75);
\draw[-,black,thick] (4,-0.25) [out= 150, in = 0] to (3,0);
\node[rotate=-9] at (3,-0.26) {$...$};
\node[rotate=-8] at (-0.95,1.3) {$...$};
\node[black!30!white] at (-1.5,-0.5) {\LARGE  $\mathsf{\Omega}$};
\node[black] at (-1.5,1.95) { $\mathsf{\partial\Omega}$};
\node[black] at (4.95,-0.25) {\footnotesize $\Phi(0,\partial\Omega)=\mathsf{\partial\Omega}$};
\node[black] at (4.65,-0.75) {\footnotesize $\Phi(1,\partial\Omega)$};
\end{tikzpicture}
\end{center}
\caption{The construction of the spatial cut-offs in the bulk-approximation of boundary terms.}\label{fig:collars}
\end{figure}
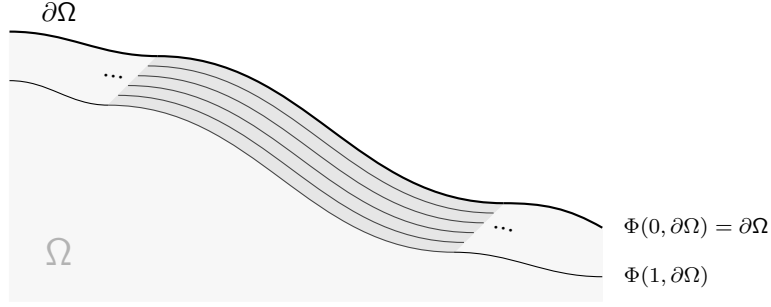
\begin{proof}
We begin with some preparations. Since $\partial\Omega$ is Lipschitz, we find a collar or deformation map $\Phi\colon [0,1]\times\partial\Omega\to\mathcal{O}\cap\overline{\Omega}$, where $\mathcal{O}\subset\R^{2}$ is a suitable open neighbourhood of $\partial\Omega$ in $\R^{n}$, with the following properties; these can be traced back to Hofman et al. \cite[Prop. 4.19]{Hofman} and Doktor \cite{Doktor}: 


\begin{enumerate}[label=(P\arabic*)]
    \item\label{item:coll1} $\Phi$ is Bi-Lipschitz onto its image, and the Bi-Lipschitz constants are uniformly bounded in $0\leq t\leq 1$, and the bounds only depend on the Lipschitz character of $\partial\Omega$. 
    \item\label{item:coll2} For any $t\in[0,1]$, $\Omega_{t}\coloneqq \Omega\setminus\Phi([0,t]\times\partial\Omega)$ has Lipschitz boundary $\partial\Omega_{t}$, and the underlying Lipschitz constants are uniformly bounded in $t\in[0,1]$.  
    \item\label{item:coll2a} There exists a constant $c>1$ with the following property: For all $s,t\in[0,1]$ and any $x\in\partial\Omega_{t}$, we have  
    \begin{align*}
   \frac{1}{c}|s-t|\leq \mathrm{dist}(x,\partial\Omega_{s})\leq c\,|s-t|.
    \end{align*}
    \item\label{item:coll3} $\Phi(0,\partial\Omega)=\partial\Omega$. 
    \item\label{item:coll4} \emph{Coordinate cylinder coverings}: There exists a finite family $\mathscr{Z}$ of coordinate cylinders $\bm{\mathscr{C}}$ which also form a family of coordinate cylinders for $\partial\Omega_{t}$ for any $0\leq t \leq 1$. Moreover, for each such coordinate cylinder $\mathscr{C}$, the following holds: If $f$ and $f_{t}$ are the Lipschitz graph parametrisations of $\partial\Omega$ and $\partial\Omega_{t}$ in $\mathscr{C}$, then there exists $c>0$ such that 
    \begin{align}\label{eq:robertweizen}
    \sup_{0\leq t\leq 1}\|\nabla f_{t}\|_{\lebe^{\infty}} \leq c\|\nabla f\|_{\lebe^{\infty}}\;\;\;\text{and}\;\;\;\nabla f_{t}\to \nabla f\;\;\;\text{$\mathscr{L}^{1}$-a.e.}. 
    \end{align}
\end{enumerate}
See Figure \ref{fig:collars} for this geometric set-up. 
In the following, we put $S_{\delta}\coloneqq \Phi((0,t)\times\partial\Omega)$. Next, for any sufficiently small $0<\delta<1$, we define a cut-off function  $\eta_{\delta}\colon\overline{\Omega}\to[0,1]$ via 
\begin{align}\label{eq:etacutoff}
\eta_{\delta}(x)\coloneqq \begin{cases} 
\frac{t}{\delta}&\;\text{if}\;x\in\partial\Omega_{t},\;0\leq t\leq\delta,\\ 
1&\;\text{if}\;x\in \Omega_{\delta}. 
\end{cases}
\end{align}
By \ref{item:coll1}, $\eta_{\delta}$ is well-defined. 

\emph{Step 1. A Lipschitz estimate.} We first claim that every $\eta_{\delta}$ is Lipschitz, and there exists $c>0$ such that 
\begin{align}\label{eq:tabealipschitz}
\|\nabla\eta_{\delta}\|_{\lebe^{\infty}(\Omega)}\leq \frac{c}{\delta}\qquad\text{for all $0<\delta<1$}. 
\end{align}
To see \eqref{eq:tabealipschitz}, we distinguish several constellations between $x,y\in\overline{\Omega}$.  \begin{itemize}
\item If $x\in\Omega\setminus\Phi([0,\delta]\times\partial\Omega)$ and $y\in\Phi(\{t\}\times\partial\Omega)$ for some $t\in[0,\delta]$, then \begin{align*}
|\eta_{\delta}(x)-\eta_{\delta}(y)|=\frac{|\delta-t|}{\delta} \stackrel{\text{\ref{item:coll2a}}}{\leq} \frac{c}{\delta}\,\mathrm{dist}(y,\partial\Omega_{\delta}) \stackrel{x\in\Omega\setminus\Phi([0,\delta]\times\partial\Omega)}{\leq} \frac{c}{\delta}|x-y|.
\end{align*} 
\item If $x\in\Phi(\{s\}\times\partial\Omega)$ and $y\in\Phi(\{t\}\times\partial\Omega)$ for some $0\leq s,t\leq\delta$, then 
\begin{align*}
|\eta_{\delta}(x)-\eta_{\delta}(y)| = \frac{|s-t|}{\delta} \stackrel{\text{\ref{item:coll2a}}}{\leq} \frac{c}{\delta}\mathrm{dist}(x,\partial\Omega_{t}) \stackrel{y\in\partial\Omega_{t}}{\leq} \frac{c}{\delta}|x-y|.
\end{align*} 
\end{itemize}
By symmetry, these are the only two non-trivial cases, and \eqref{eq:tabealipschitz} follows. Since $\eta_{\delta}|_{\partial\Omega}=0$, we have that $\eta_{\delta}\in\mathrm{Lip}_{0}(\Omega)$. 

 \emph{Step 2. Reduction to an upper bound.} Let $\mathbf{u}\in\bd(\Omega)$, and let $0<\delta<1$ be sufficiently small. Since $\eta_{\delta}\in\mathrm{Lip}_{0}(\Omega)$, we have $\eta_{\delta}\mathbf{u}\in\bd(\Omega)$ and, by  \eqref{eq:prodruleweak}, the following product rule:
\begin{align}\label{eq:prodrule}
\mathbb{T}(\eta_{\delta}\mathbf{u})=\eta_{\delta}\mathbb{T}\mathbf{u} + \mathbf{u}\otimes_{\mathbb{T}}\nabla\eta_{\delta}\mathscr{L}^{2}\mres S_{\delta}\qquad\text{as $\RR$-valued measures on $\Omega$}. 
\end{align}
We now split 
\begin{align}
\int_{\Omega} F(\mathbb{T}(\eta_{\delta}\mathbf{u})) & \stackrel{\eqref{eq:prodrule}}{=} \int_{\Omega} F(\eta_{\delta}\mathbb{T}\mathbf{u} + \mathbf{u}\otimes_{\mathbb{T}}\nabla\eta_{\delta}\mathscr{L}^{2}\mres S_{\delta}) \notag\\ 
& \;\; = \int_{\Omega} F(\eta_{\delta}\mathbb{T}\mathbf{u}) + \Big(\int_{S_{\delta}} F(\eta_{d}\mathbb{T}\mathbf{u} + \mathbf{u}\otimes_{\mathbb{T}}\nabla\eta_{\delta}\mathscr{L}^{2}) - \int_{S_{\delta}} F(\mathbf{u}\otimes_{\mathbb{T}}\nabla\eta_{\delta})\dif x\Big)\label{eq:splittin1} \\ 
& \;\, + \int_{S_{\delta}} F(\mathbf{u}\otimes_{\mathbb{T}}\nabla\eta_{\delta})\dif x - \int_{S_{\delta}} F(\eta_{\delta}\mathbb{T}\mathbf{u}) \eqqcolon \mathrm{I}_{\delta}+\mathrm{II}_{\delta}+\mathrm{III}_{\delta}-\mathrm{IV}_{\delta}.\notag
\end{align}
By the linear growth hypothesis on $F$ and dominated convergence, we have that 
\begin{align}\label{eq:split1}
\lim_{\delta\searrow 0}\mathrm{I}_{\delta} = \int_{\Omega}F(\mathbb{T}\mathbf{u}).
\end{align}
Next, since $F$ is convex and of linear growth, it is Lipschitz with some constant $L>0$; see Lemma \ref{lem:subdif}\ref{item:RobertDenk}. Because $\mathbb{T}\mathbf{u}\in\mathrm{RM}_{\mathrm{fin}}(\Omega;\R_{\mathrm{sym}}^{2\times 2})$ and $S_{\delta}\to\emptyset$ as $\delta\searrow 0$, we conclude by use of \eqref{eq:lgresh} that
\begin{align}\label{eq:split2}
\limsup_{\delta\searrow 0} (|\mathrm{II}_{\delta}| + |\mathrm{IV}_{\delta}|)\leq \limsup_{\delta\searrow 0} \Big((L+c_{3}) |\mathbb{T}\mathbf{u}|(S_{\delta})+c_{3}\mathscr{L}^{2}(S_{\delta})\Big) =0. 
\end{align}
Hence, in view of \eqref{eq:splittin1}--\eqref{eq:split2}, we arrive at 
\begin{align}\label{eq:tabea1}
\limsup_{\delta\searrow 0} \int_{\Omega}F(\mathbb{T}(\eta_{\delta}\mathbf{u})) \leq  \int_{\Omega}F(\mathbb{T}\mathbf{u}) + \limsup_{\delta\searrow 0}\int_{S_{\delta}}F(\mathbf{u}\otimes_{\mathbb{T}}\nabla\eta_{\delta})\dif x. 
\end{align}
Now assume for the time being that we are able to establish that 
\begin{align}\label{eq:tabea2}
\limsup_{\delta\searrow 0}\int_{S_{\delta}}F(\mathbf{u}\otimes_{\mathbb{T}}\nabla\eta_{\delta})\dif x \leq \int_{\partial\Omega}F^{\infty}(-\mathrm{tr}_{\partial\Omega}(\mathbf{u})\otimes_{\mathbb{T}}\nu_{\partial\Omega})\dif\mathscr{H}^{1}. 
\end{align}
We claim that this implies the assertion of the theorem. Since $\eta_{\delta}\mathbf{u}\to\mathbf{u}$ strongly in $\lebe^{1}(\Omega;\R^{2})$, we conclude as in \eqref{eq:tabeaprovesweak*} to find that $\eta_{\delta}\mathbf{u}\stackrel{*}{\rightharpoonup}\mathbf{u}$ in $\bd(\Omega)$. By Corollary \ref{cor:LSCplussbdryvalues}, the functional 
\begin{align*}
\mathscr{F}_{0}^{*}[\mathbf{w}] \coloneqq \int_{\Omega}F(\mathbb{T}\mathbf{w}) + \int_{\partial\Omega}F^{\infty}(-\mathrm{tr}_{\partial\Omega}(\mathbf{w})\otimes_{\mathbb{T}}\nu_{\partial\Omega})\dif\mathscr{H}^{1},\qquad\mathbf{w}\in\bd(\Omega),
\end{align*}
is lower semicontinuous with respect to weak*-convergence on $\bd(\Omega)$. Since $\mathrm{tr}_{\partial\Omega}(\eta_{\delta}\mathbf{u})=0$ on $\partial\Omega$ for any $0<\delta<1$, this immediately gives us 
\begin{align}\label{eq:split4}
\int_{\Omega}F(\mathbb{T}\mathbf{u}) + \int_{\partial\Omega}F^{\infty}(-\mathrm{tr}_{\partial\Omega}(\mathbf{u})\otimes_{\mathbb{T}}\nu_{\partial\Omega})\dif\mathscr{H}^{1} \leq \liminf_{\delta\searrow 0}\int_{\Omega}F(\mathbb{T}(\eta_{\delta}\mathbf{u})). 
\end{align}
Now, subject to \eqref{eq:tabea2}, \eqref{eq:tabea1} yields that the right-hand side of \eqref{eq:split4} can equally be bounded by its left-hand side, and this implies the claim. Hence, it remains to establish \eqref{eq:tabea2}. 

\emph{Step 2.} To this end, we claim in an intermediate step that it suffices to prove \eqref{eq:tabea2} subject to a  higher $\sobo^{1,1}$-regularity assumption. Namely, suppose that we can show  that 
\begin{align}\label{eq:tabea3}
\limsup_{\delta\searrow 0}\int_{S_{\delta}}F(\mathbf{v}\otimes_{\mathbb{T}}\nabla\eta_{d})\dif x \leq \int_{\partial\Omega}F^{\infty}(-\mathrm{tr}_{\partial\Omega}(\mathbf{v})\otimes_{\mathbb{T}}\nu_{\partial\Omega})\dif\mathscr{H}^{1} 
\end{align}
for all $\mathbf{v}\in\sobo^{1,1}(\Omega;\R^{2})$. We recall that, since $\partial\Omega$ is Lipschitz, $\sobo^{1,1}(\Omega;\R^{2})$ and $\bd(\Omega)$ have the same boundary trace space $\lebe^{1}(\partial\Omega;\R^{2})$; this means that the boundary trace operator is onto $\lebe^{1}(\partial\Omega;\R^{2})$ in both cases, see Lemma \ref{lem:traceoperator}\ref{item:tracex1}. Moreover, the restriction of the boundary trace operator on $\bd(\Omega)$ to $\sobo^{1,1}(\Omega;\R^{2})$ coincides with the boundary trace operator on $\sobo^{1,1}(\Omega;\R^{2})$. We thus may write $\mathrm{tr}_{\partial\Omega}$ for both boundary trace operators  without ambiguities.  For $\mathbf{u}\in\bd(\Omega)$, we may therefore choose a map  $\mathbf{v}\in\sobo^{1,1}(\Omega;\R^{2})$ such that 
\begin{align}\label{eq:tabea4}
\mathrm{tr}_{\partial\Omega}(\mathbf{u})=\mathrm{tr}_{\partial\Omega}(\mathbf{v})\qquad\text{$\mathscr{H}^{1}$-a.e. on $\partial\Omega$}.
\end{align}
Indeed, we may put $\vv\coloneqq\mathrm{ext}(\mathrm{tr}_{\partial\Omega}(\uu))$ with the extension operator from Remark \ref{rem:peetre}.

We claim that there exists a constant $c>0$ such that 
\begin{align}\label{eq:tabea5} 
\int_{S_{\delta}}|\mathbf{u}-\mathbf{v}|\dif x \leq c\,\delta\,(|\mathrm{E}\mathbf{u}|(S_{\delta})+\|\nabla\mathbf{v}\|_{\lebe^{1}(S_{\delta})})\qquad\text{for all sufficiently small $\delta>0$}. 
\end{align}
To this end, recall from \ref{item:coll2} that any $\Omega_{\delta}$ has Lipschitz boundary with Lipschitz characters  uniformly bounded in $0<\delta<1$. We now apply Lemma \ref{lem:GSTLemma} to the particular choice $U\coloneqq S_{\delta}$ and $\Gamma\coloneqq\partial\Omega\subset\partial U$. Moreover, by the Bi-Lipschitz property of $\Phi$, see \ref{item:coll1}, there exists $0<\theta<1$ such that 
\begin{align}\label{eq:unifHausTabea}
\theta\mathscr{H}^{1}(\Gamma)\leq \theta\mathscr{H}^{1}(\partial  S_{\delta}) \leq \mathscr{H}^{1}(\Gamma) \qquad \text{for all sufficiently small $\delta>0$}. 
\end{align}
Hence, Lemma \ref{lem:GSTLemma} implies that there exists $c=c(S_{\delta})>0$ such that 
\begin{align}\label{eq:tabea5a}
\|\mathbf{w}\|_{\lebe^{1}(S_{\delta})}\leq c\Big(\|\mathrm{tr}_{\partial\Omega}(\mathbf{w})\|_{\lebe^{1}(\partial\Omega)} + |\mathrm{E}\mathbf{w}|(S_{\delta})\Big)\qquad \text{for all}\;\mathbf{w}\in\bd(\Omega). 
\end{align}
Now consider  $\mathbf{w}\in\bd(\Omega)$ such that $\mathrm{tr}_{\partial\Omega}(\mathbf{w})=0$ $\mathscr{H}^{n-1}$-a.e. on $\partial\Omega$, so that \eqref{eq:tabea5a} becomes
\begin{align}\label{eq:tabea5b}
\|\mathbf{w}\|_{\lebe^{1}(S_{\delta})}\leq c\, |\mathrm{E}\mathbf{w}|(S_{\delta})\qquad \text{for all}\;\mathbf{w}\in\bd(\Omega)\;\text{with}\;\mathrm{tr}_{\partial\Omega}(\uu)=0.
\end{align}
Moreover, by \ref{item:coll1}--\ref{item:coll2a},  $\mathscr{L}^{2}(S_{\delta})$ is uniformly proportional to $\delta\mathscr{H}^{1}(\partial\Omega)$. The latter can be seen by the coarea formula from Lemma \ref{lem:coarea}, and we refer the reader to Step 3 from below for a similar argument. Based on these properties and keeping in mind the uniform boundedness of the Lipschitz characters of $\partial S_{\delta}$ and \eqref{eq:unifHausTabea}, 
a moment's reflection on scaling yields that the constant $c=c(S_{\delta})$ in \eqref{eq:tabea5b} can be chosen to be of the form $c\delta$ with some $c>0$ independent of $\delta$ and $\mathbf{w}$. In conclusion, we obtain that 
\begin{align}\label{eq:tabea6}
\|\mathbf{w}\|_{\lebe^{1}(S_{\delta})}\leq c\,\delta |\mathrm{E}\mathbf{w}|(S_{\delta})\qquad\text{for all}\;\mathbf{w}\in\bd(\Omega)\;\text{with}\;\mathrm{tr}_{\partial\Omega}(\mathbf{w})=0\;\text{$\mathscr{H}^{1}$-a.e. on $\partial\Omega$}. 
\end{align}
Based on \eqref{eq:tabea4}, we may apply \eqref{eq:tabea6} to $\mathbf{w}\coloneqq \mathbf{u}-\mathbf{v}$. Since $|z^{\mathrm{sym}}|\leq |z|$ for all $z\in\mathbb{R}^{2\times 2}$, we conclude \eqref{eq:tabea5}. 

By our present assumption, \eqref{eq:tabea3} is available for $\mathbf{v}\in\sobo^{1,1}(\Omega;\R^{2})$.  Again recalling that $F$ is Lipschitz with constant $L>0$, we  estimate by use of \eqref{eq:tabealipschitz}:
\begin{align}\label{eq:tabea7}
\begin{split}
\int_{S_{\delta}}|F(\mathbf{u}\otimes_{\mathbb{T}}\nabla\eta_{\delta})-F(\mathbf{v}\otimes_{\mathbb{T}}\nabla\eta_{\delta})|\dif x & \leq cL\int_{S_{\delta}}\frac{|\mathbf{u}-\mathbf{v}|}{\delta}\dif x \\ 
& \!\!\!\! \stackrel{\eqref{eq:tabea5}}{\leq} c(|\mathrm{E}\mathbf{u}|(S_{\delta})+\|\nabla\mathbf{v}\|_{\lebe^{1}(S_{\delta})}) \stackrel{\delta\searrow 0}{\longrightarrow} 0.
\end{split}
\end{align}
In consequence, we arrive at 
\begin{align*}
\limsup_{\delta\searrow 0} \int_{S_{\delta}}F(\mathbf{u}\otimes_{\mathbb{T}}\nabla\eta_{\delta})\dif x & \leq \limsup_{\delta\searrow 0}\int_{S_{\delta}}|F(\mathbf{u}\otimes_{\mathbb{T}}\nabla\eta_{\delta})-F(\mathbf{v}\otimes_{\mathbb{T}}\nabla\eta_{\delta})|\dif x  \\ & + \limsup_{\delta\searrow 0}\int_{S_{\delta}}F(\mathbf{v}\otimes_{\mathbb{T}}\nabla\eta_{\delta})\dif x \\ 
& \!\!\!\! \stackrel{\eqref{eq:tabea7}}{=} \limsup_{\delta\searrow 0}\int_{S_{\delta}}F(\mathbf{v}\otimes_{\mathbb{T}}\nabla\eta_{\delta})\dif x \\ 
& \!\!\!\!\stackrel{\eqref{eq:tabea3}}{\leq} \int_{\partial\Omega}F^{\infty}(-\mathrm{tr}_{\partial\Omega}(\mathbf{v})\otimes_{\mathbb{T}}\nu_{\partial\Omega})\dif\mathscr{H}^{1} \\
&\!\!\!\!\stackrel{\eqref{eq:tabea4}}{=} \int_{\partial\Omega}F^{\infty}(-\mathrm{tr}_{\partial\Omega}(\mathbf{u})\otimes_{\mathbb{T}}\nu_{\partial\Omega})\dif\mathscr{H}^{1},  
\end{align*}
and this is \eqref{eq:tabea2}. 

\emph{Step 3.} By what we have seen above, it suffices to establish \eqref{eq:tabea3} for $\mathbf{v}\in\sobo^{1,1}(\Omega;\R^{2})$. To this end, we firstly note that it is no loss of generality to assume that $F(0)=0$. Namely, if not, we consider \eqref{eq:tabea3} with the integrand $\widetilde{F}(z)\coloneqq F(z)-F(0)$. Subject to \eqref{eq:tabea3} applied to $\widetilde{F}$, the validity of \eqref{eq:tabea3} with the integrand $F$ then follows from  $\mathscr{L}^{2}(S_{\delta})\to 0$ and $\widetilde{F}^{\infty}=F^{\infty}$. Secondly, for any $0<t<\delta$, the outer unit normal to $\partial\Omega_{t}$ is given by 
\begin{align}\label{eq:tabeanormal}
\nu_{\partial\Omega_{t}}=-\frac{\nabla\eta_{\delta}}{|\nabla\eta_{\delta}|}\qquad\text{$\mathscr{H}^{1}$-a.e. on $\partial\Omega_{t}$}. 
\end{align}
Next we employ the coarea formula, see Lemma \ref{lem:coarea}; based on the conventions adopted therein, we put $f\coloneqq \eta_{\delta}$, whereby its Jacobian is given by $\mathbf{J}_{f}=|\nabla \eta_{\delta}|$. For future reference, we note that the convexity of $F$ implies that its difference quotients are increasing. Therefore, $F(0)=0$ gives us 
\begin{align}\label{eq:tabea9a}
\frac{1}{t}F(tz)\leq F^{\infty}(z)\qquad \text{for all}\;z\in\mathbb{R}_{\mathrm{sym}}^{2\times 2}\;\text{and all}\;t>0.
\end{align}
In consequence, we find 
\begin{align}
\int_{S_{\delta}}F(\mathbf{v}\otimes_{\mathbb{T}}\nabla\eta_{\delta})\dif x & = \int_{S_{\delta}\cap\{|\nabla\eta_{\delta}|> 0\}}\frac{1}{|\nabla\eta_{\delta}|}F\Big(|\nabla\eta_{\delta}|\Big(\mathbf{v}\otimes_{\mathbb{T}}\frac{\nabla\eta_{\delta}}{|\nabla\eta_{\delta}|}\Big)\Big)|\nabla\eta_{\delta}|\dif x \notag \\ 
& \!\!\!\! \stackrel{\eqref{eq:tabea9a}}{\leq} \int_{S_{\delta}\cap\{|\nabla\eta_{\delta}|>0\}} F^{\infty}\Big(\mathbf{v}\otimes_{\mathbb{T}}\frac{\nabla\eta_{\delta}}{|\nabla\eta_{\delta}|}\Big)\mathbf{J}_{f}\dif x \label{eq:tabea9}\\ 
& \!\!\!\! \stackrel{\eqref{eq:tabeanormal}}{=} \int_{0}^{1}\int_{\partial\Omega_{\delta s}} F^{\infty}(-\mathrm{tr}_{\partial\Omega_{\delta s}}(\mathbf{v})\otimes_{\mathbb{T}}\nu_{\partial\Omega_{\delta s}})\dif\mathscr{H}^{1}\dif s \eqqcolon \mathrm{V}, \notag
\end{align}
where we have used Lemma \ref{lem:coarea} in the ultimate line. Next, we claim that 
\begin{align}\label{eq:tabea10}
\lim_{\delta\searrow 0} \int_{0}^{1}\int_{\partial\Omega_{\delta s}} F^{\infty}(-\mathrm{tr}_{\partial\Omega_{\delta s}}(\mathbf{v})\otimes_{\mathbb{T}}\nu_{\partial\Omega_{\delta s}})\dif\mathscr{H}^{1}\dif s = \int_{\partial\Omega}F^{\infty}(-\mathrm{tr}_{\partial\Omega}(\mathbf{v})\otimes_{\mathbb{T}}\nu_{\partial\Omega})\dif\mathscr{H}^{1}. 
\end{align}
First suppose that, in addition, $\mathbf{v}\in\hold^{1}(\overline{\Omega};\R^{2})$. We pause to clarify the geometric set-up: Based on \ref{item:coll4}, we may choose (open) coordinate cylinders $\mathscr{C}_{1},...,\mathscr{C}_{N}$ which cover both $\partial\Omega$ and $\partial\Omega_{t}$ for all sufficiently small $0<t<1$. Subject to these coordinate cylinders, we choose a partition of unity $\psi_{1},...,\psi_{N}$; by the joint covering property, we may assume $(\psi_{j})_{j=1}^{N}$ to be a partition of unity of both $\partial\Omega$ and $\partial\Omega_{t}$ for all sufficiently small $0<t<1$; in particular, $\spt(\psi_{k})\subset\mathscr{C}_{k}$ for all $k\in\{1,...,N\}$. Let $0<s<1$ be arbitrary but fixed. We write 
\begin{align*}
&\left\vert\int_{\partial\Omega_{\delta s}} F^{\infty}(-\mathbf{v}(x)\otimes_{\mathbb{T}}\nu_{\partial\Omega_{\delta s}}(x))\dif\mathscr{H}^{1}(x) - \int_{\partial\Omega} F^{\infty}(-\mathbf{v}(x)\otimes_{\mathbb{T}}\nu_{\partial\Omega}(x))\dif\mathscr{H}^{1}(x) \right\vert \\ 
& \leq \sum_{k=1}^{N}\left\vert\int_{\partial\Omega_{\delta s}} \psi_{k}(x)F^{\infty}(-\mathbf{v}(x)\otimes_{\mathbb{T}}\nu_{\partial\Omega_{\delta s}}(x))\dif\mathscr{H}^{1}(x) - \right. \\ & \left. \;\;\;\;\;\;\;\;\;\;\;\;\;\;\;\;\;\;\;\;\;\;\;\;\;\;\;\;\;\;\;\;\;\;\;\;\;\;\;\;\;\;\;\;\;\;\;\;\;\;\;\;\;\;\;\;\,\int_{\partial\Omega} \psi_{k}(x) F^{\infty}(-\mathbf{v}(x)\otimes_{\mathbb{T}}\nu_{\partial\Omega}(x))\dif\mathscr{H}^{1}(x) \right\vert \\ 
& \!\!\!\! \!\!\!\!\!\!\! \stackrel{\spt(\psi_{k})\subset\mathscr{C}_{k}}{\leq} \sum_{k=1}^{N}\left\vert\int_{\mathscr{C}_{k}\cap\partial\Omega_{\delta s}} \psi_{k}(x)F^{\infty}(-\mathbf{v}(x)\otimes_{\mathbb{T}}\nu_{\partial\Omega_{\delta s}}(x))\dif\mathscr{H}^{1}(x) - \right. \\ & \left. \;\;\;\;\;\;\;\;\;\;\;\;\;\;\;\;\;\;\;\;\;\;\;\;\;\;\;\;\;\;\;\;\;\;\;\;\;\;\;\;\;\;\;\;\;\;\;\;\;\;\;\;\;\;\;\;\,\int_{\mathscr{C}_{k}\cap\partial\Omega} \psi_{k}(x) F^{\infty}(-\mathbf{v}(x)\otimes_{\mathbb{T}}\nu_{\partial\Omega}(x))\dif\mathscr{H}^{1}(x) \right\vert \\
& \eqqcolon \sum_{k=1}^{N} \mathrm{VI}_{k}.
\end{align*}
Rotating and translating the single cylinders if necessary, we may directly assume that 
\begin{align*}
\mathscr{C}_{k}=\ball_{r}^{(1)}(0)\times (-R,R), 
\end{align*}
and that $\mathscr{C}_{k}\cap\partial\Omega_{\delta s}$ and $\mathscr{C}_{k}\cap\partial\Omega$ can be written as 
\begin{align*}
\mathscr{C}_{k}\cap\partial\Omega_{\delta s}=\mathrm{graph}(f_{\delta s})\;\;\;\text{and}\;\;\;\mathscr{C}_{k}\cap\partial\Omega = \mathrm{graph}(f)
\end{align*}
for suitable functions $f_{\delta s},f\colon\ball_{r}^{(1)}(0)\to (-R',R')$ with $0<R'<R$; moreover, 
\begin{align*}
\mathscr{C}_{k}\cap\Omega_{\delta s}=\{(x',\theta)\in\mathscr{C}_{k}\colon\;\theta<f_{\delta s}(x')\}\;\;\;\text{and}\;\;\;\mathscr{C}_{k}\cap\Omega=\{(x',\theta)\in\mathscr{C}_{k}\colon\;\theta<f(x')\}. 
\end{align*}
For $\mathscr{L}^{1}$-a.e. $x'\in\ball\coloneqq\ball_{r}^{(1)}(0)$, the unit normal to $\mathrm{graph}(f_{\delta s})$ at $(x',f_{\delta s}(x'))$ is given by 
\begin{align}\label{eq:normaldebra}
\nu_{\partial\Omega_{\delta s}}(x',f_{\delta s}(x'))= \frac{(\nabla f_{\delta s}(x'),1)}{\sqrt{1+|\nabla f_{\delta s}(x')|^{2}}}
\end{align}
and analogously for $\nu_{\partial\Omega}(x',f(x'))$. Lastly, we denote the enclosed strip by 
\begin{align*}
\mathscr{S}_{k}^{\delta s}\coloneqq \{(x',\theta)\in\mathscr{C}_{k}\colon\; f_{\delta s}(x')<\theta < f(x')\}.
\end{align*}
We now estimate as follows:  
\begin{align*}
\mathrm{VI}_{k} & = \left\vert \int_{\ball}\psi_{k}(x',f_{\delta s}(x'))F^{\infty}(-\mathbf{v}(x',f_{\delta s}(x'))\otimes_{\mathbb{T}}\nu_{\partial\Omega_{\delta s}}(x',f_{\delta s}(x')))\sqrt{1+|\nabla f_{\delta s}(x')|^{2}}\right. \\ 
 &\left. \;\;\;\;\;\;\;\; -\psi_{k}(x',f(x'))F^{\infty}(-\mathbf{v}(x',f(x'))\otimes_{\mathbb{T}}\nu_{\partial\Omega}(x',f(x')))\sqrt{1+|\nabla f(x')|^{2}}\dif\mathscr{L}^{1}(x')\right\vert \\ 
 & \leq \left\vert \int_{\ball}\psi_{k}(x',f_{\delta s}(x'))F^{\infty}(-\mathbf{v}(x',f_{\delta s}(x'))\otimes_{\mathbb{T}}\nu_{\partial\Omega_{\delta s}}(x',f_{\delta s}(x')))\sqrt{1+|\nabla f_{\delta s}(x')|^{2}}\right.\\ 
  &\left. \;\;\;\;\;\;\;\; -\psi_{k}(x',f(x'))F^{\infty}(-\mathbf{v}(x',f_{\delta s}(x'))\otimes_{\mathbb{T}}\nu_{\partial\Omega_{\delta s}}(x',f_{\delta s}(x')))\sqrt{1+|\nabla f_{\delta s}(x')|^{2}}\dif\mathscr{L}^{1}(x')\right\vert \\ 
  & + \left\vert \int_{\ball}\psi_{k}(x',f(x'))F^{\infty}(-\mathbf{v}(x',f_{\delta s}(x'))\otimes_{\mathbb{T}}\nu_{\partial\Omega_{\delta s}}(x',f_{\delta s}(x')))\sqrt{1+|\nabla f_{\delta s}(x')|^{2}}\right.\\ 
  &\left. \;\;\;\;\;\;\;\; -\psi_{k}(x',f(x'))F^{\infty}(-\mathbf{v}(x',f(x'))\otimes_{\mathbb{T}}\nu_{\partial\Omega_{\delta s}}(x',f_{\delta s}(x')))\sqrt{1+|\nabla f_{\delta s}(x')|^{2}}\dif\mathscr{L}^{1}(x')\right\vert \\ 
   & + \left\vert \int_{\ball}\psi_{k}(x',f(x'))F^{\infty}(-\mathbf{v}(x',f(x'))\otimes_{\mathbb{T}}\nu_{\partial\Omega_{\delta s}}(x',f_{\delta s}(x')))\sqrt{1+|\nabla f_{\delta s}(x')|^{2}}\right.\\ 
  &\left. \;\;\;\;\;\;\;\; -\psi_{k}(x',f(x'))F^{\infty}(-\mathbf{v}(x',f(x'))\otimes_{\mathbb{T}}\nu_{\partial\Omega}(x',f(x')))\sqrt{1+|\nabla f_{\delta s}(x')|^{2}}\dif\mathscr{L}^{1}(x')\right\vert \\ 
   & + \left\vert \int_{\ball}\psi_{k}(x',f(x'))F^{\infty}(-\mathbf{v}(x',f(x'))\otimes_{\mathbb{T}}\nu_{\partial\Omega}(x',f(x')))\sqrt{1+|\nabla f_{\delta s}(x')|^{2}}\right.\\ 
  &\left. \;\;\;\;\;\;\;\; -\psi_{k}(x',f(x'))F^{\infty}(-\mathbf{v}(x',f(x'))\otimes_{\mathbb{T}}\nu_{\partial\Omega}(x',f(x')))\sqrt{1+|\nabla f(x')|^{2}}\dif\mathscr{L}^{1}(x')\right\vert \\ 
  & =: \int_{\ball} \mathrm{VI}_{k,1}\dif\mathscr{H}^{1} + ... + \int_{\ball}\mathrm{VI}_{k,4}\dif\mathscr{H}^{1} =: \mathrm{VII}_{k,1} + ... + \mathrm{VII}_{k,4}. 
\end{align*}
\emph{On $\mathrm{VII}_{k,1}$.} By Lemma \ref{lem:subdif}\ref{item:RobertDenk}, $F^{\infty}$ is Lipschitz together with $F^{\infty}(0)=0$. By the fundamental theorem of calculus we thus have, for $\mathscr{L}^{1}$-a.e. $x'\in\ball$, 
\begin{align}
|{\mathrm{VI}_{k,1}(x')}| & \leq c\,\mathrm{Lip}(\psi_{k})|f_{\delta s}(x')-f(x')|\,|\mathbf{v}(x',f_{\delta s}(x'))|\,\sqrt{1+|\nabla f_{\delta s}(x')|^{2}} \notag \\ 
& \leq c\,\mathrm{Lip}(\psi_{k})|f_{\delta s}(x')-f(x')|\,|\mathbf{v}(x',f_{\delta s}(x'))-\mathbf{v}(x',f(x'))|\,\sqrt{1+|\nabla f_{\delta s}(x')|^{2}} \notag \\ 
& + c\,\mathrm{Lip}(\psi_{k})|f_{\delta s}(x')-f(x')|\,|\mathbf{v}(x',f(x'))|\,\sqrt{1+|\nabla f_{\delta s}(x')|^{2}} \notag \\ 
& \leq c\,\mathrm{Lip}(\psi_{k})|f_{\delta s}(x')-f(x')|^{2}\,\Big(\int_{0}^{1}|(\partial_{x_{n}}\mathbf{v})(x',f_{\delta s}(x')+\vartheta(f(x')-f_{\delta s}(x')))|\dif\vartheta\Big)\times \label{eq:gehaltsverhandlung1}\\ & \times \,\sqrt{1+|\nabla f_{\delta s}(x')|^{2}} \notag\\ 
& + c\,\mathrm{Lip}(\psi_{k})|f_{\delta s}(x')-f(x')|\,|\mathbf{v}(x',f(x'))|\,\sqrt{1+|\nabla f_{\delta s}(x')|^{2}}. \notag
\end{align}
We recall \eqref{eq:robertweizen}, which gives us by virtue of the coarea formula 
\begin{align}\label{eq:matahari}
\mathrm{VII}_{k,1} \leq c\,\|f_{\delta s}-f\|_{\lebe^{\infty}(\ball)}\int_{\mathscr{S}_{k}^{\delta s}}|\nabla\mathbf{v}|\dif x  + c\,\|f_{\delta s}-f\|_{\lebe^{\infty}(\ball)} \int_{\mathscr{C}_{k}\cap\partial\Omega}|\mathbf{v}|\dif\mathscr{H}^{1}, 
\end{align}
where $c>0$ is independent of $\delta,s$ and $\mathbf{v}$.

\emph{On $\mathrm{VII}_{k,2}$.} Again, we recall that $F^{\infty}$ is Lipschitz. For $\mathscr{L}^{1}$-a.e. $x'\in\ball$, we estimate similarly as in the preceding step:
\begin{align}\label{eq:christine}
\begin{split}
|{\mathrm{VI}_{k,2}(x')}| & \leq c\,|\mathbf{v}(x',f_{\delta s}(x'))-\mathbf{v}(x',f(x'))|\sqrt{1+|\nabla f_{\delta s}(x')|^{2}} \\ 
& \leq c\,\Big(\int_{0}^{1}|(\partial_{x_{n}}\mathbf{v})(x',f_{\delta s}(x')+\vartheta(f(x')-f_{\delta s}(x')))|\dif\vartheta\Big)\times \\ &\times |f(x')-f_{\delta s}(x')|\sqrt{1+|\nabla f_{\delta s}(x')|^{2}}
\end{split}
\end{align}
Again recalling \eqref{eq:robertweizen}, the coarea formula yields 
\begin{align}\label{eq:mathari1a}
\mathrm{VII}_{k,2} & \leq c\,\int_{\mathscr{S}_{k}^{\delta s}}|\nabla\mathbf{v}|\dif x, 
\end{align}
and $c>0$ is independent of $\delta$, $s$ and $\mathbf{v}$. 

\emph{On $\mathrm{VII}_{k,3}$.} For $\mathscr{L}^{1}$-a.e. $x'\in\ball$, we abbreviate $a_{\delta s}(x')\coloneqq \sqrt{1+|\nabla f_{\delta s}(x')|^{2}}$ and 
$a(x')\coloneqq \sqrt{1+|\nabla f(x')|^{2}}$. Since $z\mapsto\sqrt{1+|z|^{2}}$ in itself is convex and of linear growth, it is Lipschitz. Therefore, we have the Lipschitz estimate 
\begin{align}\label{eq:figaro}
|a_{\delta s}(x')-a(x')| \leq c |\nabla f_{\delta s}(x')-\nabla f(x')|.
\end{align}
We now employ \eqref{eq:normaldebra} to find that 
\begin{align}
 |\nu_{\partial\Omega_{\delta s}}(x',f_{\delta s}(x'))-\nu_{\partial\Omega}(x',f(x'))| & \leq \left\vert \frac{(\nabla f_{\delta s}(x'),1)}{a_{\delta s}(x')}-\frac{(\nabla f(x'),1)}{a(x')}\right\vert \notag\\ 
& \leq \frac{1}{a_{\delta s}(x')a(x')}|a(x')(\nabla f_{\delta s}(x'),1)-a_{\delta s}(x')(\nabla f(x'),1)| \notag\\ 
& \leq \frac{1}{a_{\delta s}(x')a(x')}|(a(x')-a_{\delta ,s}(x'))(\nabla f_{\delta,s}(x'),1)| \label{eq:touchdown}\\ 
& + \frac{1}{a(x')}|(\nabla f_{\delta s}(x'),1)-(\nabla f(x'),1)| \notag\\ 
& \!\!\!\!\!\!\!\!\!\! \stackrel{\eqref{eq:robertweizen}, \eqref{eq:figaro}}{\leq}  c |\nabla f_{\delta s}(x')-\nabla f(x')|. \notag
\end{align}
This implies that 
\begin{align*}
|{\mathrm{VI}_{k,3}(x')}| & \leq c\, |\mathbf{v}(x',f(x'))|\,|\nu_{\partial\Omega_{\delta s}}(x',f_{\delta s}(x'))-\nu_{\partial\Omega}(x',f(x'))|\sqrt{1+|\nabla f(x')|^{2}} \\ 
& \!\!\!\! \stackrel{\eqref{eq:touchdown}}{\leq} c\,|\mathbf{v}(x',f(x'))|\, |\nabla f_{\delta s}(x')-\nabla f(x')|\,\sqrt{1+|\nabla f(x')|^{2}},  
\end{align*}
and an integration with respect to $x'\in\ball$ yields 
\begin{align}\label{eq:matahari3}
\mathrm{VII}_{k,3} & \leq c\,\int_{\ball}|\mathbf{v}(x',f(x'))|\, |\nabla f_{\delta s}(x')-\nabla f(x')|\,\sqrt{1+|\nabla f(x')|^{2}}\dif\mathscr{L}^{1}(x'). 
\end{align}
\emph{On $\mathrm{VII}_{k,4}$.} For $\mathscr{L}^{1}$-a.e. $x'\in\ball$, we recall \eqref{eq:figaro} to find 
\begin{align*}
|\mathrm{VI}_{k,4}(x')| & \leq |\mathbf{v}(x',f(x'))|\,|\nabla f_{\delta s}(x')-\nabla f(x')|
\end{align*}
and therefore, again by \eqref{eq:robertweizen}, 
\begin{align}\label{eq:mathari4}
\begin{split}
\mathrm{VII}_{k,4} & \leq c\,\int_{\ball}|\mathbf{v}(x',f(x'))|\, |\nabla f_{\delta s}(x')-\nabla f(x')|\,\sqrt{1+|\nabla f(x')|^{2}}\dif\mathscr{L}^{1}(x').
\end{split}
\end{align}
We now synthesise inequalities  \eqref{eq:matahari}--\eqref{eq:mathari4} to arrive at 
\begin{align}
&\left\vert\int_{\partial\Omega_{\delta s}} F^{\infty}(-\mathbf{v}(x)\otimes_{\mathbb{T}}\nu_{\partial\Omega_{\delta s}}(x))\dif\mathscr{H}^{1}(x) - \int_{\partial\Omega} F^{\infty}(-\mathbf{v}(x)\otimes_{\mathbb{T}}\nu_{\partial\Omega}(x))\dif\mathscr{H}^{1}(x) \right\vert \notag\\
& \leq c\,\|f_{\delta s}-f\|_{\lebe^{\infty}(\ball)}\int_{\mathscr{S}_{k}^{\delta s}}|\nabla\mathbf{v}|\dif x  + c\,\|f_{\delta s}-f\|_{\lebe^{\infty}(\ball)} \int_{\mathscr{C}_{k}\cap\partial\Omega}|\mathbf{v}|\dif\mathscr{H}^{1}\label{eq:zeit}\\ 
& + c\,\int_{\mathscr{S}_{k}^{\delta s}}|\nabla\mathbf{v}|\dif x \notag\\ 
& + c\,\int_{\ball}|\mathbf{v}(x',f(x'))|\, |\nabla f_{\delta s}(x')-\nabla f(x')|\,\sqrt{1+|\nabla f(x')|^{2}}\dif\mathscr{H}^{1}(x')\notag
\end{align}
for $\mathbf{v}\in\hold^{1}(\overline{\Omega};\R^{2})$. Let us now lift the preceding inequality to $\sobo^{1,1}(\Omega;\R^{2})$. Since $\partial\Omega$ is Lipschitz, $\hold^{\infty}(\overline{\Omega};\R^{2})$ is dense in $\sobo^{1,1}(\Omega;\R^{2})$ with respect to the norm topology. On $\sobo^{1,1}(\Omega;\R^{2})$, both the boundary trace operators \emph{as well as} interior trace operators along $(n-1)$-dimensional Lipschitz manifolds $\Sigma$ are continuous as maps 
\begin{align}\label{eq:kennzeichen}
\mathrm{tr}_{\partial\Omega}\colon\sobo^{1,1}(\Omega;\R^{2})\to \lebe^{1}(\partial\Omega;\R^{2})\;\;\;\text{and}\;\;\;\mathrm{tr}_{\Sigma}\colon\sobo^{1,1}(\Omega;\R^{2})\to\lebe^{1}(\Sigma;\R^{2})
\end{align}
with respect to the underlying norm topologies; the latter is important to pass to the limit on the left-hand side of the \eqref{eq:zeit}. By the continuity properties of the traces with respect to the $\sobo^{1,1}$-norm topology and since $F^{\infty}$ is Lipschitz, we infer that \eqref{eq:zeit} holds true for $\mathbf{v}\in\sobo^{1,1}(\Omega;\R^{2})$. 

We take the resulting inequality and integrate from $s=0$ to $s=1$. This gives us 
\begin{align*}
&\int_{0}^{1}\left\vert\int_{\partial\Omega_{\delta s}} F^{\infty}(-\mathbf{v}(x)\otimes_{\mathbb{T}}\nu_{\partial\Omega_{\delta s}}(x))\dif\mathscr{H}^{1}(x) - \int_{\partial\Omega} F^{\infty}(-\mathbf{v}(x)\otimes_{\mathbb{T}}\nu_{\partial\Omega}(x))\dif\mathscr{H}^{1}(x) \right\vert \dif s\notag\\
& \leq c\,\int_{0}^{1}\|f_{\delta s}-f\|_{\lebe^{\infty}(\ball)}\int_{\mathscr{S}_{k}^{\delta s}}|\nabla\mathbf{v}|\dif x  + c\,\|f_{\delta s}-f\|_{\lebe^{\infty}(\ball)} \int_{\mathscr{C}_{k}\cap\partial\Omega}|\mathbf{v}|\dif\mathscr{H}^{1}\dif s\\ 
& + c\,\int_{0}^{1}\int_{\mathscr{S}_{k}^{\delta s}}|\nabla\mathbf{v}|\dif x \dif s\notag\\ 
& + c\,\int_{0}^{1}\int_{\ball}|\mathbf{v}(x',f(x'))|\, |\nabla f_{\delta s}(x')-\nabla f(x')|\,\sqrt{1+|\nabla f(x')|^{2}}\dif\mathscr{L}^{1}(x')\dif s.\notag
\end{align*}
Since $f_{\delta s}\to f$ uniformly on $\ball$, $\mathrm{tr}_{\partial\Omega}(\mathbf{v})\in\lebe^{1}(\partial\Omega;\R^{2})$ and $\nabla\mathbf{v}\in\lebe^{1}(\Omega;\R^{2\times 2})$, it follows from \eqref{eq:robertweizen} and dominated convergence that 
\begin{align*}
\lim_{\delta\searrow 0} \int_{0}^{1}& \left\vert\int_{\partial\Omega_{\delta s}} F^{\infty}(-\mathrm{tr}_{\partial\Omega_{\delta s}}(\mathbf{v})(x)\otimes_{\mathbb{T}}\nu_{\partial\Omega_{\delta s}}(x))\dif\mathscr{H}^{1}(x) \right. \\ & \left.\;\;\;\;\;\;\;\;\;\;\;\;\;\;\;\;\;\;\;\;\;\;\;\;\;\;\;\;\;\;\;\; -\int_{\partial\Omega} F^{\infty}(-\mathrm{tr}_{\partial\Omega}(\mathbf{v})(x)\otimes_{\mathbb{T}}\nu_{\partial\Omega}(x))\dif\mathscr{H}^{1}(x) \right\vert  \dif s = 0, 
\end{align*}
recalling that $\mathscr{L}^{2}(\mathscr{S}_{k}^{\delta s})\to 0$ as $\delta \searrow 0$. Hence, \eqref{eq:tabea10} follows. In view of \eqref{eq:tabea9}, this implies \eqref{eq:tabea3}. 

Lastly, we outline the modifications to achieve compact support. In this case, we replace $\eta_{\delta}$ as given in \eqref{eq:etacutoff} by 
\begin{align*}
\eta_{\delta}(x)\coloneqq \begin{cases} 
0 &\;\text{if}\;x\in \Omega\setminus\Omega_{\delta/2}, \\
\frac{2}{\delta}t-1&\;\text{if}\;x\in\partial\Omega_{t},\;\frac{\delta}{2}\leq t\leq\delta,\\ 
1&\;\text{if}\;x\in \Omega_{\delta}. 
\end{cases}
\end{align*}
We may then follow the steps leading to Step 3, where \eqref{eq:tabea9} is now replaced by 
\begin{align*}
\int_{S_{\delta}}F(\mathbf{v}\otimes_{\mathbb{T}}\nabla\eta_{\delta})\dif x & \leq \int_{0}^{1}\int_{\partial\Omega_{\frac{\delta}{2}(s+1)}} F^{\infty}(-\mathrm{tr}_{\partial\Omega_{\frac{\delta}{2}(s+1)}}(\mathbf{v})\otimes_{\mathbb{T}}\nu_{\partial\Omega_{\frac{\delta}{2}( s+1)}})\dif\mathscr{H}^{1}\dif s \\ 
& \leq \int_{0}^{1} \Big(\int_{\partial\Omega_{\frac{\delta}{2}(s+1)}} F^{\infty}(-\mathrm{tr}_{\partial\Omega_{\frac{\delta}{2}(s+1)}}(\mathbf{v})\otimes_{\mathbb{T}}\nu_{\partial\Omega_{\frac{\delta}{2}( s+1)}})\dif\mathscr{H}^{1} \Big. \\ 
& \Big. \;\;\;\;\;\;\;\;\; - \int_{\partial\Omega_{{\delta}/{2}}} F^{\infty}(-\mathrm{tr}_{\partial\Omega_{{\delta}/{2}}}(\mathbf{v})\otimes_{\mathbb{T}}\nu_{\partial\Omega_{{\delta}/{2}}})\dif\mathscr{H}^{1}\Big)\dif s \\ 
& + \Big( \int_{\partial\Omega_{{\delta}/{2}}} F^{\infty}(-\mathrm{tr}_{\partial\Omega_{{\delta}/{2}}}(\mathbf{v})\otimes_{\mathbb{T}}\nu_{\partial\Omega_{{\delta}/{2}}})\dif\mathscr{H}^{1}\Big. \\ 
& \Big. \;\;\;\;\;\;\;\;\; - \int_{\partial\Omega} F^{\infty}(-\mathrm{tr}_{\partial\Omega}(\mathbf{v})\otimes_{\mathbb{T}}\nu_{\partial\Omega})\dif\mathscr{H}^{1}\Big) \\ 
& + \int_{\partial\Omega} F^{\infty}(-\mathrm{tr}_{\partial\Omega}(\mathbf{v})\otimes_{\mathbb{T}}\nu_{\partial\Omega})\dif\mathscr{H}^{1} \eqqcolon \widetilde{\mathrm{V}}_{1} + \widetilde{\mathrm{V}}_{2} + \widetilde{\mathrm{V}}_{3}.  
\end{align*}
We may then argue as in \eqref{eq:tabea10}ff. to see that $\widetilde{\mathrm{V}}_{1}=0$, and as in \eqref{eq:zeit}ff. to see that $\widetilde{\mathrm{V}}_{2}=0$ too. From  here, the claim follows with compactly supported cut-offs. The proof is complete. 
\end{proof}
For our applications in Section \ref{sec:main}, we moreover record the following uniform bounds. 
\begin{corollary}\label{cor:unibounds}
In the situation of Theorem \ref{thm:bdryapprox}, there exists a constant $c>0$ such that 
\begin{align}\label{eq:unifboundcutoff}
|\mathbb{T}(\eta_{\delta}\uu)|(\Omega)\leq c\,(\mathscr{L}^{n}(\Omega)+\|\uu\|_{\bd(\Omega)})\qquad\text{for all}\;\uu\in\bd(\Omega) 
\end{align}
and all sufficiently small $\delta>0$. 
\end{corollary}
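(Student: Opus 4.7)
The strategy is to exploit the weak product rule \eqref{eq:prodruleweak} to split $\mathbb{T}(\eta_{\delta}\uu)$ into a bulk contribution controlled directly by $|\mathbb{T}\uu|(\Omega)$ and a cut-off contribution concentrated on the thin collar $S_{\delta}$. Since $\eta_{\delta}\in\mathrm{Lip}_{0}(\Omega;[0,1])$ is supported with gradient only on $S_{\delta}$, taking total variations yields
\begin{align*}
|\mathbb{T}(\eta_{\delta}\uu)|(\Omega)\leq |\mathbb{T}\uu|(\Omega)+\int_{S_{\delta}}|\uu\otimes_{\mathbb{T}}\nabla\eta_{\delta}|\dif x \leq c\,\|\uu\|_{\bd(\Omega)}+\frac{c}{\delta}\int_{S_{\delta}}|\uu|\dif x,
\end{align*}
where we used \eqref{eq:pointwisecompa} together with Lemma \ref{lem:auxBD}\ref{item:aux0A} for the first term and the uniform Lipschitz bound \eqref{eq:tabealipschitz} for the second. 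The whole matter thus reduces to establishing
\begin{align*}
\int_{S_{\delta}}|\uu|\dif x\leq c\,\delta\,\|\uu\|_{\bd(\Omega)}
\end{align*}
for all sufficiently small $\delta>0$, with $c>0$ independent of $\delta$ and $\uu$.

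To prove this, I would mimic the Sobolev reduction from Step~2 of the proof of Theorem~\ref{thm:bdryapprox}: let $\vv\coloneqq\mathrm{ext}(\mathrm{tr}_{\partial\Omega}(\uu))\in\sobo^{1,1}(\Omega;\R^{2})$ be the Peetre extension from Remark~\ref{rem:peetre}, which satisfies $\|\vv\|_{\sobo^{1,1}(\Omega)}\leq c\,\|\mathrm{tr}_{\partial\Omega}(\uu)\|_{\lebe^{1}(\partial\Omega)}\leq c\,\|\uu\|_{\bd(\Omega)}$ by the trace boundedness (Lemma~\ref{lem:traceoperator}\ref{item:tracex1}). Then $\uu-\vv\in\bd(\Omega)$ has vanishing trace on $\partial\Omega$, so the $\delta$-scaled Poincar\'{e} estimate \eqref{eq:tabea6} applies and gives
\begin{align*}
\|\uu-\vv\|_{\lebe^{1}(S_{\delta})}\leq c\,\delta\,|\mathrm{E}(\uu-\vv)|(S_{\delta})\leq c\,\delta\,\bigl(|\mathrm{E}\uu|(S_{\delta})+\|\nabla\vv\|_{\lebe^{1}(S_{\delta})}\bigr)\leq c\,\delta\,\|\uu\|_{\bd(\Omega)}.
\end{align*}
For $\vv$ itself, the fundamental theorem of calculus along normal rays combined with the collar structure \ref{item:coll1}--\ref{item:coll2a} (so that $\mathscr{L}^{2}(S_{\delta})\sim\delta\,\mathscr{H}^{1}(\partial\Omega)$, obtained e.g.\ via the coarea formula as in Step~2 of Theorem~\ref{thm:bdryapprox}) yields
\begin{align*}
\|\vv\|_{\lebe^{1}(S_{\delta})}\leq c\,\delta\,\bigl(\|\mathrm{tr}_{\partial\Omega}(\vv)\|_{\lebe^{1}(\partial\Omega)}+\|\nabla\vv\|_{\lebe^{1}(S_{\delta})}\bigr)\leq c\,\delta\,\|\uu\|_{\bd(\Omega)}.
\end{align*}
Summing these two bounds closes the estimate. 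Combining with the display from the first paragraph delivers $|\mathbb{T}(\eta_{\delta}\uu)|(\Omega)\leq c\,\|\uu\|_{\bd(\Omega)}$, which is slightly stronger than \eqref{eq:unifboundcutoff} (the $\mathscr{L}^{n}(\Omega)$ summand then trivially absorbs any loss).

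The main obstacle, and the only step which genuinely uses two-dimensional geometry, is the justification that the Poincar\'{e} constant in \eqref{eq:tabea6} scales linearly in $\delta$. This was already carried out inside the proof of Theorem~\ref{thm:bdryapprox} and rests on the uniform boundedness of the Lipschitz characters of the moving surfaces $\partial\Omega_{\delta}$, the uniform bound \eqref{eq:unifHausTabea} on their $\mathscr{H}^{1}$-measures, and the uniform proportionality $\mathscr{L}^{2}(S_{\delta})\sim\delta\mathscr{H}^{1}(\partial\Omega)$; all three are consequences of the collar properties \ref{item:coll1}--\ref{item:coll2a}. Once that $\delta$-scaling is granted, everything else reduces to routine applications of the trace theorem on $\bd(\Omega)$ and the boundedness of the extension operator $\mathrm{ext}$.
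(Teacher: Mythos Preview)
Your proof is correct and follows essentially the same route as the paper's own argument: product rule, split $\uu=(\uu-\vv)+\vv$ with $\vv=\mathrm{ext}(\mathrm{tr}_{\partial\Omega}(\uu))$, handle $\uu-\vv$ via the $\delta$-scaled Poincar\'{e} inequality \eqref{eq:tabea6}, and handle $\vv$ via the elementary $\sobo^{1,1}$-estimate $\int_{S_{\delta}}|\vv|/\delta\,\dif x\leq c(\|\nabla\vv\|_{\lebe^{1}(S_{\delta})}+\|\mathrm{tr}_{\partial\Omega}(\vv)\|_{\lebe^{1}(\partial\Omega)})$, which the paper records as \eqref{eq:bmehldau}. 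Two minor remarks: the citation of Lemma~\ref{lem:auxBD}\ref{item:aux0A} for the bound $|\mathbb{T}\uu|(\Omega)\leq c\|\uu\|_{\bd(\Omega)}$ is superfluous (this is immediate from \eqref{eq:pointwisecompa} alone), and the claim that the $\delta$-scaling of the Poincar\'{e} constant ``genuinely uses two-dimensional geometry'' is not accurate---the argument works verbatim in any dimension, as the paper makes explicit in Corollary~\ref{cor:Celliptic}.
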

\begin{proof}
We note that, whenever $\mathbf{w}\in\sobo^{1,1}(\Omega;\R^{2})$, then 
\begin{align}\label{eq:bmehldau}
\int_{S_{\delta}}\left\vert\frac{\mathbf{w}}{\delta}\right\vert\dif x \leq c\big(\|\nabla\mathbf{w}\|_{\lebe^{1}(S_{\delta})} + \|\mathrm{tr}_{\partial\Omega}(\mathbf{w})\|_{\lebe^{1}(\partial\Omega)} \big)
\end{align}
holds for all sufficiently small $\delta>0$, where $c>0$ is a constant independent of $\delta$. This follows as, but can be seen much easier than the corresponding variant for the symmetric gradient as used in the previous proof. Let $\uu\in\bd(\Omega)$. Setting $\vv\coloneqq\mathrm{ext}(\mathrm{tr}_{\partial\Omega}(\uu))$ with the (nonlinear) extension operator $\mathrm{ext}\colon\lebe^{1}(\partial\Omega;\R^{2})\to\sobo^{1,1}(\Omega;\R^{2})$ from Remark \ref{rem:peetre}, we have the boundedness property 
\begin{align}\label{eq:boundedness}
\|\vv\|_{\sobo^{1,1}(\Omega)}\leq c\,\|\mathrm{tr}_{\partial\Omega}(\uu)\|_{\lebe^{1}(\partial\Omega)},  
\end{align}
where $c>0$ is independent of $\uu$; see \eqref{eq:tabea4}ff.. Therefore, we conclude that 
\begin{align*}
|\mathbb{T}(\eta_{\delta}\uu)|(\Omega) & \leq \,|\mathbb{T}\uu|(\Omega)+\int_{S_{\delta}}|\uu\otimes_{\mathbb{T}}\nabla\eta_{\delta}|\dif x \\ 
& \leq  |\mathbb{T}\uu|(\Omega) +  \int_{S_{\delta}}|(\uu-\vv)\otimes_{\mathbb{T}}\nabla\eta_{\delta}|\dif x +  \int_{S_{\delta}}|\vv\otimes\nabla\eta_{\delta}|\dif x \\ 
& \!\!\!\!\!\!\!\!\!\! \stackrel{\eqref{eq:tabealipschitz},\,\eqref{eq:tabea5}}{\leq}  |\mathbb{T}\uu|(\Omega) + c\,|\mathrm{E}\uu|(S_{\delta})+ c\|\nabla\mathbf{v}\|_{\lebe^{1}(S_{\delta})} + c\int_{S_{\delta}}|\vv\otimes\nabla\eta_{\delta}|\dif x \\ 
& \!\!\!\!\!\!\!\!\! \stackrel{\eqref{eq:tabealipschitz}, \eqref{eq:bmehldau}}{\leq} c\,|\mathrm{E}\uu|(\Omega) + c\,\|\nabla\mathbf{v}\|_{\lebe^{1}(S_{\delta})} + c\,\|\mathrm{tr}_{\partial\Omega}(\vv)\|_{\lebe^{1}(\partial\Omega)} \\ 
& \!\!\!\! \stackrel{\eqref{eq:boundedness}}{\leq} c\,|\mathrm{E}\mathbf{u}|(\Omega) + \|\mathrm{tr}_{\partial\Omega}(\uu)\|_{\lebe^{1}(\Omega)} \\ 
& \leq c\,\|\uu\|_{\bd(\Omega)}, 
\end{align*}
where $c>0$ is independent of $\delta>0$ and $\uu$; see Lemma \ref{lem:traceoperator}\ref{item:tracex1} for the ultimate inequality. The proof is complete. 
\end{proof}
\begin{remark}\label{rem:bulk1}
One of the key points of the proof of Theorem \ref{thm:bdryapprox}  is the passage from trace of $\bd$-functions to those of $\sobo^{1,1}$-functions, see \eqref{eq:tabea4}. Namely, if we work with $\bd$-functions throughout \emph{without} switching to $\sobo^{1,1}$, then it is precisely at \eqref{eq:christine} where the above proof breaks down; by Ornstein's Non-Inequality, the appearance of the full gradients renders the resulting inequality useless. Moreover, by the complicated structure of $\mathbb{T}\uu$, it is fully unclear how to directly control the emerging terms purely by $\mathbb{T}\uu$ or $\E\uu$, respectively. Here, the above proof admits the reduction to $\sobo^{1,1}$-maps, and essentially works because in view of traces, the passage to $\sobo^{1,1}$-maps comes with no additional creation of energy, see \eqref{eq:tabea7}. This moreover strengthens and extends the informal metaprinciple from \cite{DieningGmeineder} that Ornstein's Non-Inequality becomes invisible when passing to lower order trace estimates.
\end{remark}
\begin{remark}\label{rem:bulk2}
Let us note that, in the proof of Theorem \ref{thm:bdryapprox},  the passage to $\sobo^{1,1}$-maps (and not $\bv$-maps) is favourable due to \eqref{eq:kennzeichen}. Indeed, whereas the boundary trace operator is continuous with respect to the strict topology, the interior trace operators are not; see, e.g., \cite[Thm. 6.9(b)]{GmRaVS}. 
\end{remark}
Lastly, the particular structure of the Hibler deformation tensor $\mathbb{T}\uu$ is not essential for the above argument; in fact, one can employ any $\mathbb{C}$-elliptic operator. Since the proof is fully analogous, we confine ourselves to stating the result; to this end, note that the boundary trace space of $\bv^{\mathbb{A}}(\Omega)$ on open and bounded sets $\Omega$ with Lipschitz boundaries is $\lebe^{1}(\partial\Omega;V)$.
\begin{corollary}\label{cor:Celliptic}
Let $\Omega\subset\R^{n}$ be open and bounded with Lipschitz boundary, the latter being oriented by the outer unit normal $\nu_{\partial\Omega}\colon\partial\Omega\to\mathbb{S}^{n-1}$. Let $\mathbb{A}$ be a $\mathbb{C}$-elliptic differential operator of the form \eqref{eq:diffopform}. Moreover, let $F\in\hold(W)$ be convex and of linear growth, meaning that \eqref{eq:lgresh} holds for all $z\in W$. Then there exists a sequence $(\eta_{\delta})\subset\mathrm{Lip}_{0}(\Omega;[0,1])$ such that 
\begin{align}\label{eq:recessomainoCell}
\lim_{\delta\searrow 0}\int_{\Omega}F(\mathbb{A}(\eta_{\delta}\mathbf{u})) = \mathscr{F}_{0}^{*}[\uu;\Omega] \coloneqq\int_{\Omega}F(\mathbb{A}\mathbf{u}) + \int_{\partial\Omega}F^{\infty}(-\mathrm{tr}_{\partial\Omega}(\mathbf{u})\otimes_{\mathbb{A}}\nu_{\partial\Omega})\dif\mathscr{H}^{n-1}
\end{align}
holds for all $\uu\in\bv^{\mathbb{A}}(\Omega)$. Moreover, it is possible to arrange that each $\eta_{\delta}$ is compactly supported in $\Omega$. Lastly, Corollary \eqref{cor:unibounds} remains valid with the natural modifications.
\end{corollary}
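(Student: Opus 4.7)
The plan is to follow the proof of Theorem~\ref{thm:bdryapprox} almost verbatim, replacing $\mathbb{T}$ by the general $\mathbb{C}$-elliptic operator $\mathbb{A}$, $\R_{\mathrm{sym}}^{2\times 2}$ by $W$, $\R^{2}$ by $V$, and $\otimes_{\mathbb{T}}$ by the generalized tensor product $\otimes_{\mathbb{A}}$ from \eqref{eq:pairingA}. The cut-off construction $\eta_{\delta}$ from \eqref{eq:etacutoff} is purely geometric, depending only on $\partial\Omega$ and the collar $\Phi$, so it is taken over unchanged together with the Lipschitz bound \eqref{eq:tabealipschitz}. The product rule \eqref{eq:prodruleweak} is already stated in the $\mathbb{A}$-framework, whence the decomposition
\begin{align*}
\mathbb{A}(\eta_{\delta}\uu) = \eta_{\delta}\mathbb{A}\uu + \uu\otimes_{\mathbb{A}}\nabla\eta_{\delta}\,\mathscr{L}^{n}\mres S_{\delta}
\end{align*}
and the subsequent splitting into $\mathrm{I}_{\delta},...,\mathrm{IV}_{\delta}$ proceeds as in Step~1 of the proof of Theorem~\ref{thm:bdryapprox}. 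The reduction to the bulk-to-boundary upper bound
\begin{align*}
\limsup_{\delta\searrow 0}\int_{S_{\delta}}F(\uu\otimes_{\mathbb{A}}\nabla\eta_{\delta})\dif x \leq \int_{\partial\Omega}F^{\infty}(-\mathrm{tr}_{\partial\Omega}(\uu)\otimes_{\mathbb{A}}\nu_{\partial\Omega})\dif\mathscr{H}^{n-1}
\end{align*}
follows as before, upon invoking the analogue of Corollary~\ref{cor:LSCplussbdryvalues} for $\bv^{\mathbb{A}}$ (which in turn rests on Reshetnyak's theorem and the gluing formula in the $\mathbb{A}$-framework from \cite{BDG}).

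The next step is the reduction from $\bv^{\mathbb{A}}(\Omega)$ to $\sobo^{1,1}(\Omega;V)$. Here we use that, by \cite[Thm.~1.1]{BDG}, the trace operator $\mathrm{tr}_{\partial\Omega}\colon\bv^{\mathbb{A}}(\Omega)\to\lebe^{1}(\partial\Omega;V)$ is surjective and agrees with the usual Sobolev trace on $\sobo^{1,1}(\Omega;V)\subset\bv^{\mathbb{A}}(\Omega)$. Given $\uu\in\bv^{\mathbb{A}}(\Omega)$, we therefore pick $\vv\coloneqq\mathrm{ext}(\mathrm{tr}_{\partial\Omega}(\uu))\in\sobo^{1,1}(\Omega;V)$ with $\mathrm{tr}_{\partial\Omega}(\uu-\vv)=0$ $\mathscr{H}^{n-1}$-a.e. on $\partial\Omega$. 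The crucial localised Poincar\'e-type estimate
\begin{align*}
\|\mathbf{w}\|_{\lebe^{1}(S_{\delta})} \leq c\,\delta\,|\mathbb{A}\mathbf{w}|(S_{\delta}),\qquad \mathbf{w}\in\bv^{\mathbb{A}}(\Omega),\;\mathrm{tr}_{\partial\Omega}(\mathbf{w})=0,
\end{align*}
is the $\mathbb{A}$-analogue of \eqref{eq:tabea6}; it is available by the extension of Lemma~\ref{lem:GSTLemma} to $\mathbb{C}$-elliptic operators (see \cite{GmeSulTsc} and Remark~\ref{rem:GGapplication}), again in combination with the uniform control of the Lipschitz characters of $\partial S_{\delta}$ and a scaling argument. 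Combined with the Lipschitz property of $F$ and the bound \eqref{eq:tabealipschitz} this yields an $\mathbb{A}$-version of \eqref{eq:tabea7}, which reduces the proof of the bulk-to-boundary inequality to the case $\vv\in\sobo^{1,1}(\Omega;V)$.

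For $\vv\in\sobo^{1,1}(\Omega;V)$ we then run Step~3 of the proof of Theorem~\ref{thm:bdryapprox} unchanged, first reducing to $\vv\in\hold^{1}(\overline{\Omega};V)$ using norm density together with continuity of the boundary \emph{and} interior Sobolev traces as in \eqref{eq:kennzeichen}. The coarea argument based on $\mathbf{J}_{\eta_{\delta}}=|\nabla\eta_{\delta}|$ and $\nu_{\partial\Omega_{t}}=-\nabla\eta_{\delta}/|\nabla\eta_{\delta}|$ produces the slice integrals $\int_{\partial\Omega_{\delta s}}F^{\infty}(-\mathrm{tr}_{\partial\Omega_{\delta s}}(\vv)\otimes_{\mathbb{A}}\nu_{\partial\Omega_{\delta s}})\dif\mathscr{H}^{n-1}$, and the chart-by-chart estimate \eqref{eq:zeit} (each term $\mathrm{VII}_{k,1},...,\mathrm{VII}_{k,4}$) only uses the Lipschitz continuity of $F^{\infty}$, the bound \eqref{eq:robertweizen} on graph parametrisations, and elementary estimates on the unit normals. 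These arguments are insensitive to whether the tensor product is $\odot$, $\otimes_{\mathbb{T}}$ or $\otimes_{\mathbb{A}}$. Finally, the lower bound is obtained from the $\bv^{\mathbb{A}}$-version of Corollary~\ref{cor:LSCplussbdryvalues}\ref{item:relaxdoit1} together with $\mathrm{tr}_{\partial\Omega}(\eta_{\delta}\uu)=0$, and the compact-support modification is identical to the one described at the end of the proof of Theorem~\ref{thm:bdryapprox}. The uniform bound corresponding to Corollary~\ref{cor:unibounds} is established exactly as in its original proof, relying on the same extension operator and the Poincar\'e-type inequality above.

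The main conceptual obstacle is essentially the one identified in Remark~\ref{rem:bulk1}: the argument would collapse if we tried to stay in $\bv^{\mathbb{A}}(\Omega)$ throughout, since Ornstein's Non-Inequality blocks any estimate of $\nabla\vv$ by $\mathbb{A}\vv$. The proof works precisely because passing to the Sobolev representative $\vv=\mathrm{ext}(\mathrm{tr}_{\partial\Omega}(\uu))$ does not cost any additional energy at the level of traces, so the full gradients appearing in \eqref{eq:gehaltsverhandlung1}--\eqref{eq:christine} are harmless; this reduction is available for every $\mathbb{C}$-elliptic $\mathbb{A}$ thanks to the surjectivity of $\mathrm{tr}_{\partial\Omega}$ onto $\lebe^{1}(\partial\Omega;V)$ from \cite{BDG}.
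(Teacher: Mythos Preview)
Your proposal is correct and takes essentially the same approach as the paper, which simply remarks that the proof is fully analogous to that of Theorem~\ref{thm:bdryapprox} once one notes that the boundary trace space of $\bv^{\mathbb{A}}(\Omega)$ is $\lebe^{1}(\partial\Omega;V)$. Your write-up is in fact a careful unpacking of what ``fully analogous'' means, correctly identifying the $\mathbb{C}$-ellipticity-based ingredients from \cite{BDG,GmRa,GmeSulTsc} (trace surjectivity, gluing, Poincar\'e) that make the reduction to $\sobo^{1,1}(\Omega;V)$ go through.
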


\section{Variational solutions of the momentum balance equation}\label{sec:main}
\subsection{Variational solutions and main result}\label{sec:varsol} 
In this section, we introduce a suitable  notion of solutions for the momentum balance equation \eqref{eq:hibler1a} that is driven by the relaxed energies from Section \ref{sec:relaxthebulk}. To this end, we now specify our assumptions on the potentials, data and forces. In order to be able to incorporate different stress-strain relations, we suppose that the potential 
$F\colon\RR\to\R$ satisfies the following: 
\begin{enumerate}[label=(F\arabic*)]
\item\label{item:Fprop1} $F$ is convex and of linear growth, meaning that there exist constants $c_{1},c_{3}>0$ and $c_{2}\in\R$ such that 
\begin{align}\label{eq:lingrowth}
c_{1}|z|-c_{2}\leq F(z)\leq c_{3}(1+|z|)\qquad\text{for all}\;z\in\rsym^{2\times 2},  
\end{align}
\item\label{item:Fprop2} $F(0)=0$ and 
\item\label{item:Fprop3} $F\in\hold^{1}(\RR\setminus\{0\})$. 
\end{enumerate}
Both stress-strain relations as discussed in Section \ref{sec:modelling} fit into this framework. Throughout, let $\Omega\subset\R^{2}$ be open and bounded with Lipschitz boundary, and let $T>0$ be a final time. Pertaining to our discussion in Sections \ref{sec:hiblerintro} and \ref{sec:modelling}, we assume that 
\begin{align}\label{eq:atmosphericmain}
\mathbf{f}\coloneqq -mg\nabla H + \bm{\tau}_{\mathrm{atm}} \in \lebe^{2}(\Omega_{T};\R^{2}). 
\end{align}
Recalling  \eqref{eq:cutoff} and \eqref{eq:simpleeta}, we  moreover suppose that the underlying ocean forces for a given horizontal velocity field $\uu\colon\Omega_{T}\to\R^{2}$ are of the form 
\begin{align}\label{eq:oceanicmain}
\tocean(\uu)\coloneqq \widetilde{\eta}(\mathbf{U}_{\mathrm{ocean}}-\uu)\coloneqq c\eta(|\mathbf{U}_{\mathrm{ocean}}-\uu|)R_{\mathrm{ocean}}(\mathbf{U}_{\mathrm{ocean}}-\uu),
\end{align}
where $c>0$ is a constant, $R_{\mathrm{ocean}}\in\mathrm{SO}(2)$, and $\eta\in\hold^{1}(\R_{\geq 0};\R_{\geq 0})$ satisfies, for some fixed $\gamma\in (0,1)$ and constants $0<N_{1}<N_{2}<\infty$, 
\begin{align}\label{eq:oceanicassump}
\eta(0)=0,\;\;\;\eta\;\text{is linear on $[0,N_{1}]$ and}\;\;\;\eta(s)=s^{-\gamma}\;\text{for all}\;s\in[N_{2},\infty). 
\end{align}
We moreover assume that 
\begin{align}\label{eq:oceanicmean}
\mathbf{U}_{\mathrm{ocean}}\in \lebe^{\infty}(0,T;\lebe^{2}(\Omega;\R^{2})).
\end{align}
and that the initial datum satisfies 
\begin{align}\label{eq:initialvalue}
\uu_{0}\in\bd_{c}(\Omega),\;\;\;\text{meaning that}\;\uu_{0}\in\bd(\Omega)\;\text{and}\;\spt(\uu_{0})\;\text{is compact in $\Omega$}. 
\end{align}
For the following, it is convenient to record that \eqref{eq:oceanicassump} implies that 
\begin{align}\label{eq:oceanicHoelder}
\widetilde{\eta}\colon \R^{2} \to\R^{2}\qquad\text{is both globally Lipschitz and $(1-\gamma)$-H\"{o}lder}. 
\end{align}
We emphasize that these two regularity assumptions are independent. More precisely, the $(1-\gamma)$-H\"{o}lder property  gives better bounds for large values of the arguments.

Based on \ref{item:Fprop1}--\ref{item:Fprop3} and \eqref{eq:atmosphericmain}--\eqref{eq:oceanicHoelder}, we now derive the notion of solutions to be used in the sequel. This, in turn, is strongly inspired by Lichnewsky \& Temam \cite{LichnewskyTemam} and the corresponding variational solutions for the related total variation flow, see, e.g., \cite{HZ}. Since our setting is vectorial and  different in various respects, we give the details: For the time being, let us moreover assume that $F\in\hold^{1}(\RR)$.  Let $\mathbf{u}\colon(0,T)\times\overline{\Omega}\to\R^{n}$ be a sufficiently regular solution of 
\begin{align}\label{eq:motiv}
\partial_{t}\mathbf{u} = \mathbb{T}^{*}(F'(\mathbb{T}\uu))+\mathbf{f}+\tocean(\uu)\qquad\text{in}\;\Omega_{T}
\end{align}
subject to $\mathbf{u}(t,\cdot)=0$ on $\partial\Omega$ for all $0<t<T$. For $\vv\in\hold^{\infty}([0,T]\times\overline{\Omega};\R^{2})$, we multiply \eqref{eq:motiv} with $\bm{\varphi}\coloneqq\mathbf{v}-\mathbf{u}$ and integrate over $\Omega$. For a fixed time $0<t<T$, we then find by virtue of $\mathbf{u}(t,\cdot)=0$ on $\partial\Omega$: 
\begin{align*}
\int_{\Omega}&(\partial_{t}\mathbf{u}(t,\cdot))(\mathbf{v}(t,\cdot)-\mathbf{u}(t,\cdot))\dif x    = \int_{\Omega}\mathbb{T}^{*}(F'(\mathbb{T}\uu(t,\cdot)))\cdot (\mathbf{v}(t,\cdot)-\mathbf{u}(t,\cdot)) \dif x  \\ 
& + \int_{\Omega}\mathbf{f}(t,\cdot)\cdot(\vv(t,\cdot)-\uu(t,\cdot))\dif x + \int_{\Omega}\tocean(\uu(t,\cdot))\cdot(\vv(t,\cdot)-\uu(t,\cdot))\dif x \\
 &  \!\!\!\! \stackrel{\eqref{eq:IBPscalar}}{=} \int_{\partial\Omega}F'(\mathbb{T}\uu(t,\cdot))\cdot (\mathbf{v}(t,\cdot)\otimes_{\mathbb{T}}\nu_{\partial\Omega})\dif\mathscr{H}^{1} -\int_{\Omega}F'(\mathbb{T}\uu(t,\cdot))\cdot\mathbb{T}(\mathbf{v}(t,\cdot)-\mathbf{u}(t,\cdot))\dif x \\ 
 & + \int_{\Omega}\mathbf{f}(t,\cdot)\cdot(\vv(t,\cdot)-\uu(t,\cdot))\dif x + \int_{\Omega}\tocean(\uu(t,\cdot))\cdot(\vv(t,\cdot)-\uu(t,\cdot))\dif x \\ 
 & =:  \mathrm{T}_{1} - \mathrm{T}_{2} + \mathrm{T}_{3} + \mathrm{T}_{4}.
\end{align*}
Since $F$ is convex, we have \eqref{eq:subgradientdef} and thus for any $z_{0}\in\R_{\mathrm{sym}}^{2\times 2}$ that 
\begin{align}\label{eq:convex}
F(z_{0})+F'(z_{0})\cdot (z-z_{0})\leq F(z)\qquad \text{for all}\;z\in\rsym^{2\times 2}. 
\end{align}
In the present situation, this inequality gives us 
\begin{align}\label{eq:motiv2}
-\mathrm{T}_{2} \geq \int_{\Omega}F(\mathbb{T}\uu(t,\cdot))\dif x - \int_{\Omega}F(\mathbb{T}\vv(t,\cdot))\dif x. 
\end{align}
For the corresponding boundary integrals, we use Lemma \ref{lem:subdif}\ref{item:RobertDenk1} and obtain 
\begin{align}\label{eq:nosferatu}
\mathrm{T}_{1} = -\int_{\partial\Omega}F'(\mathbb{T}\uu(t,\cdot))\cdot (-\mathbf{v}(t,\cdot)\otimes_{\mathbb{T}}\nu_{\partial\Omega})\dif\mathscr{H}^{1}\geq - \int_{\partial\Omega}F^{\infty}(-\vv(t,\cdot)\otimes\nu_{\partial\Omega})\dif\mathscr{H}^{1}.
\end{align}
We now employ the relaxed energy functionals from Section \ref{sec:relaxthebulk}. 
Recalling that we assumed $\uu(t,\cdot)=0$ on $\partial\Omega$ and integrating with respect to time, the smoothness of $\uu$ and $\vv$ allows us to rewrite the resulting inequality as 
\begin{align*}
\int_{0}^{s}\int_{\Omega}&(\partial_{t}\mathbf{u}(t,\cdot))(\mathbf{v}(t,\cdot)-\mathbf{u}(t,\cdot))\dif x\dif t + \int_{0}^{s}\mathscr{F}_{0}^{*}[\vv(t,\cdot);\Omega]\dif t \geq  \int_{0}^{s}\mathscr{F}_{0}^{*}[\uu(t,\cdot);\Omega]\dif t \\ 
& + \int_{0}^{s}\int_{\Omega}\mathbf{f}(t,\cdot)\cdot(\vv(t,\cdot)-\uu(t,\cdot))\dif x\dif t + \int_{0}^{s}\int_{\Omega}\tocean(\uu(t,\cdot))\cdot(\vv(t,\cdot)-\uu(t,\cdot))\dif x\dif t 
\end{align*}
for all $0<s<T$. In this formulation, no smoothness beyond continuity is required for $F$, and the energy integrals merely require spatial $\bd$-regularity. In order to further relax the formulation towards less regularity with respect to time, we compute in the smooth setting:
\begin{align}
\int_{0}^{s}\int_{\Omega}(\partial_{t}\mathbf{u}(t,\cdot))(\mathbf{v}(t,\cdot)-\mathbf{u}(t,\cdot)) \dif x \dif t & = \int_{0}^{s}\int_{\Omega}\partial_{t}(\mathbf{u}(t,\cdot)-\mathbf{v}(t,\cdot))(\mathbf{v}(t,\cdot)-\mathbf{u}(t,\cdot))\dif x\dif t \notag\\ & + \int_{0}^{s}\int_{\Omega}(\partial_{t}\mathbf{v}(t,\cdot))\cdot(\mathbf{v}(t,\cdot)-\mathbf{u}(t,\cdot))\dif x\dif t \notag\\ 
& = - \frac{1}{2}\int_{\Omega}\Big(|\mathbf{u}-\mathbf{v}|^{2}(s,\cdot) - |\mathbf{u}-\mathbf{v}|^{2}(0,\cdot)\Big) \dif x \label{eq:intbyparts} \\ & +\int_{0}^{s}\int_{\Omega}(\partial_{t}\mathbf{v}(t,\cdot))(\mathbf{v}(t,\cdot)-\mathbf{u}(t,\cdot))\dif x\dif t.\notag
\end{align}
In combination with the preceding inequality, \eqref{eq:intbyparts} gives rise to the following energy-driven notion of solution of the momentum balance equation. 
\begin{definition}[Variational solutions for Hibler]\label{def:varsol1}
Let $\Omega\subset\R^{2}$ be open and bounded with Lipschitz boundary $\partial\Omega$ and let $T>0$. Moreover, let $\mathbf{u}_{0}\in\mathrm{BD}(\Omega)$ be an initial datum with compact support in $\Omega$, and let the data satisfy  \emph{\ref{item:Fprop1}--\ref{item:Fprop3}} as well as \eqref{eq:atmosphericmain}--\eqref{eq:initialvalue}. We say that $\mathbf{u}\in\lebe_{\mathrm{w}^{*}}^{1}(0,T;\bd(\Omega))\cap\lebe^{\infty}(0,T;\lebe^{2}(\Omega;\R^{2}))$ is a \emph{variational solution of Hibler's momentum balance equation} if the following hold:
\begin{enumerate}
\item\label{item:Hiblermain1} For $\mathscr{L}^{1}$-a.e. $0<s<T$, the \emph{evolutionary variational inequality}
\begin{align}\label{eq:varsolmain}
\begin{split}
\int_{0}^{s}\int_{\Omega}(\partial_{t}\mathbf{v})(\mathbf{v}-\mathbf{u})\dif t\dif x & + \int_{0}^{s}\mathscr{F}_{0}^{*}[\mathbf{v}(t,\cdot);\Omega]\dif t  \geq \int_{0}^{s}\mathscr{F}_{0}^{*}[\mathbf{u}(t,\cdot);\Omega]\dif t \\ 
& + \int_{0}^{s}\int_{\Omega}\mathbf{f}\cdot(\vv-\uu)\dif x\dif t + \int_{0}^{s}\int_{\Omega}\bm{\tau}_{\mathrm{ocean}}(\uu)(\vv-\uu)\dif x \dif t \\ 
& + \frac{1}{2}\int_{\Omega}\Big(|\mathbf{u}(s,\cdot)-\mathbf{v}(s,\cdot)|^{2} - |\mathbf{u}_{0}-\mathbf{v}(0,\cdot)|^{2}\Big) \dif x 
\end{split}
\end{align}
holds for every $\vv\in\lebe_{\mathrm{w}^{*}}^{1}(0,T;\bd(\Omega))\cap\sobo^{1,2}(0,T;\lebe^{2}(\Omega;\R^{2}))$. Here, $\mathscr{F}_{0}^{*}[-;\Omega]$ is as in \eqref{eq:relaxedbdry}.  
\item\label{item:Hiblermain2} The \emph{initial values} are attained in the $\lebe^{2}$-Lebesgue sense, meaning that 
\begin{align*}
\lim_{t\searrow 0}\dashint_{0}^{t}\|\uu(s,\cdot)-\uu_{0}\|_{\lebe^{2}(\Omega)}\dif s \coloneqq \lim_{t\searrow 0}\frac{1}{t}\int_{0}^{t}\|\uu(s,\cdot)-\uu_{0}\|_{\lebe^{2}(\Omega)}\dif s = 0. 
\end{align*}
\end{enumerate} 
\end{definition}
We are now ready to state our main result:
\begin{theorem}[Main result]\label{thm:main}
Subject to the assumptions of Definition \ref{def:varsol1}, \emph{there exists a variational solution $\uu\in\lebe_{\mathrm{w}^{*}}^{1}(0,T;\bd(\Omega))\cap\sobo^{1,2}(0,T;\lebe^{2}(\Omega;\R^{2}))$} of Hibler's momentum balance equation. 
\end{theorem}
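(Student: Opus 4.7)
The plan is to construct the variational solution as the limit of a viscously stabilised scheme following the three-layer strategy of Section \ref{sec:modelling}. For $\varepsilon>0$, mollify the initial datum to $\uu_0^\varepsilon\in\hold_c^\infty(\Omega;\R^2)$, with a mollification scale chosen diagonally (e.g.\ $\sim\varepsilon^{1/4}$) so that $\varepsilon\|\mathbb{T}\uu_0^\varepsilon\|_{\lebe^2}^2\to 0$, and introduce the regularised functional
\[
\Phi_\varepsilon[\uu] \coloneqq \int_\Omega F(\mathbb{T}\uu)\dif x + \tfrac{\varepsilon}{2}\int_\Omega |\mathbb{T}\uu|^2\dif x \quad\text{on } \{\uu\in\mathrm{LD}_0(\Omega)\colon \mathbb{T}\uu\in\lebe^2(\Omega;\RR)\},
\]
extended by $+\infty$ on the ambient Hilbert space $H\coloneqq\lebe^2(\Omega;\R^2)$. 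Convexity and properness are immediate, while the $H$-compactness of sublevel sets follows from \eqref{eq:pointwisecompa}, Korn's inequality in $\lebe^2$ on $\mathrm{LD}_0$ and Rellich--Kondrachov. Since $\tocean$ is globally Lipschitz on $H$ by \eqref{eq:oceanicHoelder} and $\mathbf{f}\in\lebe^2(0,T;H)$, Proposition \ref{prop:arendt} yields $\uu^\varepsilon\in\sobo^{1,2}(0,T;H)$ solving
\[
\partial_t \uu^\varepsilon + \partial\Phi_\varepsilon(\uu^\varepsilon) \ni \mathbf{f} + \tocean(\uu^\varepsilon),\qquad \uu^\varepsilon(0,\cdot)=\uu_0^\varepsilon,
\]
with $\uu^\varepsilon(t,\cdot)\in\mathrm{LD}_0(\Omega)$, hence $\mathrm{tr}_{\partial\Omega}\uu^\varepsilon(t,\cdot)=0$, for $\mathscr{L}^1$-a.e.\ $t$.

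\textbf{Step 2 (Uniform bounds and compactness).} Using the lower bound $F(z)\ge c_1|z|-c_2$ from \ref{item:Fprop1}, the sublinearity of $\tocean$ from \eqref{eq:oceanicHoelder}, the uniform bound $\Phi_\varepsilon(\uu_0^\varepsilon)\le C$, and the energy identity associated with the gradient-flow structure, I expect
\[
\sup_{\varepsilon>0}\Big(\|\uu^\varepsilon\|_{\lebe^\infty(0,T;H)} + \|\mathbb{T}\uu^\varepsilon\|_{\lebe^1(\Omega_T;\RR)} + \|\partial_t\uu^\varepsilon\|_{\lebe^2(\Omega_T;\R^2)}\Big)<\infty.
\]
Combined with Lemma \ref{lem:poincaresobolev}\ref{item:PoincareSobolev1} and a standard Aubin--Lions argument, a subsequence satisfies $\uu^\varepsilon\to\uu$ strongly in $\lebe^p(\Omega_T;\R^2)$ for every $p\in[1,2)$, weakly-$\ast$ in $\lebeweak(0,T;\bd(\Omega))$, and $\partial_t\uu^\varepsilon\rightharpoonup\partial_t\uu$ in $\lebe^2(\Omega_T;\R^2)$. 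The limit $\uu$ lies in the regularity class claimed by the theorem.

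\textbf{Step 3 (Variational inequality; first passage to the limit).} The subgradient characterisation of $\partial\Phi_\varepsilon$ combined with the identity \eqref{eq:intbyparts} produces an approximate version of \eqref{eq:varsolmain} valid for all competitors $\vv\in\sobo^{1,2}(0,T;H)$ with $\vv(t,\cdot)\in\mathrm{LD}_0(\Omega)$ and $\mathbb{T}\vv(t,\cdot)\in\lebe^2(\Omega;\RR)$. Passage $\varepsilon\searrow 0$ rests on Reshetnyak lower semicontinuity (Lemma \ref{lem:resh}, cf.\ Corollary \ref{cor:LSCplussbdryvalues}\ref{item:relaxdoit1}) together with Fatou in time for the energy of $\uu^\varepsilon$; on the $(1-\gamma)$-H\"older continuity from \eqref{eq:oceanicHoelder} combined with the strong convergence of $\uu^\varepsilon$ for the nonlinear ocean term; and on $\tfrac{\varepsilon}{2}\|\mathbb{T}\vv\|_{\lebe^2}^2\to 0$ for the viscous excess on the admissible $\vv$. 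This establishes \eqref{eq:varsolmain} for $\uu$, but \emph{only} for zero-trace competitors; the initial-value condition \ref{item:Hiblermain2} of Definition \ref{def:varsol1} then follows from $\uu\in\sobo^{1,2}(0,T;H)$ and $\uu_0^\varepsilon\to\uu_0$ in $H$.

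\textbf{Step 4 (Main obstacle: arbitrary traces).} The decisive step is the extension of \eqref{eq:varsolmain} to general $\vv\in\lebeweak(0,T;\bd(\Omega))\cap\sobo^{1,2}(0,T;H)$, whose traces may be nonzero. Here Theorem \ref{thm:bdryapprox} enters crucially: for each $t$, replace $\vv(t,\cdot)$ by $\eta_\delta\vv(t,\cdot)$ using the \emph{time-independent} Lipschitz cut-offs from Theorem \ref{thm:bdryapprox}. Then $\eta_\delta\vv\in\sobo^{1,2}(0,T;H)$ preserves the temporal regularity of $\vv$; the boundary trace is zero; Corollary \ref{cor:unibounds} provides the uniform-in-$\delta$ slicewise bound $|\mathbb{T}(\eta_\delta\vv(t,\cdot))|(\Omega)\le c(1+\|\vv(t,\cdot)\|_{\bd(\Omega)})$ required for dominated convergence in $t$; and Theorem \ref{thm:bdryapprox} itself gives $\int_\Omega F(\mathbb{T}(\eta_\delta\vv(t,\cdot)))\dif x\to\mathscr{F}_0^*[\vv(t,\cdot);\Omega]$ pointwise in $t$. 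Sending $\delta\searrow 0$ in the inequality already secured in Step 3 then drives it to the full form \eqref{eq:varsolmain}. Without Theorem \ref{thm:bdryapprox}, a direct mollification of $\vv$ would either destroy the boundary penalisation or, via the product rule \eqref{eq:prodruleweak}, demand an $\lebe^1$-control of $\vv$ against $\nabla\eta_\delta$ that is blocked at the full-gradient level by Ornstein's Non-Inequality (Remark \ref{rem:OrnsteinBV}); the non-linear trace reduction explained in Remark \ref{rem:bulk1} is exactly what unlocks the argument in the $\bd$-setting, and this is the hard part of the proof.
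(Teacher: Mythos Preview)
Your overall strategy matches the paper's: solve a viscously stabilised problem via Proposition~\ref{prop:arendt}, extract uniform bounds, pass to the limit using Reshetnyak lower semicontinuity for the energy of the approximants, and extend the competitor class via the bulk approximation Theorem~\ref{thm:bdryapprox}. Your single-parameter diagonalisation is more economical than the paper's three-layer scheme (separate parameters $\delta$ for viscosity, $\varepsilon$ for an integrand regularisation $F_\varepsilon=\sqrt{\varepsilon+F^2}$, and $\zeta$ for the initial data), and your decision not to smooth $F$ is legitimate since the subgradient inequality already suffices for Step~3.

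There is, however, a concrete gap in Step~4. The inequality you secure in Step~3 is valid only for competitors $\vv$ with $\vv(t,\cdot)\in\mathrm{LD}_0(\Omega)$ and $\mathbb{T}\vv(t,\cdot)\in\lebe^2(\Omega;\RR)$, because the viscous excess $\tfrac{\varepsilon}{2}\|\mathbb{T}\vv\|_{\lebe^2}^2$ had to be finite before sending $\varepsilon\to0$. For a general $\vv\in\lebeweak(0,T;\bd(\Omega))$, the cut-off $\eta_\delta\vv(t,\cdot)$ lies only in $\bd_c(\Omega)$: by the product rule \eqref{eq:prodruleweak} one has $\mathbb{T}(\eta_\delta\vv)=\eta_\delta\mathbb{T}\vv+(\vv\otimes_{\mathbb{T}}\nabla\eta_\delta)\mathscr{L}^2$, which is a Radon measure and in general not an $\lebe^2$-map. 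Hence $\eta_\delta\vv$ is \emph{not} an admissible competitor in the inequality from Step~3, and the energy term on the left cannot even be written as a Lebesgue integral. The paper closes this gap by inserting an additional spatial mollification (and a temporal one, to control measurability and apply Jensen in time), producing the triple sequence $\vv_{j,k,l}=\varrho_{\theta_l}\ast\bigl(\rho_{r_{k,j}}\ast(\eta_{d_j}\overline{\vv})\bigr)$ of Lemma~\ref{lem:keyapproximation}: the spatial mollifier $\rho_{r_{k,j}}$ makes the competitor smooth and compactly supported, hence admissible, and Jensen's inequality for measures (Lemma~\ref{lem:jensen}, Corollary~\ref{cor:LSCplussbdryvalues}\ref{item:relaxdoit2}) controls the energy as $r\to0$; only \emph{then} does Theorem~\ref{thm:bdryapprox} with Corollary~\ref{cor:unibounds} handle the cut-off limit $\delta\to0$. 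Your closing paragraph treats mollification as the step Ornstein's Non-Inequality obstructs, but in fact it is an indispensable intermediate device; what Theorem~\ref{thm:bdryapprox} actually resolves is the \emph{subsequent} passage $\delta\to0$.

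A smaller point: the uniform $\lebe^2(\Omega_T)$-bound on $\partial_t\uu^\varepsilon$ in Step~2 is asserted via ``the energy identity associated with the gradient-flow structure''. This is correct in spirit, but making the chain rule for $t\mapsto\Phi_\varepsilon(\uu^\varepsilon(t))$ rigorous under the $\uu$-dependent perturbation $\tocean(\uu^\varepsilon)$ requires care; the paper does this explicitly in Proposition~\ref{prop:timebounds} by testing the variational inequality itself with the time-regularised Br\'ezis function \eqref{eq:doakesvsdexter}.
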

Theorem \ref{thm:main} shall be established in  Section \ref{sec:existence}, dealing with the energy-driven evolution (see Definition \ref{def:varsol1}\ref{item:Hiblermain1}), and in Section \ref{sec:initial}, dealing with the initial values (see Definition \ref{def:varsol1}\ref{item:Hiblermain2}). Before embarking on the proof, two comments on Definition \ref{def:varsol1} are in order. Firstly, note that membership of variational solutions in $\lebe^{\infty}(0,T;\lebe^{2}(\Omega;\R^{2}))$ implies that the ultimate term on the right-hand side of \eqref{eq:varsolmain} is indeed well-defined for $\mathscr{L}^{1}$-a.e. $0<s<T$. Secondly, besides capturing the low spatial $\bd$-regularity, the relaxed functional also incorporates the Dirichlet zero boundary conditions: 
\begin{remark}\label{rem:boundarypenal} 
As is typical for linear growth problems, the Dirichlet zero boundary condition is included in the boundary penalization terms in the relaxed functional. Yet, the variational solution does not necessarily attain zero boundary values. This is phenomenological (see, e.g., the discussion by the second author in \cite[Section 1]{Gm1solo} or Beck \& Schmidt \cite[Section 3]{Beckschmidt13} in the stationary vectorial $\bv$-context). More precisely, the non-attainment is due to the fact that the trace operator on $\bd(\Omega)$ is \emph{not} continuous with respect to weak*-convergence, so the sort of convergence that allows for compactness assertions; see the discussion after Lemma \ref{lem:traceoperator}. 
\end{remark}

\subsection{Existence of a variational solution}\label{sec:existence}
In order to establish the existence of a variational solution of the momentum balance equation in the sense of Definition \ref{def:varsol1}, we employ three regularizations or stabilizations, respectively. This refers to the parameters $\delta,\varepsilon$ and $\zeta$ which we describe next. For $\delta,\varepsilon>0$, we define 
\begin{align}\label{eq:Fepsdelta}
F_{\varepsilon}(z)\coloneqq \sqrt{\varepsilon+|F(z)|^{2}}\;\;\;\text{and}\;\;\; F_{\delta,\varepsilon}(z)\coloneqq F_{\varepsilon}(z) + \frac{\delta}{2}|z|^{2},\qquad z\in\mathbb{R}_{\mathrm{sym}}^{2\times 2},  
\end{align}
where $F$ satisfies \ref{item:Fprop1}--\ref{item:Fprop3}.  
\begin{lemma}[$\varepsilon$-regularizations]\label{lem:approximations}
Let $F\colon\RR\to\R$ satisfy \emph{\ref{item:Fprop1}--\ref{item:Fprop3}}. Then, for any $\varepsilon>0$, $F_{\varepsilon}\colon\RR\to\R$ is convex, of linear growth (with potentially different parameters than in \eqref{eq:lingrowth}) and is differentiable with 
\begin{align*}
M\coloneqq \sup_{0<\varepsilon<1}\|F'_{\varepsilon}\|_{\sup}<\infty
\end{align*}
and $F'_{\varepsilon}(0)=0$. Moreover, we have $F_{\varepsilon}^{\infty}=F^{\infty}$ on $\RR$. Lastly, there exist constants $c_{4},c_{6}>0$ and $c_{4}\in\R$ such that 
\begin{align}\label{eq:shadyacres}
c_{4}|z| - c_{5} \leq F'_{\varepsilon}(z)\cdot z \leq c_{6}(1+|z|)\qquad\text{for all}\;z\in\RR\;\text{and all}\;0<\varepsilon<1. 
\end{align}
\end{lemma}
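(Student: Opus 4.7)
The plan is to verify each property directly from the explicit form $F_{\varepsilon}(z)=\sqrt{\varepsilon+F(z)^{2}}$, using only that, by Lemma~\ref{lem:subdif}\ref{item:RobertDenk} combined with \ref{item:Fprop1}--\ref{item:Fprop3}, $F$ is convex, nonnegative (as in the guiding examples of the paper), Lipschitz with some constant $L$, satisfies $F(0)=0$, and is $\hold^{1}$ on $\RR\setminus\{0\}$.

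For convexity I would argue by composition: the scalar function $\phi(t)\coloneqq\sqrt{\varepsilon+t^{2}}$ is convex and nondecreasing on $[0,\infty)$, so $F_{\varepsilon}=\phi\circ F$ is convex. Linear growth will follow from the two-sided estimate $|F(z)|\leq F_{\varepsilon}(z)\leq\sqrt{\varepsilon}+|F(z)|$ combined with~\eqref{eq:lingrowth}. For the recession function, pulling $t$ under the square root yields
\begin{equation*}
F_{\varepsilon}^{\infty}(z)=\lim_{t\searrow 0}\sqrt{t^{2}\varepsilon+(tF(z/t))^{2}}=F^{\infty}(z),
\end{equation*}
the final identity using the nonnegativity of $F^{\infty}$ inherited from $F$.

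For differentiability I would apply the chain rule on $\RR\setminus\{0\}$ to obtain
\begin{equation*}
F_{\varepsilon}'(z)=\frac{F(z)F'(z)}{\sqrt{\varepsilon+F(z)^{2}}}.
\end{equation*}
The scalar factor $F(z)/\sqrt{\varepsilon+F(z)^{2}}$ lies in $[0,1]$, and $|F'(z)|\leq L$ by Lemma~\ref{lem:subdif}\ref{item:RobertDenk}, whence $\sup_{0<\varepsilon<1}\|F_{\varepsilon}'\|_{\sup}\leq L$ and thus $M<\infty$. To handle $z=0$, I would exploit the algebraic identity
\begin{equation*}
F_{\varepsilon}(z)-F_{\varepsilon}(0)=\frac{F(z)^{2}}{\sqrt{\varepsilon+F(z)^{2}}+\sqrt{\varepsilon}}
\end{equation*}
together with $|F(z)|\leq L|z|$ to conclude $|F_{\varepsilon}(z)-F_{\varepsilon}(0)|\leq L^{2}|z|^{2}/(2\sqrt{\varepsilon})$, so that $F_{\varepsilon}$ is differentiable at $0$ with $F_{\varepsilon}'(0)=0$.

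The bound~\eqref{eq:shadyacres} is the only step carrying any subtlety. The upper estimate is immediate from $F_{\varepsilon}'(z)\cdot z\leq M|z|\leq c_{6}(1+|z|)$. For the lower estimate, I would invoke the subgradient inequality $F'(z)\cdot z\geq F(z)-F(0)=F(z)$ coming from convexity of $F$ with $F(0)=0$, and then multiply by the nonnegative factor $F(z)/\sqrt{\varepsilon+F(z)^{2}}$ to get
\begin{equation*}
F_{\varepsilon}'(z)\cdot z\geq\frac{F(z)^{2}}{\sqrt{\varepsilon+F(z)^{2}}}.
\end{equation*}
For $|z|$ large enough that $F(z)\geq 1$ (which is guaranteed by the lower bound in~\eqref{eq:lingrowth}), the right-hand side dominates $F(z)/\sqrt{2}\geq(c_{1}/\sqrt{2})|z|-c_{2}/\sqrt{2}$; on the remaining bounded range of $|z|$ the defect is uniformly controlled and absorbed into $c_{5}$. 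No step presents a genuine obstacle; the main care needed throughout is to separate the regime near $z=0$, where the Lipschitz decay of $F$ and $F(0)=0$ control $F_{\varepsilon}$, from the regime of large $|z|$, where $F_{\varepsilon}$ and $F_{\varepsilon}'$ inherit their qualitative behaviour from $F$ and $F'$.
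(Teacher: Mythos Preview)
Your proof is correct and follows essentially the same approach as the paper: both verify convexity, linear growth, and $F_\varepsilon^\infty=F^\infty$ directly, establish differentiability of $F_\varepsilon$ at $0$ via the Lipschitz bound $|F(z)|\leq L|z|$ together with $F(0)=0$, and bound $\|F_\varepsilon'\|_{\sup}$ by $L$. The one place where the paper is slicker is the lower bound in \eqref{eq:shadyacres}: instead of your explicit formula and case split, the paper applies the subgradient inequality to the already-established convex and differentiable $F_\varepsilon$ at $w=0$, obtaining in one line $F_\varepsilon'(z)\cdot z\geq F_\varepsilon(z)-F_\varepsilon(0)\geq F(z)-\sqrt{\varepsilon}\geq c_1|z|-c_2-1$.
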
 
\begin{proof} 
Convexity and linear growth are obvious. By Lemma \ref{lem:subdif}\ref{item:RobertDenk}, \ref{item:Fprop1} implies that $F$ is Lipschitz with some $L\geq 0$. Hence, by \ref{item:Fprop2}, $F(0)=0$, and so we have for all $\xi\in\RR\setminus\{0\}$: 
\begin{align*}
\frac{1}{|\xi|}|F^{2}(\xi)-F^{2}(0)|\leq L(|F(\xi)|+|F(0)|)\stackrel{|\xi|\searrow 0}{\longrightarrow} 2L|F(0)|=0. 
\end{align*}
Moreover, by \ref{item:Fprop1}--\ref{item:Fprop3}, we have for $\xi\in\RR$ that 
\begin{align*}
|(F^{2})'(\xi)|\leq 2|F'(\xi)|\,|F(\xi)| \leq 2L|F(\xi)|\stackrel{|\xi|\searrow 0}{\longrightarrow} 0, 
\end{align*}
so that $F^{2}\in\hold^{1}(\RR)$. From here, we deduce by use of the chain rule that $F_{\varepsilon}$ has bounded derivatives together with $F'_{\varepsilon}(0)=0$.  Finally, we compute 
\begin{align*}
F_{\varepsilon}^{\infty}(z) = \lim_{t\searrow 0}t \sqrt{\varepsilon+F^{2}\Big(\frac{z}{t}\Big)} = \lim_{t\searrow 0}\sqrt{\varepsilon t^{2} + \Big(t\,F\Big(\frac{z}{t}\Big)\Big)^{2}}= |F^{\infty}(z)| = F^{\infty}(z), \qquad z\in\RR,
\end{align*}
where we note that \eqref{eq:lingrowth} implies that $F^{\infty}\geq 0$. We finally deal with \eqref{eq:shadyacres}. A  direct computation gives us $M\coloneqq \sup_{0<\varepsilon<1}\|F'_{\varepsilon}\|_{\sup}<\infty$. Moreover, since $F_{\varepsilon}$ is differentiable and convex, we have $F_{\varepsilon}(z) + F'_{\varepsilon}(z)\cdot(w-z) \leq F_{\varepsilon}(w)$ for all $w,z\in\RR$. Applying the preceding inequality to $w=0$, we find by virtue of \ref{item:Fprop2}:
\begin{align*}
c_{1}|z|-c_{2} \stackrel{\eqref{eq:lingrowth}}{\leq} F(z) \leq F_{\varepsilon}(z) \leq F_{\varepsilon}(0) + F'_{\varepsilon}(z)\cdot z \leq \sqrt{\varepsilon} + M|z|, \qquad z\in\RR,
\end{align*}
from where \eqref{eq:shadyacres} follows. The proof is complete. 
\end{proof}
Next, we smoothly approximate the initial values $\uu_{0}\in\bd_{c}(\Omega)$. To this end, we require Young's convolution inequality in a form that involves measures.
\begin{lemma}
Let $\rho\in\hold_{c}^{\infty}(\ball_{1}(0))$ and $\mathbf{w}\in\bd_{c}(\R^{2})$. Then we have 
\begin{align}\label{eq:YoungRough}
\|\rho*\mathbb{T}\mathbf{w}\|_{\lebe^{2}(\R^{2})}\leq \|\rho\|_{\lebe^{2}(\R^{2})}|\mathbb{T}\mathbf{w}|(\R^{2}). 
\end{align}
\end{lemma}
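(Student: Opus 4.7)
The plan is to reduce the inequality to a standard Cauchy--Schwarz estimate with respect to the total variation measure $|\mathbb{T}\mathbf{w}|$, followed by Fubini. Recall that since $\mathbf{w}\in\bd_{c}(\R^{2})$, we have $\mathbb{T}\mathbf{w}=T[\E\mathbf{w}]\in\mathrm{RM}_{\mathrm{fin}}(\R^{2};\RR)$, which makes sense of the convolution componentwise via
\begin{align*}
(\rho*\mathbb{T}\mathbf{w})(x)=\int_{\R^{2}}\rho(x-y)\dif\mathbb{T}\mathbf{w}(y),\qquad x\in\R^{2}.
\end{align*}

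First, I would estimate the integrand pointwise in $x$: using $|\dif\mathbb{T}\mathbf{w}|\leq\dif|\mathbb{T}\mathbf{w}|$ and the Cauchy--Schwarz inequality with respect to the finite positive measure $|\mathbb{T}\mathbf{w}|$, one obtains
\begin{align*}
|(\rho*\mathbb{T}\mathbf{w})(x)|^{2}\leq\Big(\int_{\R^{2}}|\rho(x-y)|\dif|\mathbb{T}\mathbf{w}|(y)\Big)^{2}\leq |\mathbb{T}\mathbf{w}|(\R^{2})\int_{\R^{2}}|\rho(x-y)|^{2}\dif|\mathbb{T}\mathbf{w}|(y).
\end{align*}
Integrating this inequality in $x\in\R^{2}$ and invoking Fubini's theorem---which is applicable since the integrand is non-negative and $|\mathbb{T}\mathbf{w}|$ is finite---yields
\begin{align*}
\|\rho*\mathbb{T}\mathbf{w}\|_{\lebe^{2}(\R^{2})}^{2}\leq |\mathbb{T}\mathbf{w}|(\R^{2})\int_{\R^{2}}\Big(\int_{\R^{2}}|\rho(x-y)|^{2}\dif x\Big)\dif|\mathbb{T}\mathbf{w}|(y)=\|\rho\|_{\lebe^{2}(\R^{2})}^{2}\,|\mathbb{T}\mathbf{w}|(\R^{2})^{2},
\end{align*}
where the translation invariance of the Lebesgue measure has been used to evaluate the inner integral. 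Taking square roots gives \eqref{eq:YoungRough}.

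I do not foresee any substantive obstacle here, as the argument is the classical proof of Young's inequality $\|\rho*\mu\|_{\lebe^{2}}\leq\|\rho\|_{\lebe^{2}}\|\mu\|_{\mathrm{TV}}$ adapted to the $\RR$-valued setting. The only minor care needed is to pass from the vector-valued measure $\mathbb{T}\mathbf{w}$ to its total variation majorant $|\mathbb{T}\mathbf{w}|$ in the very first step, after which all quantities are non-negative and Fubini applies without qualification.
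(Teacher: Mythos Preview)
Your proof is correct and is the standard direct argument for Young's inequality with a finite measure: bound the vector-valued integral by the scalar one against the total variation, apply Cauchy--Schwarz in the measure variable, then Fubini.

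The paper takes a different, more roundabout route. It first mollifies $\mathbf{w}$ in space by a second kernel $\varrho_{d}$, so that $\mathbb{T}\mathbf{w}_{d}=\varrho_{d}*\mathbb{T}\mathbf{w}$ is an $\lebe^{1}$-function, and applies the classical Young inequality for functions (with exponents $p=r=2$, $q=1$) to $\rho*(\varrho_{d}*\mathbb{T}\mathbf{w})$. It then passes to the limit $d\searrow 0$: the left-hand side converges because $\varrho_{d}*(\rho*\mathbb{T}\mathbf{w})\to\rho*\mathbb{T}\mathbf{w}$ strongly in $\lebe^{2}$, and the right-hand side is controlled because Lemma~\ref{lem:jensen} together with lower semicontinuity of the total variation gives the strict convergence $|\mathbb{T}\mathbf{w}_{d}|(\R^{2})\to|\mathbb{T}\mathbf{w}|(\R^{2})$. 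Your argument is more self-contained and avoids the approximation step and the appeal to the Jensen-type Lemma~\ref{lem:jensen}; the paper's approach, on the other hand, makes the reduction to the classical function case explicit and reuses tools already set up elsewhere in the text. Both yield exactly the same constant.
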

\begin{proof} 
By the classical convolution inequality of Young, $\|f*g\|_{\lebe^{r}(\R^{2})}\leq \|f\|_{\lebe^{p}(\R^{2})}\|g\|_{\lebe^{q}(\R^{2})}$ holds whenever $1+\frac{1}{r}=\frac{1}{p}+\frac{1}{q}$ and $f\in\lebe^{p}(\R^{2})$ as well as  $g\in\lebe^{q}(\R^{2};\R^{2})$. Now let $\varrho_{d}$ be the $d$-rescaled variant of a standard mollifier on $\R^{2}$. Then Lemma \ref{lem:jensen} tells us that $|(\varrho_{d}*\mathbb{T}\mathbf{w})\mathscr{L}^{2}|(\R^{2})\leq |\mathbb{T}\mathbf{w}|(\R^{2})$. Since $\varrho_{d}*\mathbf{w}\to \mathbf{w}$ strongly in $\lebe^{1}(\R^{2};\R^{2})$, Lemma \ref{lem:auxBD}\ref{item:aux0A} gives us $|\mathbb{T}\mathbf{w}|(\R^{2})\leq \liminf_{d\searrow 0}|\mathbb{T}(\varrho_{d}*\mathbf{w})|(\R^{2})$. In conclusion, setting $\mathbf{w}_{d}\coloneqq \varrho_{d}*\mathbf{w}$, we have 
\begin{align}\label{eq:YoungApprox1}
|\mathbb{T}\mathbf{w}_{d}|(\R^{2})\to|\mathbb{T}\mathbf{w}|(\R^{2})\qquad\text{as $d\searrow 0$}.   
\end{align}
Lastly, $\rho*\mathbf{w}\in\hold_{c}^{\infty}(\R^{2})$ and so $\varrho_{d}*(\rho*\mathbf{w})\to \rho*\mathbf{w}$ strongly in $\lebe^{2}(\R^{2};\R^{2})$ as $d\searrow 0$. We now apply Young's inequality with $p=r=2$ and $q=1$. 
Using the commutativity of convolution, we thus find 
\begin{align*}
\|\rho*\mathbb{T}\mathbf{w}\|_{\lebe^{2}(\R^{2})} & = \lim_{d\searrow 0}\|\varrho_{d}*(\rho*\mathbb{T}\ww)\|_{\lebe^{2}(\R^{2})}= \lim_{d\searrow 0}\|\rho*(\varrho_{d}*\mathbb{T}\ww)\|_{\lebe^{2}(\R^{2})} \\ 
& \!\!\!\!\! \stackrel{\text{Young}}{\leq} \limsup_{d\searrow 0} \|\rho\|_{\lebe^{2}(\R^{2})}\|\mathbb{T}\ww_{d}\|_{\lebe^{1}(\R^{2})} \stackrel{\eqref{eq:YoungApprox1}}{=} \|\rho\|_{\lebe^{2}(\R^{2})}|\mathbb{T}\ww|(\R^{2}). 
\end{align*}
This completes the proof. 
\end{proof}
Recalling \eqref{eq:initialvalue},  we firstly extend $\uu_{0}\in\bd_{c}(\Omega)$ by zero to $\widetilde{\uu}_{0}\in\bd_{c}(\R^{2})$, whereby 
\begin{align}\label{eq:extendboundedly}
\|\widetilde{\uu}_{0}\|_{\bd(\R^{2})}\leq \|\uu_{0}\|_{\bd(\Omega)}.
\end{align}
Next let $\eta\in\hold_{c}^{\infty}(\ball_{1}(0))$ be a non-negative standard mollifier and denote by $\eta_{\zeta}$ its $\zeta$-rescaled version. By \eqref{eq:YoungRough}, we conclude that 
\begin{align*}
\|\mathbb{T}(\eta_{\zeta}*\widetilde{\uu}_{0})\|_{\lebe^{2}(\Omega)} & \leq \|\mathbb{T}(\eta_{\zeta}*\widetilde{\uu}_{0})\|_{\lebe^{2}(\R^{2})} = \|\eta_{\zeta}*\mathbb{T}\widetilde{\uu}_{0}\|_{\lebe^{2}(\R^{n})} \\ 
& \leq \|\eta_{\zeta}\|_{\lebe^{2}(\R^{2})} |\mathbb{T}\widetilde{\uu}_{0}|(\R^{2}) \leq \frac{c}{\zeta}\|\uu_{0}\|_{\bd(\Omega)}, 
\end{align*}
where $c>0$ is independent of $\zeta>0$ and $\uu_{0}$. By the compact support of $\uu_{0}$ in $\Omega$, there exists $d_{0}>0$ such that $\spt(\rho_{\zeta}*\widetilde{\uu}_{0})\subset\Omega$ for all $0<\zeta<d_{0}$. Hence, setting $\uu_{0}^{\zeta}\coloneqq (\eta_{\zeta}*\widetilde{\uu}_{0})|_{\Omega}$ for $0<\zeta<d_{0}$ and recalling that $\bd(\Omega)\hookrightarrow\lebe^{2}(\Omega;\R^{2})$ in $n=2$ dimensions, we have 
\begin{align}\label{eq:initialvalueapproximation}
\begin{split}
&\uu_{0}^{\zeta}\in\hold_{c}^{\infty}(\Omega;\R^{2}),\\ 
& \|\uu_{0}^{\zeta}\|_{\lebe^{2}(\Omega)}\leq c \|\uu_{0}\|_{\bd(\Omega)},\\ 
&\|\mathbb{T}\uu_{0}^{\zeta}\|_{\lebe^{2}(\Omega)}\leq \frac{c}{\zeta}\|\uu_{0}\|_{\bd(\Omega)}, \\ 
&\|\mathbb{T}\uu_{0}^{\zeta}\|_{\lebe^{1}(\Omega)}\leq c\, \|\uu_{0}\|_{\bd(\Omega)}\qquad\text{for all}\;0<\zeta<d_{0}.  
\end{split}
\end{align}
Here, $c>0$ still is independent of $0<\zeta<d_{0}$ and $\uu_{0}$. In particular, let us note that $\eqref{eq:initialvalueapproximation}_{2}$ and the scaling in $\zeta$ in $\eqref{eq:initialvalueapproximation}_{3}$ is specific to the two-dimensional case. In what follows, we fix approximation parameters 
\begin{align}\label{eq:allchoose}
0<\zeta<d_{0},\;\;\;0<\delta<\zeta^{2}\;\;\;\text{and}\;\;\;0<\varepsilon<1.
\end{align}
We now consider a stabilized system by adding an artificial viscosity term as follows: 
\begin{align}\label{eq:motiv1}
\begin{cases}
\displaystyle\partial_{t}\mathbf{u}_{\delta,\varepsilon}^{\zeta} = \mathbb{T}^{*}(F'_{\delta,\varepsilon}(\mathbb{T}\uu_{\delta,\varepsilon}^{\zeta})) + \delta\Delta_{\mathbb{T}}\mathbf{u}_{\delta,\varepsilon}^{\zeta} + \tocean(\uu_{\delta,\varepsilon}^{\zeta}) + \mathbf{f}& \text{in}\;(0,T)\times\Omega,\\ 
\mathbf{u}_{\delta,\varepsilon}^{\zeta} = 0 & \text{on}\,(0,T)\times \partial\Omega,\\ 
\mathbf{u}_{\delta,\varepsilon}^{\zeta}(0,\cdot)=\mathbf{u}_{0}^{\zeta}&\text{in}\;\Omega, 
\end{cases}
\end{align}
where $F_{\delta,\varepsilon}$ is as in \eqref{eq:Fepsdelta} and $\Delta_{\mathbb{T}}\coloneqq \mathbb{T}^{*}\mathbb{T}$ with the formal adjoint $\mathbb{T}^{*}$ from \eqref{eq:formaladjoint}. Whereas other stabilizations (e.g., with $\Delta$) are possible, this viscosity stabilization is favourable since then $\eqref{eq:motiv}_{1}$ is entirely formulated in terms of $\mathbb{T}$. We now record that the nonlinear system \eqref{eq:motiv1} is well-posed in the following sense: 
\begin{lemma}\label{lem:weaksolapproximate} Subject to the assumptions of Definition \ref{def:varsol1}, let $\delta,\varepsilon,\zeta>0$ be adjusted according to \eqref{eq:allchoose}. Then there exists a \emph{unique weak solution} 
\begin{align}\label{eq:appsolreg}
\mathbf{u}_{\delta,\varepsilon}^{\zeta}\in \lebe^{2}(0,T;\sobo_{0}^{1,2}(\Omega;\R^{2}))\cap \sobo^{1,2}(0,T;\lebe^{2}(\Omega;\R^{2}))
\end{align}
in the following sense: For all $0<s\leq T$ and all $\bm{\psi}\in\lebe^{2}(0,s;\sobo_{0}^{1,2}(\Omega;\R^{2}))$, we have 
\begin{align}\label{eq:weakform}
\begin{split}
\int_{0}^{s}\int_{\Omega}\partial_{t}\mathbf{u}_{\delta,\varepsilon}^{\zeta}\cdot\bm{\psi}\dif x \dif t &+ \int_{0}^{s}\int_{\Omega}F'_{\delta,\varepsilon}(\mathbb{T}\uu_{\delta,\varepsilon}^{\zeta})\cdot\mathbb{T}\bm{\psi}\dif x \dif t \\ 
& = \int_{0}^{s}\int_{\Omega}\mathbf{f}\cdot\bm{\psi}\dif x \dif t + \int_{0}^{s}\int_{\Omega}\tocean(\uu_{\delta,\varepsilon}^{\zeta})\cdot\bm{\psi}\dif x\dif t.
\end{split}
\end{align}
Moreover, $\uu_{\delta,\varepsilon}^{\zeta}(0,\cdot)=\uu_{0}^{\zeta}$. 
\end{lemma}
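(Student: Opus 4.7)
The plan is to apply Proposition~\ref{prop:arendt} on the Hilbert space $H\coloneqq\lebe^{2}(\Omega;\R^{2})$ with the convex functional
\begin{align*}
\Phi(\uu)\coloneqq \begin{cases} \displaystyle\int_{\Omega} F_{\delta,\varepsilon}(\mathbb{T}\uu)\dif x & \text{if}\;\uu\in \sobo_{0}^{1,2}(\Omega;\R^{2}),\\[3pt] +\infty & \text{otherwise,}\end{cases}
\end{align*}
and the forcing operator $g(\uu)\coloneqq \mathbf{f}+\tocean(\uu)$, thereby absorbing the inhomogeneity of Proposition~\ref{prop:arendt} into $g$. First I would verify \ref{item:hikaru1}--\ref{item:hikaru3}. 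For \ref{item:hikaru2}, I would use the lower bound $F_{\delta,\varepsilon}(z)\geq\tfrac{\delta}{2}|z|^{2}$ and the $\mathbb{C}$-ellipticity of $\mathbb{T}$ (captured by \eqref{eq:pointwisecompa}) together with Korn's inequality on $\sobo_{0}^{1,2}(\Omega;\R^{2})$ to obtain a coercivity bound of the form $\Phi(\uu)\geq c(\delta)\|\uu\|_{\sobo^{1,2}(\Omega)}^{2}$, which via Rellich--Kondrachov makes sub-level sets of $\Phi$ compact in $H$. Properness, convexity and lower semicontinuity (\ref{item:hikaru1}) are immediate from Lemma~\ref{lem:approximations}. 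For \ref{item:hikaru3}, the global Lipschitz bound from \eqref{eq:oceanicHoelder} and $\mathbf{f}\in\lebe^{2}(0,T;H)$ yield the continuity of $g\colon \lebe^{2}(0,T;H)\to\lebe^{2}(0,T;H)$ together with an estimate of the form \eqref{eq:gbounds}. Finally, the initial datum $\uu_{0}^{\zeta}\in\mathrm{dom}(\Phi)$ by \eqref{eq:initialvalueapproximation}.

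Proposition~\ref{prop:arendt} then produces $\uu_{\delta,\varepsilon}^{\zeta}\in\sobo^{1,2}(0,T;H)$ with $\Phi(\uu_{\delta,\varepsilon}^{\zeta})\in\lebe^{1}(0,T)$, $\uu_{\delta,\varepsilon}^{\zeta}(0,\cdot)=\uu_{0}^{\zeta}$ and the abstract inclusion
\begin{align*}
\partial_{t}\uu_{\delta,\varepsilon}^{\zeta}+\partial\Phi(\uu_{\delta,\varepsilon}^{\zeta})\ni \mathbf{f}+\tocean(\uu_{\delta,\varepsilon}^{\zeta})\qquad\text{$\mathscr{L}^{1}$-a.e. in $(0,T)$}.
\end{align*}
Because $F_{\delta,\varepsilon}\in\hold^{1}(\rsym^{2\times 2})$ is strictly convex (by Lemma~\ref{lem:approximations} combined with the viscosity term $\tfrac{\delta}{2}|z|^{2}$), a standard convex analysis computation identifies the subdifferential as the single-valued operator $\partial\Phi(\uu)=\{\mathbb{T}^{*}(F_{\delta,\varepsilon}'(\mathbb{T}\uu))\}$ on its effective domain inside $H$. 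Pairing the above inclusion against test functions $\bm{\psi}(t,\cdot)\in\sobo_{0}^{1,2}(\Omega;\R^{2})$ and integrating by parts via \eqref{eq:IBPscalar}, with the boundary contribution vanishing by $\bm{\psi}|_{\partial\Omega}=0$, yields \eqref{eq:weakform} after integration in time. The coercivity bound together with $\Phi(\uu_{\delta,\varepsilon}^{\zeta})\in\lebe^{1}(0,T)$ upgrades the spatial regularity to $\uu_{\delta,\varepsilon}^{\zeta}\in\lebe^{2}(0,T;\sobo_{0}^{1,2}(\Omega;\R^{2}))$, which is precisely \eqref{eq:appsolreg}.

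Uniqueness will follow by testing the weak equation for the difference $\ww\coloneqq \uu^{(1)}-\uu^{(2)}$ of two solutions against $\ww$ itself: the $F'_{\delta,\varepsilon}$-contribution is non-negative by monotonicity (a consequence of convexity of $F_{\delta,\varepsilon}$), while the $\tocean$-term is controlled by $L\|\ww\|_{\lebe^{2}(\Omega)}^{2}$ via the global Lipschitz estimate \eqref{eq:oceanicHoelder}. An application of Gronwall's inequality to $t\mapsto\|\ww(t,\cdot)\|_{\lebe^{2}(\Omega)}^{2}$ then forces $\ww\equiv 0$. The main technical step I anticipate is the clean identification of $\partial\Phi(\uu)$ with the single-valued operator $\mathbb{T}^{*}(F'_{\delta,\varepsilon}(\mathbb{T}\uu))$: this is what permits the passage from the abstract inclusion delivered by Proposition~\ref{prop:arendt} to the pointwise weak formulation \eqref{eq:weakform}, and it is the place where the viscosity stabilization $\tfrac{\delta}{2}|z|^{2}$ (as well as the $\hold^{1}$-smoothness of $F_{\delta,\varepsilon}$ furnished by Lemma~\ref{lem:approximations}) is indispensable.
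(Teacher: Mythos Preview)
Your proposal is correct and follows essentially the same route as the paper: apply Proposition~\ref{prop:arendt} with $H=\lebe^{2}(\Omega;\R^{2})$ and the functional $\Phi$ you wrote down, verify \ref{item:hikaru1}--\ref{item:hikaru3} via Korn--Poincar\'{e}--Rellich and the Lipschitz bound on $\tocean$, upgrade to $\lebe^{2}(0,T;\sobo_{0}^{1,2})$ using $\Phi(\uu_{\delta,\varepsilon}^{\zeta})\in\lebe^{1}((0,T))$, identify the subdifferential via the $\hold^{1}$-regularity of $F_{\delta,\varepsilon}$, and prove uniqueness by testing with the difference and Gronwall. One small caution: your line $\partial\Phi(\uu)=\{\mathbb{T}^{*}(F'_{\delta,\varepsilon}(\mathbb{T}\uu))\}$ should be read as an identity at the level of the pairing $\int_{\Omega}w\cdot\bm{\psi}\dif x=\int_{\Omega}F'_{\delta,\varepsilon}(\mathbb{T}\uu)\cdot\mathbb{T}\bm{\psi}\dif x$ for $w\in\partial\Phi(\uu)\cap\lebe^{2}$ and $\bm{\psi}\in\sobo_{0}^{1,2}$, since membership of $\mathbb{T}^{*}(F'_{\delta,\varepsilon}(\mathbb{T}\uu))$ in $\lebe^{2}$ is only known a posteriori from the equation; the paper phrases exactly this pairing identity rather than a set equality.
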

\begin{proof}
We reformulate  $\eqref{eq:motiv1}_{1}$ in the language of Proposition \ref{prop:arendt}. To this end, we put $H\coloneqq \lebe^{2}(\Omega;\R^{2})$ and define $\Phi\colon\lebe^{2}(\Omega;\R^{2})\to (-\infty,\infty]$ by 
\begin{align}\label{eq:energyinproof}
\Phi[\mathbf{v}] \coloneqq \begin{cases}\displaystyle \int_{\Omega}F_{\delta,\varepsilon}(\mathbb{T}\vv)\dif x&\;\text{if}\;\vv\in\sobo_{0}^{1,2}(\Omega;\R^{2}),\\ 
+\infty&\;\text{if}\;\vv\in \lebe^{2}(\Omega;\R^{2})\setminus\sobo_{0}^{1,2}(\Omega;\R^{2}). 
\end{cases}
\end{align}
Then we have $\mathrm{dom}(\Phi)=\sobo_{0}^{1,2}(\Omega;\R^{2})$. Clearly, $\Phi$ satisfies \ref{item:hikaru1}. For \ref{item:hikaru2}, note that $\sobo_{0}^{1,2}(\Omega;\R^{2})\hookrightarrow\hookrightarrow\lebe^{2}(\Omega;\R^{2})$ by the Rellich-Kondrachov theorem. In particular, if $\lambda\geq 0$ and $(\vv_{j})\subset E_{\lambda}$, then Korn's and Poincar\'{e}'s inequalities in $\sobo_{0}^{1,2}(\Omega;\R^{2})$ firstly imply that 
\begin{align*}
\int_{\Omega}|\vv_{j}|^{2}\dif x \leq c \int_{\Omega}|\nabla\vv_{j}|^{2}\dif x & \leq c \int_{\Omega}|\sg\vv_{j}|^{2}\dif x \stackrel{\eqref{eq:pointwisecompa}}{\leq} c \int_{\Omega}|\mathbb{T}\vv_{j}|^{2}\dif x \\ & \leq c \int_{\Omega}F_{\delta,\varepsilon}(\mathbb{T}\vv_{j})\dif x = c\,\Phi[\mathbf{v}_{j}] \leq c\lambda,   
\end{align*}
where the ultimate constant satisfies $c=c(\mathrm{diam}(\Omega),\delta)>0$. Hence, $(\vv_{j})$ is bounded in $\sobo_{0}^{1,2}(\Omega;\R^{2})$. By the Banach-Alaoglu and Rellich-Kondrachov theorems, there exists $\vv\in\sobo_{0}^{1,2}(\Omega;\R^{2})$ and a subsequence $(\vv_{j(i)})\subset(\vv_{j})$ such that $\vv_{j(i)}\rightharpoonup\vv$ weakly in $\sobo_{0}^{1,2}(\Omega;\R^{2})$ and $\vv_{j(i)}\to\vv$ strongly in $\lebe^{2}(\Omega;\R^{2})$. Since $\Phi|_{\sobo_{0}^{1,2}(\Omega;\R^{2})}$ is sequentially weakly lower semicontinuous on $\sobo_{0}^{1,2}(\Omega;\R^{2})$ (see, e.g., \cite[Chapter 5]{Giusti}), we deduce that 
\begin{align*}
\int_{\Omega}F_{\delta,\varepsilon}(\mathbb{T}\vv)\dif x \leq \liminf_{i\to\infty}\int_{\Omega}F_{\delta,\varepsilon}(\mathbb{T}\vv_{j(i)})\dif x \leq \lambda, 
\end{align*}
and so $\vv\in E_{\lambda}$ too. In conclusion, $E_{\lambda}$ is compact with respect to the norm topology on $\lebe^{2}(\Omega;\R^{2})$, and this is \ref{item:hikaru2}. For \ref{item:hikaru3}, we put $g(\vv(t))\coloneqq \tocean(\vv(t))+\mathbf{f}$, where now $\vv\in\lebe^{2}(0,T;\lebe^{2}(\Omega;\R^{2}))$. By \eqref{eq:oceanicHoelder}, we have for $\mathscr{L}^{1}$-a.e. $0<t<T$:
\begin{align*}
\|g(\vv(t,\cdot))\|_{\lebe^{2}(\Omega)} & \leq c\Big(\int_{\Omega}|\vv(t,\cdot)|^{2}\dif x\Big)^{\frac{1}{2}} + \|\mathbf{f}(t,\cdot)\|_{\lebe^{2}(\Omega)} \eqqcolon L\|\vv(t,\cdot)\|_{\lebe^{2}(\Omega)} + b(t)
\end{align*}
with an obvious definition of $L$ and $b$. Since $\mathbf{f}\in\lebe^{2}(0,T;\lebe^{2}(\Omega;\R^{2}))$, $b\in\lebe^{2}((0,T))$, and so \eqref{eq:gbounds} follows. Lastly, since $\tocean$ is Lipschitz by \eqref{eq:oceanicHoelder}, $g$ is continuous on $\lebe^{2}(0,T;\lebe^{2}(\Omega;\R^{2}))$. Hence, \ref{item:hikaru3} is satisfied. In conclusion, Proposition \ref{prop:arendt} provides us with a solution $\uu_{\delta,\varepsilon}^{\zeta}\in\sobo^{1,2}(0,T;\lebe^{2}(\Omega;\R^{2}))$ such that $\uu_{\delta,\varepsilon}^{\zeta}(0,\cdot)=\uu_{0}^{\zeta}$ in $\lebe^{2}(\Omega;\R^{2})$ and 
\begin{align}\label{eq:natalieteeger}
\partial_{t}\uu_{\delta,\varepsilon}^{\zeta}(t,\cdot) + \mathcal{A}\uu_{\delta,\varepsilon}^{\zeta}(t,\cdot) = g(\uu_{\delta,\varepsilon}^{\zeta}(t,\cdot))\qquad\text{$\mathscr{L}^{1}$-a.e.  $t\in(0,T)$}. 
\end{align}
Moreover, Proposition \ref{prop:arendt} tells us that $\Phi[\uu_{\delta,\varepsilon}^{\zeta}]\in\lebe^{1}((0,T))$, whereby Korn's and Poincar\'{e}'s inequalities combine to $\uu_{\delta,\varepsilon}^{\zeta}\in\lebe^{2}(0,T;\sobo_{0}^{1,2}(\Omega;\R^{2}))$ by the very definition of $\Phi$. Now let $0<s< T$ and  $\bm{\psi}\in\lebe^{2}(0,s;\sobo_{0}^{1,2}(\Omega;\R^{2}))$ be arbitrary. For $\mathscr{L}^{1}$-a.e. $0<t<s$, we multiply \eqref{eq:natalieteeger} with $\bm{\psi}(t,\cdot)$ and integrate over $\Omega_{s}$. This gives us 
\begin{align}\label{eq:teenager}
\int_{\Omega_{s}}(\partial_{t}\uu_{\delta,\varepsilon}^{\zeta})\cdot\bm{\psi}\dif\,(t,x) + \int_{\Omega_{s}}\mathcal{A}\uu_{\delta,\varepsilon}^{\zeta}\cdot\bm{\psi}\dif\,(t,x) = \int_{\Omega_{s}}g(\uu_{\delta,\varepsilon}^{\zeta})\bm{\psi}\dif\,(t,x). 
\end{align}
Since $F_{\delta,\varepsilon}$ is differentiable by Lemma \ref{lem:approximations}, we have 
\begin{align*}
\int_{\Omega}\mathcal{A}\uu(t,\cdot)\cdot\bm{\psi}\dif x = \int_{\Omega}F'_{\delta,\varepsilon}(\mathbb{T}\uu(t,\cdot))\cdot\mathbb{T}\bm{\psi}(t,\cdot)\dif x \qquad\text{for $\mathscr{L}^{1}$-a.e. $0<t<s$}, 
\end{align*}
and so \eqref{eq:teenager} yields \eqref{eq:weakform}.

As to uniqueness, let $\uu^{(1)},\uu^{(2)}$ be two weak solutions with the regularity displayed in \eqref{eq:appsolreg}; in particular, they satisfy \eqref{eq:weakform} with the respective modifications and  $\uu^{(1)}=\uu^{(2)}=\uu_{0}^{\zeta}$. We test both equations with $\bm{\psi}\coloneqq \uu^{(1)}-\uu^{(2)}$ and subtract them from each other. Due to the quadratic viscosity stabilization, there exists $\mu=\mu(\delta)>0$ such that 
\begin{align*}
\mu |z-z|^{2} \leq (F'_{\delta,\varepsilon}(z)-F'_{\delta,\varepsilon}(z'))\cdot(z-z')\qquad\text{for all}\;z,z'\in\RR.
\end{align*}
For an arbitrary $0<s<T$, we thus end up with 
\begin{align*}
 \frac{1}{2}&\int_{0}^{s}\partial_{t}\|\uu^{(1)}(t,\cdot)-\uu^{(2)}(t,\cdot)\|_{\lebe^{2}(\Omega)}^{2}\dif t + \mu  \int_{0}^{s}\|\mathbb{T}\uu^{(1)}(t,\cdot)-\mathbb{T}\uu^{(2)}(t,\cdot)\|_{\lebe^{2}(\Omega)}^{2}\dif t \\ 
 & \leq \int_{0}^{s}\int_{\Omega}|\tocean(\uu^{(1)}(t,\cdot))-\tocean(\uu^{(2)}(t,\cdot))|\uu^{(1)}(t,\cdot)-\uu^{(2)}(t,\cdot)|\dif x \dif t \\ 
 & \leq c \int_{0}^{s}\|\uu^{(1)}(t,\cdot)-\uu^{(2)}(t,\cdot)\|_{\lebe^{2}(\Omega)}^{2}\dif t, 
\end{align*}
where we employed the global Lipschitz estimate on $\widetilde{\eta}$ from \eqref{eq:oceanicHoelder} in the last step. Using that $\uu^{(1)},\uu^{(2)}\in\sobo^{1,2}(0,T;\lebe^{2}(\Omega;\R^{2}))$ and $\uu^{(1)}(0,\cdot)=\uu^{(2)}(0,\cdot)=\uu_{0}^{\zeta}$, we find that 
\begin{align*}
\|\uu^{(1)}(s,\cdot)-\uu^{(2)}(s,\cdot)\|_{\lebe^{2}(\Omega)}^{2} \leq 2\ell \int_{0}^{s}\|\uu^{(1)}(t,\cdot)-\uu^{(2)}(t,\cdot)\|_{\lebe^{2}(\Omega)}^{2}\dif t . 
\end{align*}
Gronwall's inequality then implies that $\uu^{(1)}(s,\cdot)=\uu^{(2)}(s,\cdot)$ for all $0\leq s\leq T$. The proof is complete. 
\end{proof}
\begin{remark}\label{rem:AHlimitations}
Lemma \ref{lem:weaksolapproximate} can be established in various ways. For instance, if one considers the simplified situation where $\tocean(\uu_{\delta,\varepsilon}^{\zeta})$ is absent, the existence of strong (and hence weak) solutions can be deduced from Showalter {\cite[Chapter III, Prop. 4.2]{Showalter}}. However, the key point in the argument below is the limit passage $\varepsilon\searrow 0$. In this regard, we emphasize that Proposition \ref{prop:arendt} does not imply the existence of weak solutions of \eqref{eq:weakform} when $\delta=0$. Indeed, the substitute of \eqref{eq:energyinproof} would be 
\begin{align*}
\widetilde{\Phi}[\vv] \coloneqq \begin{cases} 
\displaystyle \int_{\Omega}F_{\varepsilon}(\mathbb{T}\vv)\dif x&\;\text{if}\;\vv\in\ld_{0}(\Omega), \\ 
+\infty&\;\text{if}\;\vv\in\lebe^{2}(\Omega;\R^{2})\setminus\ld_{0}(\Omega),
\end{cases}
\end{align*}
but then condition \ref{item:hikaru2} is not necessarily fulfilled anymore. This is due to the fact that $\ld_{0}(\Omega)\hookrightarrow\lebe^{2}(\Omega;\R^{2})$, see Lemma \ref{lem:poincaresobolev},  but the embedding is not compact. 
\end{remark}
\begin{remark}
For the existence part of Theorem \ref{thm:main}, the uniqueness part of Lemma \ref{lem:weaksolapproximate} is not necessary. However, in view of a potential numerical implementation of our strategy, the unique solvability for the approximate problems is essential. Lastly note that the unique solvability does \emph{not} come as a consequence of Proposition \ref{prop:arendt}. Indeed, if the sublinear function e.g. is only H\"{o}lder but not Lipschitz continuous, uniqueness might fail (see \cite{ArendtHauer}). In our situation, uniqueness is a consequence of the specific hypothesis on $\tocean$, see \eqref{eq:oceanicHoelder}.
\end{remark}
Based on the weak solution $\uu_{\delta,\varepsilon}^{\zeta}$ from Lemma \ref{lem:weaksolapproximate}, we now derive the evolutionary variational inequality satisfied by $\uu_{\delta,\varepsilon}^{\zeta}$. To this end, let $0<s<T$ and let the test map $\mathbf{v}\in\lebe^{2}(0,T;\sobo_{0}^{1,2}(\Omega;\R^{2}))\cap\sobo^{1,2}(0,T;\lebe^{2}(\Omega;\R^{2}))$ be arbitrary. We test \eqref{eq:weakform} with $\bm{\psi} \coloneqq \mathbf{v}-\mathbf{u}_{\delta,\varepsilon}^{\zeta}\in\lebe^{2}(0,T;\sobo_{0}^{1,2}(\Omega;\R^{2}))$ to obtain 
\begin{align}
\int_{0}^{s}\int_{\Omega}\partial_{t}\mathbf{u}_{\delta,\varepsilon}^{\zeta}(\mathbf{v}-\mathbf{u}_{\delta,\varepsilon}^{\zeta})\dif x\dif t & = - \int_{0}^{s}\int_{\Omega}F'_{\delta,\varepsilon}(\mathbb{T}\uu_{\delta,\varepsilon}^{\zeta})\cdot\mathbb{T}(\mathbf{v}-\mathbf{u}_{\delta,\varepsilon}^{\zeta})\dif x\dif t \notag \\ & \!\!\!\!\!\!\!\!\!\!\!\!\!\!\!\!\!\!\!\!\!\!\!\!\!\!\!\!\!\!\!\!\!\!\!\!\!\!\!\! + \int_{0}^{s}\int_{\Omega}\mathbf{f}\cdot(\mathbf{v}-\uu_{\delta,\varepsilon}^{\zeta})\dif x\dif t + \int_{0}^{s}\int_{\Omega}\tocean(\uu_{\delta,\varepsilon}^{\zeta})\cdot(\vv-\uu_{\delta,\varepsilon}^{\zeta})\dif x \dif t \notag \\ 
& \!\!\!\!\!\!\!\!\!\!\!\!\!\!\!\!\!\!\!\!\!\!\!\!\!\!\!\!\!\!\!\!\!\!\!\!\!\!\!\! \!\!\! \! \stackrel{\eqref{eq:subgradientdef}}{\geq} \int_{0}^{s}\int_{\Omega}F_{\delta,\varepsilon}(\mathbb{T}\uu_{\delta,\varepsilon}^{\zeta})\dif x\dif t - \int_{0}^{s}\int_{\Omega}F_{\delta,\varepsilon}(\mathbb{T}\vv)\dif x \dif t
 \label{eq:wtanner} \\ & \!\!\!\!\!\!\!\!\!\!\!\!\!\!\!\!\!\!\!\!\!\!\!\!\!\!\!\!\!\!\!\!\!\!\!\!\!\!\!\! + \int_{0}^{s}\int_{\Omega}\mathbf{f}\cdot(\mathbf{v}-\uu_{\delta,\varepsilon}^{\zeta})\dif x\dif t + \int_{0}^{s}\int_{\Omega}\tocean(\uu_{\delta,\varepsilon}^{\zeta})\cdot(\vv-\uu_{\delta,\varepsilon}^{\zeta})\dif x \dif t. \notag 
\end{align}
By the regularity of $\uu_{\delta,\varepsilon}^{\zeta}$ and $\vv$, we may rewrite the term on the very left-hand side of the previous chain of inequalities in the same way as in \eqref{eq:intbyparts}. This gives us the evolutionary variational inequality 
\begin{align}
\int_{0}^{s}\int_{\Omega}\partial_{t}\mathbf{v}\cdot(\mathbf{v}-\mathbf{u}_{\delta,\varepsilon}^{\zeta})\dif x\dif t & + \int_{0}^{s}\int_{\Omega}F_{\delta,\varepsilon}(\mathbb{T}\mathbf{v})\dif x \dif t   - \int_{0}^{s}\int_{\Omega}\mathbf{f}\cdot\mathbf{v}\dif x \dif t \notag\\ & \geq \int_{0}^{s}\int_{\Omega}F_{\delta,\varepsilon}(\mathbb{T}\mathbf{u}_{\delta,\varepsilon}^{\zeta})\dif x\dif t -  \int_{0}^{s}\int_{\Omega}\mathbf{f}\cdot\mathbf{u}_{\delta,\varepsilon}^{\zeta}\dif x \dif t  \label{eq:approxineq}\\
& + \int_{0}^{s}\int_{\Omega}\tocean(\uu_{\delta,\varepsilon}^{\zeta})\cdot(\vv-\uu_{\delta,\varepsilon}^{\zeta})\dif x\dif t  \notag\\ 
&  + \frac{1}{2}\int_{\Omega}\Big(|\mathbf{u}_{\delta,\varepsilon}^{\zeta}-\mathbf{v}|^{2}(s,\cdot) - |\mathbf{u}_{\delta,\varepsilon}^{\zeta}-\mathbf{v}|^{2}(0,\cdot)\Big) \dif x\notag
\end{align}
for all $\mathscr{L}^{1}$-a.e. $0<s<T$. Next, we collect some useful a priori estimates: 
\begin{lemma}[A priori estimates]\label{lem:usefulapriori} Subject to the assumptions of Theorem \ref{thm:main}, there exists a constant $c>0$, independent of the parameters $\delta,\varepsilon,\zeta>0$ from \eqref{eq:allchoose}, such that the following hold: 
\begin{enumerate}
\item\label{eq:unifobd1} $\|\mathbf{u}_{\delta,\varepsilon}^{\zeta}\|_{\lebe^{\infty}(0,T;\lebe^{2}(\Omega))}\leq c$, 
\item\label{eq:unifobd2} $\|\mathbf{u}_{\delta,\varepsilon}^{\zeta}\|_{\lebe^{1}(0,T;\mathrm{LD}(\Omega))}\leq c$, 
\item\label{eq:unifobd3} $\sqrt{\delta}\|\mathbf{u}_{\delta,\varepsilon}^{\zeta}\|_{\lebe^{2}(0,T;\sobo^{1,2}(\Omega))}\leq c$, 
\item\label{eq:unifobd4} $\|\partial_{t}\mathbf{u}_{\delta,\varepsilon}^{\zeta}\|_{\lebe^{2}(0,T;\sobo^{-1,2}(\Omega))}\leq c$. 
\end{enumerate}
\end{lemma}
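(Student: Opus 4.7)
The plan is a standard energy argument: test the weak formulation \eqref{eq:weakform} with $\bm{\psi}=\uu_{\delta,\varepsilon}^{\zeta}$ itself to obtain an energy identity, then combine with Gronwall to get (a); the bounds (b)--(d) follow by bootstrapping off (a) and the energy identity. The regularity \eqref{eq:appsolreg} of $\uu_{\delta,\varepsilon}^{\zeta}$ legitimises the choice $\bm{\psi}=\uu_{\delta,\varepsilon}^{\zeta}$.

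Specifically, for $\mathscr{L}^{1}$-a.e.\ $0<s<T$, testing \eqref{eq:weakform} with $\bm{\psi}=\uu_{\delta,\varepsilon}^{\zeta}$ and using $\uu_{\delta,\varepsilon}^{\zeta}\in\sobo^{1,2}(0,T;\lebe^{2}(\Omega;\R^{2}))$ gives
\begin{align*}
\tfrac{1}{2}\|\uu_{\delta,\varepsilon}^{\zeta}(s,\cdot)\|_{\lebe^{2}(\Omega)}^{2} + \int_{0}^{s}\!\!\int_{\Omega}F'_{\delta,\varepsilon}(\mathbb{T}\uu_{\delta,\varepsilon}^{\zeta})\cdot\mathbb{T}\uu_{\delta,\varepsilon}^{\zeta}\,\dif x\,\dif t = \tfrac{1}{2}\|\uu_{0}^{\zeta}\|_{\lebe^{2}(\Omega)}^{2} + \int_{0}^{s}\!\!\int_{\Omega}(\mathbf{f}+\tocean(\uu_{\delta,\varepsilon}^{\zeta}))\cdot\uu_{\delta,\varepsilon}^{\zeta}\,\dif x\,\dif t.
\end{align*}
Since $F'_{\delta,\varepsilon}(z)\cdot z = F'_{\varepsilon}(z)\cdot z + \delta|z|^{2}$, Lemma \ref{lem:approximations} yields the pointwise lower bound $F'_{\delta,\varepsilon}(z)\cdot z \geq c_{4}|z|-c_{5}+\delta|z|^{2}$. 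The initial term is controlled by $\|\uu_{0}^{\zeta}\|_{\lebe^{2}(\Omega)}\leq c\|\uu_{0}\|_{\bd(\Omega)}$ via \eqref{eq:initialvalueapproximation}; $\mathbf{f}$ is absorbed by Young's inequality using \eqref{eq:atmosphericmain}; for $\tocean$, the global Lipschitz property from \eqref{eq:oceanicHoelder} together with $\mathbf{U}_{\mathrm{ocean}}\in\lebe^{\infty}(0,T;\lebe^{2}(\Omega;\R^{2}))$ implies $\|\tocean(\vv)\|_{\lebe^{2}(\Omega)} \leq c(1+\|\vv\|_{\lebe^{2}(\Omega)})$, so another application of Young's inequality suffices. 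All constants are independent of $\delta,\varepsilon,\zeta$, and Gronwall's inequality then delivers (a).

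With (a) available, the energy identity provides uniform control of $\int_{0}^{T}\!\!\int_{\Omega}F'_{\delta,\varepsilon}(\mathbb{T}\uu_{\delta,\varepsilon}^{\zeta})\cdot\mathbb{T}\uu_{\delta,\varepsilon}^{\zeta}\,\dif x\,\dif t$, from which the lower bound in \eqref{eq:shadyacres} and the additional $\delta|z|^{2}$-contribution simultaneously yield $\|\mathbb{T}\uu_{\delta,\varepsilon}^{\zeta}\|_{\lebe^{1}(0,T;\lebe^{1}(\Omega))}\leq c$ and $\sqrt{\delta}\|\mathbb{T}\uu_{\delta,\varepsilon}^{\zeta}\|_{\lebe^{2}(0,T;\lebe^{2}(\Omega))}\leq c$. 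Combined with (a) and the norm equivalence \eqref{eq:pointwisecompa}, the first bound gives (b); combined with Korn's and Poincar\'{e}'s inequality, legitimate on $\sobo^{1,2}_{0}(\Omega;\R^{2})$, the second bound gives (c). For (d) we test \eqref{eq:weakform} against arbitrary $\bm{\psi}\in\lebe^{2}(0,T;\sobo^{1,2}_{0}(\Omega;\R^{2}))$ and use $|F'_{\delta,\varepsilon}(z)|\leq M+\delta|z|$ from Lemma \ref{lem:approximations} together with (a) to estimate $\|\partial_{t}\uu_{\delta,\varepsilon}^{\zeta}\|_{\sobo^{-1,2}(\Omega)}$ pointwise by $c(1+\delta\|\mathbb{T}\uu_{\delta,\varepsilon}^{\zeta}\|_{\lebe^{2}(\Omega)}+\|\mathbf{f}\|_{\lebe^{2}(\Omega)}+\|\tocean(\uu_{\delta,\varepsilon}^{\zeta})\|_{\lebe^{2}(\Omega)})$; squaring and integrating, the critical term $\delta^{2}\int_{0}^{T}\|\mathbb{T}\uu_{\delta,\varepsilon}^{\zeta}\|_{\lebe^{2}(\Omega)}^{2}\,\dif t\leq \delta\cdot c$ stays bounded by (c), and (d) follows.

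The principal obstacle is preserving $\varepsilon$-independence of all constants: as $\varepsilon\searrow 0$, the quadratic behaviour of $F_{\delta,\varepsilon}$ near the origin degenerates, so the only surviving coercivity of $F'_{\delta,\varepsilon}(z)\cdot z$ stemming from $F_{\varepsilon}$ is the linear lower bound of Lemma \ref{lem:approximations}. This is exactly what forces the $\lebe^{1}$-type bound (b) rather than any $\lebe^{p}$-bound with $p>1$ to survive the singular limit, in turn dictating the $\bd$-setting in Theorem \ref{thm:main}; analogously, the $\sqrt{\delta}$-weight in (c) reflects the viscosity-stabilisation losing strict ellipticity as $\delta\searrow 0$, compatible with relaxing the evolution to a gradient-flow formulation on $\bd(\Omega)$.
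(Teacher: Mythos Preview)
Your proof is correct and follows essentially the same approach as the paper: test \eqref{eq:weakform} with $\uu_{\delta,\varepsilon}^{\zeta}$ itself, derive the energy identity, and bootstrap to (b)--(d). The only minor difference is that the paper handles the $\tocean$-term via the $(1-\gamma)$-H\"older bound from \eqref{eq:oceanicHoelder}, obtaining $|\tocean(\uu)\cdot\uu|\leq c(\theta)(1+|\mathbf{U}_{\mathrm{ocean}}|^{2})+\theta|\uu|^{2}$ with small $\theta$ and absorbing directly after taking the supremum in $s$, whereas you use the Lipschitz bound and close via Gronwall; both are valid.
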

\begin{proof}
We begin with a preliminary estimate. Based on \eqref{eq:oceanicassump} and the $(1-\gamma)$-H\"{o}lder property of $\widetilde{\eta}$ from \eqref{eq:oceanicHoelder}, we find by Young's inequality with some $\theta>0$ to be specified later: 
\begin{align}\label{eq:VRock}
\begin{split}
|\tocean(\uu_{\delta,\varepsilon}^{\zeta})||\uu_{\delta,\varepsilon}^{\zeta}| & \leq c |\mathbf{U}_{\mathrm{ocean}}-\mathbf{u}_{\delta,\varepsilon}^{\zeta}|^{1-\gamma}|\uu_{\delta,\varepsilon}^{\zeta}| \\ 
& \leq c\,|\mathbf{U}_{\mathrm{ocean}}|\,|\uu_{\delta,\varepsilon}^{\zeta}| + c\,|\uu_{\delta,\varepsilon}^{\zeta}|^{2-\gamma} \\ 
& \leq c(\theta)(1+|\mathbf{U}_{\mathrm{ocean}}|^{2}) + \theta|\uu_{\delta,\varepsilon}^{\zeta}|^{2}, 
\end{split}
\end{align}
where we used that $0<\gamma<1$. The weak solution $\mathbf{u}_{\delta,\varepsilon}^{\zeta}$ of \eqref{eq:motiv1} is an admissible test function in \eqref{eq:weakform}. Based on its regularity as displayed in \eqref{eq:appsolreg}, we obtain for $\mathscr{L}^{1}$-a.e.  $0\leq s \leq T$:  
\begin{align*}
&\frac{1}{2}\Big(\|\mathbf{u}_{\delta,\varepsilon}^{\zeta}(s,\cdot)\|_{\lebe^{2}(\Omega)}^{2}- \|\mathbf{u}_{0}^{\zeta}\|_{\lebe^{2}(\Omega)}^{2} \Big) +  \int_{0}^{s}\int_{\Omega}F'_{\varepsilon}(\mathbb{T}\uu_{\delta,\varepsilon}^{\zeta})\cdot\mathbb{T}\uu_{\delta,\zeta}\dif x \dif t + \frac{\delta}{2}\int_{0}^{s}\int_{\Omega}|\mathbb{T}\mathbf{u}_{\delta,\varepsilon}^{\zeta}|^{2}\dif x \dif t \\ & 
=\frac{1}{2}\int_{0}^{s}\frac{\dif}{\dif t}\int_{\Omega}|\mathbf{u}_{\delta,\varepsilon}^{\zeta}(t)|^{2}\dif x \dif t  +  \int_{0}^{s}\int_{\Omega}F'_{\varepsilon}(\mathbb{T}\uu_{\delta,\varepsilon}^{\zeta})\cdot\mathbb{T}\uu_{\delta,\varepsilon}^{\zeta}\dif x \dif t + \frac{\delta}{2}\int_{0}^{s}\int_{\Omega}|\mathbb{T}\mathbf{u}_{\delta,\varepsilon}^{\zeta}|^{2}\dif x \dif t \\ 
& \!\!\!\! \stackrel{\eqref{eq:weakform}}{=} \int_{0}^{s}\int_{\Omega}\mathbf{f}\cdot\mathbf{u}_{\delta,\varepsilon}^{\zeta}\dif x \dif t + \int_{0}^{s}\int_{\Omega}\tocean(\uu_{\delta,\varepsilon}^{\zeta})\cdot\uu_{\delta,\varepsilon}^{\zeta}\dif x\dif t.
\end{align*}
We then pass to the essential supremum over $0\leq s\leq T$ on both sides of the preceding equation. By virtue of Young's inequality with some $0<\theta<1$, this yields  
\begin{align*}
\frac{1}{2}\sup_{0<s<T}\Big(\|\mathbf{u}_{\delta,\varepsilon}^{\zeta}(s,\cdot)\|_{\lebe^{2}(\Omega)}^{2} & - \|\mathbf{u}_{0}^{\zeta}\|_{\lebe^{2}(\Omega)}^{2} \Big) \\ & +  \int_{0}^{T}\int_{\Omega}F'_{\varepsilon}(\mathbb{T}\uu_{\delta,\varepsilon}^{\zeta})\cdot\mathbb{T}\uu_{\delta,\varepsilon}^{\zeta}\dif x \dif t + \frac{\delta}{2}\int_{0}^{T}\int_{\Omega}|\mathbb{T}\mathbf{u}_{\delta,\varepsilon}^{\zeta}|^{2}\dif x \dif t \\ 
& = \int_{0}^{T}\int_{\Omega}|\mathbf{f}\cdot\mathbf{u}_{\delta,\varepsilon}^{\zeta}|\dif x \dif t + \int_{0}^{T}\int_{\Omega}|\tocean(\uu_{\delta,\varepsilon}^{\zeta})|\,|\uu_{\delta,\varepsilon}^{\zeta}|\dif x \dif t \\ & \!\!\!\! \stackrel{\text{\eqref{eq:VRock}}}{\leq}  c(\theta)\int_{0}^{T}\int_{\Omega}|\mathbf{f}|^{2}\dif x \dif t + c(\theta)\int_{0}^{T}\int_{\Omega}(1+|\mathbf{U}_{\mathrm{ocean}}|^{2})\dif x\dif t \\ 
& + \theta T\sup_{0<s<T}\|\mathbf{u}_{\delta,\varepsilon}^{\zeta}(s,\cdot)\|_{\lebe^{2}(\Omega)}^{2}.
\end{align*}
Choosing $\theta>0$ sufficiently small, the ultimate term may be absorbed into the very first term on the left-hand side of the preceding inequality. By our integrability assumptions \eqref{eq:atmosphericmain}, \eqref{eq:oceanicmain} on $\mathbf{f}$ and $\mathbf{U}_{\mathrm{ocean}}$ in conjunction with \eqref{eq:initialvalue} and $\eqref{eq:initialvalueapproximation}_{2}$, this yields \ref{eq:unifobd1}. Inequality \eqref{eq:shadyacres} yields assertion \ref{eq:unifobd2} and, by use of Korn's inequality and Poincar\'{e}'s inequality in $\sobo_{0}^{1,2}(\Omega;\R^{2})$, assertion \ref{eq:unifobd3}. 

To see \ref{eq:unifobd4}, we recall that $\lebe^{2}(0,T;\sobo^{-1,2}(\Omega;\R^{2}))\cong\lebe^{2}(0,T;\sobo_{0}^{1,2}(\Omega;\R^{2}))'$. Going back to \eqref{eq:weakform}, which holds true for $s=T$, let $\bm{\psi}\in\lebe^{2}(0,T;\sobo_{0}^{1,2}(\Omega;\R^{2}))$. With $M>0$ as in Lemma \ref{lem:approximations}, we have   
\begin{align*}
\left\vert \int_{0}^{T}\int_{\Omega}\partial_{t}\uu_{\delta,\varepsilon}^{\zeta}\cdot\bm{\psi}\dif x \dif t \right\vert & \leq C(T,M,\mathscr{L}^{2}(\Omega))\|\nabla\bm{\psi}\|_{\lebe^{2}(0,T;\lebe^{2}(\Omega))} \\ & + \delta\|\nabla \uu_{\delta,\varepsilon}^{\zeta}\|_{\lebe^{2}(0,T;\lebe^{2}(\Omega))}\|\nabla\bm{\psi}\|_{\lebe^{2}(0,T;\lebe^{2}(\Omega))} \\ 
& + \|\mathbf{f}\|_{\lebe^{2}(0,T;\lebe^{2}(\Omega))}\|\bm{\psi}\|_{\lebe^{2}(0,T;\lebe^{2}(\Omega))} \\ & + \|\tocean(\uu_{\delta,\varepsilon}^{\zeta})\|_{\lebe^{2}(0,T;\lebe^{2}(\Omega))}\|\bm{\psi}\|_{\lebe^{2}(0,T;\lebe^{2}(\Omega))} \leq  c\|\bm{\psi}\|_{\lebe^{2}(0,T;\sobo^{1,2}(\Omega))}, 
\end{align*}
where $0<c<\infty$ holds due to \ref{eq:unifobd1}--\ref{eq:unifobd3}. Specifically, note that by \eqref{eq:oceanicmain} and \eqref{eq:oceanicassump}, the oceanic force terms are equally bounded due to \ref{eq:unifobd1}. 
Hence, \ref{eq:unifobd4} follows, and the proof is complete. 
\end{proof}
In the following, we shall fix $0<\zeta<d_{0}$ and then send, in this order, $\delta\searrow 0$ and $\varepsilon\searrow 0$. To this end, we now examine the limiting behaviour of $(\uu_{\delta,\varepsilon}^{\zeta})$.
\begin{corollary}\label{cor:compa1}
Let $0<\zeta<d_{0}$. For each $\varepsilon>0$, there exists a map $\mathbf{u}_{\varepsilon}^{\zeta}\in\lebe_{\mathrm{w}^{*}}^{1}(0,T;\bd(\Omega))\cap\lebe^{\infty}(0,T;\lebe^{2}(\Omega;\R^{2}))$ and a sequence $(\delta_{j})\subset (0,\zeta^{2})$ with $\delta_{j}\searrow 0$ as $j\to\infty$ such that the following hold:
\begin{enumerate}
\item\label{eq:compa1} $\mathbf{u}_{\delta_{j},\varepsilon}^{\zeta}\stackrel{*}{\rightharpoonup}\mathbf{u}_{\varepsilon}^{\zeta}$ in $\lebe^{\infty}(0,T;\lebe^{2}(\Omega;\R^{2}))$, 
\item\label{eq:compa1a} $\mathbf{u}_{\delta_{j},\varepsilon}^{\zeta}\rightharpoonup \mathbf{u}_{\varepsilon}^{\zeta}$ in $\lebe^{2}(0,T;\lebe^{2}(\Omega;\R^{2}))$, 
\item\label{eq:compa3} $\mathbf{u}_{\delta_{j},\varepsilon}^{\zeta}\to \mathbf{u}_{\varepsilon}^{\zeta}$ in $\lebe^{1}(0,T;\lebe^{r}(\Omega;\R^{2}))$ for any $1\leq r<2$, 
\item\label{eq:compa4} $\partial_{t}\uu_{\delta_{j},\varepsilon}^{\zeta}\stackrel{*}{\rightharpoonup}\partial_{t}\uu_{\varepsilon}^{\zeta}$ in $\lebe^{2}(0,T;\sobo^{-1,2}(\Omega;\R^{2}))$. 
\end{enumerate}
Moreover, there exists a constant $c>0$ independent of $(\delta_{j})$, $\varepsilon$ and $\zeta$ such that 
\begin{align}\label{eq:epsbounds}
\sup_{0<\varepsilon<1}\Big(\|\uu_{\varepsilon}^{\zeta}\|_{\lebe_{\mathrm{w}^{*}}^{1}(0,T;\bd(\Omega))} + \|\uu_{\varepsilon}^{\zeta}\|_{\lebe^{\infty}(0,T;\lebe^{2}(\Omega))}+\|\partial_{t}\uu_{\varepsilon}^{\zeta}\|_{\lebe^{2}(0,T;\sobo^{-1,2}(\Omega))}\Big) \leq c. 
\end{align}
As a consequence, for each $0<\zeta<d_{0}$, there exists a map $\uu^{\zeta}\in \lebe_{\mathrm{w}^{*}}^{1}(0,T;\bd(\Omega))\cap\lebe^{\infty}(0,T;\lebe^{2}(\Omega;\R^{2}))$ and a sequence $(\varepsilon_{j})$ with $\varepsilon_{j}\searrow 0$ as $j\to\infty$ such that the following hold: 
\begin{enumerate}\addtocounter{enumi}{4}
\item\label{eq:compa5}$\uu_{\varepsilon_{j}}^{\zeta}\stackrel{*}{\rightharpoonup}\uu^{\zeta}$ in $\lebe^{\infty}(0,T;\lebe^{2}(\Omega;\R^{2}))$, 
\item\label{eq:compa6} $\uu_{\varepsilon_{j}}^{\zeta} \rightharpoonup \uu^{\zeta}$ in $\lebe^{2}(0,T;\lebe^{2}(\Omega;\R^{2}))$, 
\item \label{eq:compa7}$\uu_{\varepsilon_{j}}^{\zeta}\to\uu^{\zeta}$ in $\lebe^{1}(0,T;\lebe^{r}(\Omega;\R^{2}))$ for any $1\leq r<2$, 
\item\label{eq:compa8} $\partial_{t}\uu_{\varepsilon_{j}}^{\zeta}\stackrel{*}{\rightharpoonup}  \partial_{t}\uu^{\zeta}$ in $\lebe^{2}(0,T;\sobo^{-1,2}(\Omega;\R^{2}))$. 
\end{enumerate}
Moreover, there exists a constant $c>0$ independent of $(\varepsilon_{j})$ and $\zeta$ such that 
\begin{align}\label{eq:zetabounds}
\sup_{0<\varepsilon<1}\Big(\|\uu^{\zeta}\|_{\lebe_{\mathrm{w}^{*}}^{1}(0,T;\bd(\Omega))} + \|\uu^{\zeta}\|_{\lebe^{\infty}(0,T;\lebe^{2}(\Omega))}+\|\partial_{t}\uu^{\zeta}\|_{\lebe^{2}(0,T;\sobo^{-1,2}(\Omega))}\Big) \leq c. 
\end{align}
Finally, there exists a map $\uu\in\lebe_{\mathrm{w}^{*}}^{1}(0,T;\bd(\Omega))\cap\lebe^{\infty}(0,T;\lebe^{2}(\Omega;\R^{2}))$ and a sequence $(\zeta_{j})\subset (0,d_{0})$ with $\zeta_{j}\searrow 0$ as $j\to\infty$ such that the following hold: 
\begin{enumerate}\addtocounter{enumi}{8}
\item\label{eq:compa9} $\uu^{\zeta_{j}}\stackrel{*}{\rightharpoonup}\uu$ in $\lebe^{\infty}(0,T;\lebe^{2}(\Omega;\R^{2}))$, 
\item\label{eq:compa10} $\uu^{\zeta_{j}} \rightharpoonup \uu$ in $\lebe^{2}(0,T;\lebe^{2}(\Omega;\R^{2}))$, 
\item\label{eq:compa11} $\uu^{\zeta_{j}}\to\uu$ in $\lebe^{1}(0,T;\lebe^{r}(\Omega;\R^{2}))$ for any $1\leq r<2$, 
\item\label{eq:compa12} $\partial_{t}\uu^{\zeta_{j}}\stackrel{*}{\rightharpoonup} \partial_{t}\uu$ in $\lebe^{2}(0,T;\sobo^{-1,2}(\Omega;\R^{2}))$.
\end{enumerate}
\end{corollary}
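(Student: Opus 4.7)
The proof reduces to an iterated application of Banach--Alaoglu plus an Aubin--Lions--Simon-type compactness argument, carried out successively for the three parameters $\delta,\varepsilon,\zeta$. The crucial observation is that all four estimates in Lemma~\ref{lem:usefulapriori} are uniform not only in $\delta$ but also in $\varepsilon,\zeta$, so that after establishing the bound \eqref{eq:epsbounds} for $\uu_{\varepsilon}^{\zeta}$ by lower semicontinuity, the same machinery can be re-used to pass to the limits $\varepsilon\searrow 0$ and $\zeta\searrow 0$.

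For the first stage ($\delta\searrow 0$) I would proceed as follows. From Lemma~\ref{lem:usefulapriori}\ref{eq:unifobd1} and the Banach--Alaoglu theorem applied to $L^{\infty}(0,T;L^{2}(\Omega;\mathbb{R}^{2}))\cong L^{1}(0,T;L^{2}(\Omega;\mathbb{R}^{2}))'$, I extract a subsequence $(\delta_{j})$ with $\uu_{\delta_{j},\varepsilon}^{\zeta}\stackrel{*}{\rightharpoonup}\uu_{\varepsilon}^{\zeta}$, which proves \ref{eq:compa1}; test functions from $L^{2}(0,T;L^{2})\subset L^{1}(0,T;L^{2})$ then yield \ref{eq:compa1a}. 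Reflexivity of $L^{2}(0,T;W^{-1,2}(\Omega;\mathbb{R}^{2}))$ together with Lemma~\ref{lem:usefulapriori}\ref{eq:unifobd4} delivers \ref{eq:compa4} along a further subsequence, and the limit is identified as $\partial_{t}\uu_{\varepsilon}^{\zeta}$ in the distributional sense by pairing with $\hold_{c}^{\infty}((0,T)\times\Omega;\R^{2})$ test maps.

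The strong convergence \ref{eq:compa3} is the delicate point. Here I would invoke the Aubin--Lions--Simon compactness lemma with the chain
\begin{align*}
\bd(\Omega) \hookrightarrow\hookrightarrow L^{r}(\Omega;\R^{2}) \hookrightarrow W^{-1,2}(\Omega;\R^{2}),\qquad 1<r<2,
\end{align*}
where the first compact embedding is Lemma~\ref{lem:poincaresobolev}\ref{item:PoincareSobolev1} and the second continuous embedding uses $n=2$. Since Lemma~\ref{lem:usefulapriori}\ref{eq:unifobd2} bounds $(\uu_{\delta_{j},\varepsilon}^{\zeta})$ in $L^{1}(0,T;\ld(\Omega))\subset L^{1}(0,T;\bd(\Omega))$ while the derivative bound sits in $L^{2}(0,T;W^{-1,2})$ with time exponent strictly greater than $1$, Simon's refinement of Aubin--Lions (needed precisely because the spatial bound is only $L^{1}$ in time) applies and gives relative compactness in $L^{1}(0,T;L^{r}(\Omega;\R^{2}))$ for any $1<r<2$, which extends to $r=1$ by boundedness of $\Omega$. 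That $\uu_{\varepsilon}^{\zeta}\in\lebeweak(0,T;\bd(\Omega))$ then follows from Lemma~\ref{lem:auxBD}\ref{item:aux0A} applied slicewise (using the $L^{1}(L^{r})$ convergence) combined with Fatou's inequality in time.

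For the bound \eqref{eq:epsbounds} I would use the weak/weak-$*$ lower semicontinuity of each of the three norms together with the uniformity-in-$\varepsilon$ and $\zeta$ of the estimates in Lemma~\ref{lem:usefulapriori}. With \eqref{eq:epsbounds} in hand, stage two (sending $\varepsilon_{j}\searrow 0$) and stage three (sending $\zeta_{j}\searrow 0$) are \emph{verbatim} repetitions of the argument above, producing \ref{eq:compa5}--\ref{eq:compa8} together with \eqref{eq:zetabounds}, and finally \ref{eq:compa9}--\ref{eq:compa12}. The main obstacle throughout is the $L^{1}$-in-time spatial integrability of the approximants, which rules out the classical Aubin--Lions theorem and forces the use of Simon's variant; however, since the time-derivative bound enjoys the strictly better exponent $p=2$, this variant applies without further modification.
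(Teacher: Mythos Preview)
Your proposal is correct and follows essentially the same route as the paper: Banach--Alaoglu for \ref{eq:compa1}--\ref{eq:compa1a} and \ref{eq:compa4}, an Aubin--Lions--Simon argument with the chain $\mathrm{LD}(\Omega)\hookrightarrow\hookrightarrow L^{r}\hookrightarrow W^{-1,2}$ at $p=1,q=2$ for \ref{eq:compa3}, slicewise lower semicontinuity plus Fatou for the $\lebeweak(0,T;\bd)$ membership, and iteration of the whole scheme through the $\varepsilon$- and $\zeta$-stages via lower semicontinuity of norms. Your remark that Simon's variant (rather than classical Aubin--Lions) is needed because the spatial bound is only $L^{1}$ in time is in fact a sharper observation than what the paper records, and using $\bd(\Omega)$ versus $\mathrm{LD}(\Omega)$ as the first space in the chain is immaterial since the approximants lie in $W^{1,2}_{0}(\Omega;\R^{2})$.
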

\begin{proof}
By Lemma \ref{lem:usefulapriori} and the Banach-Alaoglu theorem, there exists $\uu_{\varepsilon}^{\zeta}\in\lebe^{\infty}(0,T;\lebe^{2}(\Omega;\R^{2}))$ and a suitable sequence $(\delta_{j})$ such that \ref{eq:compa1}--\ref{eq:compa1a} hold. For \ref{eq:compa3}, we recall that $X_{0}\coloneqq \mathrm{LD}(\Omega)\hookrightarrow\hookrightarrow\lebe^{r}(\Omega;\R^{2})$ for all $1\leq r <2$, see Lemma \ref{lem:poincaresobolev}\ref{item:PoincareSobolev1}. Let $1< r <2$ be arbitrary, so that $\sobo_{0}^{1,2}(\Omega;\R^{2})\hookrightarrow\lebe^{r'}(\Omega;\R^{2}) \eqqcolon X'$; recall that $n=2$. Then $X\coloneqq \lebe^{r}(\Omega;\R^{2})\hookrightarrow \sobo^{-1,2}(\Omega;\R^{2})\eqqcolon X_{1}$, and so we have that $X_{0}\hookrightarrow\hookrightarrow X \hookrightarrow X_{1}$. By the classical Aubin-Lions lemma, we have 
\begin{align*}
\mathbb{W}\coloneqq \{\vv\in\lebe^{p}(0,T;X_{0})\colon\;\dot{\vv}\in \lebe^{q}(0,T;X_{1})\}\hookrightarrow\hookrightarrow \lebe^{p}(0,T;X), 
\end{align*}
where the space on the left-hand side is endowed with its canonical norm. We then set $p=1,q=2$ and so deduce \ref{eq:compa3} from Lemma \ref{lem:usefulapriori}. 

It remains to establish that $\uu_{\varepsilon}^{\zeta}\in\lebe_{\mathrm{w}^{*}}^{1}(0,T;\bd(\Omega))$. To this end, recall that the total Hibler deformation is lower semicontinuous with respect to $\lebe_{\locc}^{1}$-convergence, see Lemma \ref{lem:auxBD}\ref{item:aux0A}. By \ref{eq:compa3} and passing to another subsequence if required, we may assume that $\uu_{\delta_{j},\varepsilon}^{\zeta}(t,\cdot)\to \uu_{\varepsilon}^{\zeta}(t,\cdot)$ in $\lebe^{1}(\Omega;\R^{2})$ for $\mathscr{L}^{1}$-a.e. $t\in (0,T)$. Hence, for each such $t$, 
\begin{align}\label{eq:preFatou}
|\mathbb{T}\uu_{\varepsilon}^{\zeta}(t,\cdot)|(\Omega) \leq \liminf_{j\to\infty} \int_{\Omega}|\mathbb{T}\uu_{\delta_{j},\varepsilon}^{\zeta}(t,x)|\dif x. 
\end{align} 
Next, denote by $\mathscr{X}$ a countable dense subset of the maps $\bm{\varphi}\in\hold_{c}^{\infty}(\Omega;\RR)$ with $|\bm{\varphi}|\leq 1$, so that 
\begin{align}\label{eq:totvarpointwiseT}
|\mathbb{T}\uu_{\varepsilon}^{\zeta}(t,\cdot)|(\Omega) = \sup_{\varphi\in\mathscr{X}}\int_{\Omega}\uu_{\varepsilon}^{\zeta}(t,\cdot)\cdot\mathbb{T}^{*}\bm{\varphi}\dif x. 
\end{align}
Note that, since $\uu_{\varepsilon}^{\zeta}\in\lebe^{2}(0,T;\lebe^{2}(\Omega;\RR))$, the map 
\begin{align*}
(0,T)\ni t\mapsto \int_{\Omega}\uu_{\varepsilon}^{\zeta}(t,\cdot)\cdot\mathbb{T}^{*}\bm{\varphi}\dif x \qquad\text{is $\mathscr{L}^{1}$-measurable} 
\end{align*}
for every $\bm{\varphi}\in\hold_{c}^{\infty}(\Omega;\RR)$. In view of \eqref{eq:totvarpointwiseT}, it follows that $t\mapsto |\mathbb{T}\uu_{\varepsilon}^{\zeta}(t,\cdot)|(\Omega)$ is $\mathscr{L}^{1}$-measurable as the pointwise supremum over a countable set. In particular, we may integrate \eqref{eq:preFatou} with respect to $t\in (0,T)$ and subsequently use Fatou's lemma to deduce 
\begin{align}\label{eq:johnzorn}
\int_{0}^{T}|\mathbb{T}\uu_{\varepsilon}^{\zeta}(t,\cdot)|(\Omega)\dif t \leq \liminf_{j\to\infty}\int_{0}^{T}\int_{\Omega}|\mathbb{T}\uu_{\delta_{j},\varepsilon}^{\zeta}(t,x)|\dif x \dif t \stackrel{\text{Lemma \ref{lem:usefulapriori}\ref{eq:unifobd2}}}{\leq} c <\infty. 
\end{align}
In conjunction with $\uu_{\varepsilon}\in\lebe^{1}(0,T;\lebe^{1}(\Omega;\R^{2}))$, we conclude that $\uu_{\varepsilon}\in\lebe_{\mathrm{w}^{*}}^{1}(0,T;\bd(\Omega))$. We come to \ref{eq:compa4}. To this end, we observe that, by Lemma \ref{lem:usefulapriori}, $(\partial_{t}\uu_{\delta,\varepsilon}^{\zeta})$ is uniformly bounded in $\lebe^{2}(0,T;\sobo^{-1,2}(\Omega;\R^{2}))$. Hence, passing to another non-relabelled subsequence if required, there exists $\vv\in\lebe^{2}(0,T;\sobo^{-1,2}(\Omega;\R^{2}))$ such that 
\begin{align}\label{eq:hieb}
\partial_{t}\uu_{\delta_{j},\varepsilon}^{\zeta}\stackrel{*}{\rightharpoonup} \vv \qquad\text{in}\;\lebe^{2}(0,T;\sobo^{-1,2}(\Omega;\R^{2})). 
\end{align}
For any $\bm{\psi}\in\hold_{c}^{\infty}(\Omega_{T};\R^{2})$, we have 
\begin{align*}
\int_{0}^{T}\langle\vv(t,\cdot),\bm{\psi}(t,\cdot)\rangle \dif t & = \lim_{j\to\infty}\int_{0}^{T}\langle\partial_{t}\uu_{\delta_{j},\varepsilon}^{\zeta}(t,\cdot),\bm{\psi}(t,\cdot)\rangle\dif t \\ 
& = - \lim_{j\to\infty} \int_{0}^{T}\langle \uu_{\delta_{j},\varepsilon}^{\zeta}(t,\cdot),\partial_{t}\bm{\psi}(t,\cdot)\rangle\dif t = - \int_{0}^{T}\langle\uu_{\varepsilon}^{\zeta}(t,\cdot),\partial_{t}\bm{\psi}(t,\cdot)\rangle\dif t, 
\end{align*}
where we have used \ref{eq:compa1}. In particular, by density, $\uu_{\varepsilon}^{\zeta}$ is weakly differentiable in time with $\partial_{t}\uu_{\varepsilon}^{\zeta}=\vv$, and so \eqref{eq:hieb} yields \ref{eq:compa4}. The estimate \eqref{eq:epsbounds} is a routine consequence of Lemma \ref{lem:usefulapriori}, the preceding convergences and the lower semicontinuity of norms with respect to the underlying weak or weak*-convergence, respectively. 

Based on \eqref{eq:epsbounds}, we may argue as in \ref{eq:compa1}--\ref{eq:compa4} and as for \eqref{eq:epsbounds} to deduce \ref{eq:compa5}--\ref{eq:compa8} and \eqref{eq:zetabounds}.  From here, \ref{eq:compa9}--\ref{eq:compa12} follow too. The proof is complete. 
\end{proof}
At this stage, we go back to \eqref{eq:approxineq} and write the inequality for $\uu_{\delta_{j},\varepsilon}^{\zeta}$ and test maps $\mathbf{v}\in\lebe^{2}(0,T;\sobo_{0}^{1,2}(\Omega;\R^{2}))\cap\sobo^{1,2}(0,T;\lebe^{2}(\Omega;\R^{2}))$ as 
\begin{align}
\int_{0}^{s}\int_{\Omega}&(\partial_{t}\mathbf{v})\cdot(\mathbf{v}-\mathbf{u}_{\delta_{j},\varepsilon}^{\zeta})\dif x\dif t  + \int_{0}^{s}\int_{\Omega}F_{\varepsilon}(\mathbb{T}\mathbf{v})\dif x\dif t + \frac{\delta_{j}}{2}\int_{0}^{s}\int_{\Omega}|\mathbb{T}\mathbf{v}|^{2}\dif x\dif t - \int_{0}^{s}\int_{\Omega}\mathbf{f}\cdot\mathbf{v}\dif x \dif t \notag\\ 
& \geq \int_{0}^{s}\int_{\Omega}F_{\varepsilon}(\mathbb{T}\mathbf{u}_{\delta_{j},\varepsilon}^{\zeta})\dif x \dif t + \frac{\delta_{j}}{2}\int_{0}^{s}\int_{\Omega}|\mathbb{T}\mathbf{u}_{\delta_{j},\varepsilon}^{\zeta}|^{2}\dif x \dif t - \int_{0}^{s}\int_{\Omega}\mathbf{f}\cdot\mathbf{u}_{\delta_{j},\varepsilon}^{\zeta}\dif x \dif t \notag\\ & + \int_{0}^{s}\int_{\Omega}\tocean(\uu_{\delta_{j},\varepsilon}^{\zeta})\cdot(\vv-\uu_{\delta_{j},\varepsilon}^{\zeta})\dif x \dif t 
 + \frac{1}{2}\int_{\Omega}\Big(|\mathbf{u}_{\delta_{j},\varepsilon}^{\zeta}-\mathbf{v}|^{2}(s,\cdot) - |\mathbf{u}_{0}^{\zeta}-\mathbf{v}(0,\cdot)|^{2}\Big) \dif x \notag \\ 
 & \Longleftrightarrow: \mathrm{I}_{1} + \mathrm{I}_{2} + \mathrm{I}_{3} + \mathrm{I}_{4} \geq \mathrm{I}_{5} + \mathrm{I}_{6} + \mathrm{I}_{7} + \mathrm{I}_{8} + \mathrm{I}_{9}\label{eq:kismet}
\end{align}
for $\mathscr{L}^{1}$-a.e. $0\leq s \leq T$. We deal with the single terms separately: 
\begin{itemize}
    \item On \namedlabel{item:term1}{Term $\mathrm{I}_{1}$}: Here we use $\partial_{t}\mathbf{v}\in\lebe^{2}(0,T;\lebe^{2}(\Omega;\R^{2}))$ and Corollary \ref{cor:compa1}\ref{eq:compa1a} to conclude
    \begin{align*}
    \mathrm{I}_{1} = \int_{0}^{s}\int_{\Omega}&(\partial_{t}\mathbf{v})\cdot(\mathbf{v}-\mathbf{u}_{\delta_{j},\varepsilon}^{\zeta})\dif x\dif t \stackrel{j\to\infty}{\longrightarrow} \int_{0}^{s}\int_{\Omega}(\partial_{t}\mathbf{v})\cdot(\mathbf{v}-\mathbf{u}_{\varepsilon}^{\zeta})\dif x \dif t. 
    \end{align*}
    \item On \namedlabel{item:term2}{Term $\mathrm{I}_{2}$}: We leave term $\mathrm{I}_{2}$ untouched. 
    \item On \namedlabel{item:term3}{Term $\mathrm{I}_{3}$}: Here we use $\mathbf{v}\in\lebe^{2}(0,T;\sobo_{0}^{1,2}(\Omega;\R^{2}))$ to conclude that 
    \begin{align*}
    \mathrm{I}_{3} = \frac{\delta_{j}}{2}\int_{0}^{s}\int_{\Omega}|\nabla\mathbf{v}|^{2}\dif x \dif t \stackrel{j\to\infty}{\longrightarrow} 0. 
    \end{align*}
    \item On \namedlabel{item:term4}{Term $\mathrm{I}_{4}$}: We leave term $\mathrm{I}_{4}$ untouched.
    \item On \namedlabel{item:term5}{Term $\mathrm{I}_{5}$}: For $\mathrm{I}_{5}$, we employ Corollary \ref{cor:compa1}\ref{eq:compa3}. In particular, by passing to a non-relabelled subsequence if necessary, the Riesz-Fischer theorem implies that 
    \begin{align*}
    \mathbf{u}_{\delta_{j},\varepsilon}^{\zeta}(t,\cdot) \stackrel{j\to\infty}{\longrightarrow} \mathbf{u}_{\varepsilon}^{\zeta}(t,\cdot)\qquad\text{strongly in}\;\lebe^{1}(\Omega;\R^{2})\;\text{for}\;\text{$\mathscr{L}^{1}$-a.e.}\;0<t<T. 
    \end{align*}
By the lower semicontinuity result from Corollary \ref{cor:LSCplussbdryvalues}, this implies that 
\begin{align}\label{eq:trinity}
\liminf_{j\to\infty} \int_{\Omega}F_{\varepsilon}(\mathbb{T}\mathbf{u}_{\delta_{j},\varepsilon}^{\zeta}(t,\cdot))\dif x & = \liminf_{j\to\infty} (\mathscr{F}_{\varepsilon})_{0}^{*}[\mathbf{u}_{\delta_{j},\varepsilon}^{\zeta}(t,\cdot);\Omega] \geq (\mathscr{F}_{\varepsilon})_{0}^{*}[\mathbf{u}_{\varepsilon}^{\zeta}(t,\cdot);\Omega]
\end{align}
for $\mathscr{L}^{1}$-a.e. $0<t<T$. Now, Fatou's lemma yields
    \begin{align}\label{eq:tabenstaedt}
    \begin{split}
    \liminf_{j\to\infty}\mathrm{I}_{5} = \liminf_{j\to\infty}\int_{0}^{s}\int_{\Omega}F_{\varepsilon}(\mathbb{T}\uu_{\delta_{j},\varepsilon}^{\zeta})\dif x \dif t & \geq \int_{0}^{s}\liminf_{j\to\infty}\int_{\Omega}F_{\varepsilon}(\mathbb{T}\uu_{\delta_{j},\varepsilon}^{\zeta})\dif x \dif t \\ 
    & \geq \int_{0}^{s}(\mathscr{F}_{\varepsilon})_{0}^{*}[\mathbf{u}_{\varepsilon}^{\zeta}(t,\cdot);\Omega]\dif t.  
    \end{split}
    \end{align}
    We recall from Lemma \ref{lem:approximations} that $F\leq F_{\varepsilon}$ and $F^{\infty}= F_{\varepsilon}^{\infty}$, whereby \eqref{eq:tabenstaedt} gives us
\begin{align*}\liminf_{j\to\infty}\mathrm{I}_{5} \geq \int_{0}^{s}\mathscr{F}_{0}^{*}[\mathbf{u}_{\varepsilon}^{\zeta}(t,\cdot);\Omega]\dif t.  
    \end{align*}
    \item On \namedlabel{item:term6}{Term $\mathrm{I}_{6}$}: We deal with $\mathrm{I}_{6}$ by employing the rough estimate $\mathrm{I}_{6}\geq 0$. 
    \item On \namedlabel{item:term7}{Term $\mathrm{I}_{7}$}: For term $\mathrm{I}_{7}$, we use Corollary \ref{cor:compa1}\ref{eq:compa1a} and $\mathbf{f}\in\lebe^{2}(0,T;\lebe^{2}(\Omega;\R^{2}))$ to obtain 
    \begin{align*}
    \mathrm{I}_{7} = \int_{0}^{s}\int_{\Omega}\mathbf{f}\cdot \mathbf{u}_{\delta_{j},\varepsilon}^{\zeta}\dif x \dif t \stackrel{j\to\infty}{\longrightarrow} \int_{0}^{s}\int_{\Omega}\mathbf{f}\cdot\mathbf{u}_{\varepsilon}^{\zeta}\dif x \dif t. 
    \end{align*}
    \item On \namedlabel{item:term8}{Term $\mathrm{I}_{8}$}: For term $\mathrm{I}_{8}$, we estimate \begin{align}\label{eq:Iransofaraway}
    \begin{split}
        & \left\vert \int_{0}^{T}\int_{\Omega}\tocean(\uu_{\delta_{j},\varepsilon}^{\zeta})\cdot(\vv-\uu_{\delta_{j},\varepsilon}^{\zeta})-\tocean(\uu_{\varepsilon}^{\zeta})\cdot(\mathbf{v}-\uu_{\varepsilon}^{\zeta})\dif x \dif t\right\vert \\ 
        & \;\;\;\;\;\;\;\;\leq \int_{0}^{T}\int_{\Omega}|\tocean(\uu_{\delta_{j},\varepsilon}^{\zeta})-\tocean(\uu_{\varepsilon}^{\zeta})|\,|\vv-\uu_{\delta_{j},\varepsilon}^{\zeta}|\dif x \dif t \\ & \;\;\;\;\;\;\;\; + \int_{0}^{T}\int_{\Omega} |\tocean(\uu_{\varepsilon}^{\zeta})|\,|\uu_{\varepsilon}^{\zeta}-\uu_{\delta_{j},\varepsilon}^{\zeta}|\dif x \dif t \eqqcolon  \mathrm{I}_{8}^{(1)} + \mathrm{I}_{8}^{(2)}. 
        \end{split}
    \end{align}
    We recall \eqref{eq:oceanicmain} and \eqref{eq:oceanicassump}; in particular, $\tocean(\mathbf{w})=\widetilde{\eta}(\mathbf{U}_{\mathrm{ocean}}-\mathbf{w})$ with a $(1-\gamma)$-H\"{o}lder continuous map $\widetilde{\eta}\colon\R^{2}\to\R^{2}$ such that $\widetilde{\eta}(0)=0$. For some $0<\varepsilon'<1$ sufficiently small and to be determined later, we put 
    \begin{align*}
    p \coloneqq \frac{2-\varepsilon'}{1-\gamma}\;\;\;\text{and so}\;\;\;p' = \frac{2-\varepsilon'}{1+\gamma-\varepsilon'}. 
    \end{align*}
    In particular, we have \begin{align}\label{eq:p'compute}
    0<\varepsilon'<2\gamma \Longrightarrow 2-\varepsilon' < 2 (1+\gamma-\varepsilon') \Longrightarrow p'<2. 
    \end{align}
    In consequence, we arrive at 
    \begin{align*}
    \mathrm{I}_{8}^{(1)} & \leq c\int_{0}^{T}\int_{\Omega}|\uu_{\delta_{j},\varepsilon}^{\zeta}-\uu_{\varepsilon}^{\zeta}|^{1-\gamma}|\mathbf{v}-\uu_{\delta_{j},\varepsilon}^{\zeta}|\dif x \dif t \\ 
    & \leq c \int_{0}^{T}\Big(\int_{\Omega}|\uu_{\delta_{j},\varepsilon}^{\zeta}-\uu_{\varepsilon}^{\zeta}|^{2-\varepsilon'}\dif x\Big)^{\frac{1-\gamma}{2-\varepsilon'}}\Big(\int_{\Omega}|\vv-\uu_{\delta_{j},\varepsilon}^{\zeta}|^{\frac{2-\varepsilon'}{1+\gamma-\varepsilon'}}\dif x\Big)^{\frac{1+\gamma-\varepsilon'}{2-\varepsilon'}}\dif t \\ 
    & \leq c\, \Big(\int_{0}^{T}\Big(\int_{\Omega}|\uu_{\delta_{j},\varepsilon}^{\zeta}-\uu_{\varepsilon}^{\zeta}|^{2-\varepsilon'}\dif x \Big)^{\frac{1}{2-\varepsilon'}}\dif t\Big)^{1-\gamma} \\ 
    & \times \Big(\int_{0}^{T}\Big(\int_{\Omega}|\vv-\uu_{\delta_{j},\varepsilon}^{\zeta}|^{\frac{2-\varepsilon'}{1+\gamma-\varepsilon'}}\dif x \Big)^{\frac{1+\gamma-\varepsilon'}{2-\varepsilon'}\frac{1}{\gamma}}\dif t\Big)^{\gamma}. \\ 
    & \!\!\!\! \stackrel{\eqref{eq:p'compute}}{\leq} c\, \Big(\int_{0}^{T}\Big(\int_{\Omega}|\uu_{\delta_{j},\varepsilon}^{\zeta}-\uu_{\varepsilon}^{\zeta}|^{2-\varepsilon'}\dif x \Big)^{\frac{1}{2-\varepsilon'}}\dif t\Big)^{1-\gamma}  \Big(\int_{0}^{T}\Big(\int_{\Omega}|\vv-\uu_{\delta_{j},\varepsilon}^{\zeta}|^{2}\dif x \Big)^{\frac{1}{2}\cdot\frac{1}{\gamma}}\dif t\Big)^{\gamma}. 
    \end{align*}
    By Corollary \ref{cor:compa1}\ref{eq:compa3}, the first term of the ultimate product tends to zero as $j\to\infty$. Since $\vv\in\sobo^{1,2}(0,T;\lebe^{2}(\Omega;\R^{2}))\hookrightarrow\lebe^{\infty}(0,T;\lebe^{2}(\Omega;\R^{2}))$ and $(\uu_{\delta_{j},\varepsilon}^{\zeta})$ is uniformly bounded in $\lebe^{\infty}(0,T;\lebe^{2}(\Omega;\R^{2}))$ by Lemma \ref{lem:usefulapriori}\ref{eq:unifobd1}, the second term of the ultimate product is uniformly bounded. Therefore, $
\lim_{j\to\infty} \mathrm{I}_{8}^{(1)} = 0$. 
For  $\mathrm{I}_{8}^{(2)}$, we recall that $|\widetilde{\eta}(z)|\leq c|z|^{1-\gamma}$ for all $z\in\RR$. We let $\varepsilon'>0$ be such that 
\begin{align}\label{eq:loewe}
0<\varepsilon'<\frac{2\gamma}{1+\gamma}\;\;\;\text{and so}\;\;\;(1-\gamma)\frac{2-\varepsilon'}{1-\varepsilon'}<2. 
\end{align}
It follows that 
\begin{align*}
\mathrm{I}_{8}^{(2)} & \leq c\int_{0}^{T}\int_{\Omega} |\mathbf{U}_{\mathrm{ocean}}-\uu_{\varepsilon}^{\zeta}|^{1-\gamma}|\uu_{\varepsilon}^{\zeta}-\uu_{\delta_{j},\varepsilon}^{\zeta}|\dif x \dif t \\ 
& \leq c \int_{0}^{T}\Big(\Big( \int_{\Omega}|\mathbf{U}_{\mathrm{ocean}}-\mathbf{u}_{\varepsilon}^{\zeta}|^{(1-\gamma)\frac{2-\varepsilon'}{1-\varepsilon'}}\dif x \Big)^{\frac{1-\varepsilon'}{(2-\varepsilon')(1-\gamma)}}\Big)^{1-\gamma}\Big(\int_{\Omega}|\uu_{\varepsilon}^{\zeta}-\uu_{\delta_{j},\varepsilon}^{\zeta}|^{2-\varepsilon'}\dif x \Big)^{\frac{1}{2-\varepsilon'}}\dif t \\ 
& \!\!\!\! \stackrel{\eqref{eq:loewe}}{\leq} c\,\|\mathbf{U}_{\mathrm{ocean}}-\uu_{\varepsilon}^{\zeta}\|_{\lebe^{\infty}(0,T;\lebe^{2}(\Omega))}^{1-\gamma}\|\uu_{\varepsilon}^{\zeta}-\uu_{\delta_{j},\varepsilon}^{\zeta}\|_{\lebe^{1}(0,T;\lebe^{2-\varepsilon'}(\Omega))}. 
\end{align*}
By \eqref{eq:oceanicmean} and Lemma \ref{lem:usefulapriori}\ref{eq:unifobd1}, the first term of the ultimate product is finite. Moreover, by Corollary  \ref{cor:compa1}\ref{eq:compa3}, the second term converges to zero as $j\to\infty$. In conclusion, $\lim_{j\to\infty}\mathrm{I}_{8}^{(2)}=0$ and so $\lim_{j\to\infty}\mathrm{I}_{8}=0$.
    \item On \namedlabel{item:term9}{Term $\mathrm{I}_{9}$}: Here, we use Corollary \ref{cor:compa1}\ref{eq:compa3}. By the Riesz-Fischer theorem, we may assume that $\mathbf{u}_{\delta_{j},\varepsilon}^{\zeta}(s,\cdot)\to \mathbf{u}_{\varepsilon}^{\zeta}(s,\cdot)$ strongly in $\lebe^{1}(\Omega;\R^{2})$ for $\mathscr{L}^{1}$-a.e. $0<s<T$. For any such $0s<<T$, we may pass to a non-relabelled subsequence to achieve $\uu_{\delta_{j},\varepsilon}^{\zeta}(s,\cdot)\to\uu_{\varepsilon}^{\zeta}(s,\cdot)$ $\mathscr{L}^{2}$-a.e. in $\Omega$. Hence, Fatou's lemma implies that 
    \begin{align*}
    \liminf_{j\to\infty} \mathrm{I}_{9} & \geq \frac{1}{2}\int_{\Omega}\Big(|\uu_{\varepsilon}^{\zeta}-\mathbf{v}|^{2}(s,\cdot)-|\uu_{0}^{\zeta}-\mathbf{v}(0,\cdot)|^{2}\Big)\dif s. 
    \end{align*}
\end{itemize}
Combining the previous estimates, we therefore arrive at 
\begin{align}\label{eq:franklundy}
\begin{split}
\int_{0}^{s}\int_{\Omega}(\partial_{t}\mathbf{v})\cdot(\mathbf{v}-\mathbf{u}_{\varepsilon}^{\zeta})\dif x\dif t  & + \int_{0}^{s}\int_{\Omega}F_{\varepsilon}(\mathbb{T}\mathbf{v})\dif x\dif t - \int_{0}^{s}\int_{\Omega}\mathbf{f}\cdot\mathbf{v}\dif x \dif t\\ 
& \geq \int_{0}^{s}\mathscr{F}_{0}^{*}[\mathbf{u}_{\varepsilon}^{\zeta}(t,\cdot);\Omega] \dif t - \int_{0}^{s}\int_{\Omega}\mathbf{f}\cdot\mathbf{u}_{\varepsilon}^{\zeta}\dif x \dif t \\ 
& + \int_{0}^{s}\int_{\Omega}\tocean(\uu_{\varepsilon}^{\zeta})\cdot(\mathbf{v}-\mathbf{u}_{\varepsilon}^{\zeta})\dif x\dif t 
\\ & + \frac{1}{2}\int_{\Omega}\Big(|\mathbf{u}_{\varepsilon}^{\zeta}-\mathbf{v}|^{2}(s,\cdot) - |\mathbf{u}_{0}^{\zeta}-\mathbf{v}(0,\cdot)|^{2}\Big) \dif x \\ 
 & \Longleftrightarrow: \mathrm{I}_{10} + \mathrm{I}_{11} + \mathrm{I}_{12} \geq \mathrm{I}_{13} + \mathrm{I}_{14} + \mathrm{I}_{15} + \mathrm{I}_{16} 
 \end{split}
\end{align}
for all $\mathbf{v}\in \lebe^{2}(0,T;\sobo_{0}^{1,2}(\Omega;\R^{2}))\cap\sobo^{1,2}(0,T;\lebe^{2}(\Omega;\R^{2}))$ and $\mathscr{L}^{1}$-a.e. $0<s<T$. 

At this stage, it is important to remark the competitors $\mathbf{v}$ in \eqref{eq:franklundy} belong spatially to $\sobo_{0}^{1,2}(\Omega;\R^{2})$; this is due to the fact that the weak formulation \eqref{eq:weakform} requires spatial zero boundary values. The Definition \ref{def:varsol1} of variational solutions does  \emph{not} require spatial zero boundary values. Indeed, the latter enter the relaxed functional via boundary penalisation terms. We thus aim to admit
$\mathbf{v}\in\lebe_{\mathrm{w}^{*}}^{1}(0,T;\bd(\Omega))$ with $\partial_{t}\mathbf{v}\in\lebe^{2}(\Omega_{T};\R^{2})$ in  \eqref{eq:franklundy}. For this, we require a particular approximation whose construction we give next. 

Let $\vv\in\lebe_{\mathrm{w}^{*}}^{1}(0,T;\bd(\Omega))\cap\sobo^{1,2}(0,T;\lebe^{2}(\Omega;\R^{2}))$. We choose a smooth cut-off  $\psi\in\hold_{c}^{\infty}((-T;2T);[0,1])$ such that $\mathbbm{1}_{[0,T]}\leq \psi \leq\mathbbm{1}_{(-T,2T)}$. We then put 
\begin{align*}
\mathbf{w}(t,\cdot) \coloneqq \begin{cases}
\vv(-t,\cdot)&\; -T<t\leq 0, \\ 
\vv(t,\cdot)&\;0\leq t\leq T, \\ 
\vv(2T-t,\cdot)&\;T\leq t \leq 2T, 
\end{cases}
\end{align*}
and $\overline{\mathbf{v}}\coloneqq \psi\mathbf{w}$, which we view as a function $\R\to\bd(\Omega)$. By construction, we have both $\overline{\mathbf{v}}\in\lebe_{\mathrm{w}^{*}}^{1}(0,T;\bd(\Omega))$ and $\partial_{t}\overline{\mathbf{v}}\in\lebe^{2}(\R\times\Omega;\R^{2})$; the latter follows from the construction of $\mathbf{w}$ as the even reflection of $\vv$ at $0$ or $T$, respectively. 

Let $\varrho_{\theta}\in\hold_{c}^{\infty}(\R;\R_{\geq 0})$ be the $\theta$-rescaled variant of a standard mollifier with respect to time, and let $\rho_{r}\in\hold_{c}^{\infty}(\R^{2};\R_{\geq 0})$ be the $r$-rescaled variant of a standard mollifier with respect to space. Moreover, let $\eta_{d}\in\mathrm{Lip}_{c}(\Omega;[0,1])$ be as in Theorem \ref{thm:bdryapprox}. For a sequence $(d_{j})\subset\R_{>0}$ with $d_{j}\searrow 0$, we choose a double sequence $(r_{k,j})\subset\R_{>0}$ such that $\spt(\rho_{r_{k,j}}*\eta_{d_{j}})\Subset\Omega$ for all $k,j\in\mathbb{N}$ and $\lim_{k\to\infty}r_{k,j}= 0$ for all $j\in\mathbb{N}$. We then define, for an arbitrary but fixed sequence $({\theta_{l}})\subset(0,1)$ with $\theta_{l}\searrow 0$:  
\begin{align}\label{eq:approximationmain}
\left.\mathbf{v}_{j,k,l}\coloneqq \Big(\varrho_{\theta_{l}}*\big(\rho_{r_{k,j}}*(\eta_{d_{j}}\overline{\mathbf{v}})\big)\Big)\right|_{[0,T]\times\Omega}. 
\end{align}
Then, because of $\vv\in\lebe^{1}(0,T;\lebe^{1}(\Omega;\R^{2}))$, we have  $\mathbf{v}_{j,k,l}\in\hold_{b}((0,T);\sobo_{c}^{1,2}(\Omega;\R^{2}))$. 
\begin{lemma}\label{lem:keyapproximation}
Let $\Omega\subset\R^{2}$ be open and bounded with Lipschitz boundary, and let   $\mathbf{v}\in\lebe_{\mathrm{w}^{*}}^{1}(0,T;\bd(\Omega))\cap\sobo^{1,2}(0,T;\lebe^{2}(\Omega;\R^{2}))$. Then the triple sequence $(\vv_{j,k,l})$ from \eqref{eq:approximationmain} has the following properties: 
\begin{enumerate}
\item\label{item:approxmain1} For all $j,k,l\in\mathbb{N}$, $\partial_{t}\mathbf{v}_{j,k,l}\in\lebe^{2}(\Omega_{T};\R^{2})$, and we have 
\begin{align*}
\limsup_{j\to\infty}\big(\limsup_{k\to\infty}\big(\limsup_{l\to\infty}\|\partial_{t}\vv_{j,k,l}-\partial_{t}\vv\|_{\lebe^{2}(0,T;\lebe^{2}(\Omega))}\big)\big)=0. 
\end{align*}
\item\label{item:approxmain2} We have 
\begin{align*}
\limsup_{j\to\infty}\big(\limsup_{k\to\infty}\big(\limsup_{l\to\infty}\|\vv_{j,k,l}-\vv\|_{\lebe^{2}(0,T;\lebe^{2}(\Omega))}\big)\big)=0.
\end{align*}
\item\label{item:approxmain3} Let $F\colon\RR\to\R$ be convex and satisfy the linear growth assumption \eqref{eq:lingrowth}. Then we have 
\begin{align*}
\limsup_{j\to\infty} \limsup_{k\to\infty}\limsup_{l\to\infty}\int_{0}^{T}\int_{\Omega}F(\mathbb{T}\vv_{j,k,l})\dif x \dif t \leq \int_{0}^{T}\mathscr{F}_{0}^{*}[\vv(t,\cdot);\Omega]\dif t. 
\end{align*}
\end{enumerate}
\end{lemma}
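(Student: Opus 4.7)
I will peel off the three limits from the inside out: first $l\to\infty$ (time mollification), then $k\to\infty$ (spatial mollification), and finally $j\to\infty$ (boundary cut-off). For parts \ref{item:approxmain1} and \ref{item:approxmain2}, the time derivative commutes with both $\eta_{d_j}$ and $\rho_{r_{k,j}}$, so
\[
\partial_t\vv_{j,k,l}=\big(\varrho_{\theta_l}*_t(\rho_{r_{k,j}}*_x(\eta_{d_j}\partial_t\overline{\vv}))\big)\big|_{[0,T]\times\Omega}.
\]
Since $\partial_t\overline{\vv}\in\lebe^2(\R\times\Omega;\R^2)$ by construction and $\partial_t\overline{\vv}=\partial_t\vv$ on $[0,T]$, the standard $\lebe^2$-convergence of time mollification handles the $l\to\infty$ step, that of spatial mollification the $k\to\infty$ step, and dominated convergence — using $\eta_{d_j}\to 1$ pointwise in $\Omega$ together with $|\eta_{d_j}|\leq 1$ — closes the $j\to\infty$ step. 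Part \ref{item:approxmain2} is structurally identical, invoking $\vv\in\sobo^{1,2}(0,T;\lebe^2(\Omega;\R^2))\hookrightarrow\lebe^\infty(0,T;\lebe^2(\Omega;\R^2))$ to supply the needed $\lebe^2$-integrand in space-time.

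Part \ref{item:approxmain3} is the main point. The plan is to apply Jensen's inequality at each mollification stage and close with Theorem~\ref{thm:bdryapprox}. Pointwise Jensen for the time average yields
\[
F(\mathbb{T}\vv_{j,k,l}(t,x))\leq\int\varrho_{\theta_l}(t-s)F(\mathbb{T}(\rho_{r_{k,j}}*_x(\eta_{d_j}\overline{\vv}))(s,x))\dif s;
\]
integrating over $(0,T)\times\Omega$, Fubini and $\int_0^T\varrho_{\theta_l}(t-s)\dif t\to\mathbbm{1}_{[0,T]}(s)$ reduce the $l\to\infty$ upper bound to $\int_0^T\int_\Omega F(\mathbb{T}(\rho_{r_{k,j}}*_x(\eta_{d_j}\vv)))\dif x\dif t$, using that $\overline{\vv}=\vv$ on $[0,T]$. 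For each fixed $t$, $\eta_{d_j}\vv(t,\cdot)\in\bd_c(\Omega)$, so Lemma~\ref{lem:jensen} (extending by zero outside $\spt(\eta_{d_j})$) gives the slicewise spatial Jensen bound
\[
\int_\Omega F(\mathbb{T}(\rho_{r_{k,j}}*_x(\eta_{d_j}\vv))(t,\cdot))\dif x\leq\int_\Omega F(\mathbb{T}(\eta_{d_j}\vv)(t,\cdot)),
\]
the right-hand side being independent of $k$. Finally Theorem~\ref{thm:bdryapprox} applied slicewise yields $\int_\Omega F(\mathbb{T}(\eta_{d_j}\vv(t,\cdot)))\to\mathscr{F}_0^*[\vv(t,\cdot);\Omega]$ for $\mathscr{L}^1$-a.e.~$t\in(0,T)$. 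The exchange of $\lim_{j\to\infty}$ and $\int_0^T\dif t$ will be justified by dominated convergence, the majorant being $c(1+\|\vv(t,\cdot)\|_{\bd(\Omega)})$ obtained from the linear growth of $F$ combined with the uniform bound of Corollary~\ref{cor:unibounds}; this is $\lebe^1((0,T))$-integrable since $\vv\in\lebeweak(0,T;\bd(\Omega))$.

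The hard part is the coupling between the cut-off $\eta_{d_j}$ and the final $j$-limit. The ordering of the triple limit is essential: the inner two Jensen steps produce bounds depending only on $\eta_{d_j}\vv$ and not on $(k,l)$, which in turn enables Theorem~\ref{thm:bdryapprox} to absorb the cut-off and recover the boundary penalization $\int_{\partial\Omega}F^\infty(-\mathrm{tr}_{\partial\Omega}(\vv)\otimes_{\mathbb{T}}\nu_{\partial\Omega})\dif\mathscr{H}^1$ on the right-hand side. Without the uniform bulk bound $|\mathbb{T}(\eta_{d_j}\uu)|(\Omega)\leq c(\mathscr{L}^2(\Omega)+\|\uu\|_{\bd(\Omega)})$ from Corollary~\ref{cor:unibounds}, dominated convergence in the outer $j$-limit would be unavailable, and the entire three-scale scheme would collapse.
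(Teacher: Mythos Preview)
Your proposal is correct and follows essentially the same approach as the paper's own proof. Parts \ref{item:approxmain1} and \ref{item:approxmain2} are dismissed in the paper as routine mollification facts (noting, as you do, that $\eta_{d_j}$ is purely spatial), and for part \ref{item:approxmain3} the paper proceeds exactly as you outline: Jensen for the time mollification, Lemma~\ref{lem:jensen} for the spatial mollification (using that $\eta_{d_j}\vv(t,\cdot)$ is compactly supported so the support condition in Lemma~\ref{lem:jensen} is satisfied), then Theorem~\ref{thm:bdryapprox} slicewise, with the interchange of $\limsup_{j}$ and $\int_0^T$ justified by dominated convergence via the uniform majorant from Corollary~\ref{cor:unibounds}. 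The only cosmetic difference is that the paper handles the $l\to\infty$ step via $\lebe^1$-convergence of $\varrho_{\theta_l}*\Phi_{j,k}^{\vv}$ (framed through Lebesgue points of $\Phi_{j,k}^{\vv}$), whereas you use Fubini and the pointwise bound $\int_0^T\varrho_{\theta_l}(t-s)\dif t\leq 1$ together with dominated convergence; both are valid.
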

\begin{proof}
Properties  \ref{item:approxmain1} and \ref{item:approxmain2} are routine consequences of general results on convolutions; for \ref{item:approxmain1}, we explicitly note that the cut-off $\eta_{d_{j}}$ is purely spatial. As to  \ref{item:approxmain3}, we define for $k,j\in\mathbb{N}$
\begin{align*}
\Phi_{j,k}^{\vv}\colon t\mapsto \int_{\Omega}F(\mathbb{T}(\rho_{r_{k,j}}*(\eta_{d_{j}}\vv))(t,x))\dif x, 
\end{align*}
so that $\Phi_{j,k}^{\vv}(t)\in\lebe^{1}((0,T))$ by \ref{item:Fprop1}. Let $
\mathscr{N}_{j,k}$ be the set of all non-Lebesgue points $0<t<T$ of $\Phi_{j,k}^{\vv}$; then $\mathscr{L}^{1}(\mathscr{N}_{j,k})=0$ for all $k,j\in\mathbb{N}$. Moreover, there exists $\mathscr{N}'\subset(0,T)$ with $\mathscr{L}^{1}(\mathscr{N}')=0$ such that $\vv(t,\cdot)$ -- and so $\eta_{d_{j}}\vv(t,\cdot)$ -- belong to $\bd(\Omega)$ for all  $t\in(0,T)\setminus\mathscr{N}'$. Thus,
\begin{align*}
\mathscr{L}^{1}(\mathscr{N})\coloneqq \mathscr{L}^{1}\Big(\mathscr{N}'\cup\bigcup_{j,k\in\mathbb{N}}\mathscr{N}_{j,k} \Big) = 0. 
\end{align*}
By the definition of $\mathscr{N}$ and the convexity of $F$, we have 
\begin{align}
\begin{split}
\int_{0}^{T}\int_{\Omega}F(\mathbb{T}\mathbf{v}_{j,k,l}(t,x))\dif x \dif t & \stackrel{\text{Jensen}}{\leq}  \int_{[0,T]\setminus\mathscr{N}}\int_{\R}\varrho_{\theta_{l}}(t-s)\Phi_{j,k}^{\mathbf{v}}(s)\dif s \dif t \\ 
& \,\stackrel{l\to\infty}{\longrightarrow} \int_{0}^{T}\Phi_{j,k}^{\mathbf{v}}(t)\dif t \\ 
& \;\;\, =  \int_{0}^{T}\int_{\Omega}F(\rho_{r_{k,j}}*\mathbb{T}(\eta_{d_{j}}\vv(t,\cdot)))\dif x \dif t \\ 
& \!\!\!\!\!\!\!\!\!\!\!  \stackrel{\text{Jensen, Lem. \ref{lem:jensen}}}{ \leq} \int_{0}^{T}\Big(\int_{\Omega}F(\mathbb{T}(\eta_{d_{j}}\vv(t,\cdot)))\Big)\dif t \\ 
& \;\;\, \eqqcolon   \int_{0}^{T}\Phi_{j}^{\vv}(t)\dif t
\end{split}
\end{align}
with an obvious definition of $\Phi_{j}^{\vv}$. In particular, in the application of Jensen's inequality for measures from Lemma \ref{lem:jensen}, we used that $\spt(\rho_{r_{k,j}}*\eta_{d_{j}})\subset\Omega$ for all $k,j\in\mathbb{N}$.

In order to accomplish the limit passage $j\to\infty$, we use the upper part of Fatou's lemma. To this end, we apply Corollary \ref{cor:unibounds}. In conjunction with the linear growth hypothesis \eqref{eq:lingrowth} on $F$, we infer that there exists a constant $c>0$ independent of $\vv$ and $j$ such that 
\begin{align*}
\Phi_{j}^{\vv}(t) \leq c(\mathscr{L}^{2}(\Omega)+\|\vv(t,\cdot)\|_{\bd(\Omega)})\qquad\text{for $\mathscr{L}^{1}$-a.e. $t\in (0,T)$}. 
\end{align*}
Since $\vv\in\lebe_{\mathrm{w}^{*}}^{1}(0,T;\bd(\Omega))$, the right-hand side of the ultimate inequality belongs to $\lebe^{1}((0,T))$, see \eqref{eq:weak*-bochner}. In particular, $\Phi_{j}^{\vv}$ has an integrable majorant. By Theorem \ref{thm:bdryapprox} on the bulk approximation of boundary terms, we thus have 
\begin{align*}
\limsup_{j\to\infty}\int_{\Omega}F(\mathbb{T}(\eta_{d_{j}}\vv(t,\cdot)))\leq \mathscr{F}_{0}^{*}[\vv(t,\cdot);\Omega]\qquad\text{for $\mathscr{L}^{1}$-a.e. $t\in (0,T)$}. 
\end{align*}
Therefore, we may apply the upper part of Fatou's lemma to conclude 
\begin{align}
\limsup_{j\to\infty}\limsup_{k\to\infty}\limsup_{l\to\infty} \int_{0}^{T}\int_{\Omega}F(\mathbb{T}\vv_{j,k,l}(t,x))\dif x \dif t \leq \int_{0}^{T}\mathscr{F}_{0}^{*}[\vv(t,\cdot);\Omega]\dif t. 
\end{align}
This implies \ref{item:approxmain3}, and the proof is complete. 
\end{proof}
\begin{remark}\label{rem:bulk3}
In the estimates below, the actual critical term is \ref{item:term11} and shall be treated by Lemma \ref{lem:keyapproximation}. In particular, this is the key point for which we require the bulk approximation of boundary terms as established in Theorem \ref{thm:bdryapprox}. 
\end{remark}
Now let $\vv\in\lebe_{\mathrm{w}^{*}}^{1}(0,T;\bd(\Omega))\cap\sobo^{1,2}(0,T;\lebe^{2}(\Omega;\R^{2}))$ be arbitrary and let $(\vv_{j,k,l})\subset \lebe^{2}(0,T;\sobo_{0}^{1,2}(\Omega;\R^{2}))\cap \sobo^{1,2}(0,T;\lebe^{2}(\Omega;\R^{2}))$ be the triple sequence from Lemma \ref{lem:keyapproximation}, where we work with $F_{\varepsilon}$ as convex integrand of linear growth in Lemma \ref{lem:keyapproximation}\ref{item:approxmain3}. In particular, for all $(j,k,l)\in\mathbb{N}^{3}$, $\vv_{j,k,l}$ is admissible in \eqref{eq:franklundy}. We organise the resulting inequality as 
\begin{align*}
\mathrm{I}_{10}^{(j,k,l)}+\mathrm{I}_{11}^{(j,k,l)}+\mathrm{I}_{12}^{(j,k,l)} \geq \mathrm{I}_{13} + \mathrm{I}_{14} + \mathrm{I}_{15}^{(j,k,l)} + \mathrm{I}_{16}^{(j,k,l)}. 
\end{align*}
Again, we deal with the single terms separately: 
\begin{itemize}
\item On \namedlabel{item:term10}{Term $\mathrm{I}_{10}^{(j,k,l)}$}: For term $\mathrm{I}_{10}^{(j,k,l)}$, we send (in this order) $l\to\infty$, $k\to\infty$, $j\to\infty$. The strong convergences in $\lebe^{2}(\Omega_{T};\R^{2})$ as displayed in Lemma \ref{lem:keyapproximation}\ref{item:approxmain1} and \ref{item:approxmain2} imply 
\begin{align*}
\lim_{j\to\infty}\lim_{k\to\infty}\lim_{l\to\infty} \mathrm{I}_{10}^{(j,k,l)} = \int_{0}^{s}\int_{\Omega}\partial_{t}\mathbf{v}\cdot(\mathbf{v}-\mathbf{u}_{\varepsilon}^{\zeta})\dif x \dif t. 
\end{align*}
\item On \namedlabel{item:term11}{Term $\mathrm{I}_{11}^{(j,k,l)}$}: For the limit passage in  $\mathrm{I}_{11}^{(j,k,l)}$, we use Lemma \ref{lem:keyapproximation}\ref{item:approxmain3}. We thus conclude that 
\begin{align*}
\limsup_{j\to\infty}\limsup_{k\to\infty}\limsup_{l\to\infty} \mathrm{I}_{11}^{(j,k,l)} \leq \int_{0}^{T}(\mathscr{F}_{\varepsilon})_{0}^{*}[\vv(t,\cdot);\Omega]\dif t, 
\end{align*}
where $(\mathscr{F}_{\varepsilon})_{0}^{*}[-;\Omega]$ is the relaxed energy \eqref{eq:relaxedbdry} with energy integrand $F_{\varepsilon}$. 
\item On \namedlabel{item:term12}{Term $\mathrm{I}_{12}^{(j,k,l)}$}: For term $\mathrm{I}_{12}^{(j,k,l)}$, we note that $\mathbf{v}_{j,k,l}\to\mathbf{v}$ strongly in $\lebe^{2}(\Omega_{T};\R^{2})$ by Lemma \ref{lem:keyapproximation}\ref{item:approxmain1} as $j,k,l\to\infty$. Therefore, our assumption $\mathbf{f}\in\lebe^{2}(\Omega_{T};\R^{2})$, see \eqref{eq:atmosphericmain}, implies 
\begin{align*}
\lim_{j\to\infty}\lim_{k\to\infty}\lim_{l\to\infty}\mathrm{I}_{12}^{(j,k,l)} = \lim_{j\to\infty}\lim_{k\to\infty}\lim_{l\to\infty} \int_{0}^{s}\int_{\Omega}\mathbf{f}\cdot\mathbf{v}_{j,k,l}\dif x \dif t = \int_{0}^{s}\int_{\Omega}\mathbf{f}\cdot\mathbf{v}\dif x \dif t. 
\end{align*}
\item On \namedlabel{item:term1314}{Terms  $\mathrm{I}_{13},\mathrm{I}_{14}$}: We leave terms $\mathrm{I}_{13},\mathrm{I}_{14}$ untouched. 
\item On \namedlabel{item:term15}{Term $\mathrm{I}_{15}^{(j,k,l)}$}: For term $\mathrm{I}_{15}^{(j,k,l)}$, we recall \eqref{eq:oceanicmain}, \eqref{eq:oceanicassump} and \eqref{eq:oceanicmean}. In particular, we have $\tocean(\uu_{\varepsilon}^{\zeta})\in\lebe^{2}(\Omega_{T};\R^{2})$. Hence, by Lemma \ref{lem:keyapproximation}\ref{item:approxmain2}, 
\begin{align*}
\lim_{j\to\infty}\lim_{k\to\infty}\lim_{l\to\infty} \mathrm{I}_{15}^{(j,k,l)}= \int_{0}^{s}\int_{\Omega}\tocean(\uu_{\varepsilon}^{\zeta})\cdot(\vv-\uu_{\varepsilon}^{\zeta})\dif x \dif t.
\end{align*}

\item On \namedlabel{item:term16}{Term $\mathrm{I}_{16}^{(j,k,l)}$}: For term $\mathrm{I}_{16}^{(j,k,l)}$, we note that $\vv_{j,k,l}\to \vv$ strongly in $\lebe^{2}(\Omega_{T};\R^{2})$ and so $\vv_{j,k,l}(s,\cdot)\to\vv(s,\cdot)$ strongly in $\lebe^{2}(\Omega;\R^{2})$ for $\mathscr{L}^{1}$-a.e. $0<s<T$. In consequence, we obtain for $\mathscr{L}^{1}$-a.e. $0<s<T$ that 
\begin{align*}
\lim_{j\to\infty}\lim_{k\to\infty}\lim_{l\to\infty}\mathrm{I}_{16}^{(j,k,l)} = \frac{1}{2}\int_{\Omega}\Big(|\uu_{\varepsilon}^{\zeta}-\mathbf{v}|^{2}(s,\cdot)-|\mathbf{u}_{0}-\mathbf{v}(0,\cdot)|^{2}\Big)\dif x.  
\end{align*}
\end{itemize}
Combining the previous items, we thus arrive at 
\begin{align}\label{eq:debramorgan}
\begin{split}
\int_{0}^{s}\int_{\Omega}(\partial_{t}\mathbf{v})\cdot(\mathbf{v}-\mathbf{u}_{\varepsilon}^{\zeta})\dif x\dif t  & + \int_{0}^{s}(\mathscr{F}_{\varepsilon})_{0}^{*}[\mathbf{v}(t,\cdot);\Omega]\dif t -  \int_{0}^{s}\int_{\Omega}\mathbf{f}\cdot\mathbf{v}\dif x \dif t\\ 
& \geq \int_{0}^{s}\mathscr{F}_{0}^{*}[\mathbf{u}_{\varepsilon}^{\zeta}(t,\cdot);\Omega] \dif t -  \int_{0}^{s}\int_{\Omega}\mathbf{f}\cdot\mathbf{u}_{\varepsilon}^{\zeta}\dif x \dif t
\\ & + \int_{0}^{s}\int_{\Omega}\tocean(\uu_{\varepsilon}^{\zeta})\cdot(\vv-\uu_{\varepsilon}^{\zeta})\dif x \dif t \\  & + \frac{1}{2}\int_{\Omega}\Big(|\mathbf{u}_{\varepsilon}^{\zeta}-\mathbf{v}|^{2}(s,\cdot) - |\mathbf{u}_{0}^{\zeta}-\mathbf{v}(0,\cdot)|^{2}\Big) \dif x \\ 
 & \Longleftrightarrow: \mathrm{I}_{17} + \mathrm{I}_{18} + \mathrm{I}_{19} \geq \mathrm{I}_{20} + \mathrm{I}_{21} + \mathrm{I}_{22} + \mathrm{I}_{23}
 \end{split}
\end{align}
for all $\mathbf{v}\in\lebe^{2}(0,T;\sobo^{1,2}(\Omega;\R^{2})\cap\sobo^{1,2}(0,T;\lebe^{2}(\Omega;\R^{2}))$. Based on \eqref{eq:debramorgan}, we now address the limit passage $\varepsilon\searrow 0$. To this end, we isolate the following elementary observation: 
\begin{lemma}\label{lem:FepsLimit}
Let $F\colon\RR\to\R$ satisfy  \emph{\ref{item:Fprop1}--\ref{item:Fprop3}} and let $\mathbf{v}\in\lebe_{\mathrm{w}^{*}}^{1}(0,T;\bd(\Omega))$. Then, for $\mathscr{L}^{1}$-a.e. $0<s<T$, there holds 
\begin{align}\label{eq:officerkruger}
\int_{0}^{s}(\mathscr{F}_{\varepsilon})_{0}^{*}[\mathbf{v}(t,\cdot)]\dif t \to \int_{0}^{s}\mathscr{F}_{0}^{*}[\mathbf{v}(t,\cdot)]\dif t,\qquad \varepsilon\searrow 0. 
\end{align}
\end{lemma}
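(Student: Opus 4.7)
The plan is to exploit the special structure of the regularisation $F_{\varepsilon}(z)=\sqrt{\varepsilon+F(z)^{2}}$ and show that the difference $(\mathscr{F}_{\varepsilon})_{0}^{*}-\mathscr{F}_{0}^{*}$ is pointwise in time of order $\sqrt{\varepsilon}\,\mathscr{L}^{2}(\Omega)$, so that the time-integrated statement follows without any refined compactness argument.

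First, I would decompose both relaxed functionals into their three natural pieces: bulk absolutely continuous, bulk singular, and boundary penalisation. By Lemma~\ref{lem:approximations}, the recession function is preserved under the regularisation, i.e. $F_{\varepsilon}^{\infty}=F^{\infty}$ on $\RR$. Hence the bulk singular integrals $\int_{\Omega}F_{\varepsilon}^{\infty}(\tfrac{\dif \mathbb{T}^{s}\mathbf{v}}{\dif|\mathbb{T}^{s}\mathbf{v}|})\dif|\mathbb{T}^{s}\mathbf{v}|$ and the boundary integrals $\int_{\partial\Omega}F_{\varepsilon}^{\infty}(-\mathrm{tr}_{\partial\Omega}(\mathbf{v})\otimes_{\mathbb{T}}\nu_{\partial\Omega})\dif\mathscr{H}^{1}$ appearing in $(\mathscr{F}_{\varepsilon})_{0}^{*}$ coincide identically with the corresponding pieces of $\mathscr{F}_{0}^{*}$. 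Consequently, for $\mathscr{L}^{1}$-a.e. $t\in(0,T)$, one reduces to the identity
\begin{align*}
(\mathscr{F}_{\varepsilon})_{0}^{*}[\mathbf{v}(t,\cdot);\Omega]-\mathscr{F}_{0}^{*}[\mathbf{v}(t,\cdot);\Omega]=\int_{\Omega}\bigl(F_{\varepsilon}-F\bigr)(\mathscr{T}\mathbf{v}(t,\cdot))\dif x.
\end{align*}

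Next, I would establish the uniform pointwise bound
\begin{align*}
0\leq F_{\varepsilon}(z)-F(z)\leq \sqrt{\varepsilon+F(z)^{2}}-F(z)=\frac{\varepsilon}{\sqrt{\varepsilon+F(z)^{2}}+F(z)}\leq\sqrt{\varepsilon},
\end{align*}
valid for every $z\in\RR$ (here I use that in the setting of Sections~\ref{sec:hiblerintro}--\ref{sec:modelling} the integrand $F$ is nonnegative, so that $|F|=F$; if $F$ were only of linear growth with $F(0)=0$, one would replace $F$ by $|F|$ and the same estimate holds). This estimate is completely elementary and uniform in $z$, which is the decisive feature: it bypasses any need to control $\mathscr{T}\mathbf{v}$ in $\lebe^{p}$ for $p>1$.

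Finally, combining the two previous steps and integrating over $t\in(0,s)$ yields, for every $0<s<T$,
\begin{align*}
\left|\int_{0}^{s}(\mathscr{F}_{\varepsilon})_{0}^{*}[\mathbf{v}(t,\cdot);\Omega]\dif t-\int_{0}^{s}\mathscr{F}_{0}^{*}[\mathbf{v}(t,\cdot);\Omega]\dif t\right|\leq s\,\mathscr{L}^{2}(\Omega)\sqrt{\varepsilon}\stackrel{\varepsilon\searrow 0}{\longrightarrow}0,
\end{align*}
which is \eqref{eq:officerkruger}. No subsequence extraction or dominated convergence argument is needed; the only mild subtlety is ensuring that $\mathbf{v}(t,\cdot)\in\bd(\Omega)$ for $\mathscr{L}^{1}$-a.e.\ $t$, so that the Lebesgue--Radon--Nikod\'ym decomposition of $\mathbb{T}\mathbf{v}(t,\cdot)$ (and hence the definitions of both $(\mathscr{F}_{\varepsilon})_{0}^{*}$ and $\mathscr{F}_{0}^{*}$) makes sense $t$-a.e., which is guaranteed by $\mathbf{v}\in\lebe_{\mathrm{w}^{*}}^{1}(0,T;\bd(\Omega))$.
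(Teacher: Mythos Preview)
Your argument is correct and takes a genuinely more elementary route than the paper. Both proofs start identically: split $(\mathscr{F}_{\varepsilon})_{0}^{*}$ and $\mathscr{F}_{0}^{*}$ into absolutely continuous bulk, singular bulk, and boundary parts, and use $F_{\varepsilon}^{\infty}=F^{\infty}$ (Lemma~\ref{lem:approximations}) to see that the latter two pieces coincide exactly. From there the paper argues via two nested applications of dominated convergence (pointwise convergence $F_{\varepsilon}\to F$ together with the integrable majorant $c(1+|\mathscr{T}\mathbf{v}(t,\cdot)|)$ in space, and then an integrable majorant in time coming from $\mathbf{v}\in\lebe_{\mathrm{w}^{*}}^{1}(0,T;\bd(\Omega))$). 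You instead exploit the explicit uniform bound $0\le F_{\varepsilon}-F\le\sqrt{\varepsilon}$, which yields the quantitative estimate $s\,\mathscr{L}^{2}(\Omega)\sqrt{\varepsilon}$ and dispenses with any limit theorem; in particular your conclusion holds for \emph{every} $0<s<T$, not just almost every.

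One caveat worth flagging: the inequality $F_{\varepsilon}(z)-F(z)\le\sqrt{\varepsilon}$ genuinely requires $F(z)\ge 0$, and your parenthetical fix (``replace $F$ by $|F|$'') does not rescue the statement as written, since the target functional $\mathscr{F}_{0}^{*}$ is built from $F$, not $|F|$. Hypotheses \ref{item:Fprop1}--\ref{item:Fprop3} by themselves do not force $F\ge 0$ (e.g.\ $F(z)=\sqrt{1+|z|^{2}}-1+a\cdot z$ with $0<|a|<1$). That said, this is not a defect relative to the paper: the paper's own proof tacitly uses $F_{\varepsilon}(z)\to F(z)$, which likewise fails wherever $F(z)<0$ (one only has $F_{\varepsilon}\to|F|$). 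In the applications driving the paper---the Hibler integrand $F(z)=\tfrac{P}{2}|z|$ and its Mohr--Coulomb-type variants---$F\ge 0$ holds, so both arguments are on equal footing, with yours being the cleaner of the two.
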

\begin{proof}
Let $z\in\RR$. 
We note that $F_{\varepsilon}(z)\to F(z)$ as $\varepsilon\searrow 0$ and $|F_{\varepsilon}(z)|\leq c(1+|z|)$ for all $0<\varepsilon<1$ and some universal $c>0$. For $\vv\in\lebe_{\mathrm{w}^{*}}^{1}(0,T;\bd(\Omega))$ and  $\mathscr{L}^{1}$-a.e.  $0<t<T$, we have the Radon-Nikod\'{y}m  decomposition $\mathbb{T}\vv(t,\cdot)=\mathscr{T}\vv(t,\cdot)\mathscr{L}^{2}+\mathbb{T}^{s}\vv(t,\cdot)$, where  $\mathscr{T}\vv\in\lebe^{1}(0,T;\lebe^{1}(\Omega;\RR))$. In consequence, 
\begin{align}\label{eq:laguerta}
G_{\varepsilon}[\mathbf{v}](t) \coloneqq \left\vert \int_{\Omega}F_{\varepsilon}(\mathscr{T}\mathbf{v}(t,\cdot))\dif x - \int_{\Omega}F(\mathscr{T}\mathbf{v}(t,\cdot))\dif x\right\vert \leq c\Big(1+\int_{\Omega}|\mathscr{T}\mathbf{v}(t,\cdot)|\dif x\Big), 
\end{align}
whereby the right-hand side of \eqref{eq:laguerta} is an integrable majorant for every $G_{\varepsilon}[\mathbf{v}]$. On the other hand, for $\mathscr{L}^{1}$-a.e. $0<t<T$, $c(1+|\mathscr{T}\mathbf{v}(t,\cdot)|)$ is an integrable majorant for all $F_{\varepsilon}(\mathscr{T}\mathbf{v}(t,\cdot))$. By dominated convergence with respect to space, it follows that $G_{\varepsilon}[\mathbf{v}](t)\to 0$. Now, by dominated convergence with respect to time, it follows that 
\begin{align*}
\int_{0}^{T}G_{\varepsilon}[\mathbf{v}](t)\dif t \to 0,\qquad \varepsilon\searrow 0. 
\end{align*}
By Lemma \ref{lem:approximations}, $F_{\varepsilon}^{\infty}=F^{\infty}$, and so \eqref{eq:officerkruger} follows. The proof is complete. 
\end{proof}
Again, let $\vv\in\lebe_{\mathrm{w}^{*}}^{1}(0,T;\bd(\Omega))\cap\sobo^{1,2}(0,T;\lebe^{2}(\Omega;\R^{2}))$. By Corollary \ref{cor:compa1}\ref{eq:compa5}--\ref{eq:compa8}, there exists $\uu^{\zeta}\in\mathrm{L}_{\mathrm{w}^{*}}^{1}(0,T;\bd(\Omega))\cap\lebe^{\infty}(0,T;\lebe^{2}(\Omega;\R^{2}))$ such that the convergences stated therein hold. We now deal with the single terms in \eqref{eq:debramorgan} separately. 
\begin{itemize}
\item On \namedlabel{item:term17}{Term $\mathrm{I}_{17}$}. By Corollary \ref{cor:compa1}\ref{eq:compa6}, $\uu_{\varepsilon_{j}}^{\zeta}\rightharpoonup\uu^{\zeta}$ in $\lebe^{2}(\Omega_{T};\R^{2})$. Since $\partial_{t}\vv\in\lebe^{2}(\Omega_{T};\R^{2})$, we infer that  
\begin{align*}
\lim_{j\to\infty} \mathrm{II}_{17}=\lim_{j\to\infty}\int_{0}^{s}\int_{\Omega}(\partial_{t}\vv)\cdot(\vv-\uu_{\varepsilon_{j}}^{\zeta})\dif x \dif t = \int_{0}^{s}\int_{\Omega}(\partial_{t}\vv)\cdot(\vv-\uu^{\zeta})\dif x \dif t. 
\end{align*}
\item On \namedlabel{item:term18}{Term $\mathrm{I}_{18}$}. By Lemma \ref{lem:FepsLimit}, we have 
\begin{align*}
\lim_{j\to\infty}\mathrm{I}_{18} = \lim_{j\to\infty} \int_{0}^{s}(\mathscr{F}_{\varepsilon_{j}})_{0}^{*}[\vv(t,\cdot);\Omega]\dif t = \int_{0}^{s}\mathscr{F}_{0}^{*}[\vv(t,\cdot);\Omega]\dif t. 
\end{align*}
\item On \namedlabel{item:term19}{Term $\mathrm{I}_{19}$}. We leave term $\mathrm{I}_{19}$ untouched. 
\item On \namedlabel{item:term20}{Term $\mathrm{I}_{20}$}. For term $\mathrm{I}_{20}$, we first note that, by Corollary \ref{cor:compa1}\ref{eq:compa7}, $\uu_{\varepsilon_{j}}^{\zeta}(t,\cdot)\to\uu^{\zeta}(t,\cdot)$ in $\lebe^{1}(\Omega;\R^{2})$ for $\mathscr{L}^{1}$-a.e. $t\in(0,T)$. Therefore, by Fatou's lemma and Corollary \ref{cor:LSCplussbdryvalues}\ref{item:relaxdoit1}, 
\begin{align*}
\liminf_{j\to\infty}\int_{0}^{s}\mathscr{F}_{0}^{*}[\uu_{\varepsilon_{j}}^{\zeta}(t,\cdot);\Omega]\dif t & \geq \int_{0}^{s}\liminf_{j\to\infty}\mathscr{F}_{0}^{*}[\uu_{\varepsilon_{j}}^{\zeta}(t,\cdot);\Omega]\dif t \geq \int_{0}^{s}\mathscr{F}_{0}^{*}[\uu^{\zeta}(t,\cdot);\Omega]\dif t. 
\end{align*}
\item On \namedlabel{item:term21}{Term $\mathrm{I}_{21}$}. Since $\uu_{\varepsilon_{j}}^{\zeta}\rightharpoonup \uu^{\zeta}$ in $\lebe^{2}(\Omega_{T};\R^{2})$, it follows that 
\begin{align*}
\lim_{j\to\infty}\mathrm{I}_{21} = \int_{0}^{s}\int_{\Omega}\mathbf{f}\cdot\uu^{\zeta}\dif x\dif t.
\end{align*}
\item On \namedlabel{item:term22}{Term $\mathrm{I}_{22}$}. For term $\mathrm{I}_{22}$, we note that Corollary \eqref{cor:compa1}\ref{eq:compa5}--\ref{eq:compa6} provide us with the same convergences as required in the limit passage in \eqref{eq:Iransofaraway}ff.. Therefore, we arrive at 
\begin{align*}
\lim_{j\to\infty}\mathrm{I}_{22} = \int_{0}^{s}\int_{\Omega}\tocean(\uu^{\zeta})\cdot(\vv-\uu^{\zeta})\dif x \dif t. 
\end{align*}
\item On \namedlabel{item:term23}{Term $\mathrm{I}_{23}$}. For term $\mathrm{I}_{23}$, we note that Corollary \ref{cor:compa1}\ref{eq:compa7} implies that $\uu_{\varepsilon_{j}}^{\zeta}(s,\cdot)\to\uu^{\zeta}(s,\cdot)$ strongly in $\lebe^{1}(\Omega;\R^{2})$ for $\mathscr{L}^{1}$-a.e. $0<s<T$. Hence, Fatou's lemma gives us 
\begin{align*}
\liminf_{j\to\infty} \mathrm{I}_{23}  \geq \frac{1}{2}\int_{\Omega}\Big(|\uu^{\zeta}-\vv|^{2}(s,\cdot)-|\uu_{0}^{\zeta}-\vv(0,\cdot)|^{2}\Big)\dif x
\end{align*}
for $\mathscr{L}^{1}$-a.e. $s\in (0,T)$. 
\end{itemize}
As an outcome of the preceding convergence assertions, we end up with 
\begin{align}\label{eq:wave103}
\begin{split}
\int_{0}^{s}\int_{\Omega}(\partial_{t}\mathbf{v})\cdot(\mathbf{v}-\mathbf{u}^{\zeta})\dif x\dif t  & + \int_{0}^{s}\mathscr{F}_{0}^{*}[\mathbf{v}(t,\cdot);\Omega]\dif t -  \int_{0}^{s}\int_{\Omega}\mathbf{f}\cdot\mathbf{v}\dif x \dif t\\ 
& \geq \int_{0}^{s}\mathscr{F}_{0}^{*}[\mathbf{u}^{\zeta}(t,\cdot);\Omega] \dif t -  \int_{0}^{s}\int_{\Omega}\mathbf{f}\cdot\mathbf{u}^{\zeta}\dif x \dif t
\\ & + \int_{0}^{s}\int_{\Omega}\tocean(\uu^{\zeta})\cdot(\vv-\uu^{\zeta})\dif x \dif t \\  & + \frac{1}{2}\int_{\Omega}\Big(|\mathbf{u}^{\zeta}-\mathbf{v}|^{2}(s,\cdot) - |\mathbf{u}_{0}^{\zeta}-\mathbf{v}(0,\cdot)|^{2}\Big) \dif x \\ 
 & \Longleftrightarrow: \mathrm{I}_{24} + \mathrm{I}_{25} + \mathrm{I}_{26} \geq \mathrm{I}_{27} + \mathrm{I}_{28} + \mathrm{I}_{29} + \mathrm{I}_{30}.
 \end{split}
\end{align}
Now we pick the map $\uu\in\lebe_{\mathrm{w}^{*}}^{1}(0,T;\bd(\Omega))\cap\lebe^{\infty}(0,T;\lebe^{2}(\Omega;\R^{2}))$ and the sequence $(\uu^{\zeta_{j}})$ from Corollary \ref{cor:compa1}\ref{eq:compa9}--\ref{eq:compa12}. Based on the latter, we may argue as for \ref{item:term17}, \ref{item:term20}, \ref{item:term21}, \ref{item:term22} to accomplish the requisite limit passage in terms $\mathrm{I}_{24},\mathrm{I}_{27},\mathrm{I}_{28},\mathrm{I}_{29}$. For term $\mathrm{I}_{30}$, we first note that $\uu^{\zeta_{j}}(s,\cdot)\to\uu(s,\cdot)$ strongly in $\lebe^{1}(\Omega;\R^{2})$ for $\mathscr{L}^{1}$-a.e. $0<s<T$. Hence, we may deal with the first term in the integrand of $\mathrm{I}_{30}$ as in the estimation of \ref{item:term23}. Moreover, by the definition of $\uu_{0}^{\zeta}$, see \eqref{eq:initialvalueapproximation}, we have $\uu_{0}^{\zeta_{j}}\to\uu_{0}$ strongly in $\lebe^{2}(\Omega;\R^{2})$. Hence, 
\begin{align*}
\liminf_{j\to\infty}\mathrm{I}_{30} \geq \frac{1}{2}\int_{\Omega}\Big(|\mathbf{u}-\mathbf{v}|^{2}(s,\cdot) - |\mathbf{u}_{0}^{\zeta}-\mathbf{v}(0,\cdot)|^{2}\Big) \dif x 
\end{align*}
for $\mathscr{L}^{1}$-a.e. $0<s<T$. In conclusion, for $\mathscr{L}^{1}$-a.e. $0<s<T$, we have 
\begin{align}
\int_{0}^{s}\int_{\Omega}(\partial_{t}\mathbf{v})\cdot(\mathbf{v}-\mathbf{u})\dif x\dif t  & + \int_{0}^{s}\mathscr{F}_{0}^{*}[\mathbf{v}(t,\cdot);\Omega]\dif t - \int_{0}^{s}\int_{\Omega}\mathbf{f}\cdot\mathbf{v}\dif x \dif t \notag \\ 
& \geq \int_{0}^{s}\mathscr{F}_{0}^{*}[\mathbf{u}(t,\cdot);\Omega] \dif t - \int_{0}^{s}\int_{\Omega}\mathbf{f}\cdot\mathbf{u}\dif x \dif t \label{eq:varfinal}
\\ & + \int_{0}^{s}\int_{\Omega}\tocean(\uu)\cdot(\vv-\uu)\dif x \dif t \notag\\  & + \frac{1}{2}\int_{\Omega}\Big(|\mathbf{u}-\mathbf{v}|^{2}(s,\cdot) - |\mathbf{u}_{0}-\mathbf{v}(0,\cdot)|^{2}\Big) \dif x \notag
\end{align}
for all $\vv\in\lebe_{\mathrm{w}^{*}}^{1}(0,T;\bd(\Omega))\cap\sobo^{1,2}(0,T;\lebe^{2}(\Omega;\R^{2}))$. This establishes that $\uu$ is a variational solution of Hibler's momentum balance equation in the sense of Definition \ref{def:varsol1}\ref{item:Hiblermain1}. \hfill $\square$ \\ 

The attainment of the initial values shall be addressed in the following section. For now, we discuss the specific growth of the oceanic force terms as employed in \eqref{eq:oceanicmain}, \eqref{eq:oceanicassump}, and including $x$-dependent integrands.
\begin{remark}\label{rem:criticalcutoff}
The specific growth behaviour of the function $\widetilde{\eta}$ in \eqref{eq:oceanicmain} and \eqref{eq:oceanicassump} with $0<\gamma<1$ enters the proof at several stages. For instance, to get access to the compactness provided by the Aubin-Lions lemma, the estimation of \ref{item:term8} requires $0<\gamma<1$. On the other hand, the evolution equation and the leading energy terms $F_{\delta,\varepsilon}(\mathbb{T}\uu_{\delta,\varepsilon}^{\zeta})$ only provide us with uniform bounds on $\mathbb{T}\uu_{\delta,\varepsilon}^{\zeta}$ in $\lebe^{1}(0,T;\lebe^{1}(\Omega;\RR))$. In particular, since $\bd(\Omega)\hookrightarrow\lebe^{2}(\Omega;\R^{2})$, the maximal spatial integrability entailed by the equation is $\lebe^{2}(\Omega;\R^{2})$. Therefore, a potential superlinear growth of $\widetilde{\eta}$ only leads to well-defined terms which are \emph{uniformly} treatable in the singular limit $\varepsilon\searrow 0$ if one can establish uniform higher (spatial)  integrability estimates on $\uu_{\delta,\varepsilon}^{\zeta}$ beyond $\lebe^{2}(\Omega;\R^{2})$. 

Such estimates, in turn, \emph{cannot stem} from higher differentiability assertions on $\uu$ (e.g., in Sobolev spaces). Since we deal with the singular limit, the limiting integrands typically do not share any ellipticity at all. Hence, such higher differentiability results are neither expected, nor would they be natural in view of incorporating plasticity effects. In consequence, one would need to establish higher spatial integrability directly on the level of the functions themselves. 

In this regard, methods to arrive at uniform spatial $\lebe^{p}$-integrability for $\uu_{\delta,\varepsilon}^{\zeta}$, $p>1$, fail in the setting considered here. First,  in the stationary, vectorial $\bv$-context, a radial structure of the energy integrands $F$ can be used to establish maximum principles or local $\lebe^{\infty}$-bounds via Moser iterations; see, e.g., \cite[Section 3]{Beckschmidt13} and also \cite{HZ} in the parabolic context. It is, however, here where the complicated structure of $\mathbb{T}$ (or even the symmetric gradient) fully destroys the underlying comparison estimates. Aiming for lower order integrability, a natural alternative are lower order Gehring-type improvements; see the next remark.  
\end{remark}
\begin{remark}\label{rem:gehringfail} 
In general, for problems involving energies of linear growth, Gehring-type improvements on the level of derivatives are impossible. More precisely, even when a Caccioppoli inequality is available -- thereby estimating first order quantities against zeroth order quantities -- one would need a sublinear Sobolev-Poincar\'{e} inequality to obtain a self-improvement of gradient integrability. The latter is ruled out by \cite{BK} and indeed, Gehring-type improvements even fail for full gradient problems; see  \cite[Section 1]{Gm1} for more detail.

As discussed in \cite[Section 1]{Gm1}, \emph{lower order} integrability improvements via Gehring are possible in the elliptic context. Here, one firstly employs the usual Sobolev-Poincar\'{e} inequality and \emph{subsequently} the Caccioppoli inequality. By use of boundary regularity techniques, this yields -- even for irregular problems -- spatial $\lebe^{2+\varepsilon}(\Omega;\R^{2})$-integrability. However, in the evolutionary context as considered here, such improvements are precisely ruled out in $n=2$ dimensions for $p=1$. This becomes visible when attempting to generalize the up-to-the-boundary regularity results of, e.g., B\"{o}gelein \& Parviainen \cite{Boegelein} on the level of \emph{functions} (and not first order differential expressions) to the case $p=1$. In essence, the underlying obstruction is that the problems considered here only provide us with a priori $\lebe^{\infty}(0,T;\lebe^{2}(\Omega;\R^{2}))$-bounds on the solutions. Whereas this can yet be made work for problems of superlinear growth, $p>1$, it is precisely $p=1$ in $n=2$ dimensions where such approaches break down. 
\end{remark}

\begin{remark}\label{rem:xdependence}
Appealing to the $x$-dependent variant of Reshetnyak's lower semicontinuity theorem \cite{Reshetnyak}, it is in principle possible to include integrands which correspond to spatially variable pressures. For instance, the particular integrand $F(x,z)=\frac{P(x)}{2}|z|$ with some given continuous pressure $P$ that is bounded from above and away from zero leads to a similar result as Theorem \ref{thm:main}. Since this is not the core aspect of the present paper, we refrain from stating such a version explicitly.
\end{remark}

\subsection{$\hold([0,T];\lebe^{2}(\Omega;\R^{2}))$-regularity and attainment of initial values}\label{sec:initial}
We now establish that the variational solution $\uu$ as constructed in Section \ref{sec:existence} attains the initial values $\uu_{0}$. To this end, we begin with the following proposition: 
\begin{proposition}\label{prop:timebounds}
For $\delta,\varepsilon,\zeta>0$ as in \eqref{eq:allchoose}, let $\uu_{\delta,\varepsilon}^{\zeta}$ be as in the previous subsection, see \eqref{eq:motiv1}ff.. Then there exists a constant $c=c(\gamma,T,F,\uu_{0},\mathbf{f})>0$ such that 
\begin{align}\label{eq:uniftimederivativebounds}
\sup\big\{\|\partial_{t}\uu_{\delta,\varepsilon}^{\zeta}\|_{\lebe^{2}(\Omega_{T})}\colon\;\delta,\varepsilon,\zeta>0\;\text{satisfy \eqref{eq:allchoose}}\big\}<\infty. 
\end{align}
\end{proposition}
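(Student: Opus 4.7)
The plan is to derive an energy identity by testing the equation with $\partial_t\uu_{\delta,\varepsilon}^{\zeta}$, exploiting the fact that $F_{\delta,\varepsilon}$ is smooth and the functional $\Phi[\vv]\coloneqq\int_{\Omega}F_{\delta,\varepsilon}(\mathbb{T}\vv)\dif x$ satisfies a chain rule along trajectories in $\sobo^{1,2}(0,T;\lebe^{2}(\Omega;\R^{2}))\cap\lebe^{2}(0,T;\sobo_{0}^{1,2}(\Omega;\R^{2}))$. Formally, testing \eqref{eq:weakform} with $\partial_t\uu_{\delta,\varepsilon}^{\zeta}$ and recognising
\begin{align*}
\int_{\Omega}F'_{\delta,\varepsilon}(\mathbb{T}\uu_{\delta,\varepsilon}^{\zeta})\cdot \mathbb{T}(\partial_{t}\uu_{\delta,\varepsilon}^{\zeta})\dif x = \frac{\dif}{\dif t}\Phi[\uu_{\delta,\varepsilon}^{\zeta}]
\end{align*}
yields, after integration from $0$ to $T$,
\begin{align*}
\int_{0}^{T}\|\partial_{t}\uu_{\delta,\varepsilon}^{\zeta}\|_{\lebe^{2}(\Omega)}^{2}\dif t + \Phi[\uu_{\delta,\varepsilon}^{\zeta}(T,\cdot)]-\Phi[\uu_{0}^{\zeta}] = \int_{0}^{T}\int_{\Omega}(\mathbf{f}+\tocean(\uu_{\delta,\varepsilon}^{\zeta}))\cdot \partial_{t}\uu_{\delta,\varepsilon}^{\zeta}\dif x\dif t.
\end{align*}
Since $\Phi\geq 0$ (as both $\sqrt{\varepsilon+F^{2}}$ and $\tfrac{\delta}{2}|\cdot|^{2}$ are non-negative), the second term on the left is dropped, and absorbing the right-hand side via Cauchy-Schwarz with Young gives $\int_{0}^{T}\|\partial_{t}\uu_{\delta,\varepsilon}^{\zeta}\|_{\lebe^{2}}^{2}\dif t \leq 2\Phi[\uu_{0}^{\zeta}]+\int_{0}^{T}\|\mathbf{f}+\tocean(\uu_{\delta,\varepsilon}^{\zeta})\|_{\lebe^{2}(\Omega)}^{2}\dif t$.

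The first crucial estimate is on $\Phi[\uu_{0}^{\zeta}]$. Here  \eqref{eq:lingrowth} gives $|F(z)|\leq c(1+|z|)$, so
\begin{align*}
\Phi[\uu_{0}^{\zeta}] \leq \int_{\Omega}\big(\sqrt{\varepsilon}+c(1+|\mathbb{T}\uu_{0}^{\zeta}|)\big)\dif x + \frac{\delta}{2}\|\mathbb{T}\uu_{0}^{\zeta}\|_{\lebe^{2}(\Omega)}^{2}.
\end{align*}
By \eqref{eq:initialvalueapproximation}$_{4}$ the middle integral is bounded uniformly by $c(1+\|\uu_{0}\|_{\bd(\Omega)})$, whereas by \eqref{eq:initialvalueapproximation}$_{3}$ the quadratic term is dominated by $\tfrac{c\delta}{\zeta^{2}}\|\uu_{0}\|_{\bd(\Omega)}^{2}$. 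It is here that the scaling restriction $\delta<\zeta^{2}$ from \eqref{eq:allchoose} is essential: it renders $\Phi[\uu_{0}^{\zeta}]$ uniformly bounded in all three parameters. The forcing $\mathbf{f}$ contributes a bounded $\lebe^{2}(\Omega_{T})$-norm by \eqref{eq:atmosphericmain}. The ocean term $\tocean(\uu_{\delta,\varepsilon}^{\zeta})$ is handled using the sublinear growth \eqref{eq:oceanicassump}, namely $|\tocean(\uu)|\leq c(1+|\mathbf{U}_{\mathrm{ocean}}|+|\uu|)^{1-\gamma}$; a H\"{o}lder inequality with exponent $1/(1-\gamma)$ on $\Omega$ reduces the $\lebe^{2}$-norm to the $\lebe^{2}$-norm of $\mathbf{U}_{\mathrm{ocean}}-\uu_{\delta,\varepsilon}^{\zeta}$, which is uniformly bounded in $t$ by \eqref{eq:oceanicmean} and Lemma \ref{lem:usefulapriori}\ref{eq:unifobd1}.

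The main obstacle is the rigorous justification of the formal test with $\partial_{t}\uu_{\delta,\varepsilon}^{\zeta}$, since a priori $\partial_{t}\uu_{\delta,\varepsilon}^{\zeta}\in\lebe^{2}(0,T;\lebe^{2}(\Omega;\R^{2}))$ but \eqref{eq:weakform} only admits test functions from $\lebe^{2}(0,T;\sobo_{0}^{1,2}(\Omega;\R^{2}))$. We overcome this using the standard machinery for convex gradient flows in Hilbert spaces: since $\Phi$ is convex and Fr\'{e}chet differentiable on $\sobo_{0}^{1,2}(\Omega;\R^{2})$ with Lipschitz derivative (as $F_{\delta,\varepsilon}$ is $\hold^{1}$ with bounded second derivatives through the viscosity term $\tfrac{\delta}{2}|\cdot|^{2}$), the composition $t\mapsto \Phi[\uu_{\delta,\varepsilon}^{\zeta}(t,\cdot)]$ is absolutely continuous on $[0,T]$, and the chain rule
\begin{align*}
\frac{\dif}{\dif t}\Phi[\uu_{\delta,\varepsilon}^{\zeta}]=\langle \Phi'[\uu_{\delta,\varepsilon}^{\zeta}],\partial_{t}\uu_{\delta,\varepsilon}^{\zeta}\rangle =-\langle \mathbb{T}^{*}F'_{\delta,\varepsilon}(\mathbb{T}\uu_{\delta,\varepsilon}^{\zeta}),\partial_{t}\uu_{\delta,\varepsilon}^{\zeta}\rangle_{\sobo^{-1,2},\sobo_{0}^{1,2}}
\end{align*}
holds for $\mathscr{L}^{1}$-a.e. $t\in(0,T)$; see, e.g., Showalter \cite[Chapter III]{Showalter} or Brezis-type chain rules for convex functionals along absolutely continuous curves. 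Alternatively, the identity can be obtained by Steklov averaging $\uu_{\delta,\varepsilon}^{\zeta}$ in time, testing with the difference quotient $h^{-1}(\uu_{\delta,\varepsilon}^{\zeta}(\cdot+h)-\uu_{\delta,\varepsilon}^{\zeta})$, and sending $h\searrow 0$. Pairing the resulting energy identity with the force-term estimates then yields \eqref{eq:uniftimederivativebounds} with a constant depending only on $T,\gamma,F,\uu_{0},\mathbf{f}$ and $\mathbf{U}_{\mathrm{ocean}}$.
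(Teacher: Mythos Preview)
Your argument is correct in spirit and reaches the same final estimate as the paper, but the route is genuinely different.

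You test the equation directly with $\partial_{t}\uu_{\delta,\varepsilon}^{\zeta}$ and invoke a chain rule to identify $\int_{\Omega}F'_{\delta,\varepsilon}(\mathbb{T}\uu_{\delta,\varepsilon}^{\zeta})\cdot\mathbb{T}(\partial_{t}\uu_{\delta,\varepsilon}^{\zeta})\dif x=\tfrac{\dif}{\dif t}\Phi[\uu_{\delta,\varepsilon}^{\zeta}]$. The paper instead avoids the chain rule entirely: it inserts the Brezis exponential mollification
\[
\uu_{\delta,\varepsilon}^{\zeta,\lambda}(t,\cdot)\coloneqq \mathrm{e}^{-t/\lambda}\uu_{0}^{\zeta}+\frac{1}{\lambda}\int_{0}^{t}\mathrm{e}^{(s-t)/\lambda}\uu_{\delta,\varepsilon}^{\zeta}(s,\cdot)\dif s
\]
as a competitor in the variational inequality \eqref{eq:approxineq}. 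Since $\vv-\uu_{\delta,\varepsilon}^{\zeta}=-\lambda\dot{\uu}_{\delta,\varepsilon}^{\zeta,\lambda}$ and Jensen's inequality with respect to the probability measure $\mathrm{e}^{-t/\lambda}\delta_{0}+\lambda^{-1}\mathrm{e}^{(s-t)/\lambda}\mathscr{L}^{1}\mres(0,t)$ controls $\Phi[\uu_{\delta,\varepsilon}^{\zeta,\lambda}]$ by $\Phi[\uu_{\delta,\varepsilon}^{\zeta}]$ up to an initial-data correction, the energy terms cancel and one obtains $\|\dot{\uu}_{\delta,\varepsilon}^{\zeta,\lambda}\|_{\lebe^{2}(\Omega_{T})}^{2}\leq c$ uniformly in $\lambda$; sending $\lambda\searrow 0$ and using Fatou yields the bound. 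The advantage of the paper's approach is that $\uu_{\delta,\varepsilon}^{\zeta,\lambda}$ is a legitimate test function in $\lebe^{2}(0,T;\sobo_{0}^{1,2})\cap\sobo^{1,2}(0,T;\lebe^{2})$, so no abstract regularity theory is invoked. Your approach is shorter but relies on the Brezis chain rule for convex functionals along $\sobo^{1,2}(0,T;H)$-curves, which is legitimate here since $\partial\Phi(\uu_{\delta,\varepsilon}^{\zeta})=g(\uu_{\delta,\varepsilon}^{\zeta})-\partial_{t}\uu_{\delta,\varepsilon}^{\zeta}\in\lebe^{2}(0,T;\lebe^{2}(\Omega))$.

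One caveat: your justification that $\Phi$ has ``Lipschitz derivative (as $F_{\delta,\varepsilon}$ is $\hold^{1}$ with bounded second derivatives through the viscosity term)'' is not established---the paper only shows $F_{\varepsilon}\in\hold^{1}$, and adding $\tfrac{\delta}{2}|z|^{2}$ does not make $F_{\varepsilon}$ itself $\hold^{2}$. Fortunately this is irrelevant: the Brezis chain rule needs only convexity and lower semicontinuity of $\Phi$ together with $\uu\in\sobo^{1,2}(0,T;H)$ and an $\lebe^{2}$-selection from $\partial\Phi(\uu)$, all of which you have. Your Steklov-averaging alternative would also work. The remaining estimates (on $\Phi[\uu_{0}^{\zeta}]$ via $\delta<\zeta^{2}$, and on the force terms via Lemma~\ref{lem:usefulapriori}\ref{eq:unifobd1}) match the paper exactly.
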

\begin{proof}
We shall use a particular test function in \eqref{eq:approxineq}; this specific choice is due to Brezis \cite[\S II, p. 69--70]{Brezis72}. For future reference, we note that the approximate solutions of \eqref{eq:motiv1} satisfy    $\uu_{\delta,\varepsilon}^{\zeta}\in\sobo^{1,2}(0,T;\lebe^{2}(\Omega;\R^{2}))\hookrightarrow X\coloneqq \hold([0,T];\lebe^{2}(\Omega;\R^{2}))$. For $\lambda>0$, we put  
\begin{align}\label{eq:doakesvsdexter}
\uu_{\delta,\varepsilon}^{\zeta,\lambda}(t,\cdot) \coloneqq \mathrm{e}^{-\frac{t}{\lambda}}\uu_{0}^{\zeta} + \frac{1}{\lambda}\int_{0}^{t}\mathrm{e}^{\frac{s-t}{\lambda}}\uu_{\delta,\varepsilon}^{\zeta}(s,\cdot)\dif s,\qquad 0\leq t\leq T. 
\end{align}
Given $0< t<T$, we define a probability measure on $[0,T]$ by 
\begin{align}\label{eq:mudef}
\mu_{t} \coloneqq \mathrm{e}^{-\frac{t}{\lambda}}\delta_{\{s=0\}} + \frac{1}{\lambda}\mathrm{e}^{\frac{s-t}{\lambda}}\mathscr{L}^{1}\mres (0,t). 
\end{align}
Clearly, $\mu_{t}$ is non-negative, and we compute 
\begin{align*}
\mu_{t}([0,T))=\mathrm{e}^{-\frac{t}{\lambda}} + \frac{1}{\lambda}\int_{0}^{t}\mathrm{e}^{\frac{s-t}{\lambda}}\dif s = \mathrm{e}^{-\frac{t}{\lambda}}+\frac{1}{\lambda}\mathrm{e}^{-\frac{t}{\lambda}}\big[\lambda\mathrm{e}^{\frac{s}{\lambda}}\big]_{s=0}^{s=t} =1. 
\end{align*}
Hence, 
\begin{align}\label{eq:zuck1}
\int_{0}^{T}\uu_{\delta,\varepsilon}^{\zeta}(s,\cdot)\dif\mu_{t} \stackrel{\eqref{eq:mudef}}{=} \mathrm{e}^{-\frac{t}{\lambda}}\uu_{0}^{\zeta} + \frac{1}{\lambda}\int_{0}^{t}\mathrm{e}^{\frac{s-t}{\lambda}}\uu_{\delta,\varepsilon}^{\zeta}(s,\cdot)\dif s \stackrel{\eqref{eq:doakesvsdexter}}{=}  \uu_{\delta,\varepsilon}^{\zeta,\lambda}(t,\cdot).
\end{align}
By an integration by parts and recalling that $\uu_{\delta,\varepsilon}^{\zeta}(0,\cdot)=\uu_{0}^{\zeta}$, we obtain 
\begin{align*}
\partial_{t}\uu_{\delta,\varepsilon}^{\zeta,\lambda} & = -\frac{1}{\lambda}\mathrm{e}^{-\frac{t}{\lambda}}\uu_{0}^{\zeta}- \frac{1}{\lambda^{2}}\mathrm{e}^{-\frac{t}{\lambda}}\int_{0}^{t}\mathrm{e}^{\frac{s}{\lambda}}\uu_{\delta,\varepsilon}^{\zeta}(s,\cdot)\dif s + \frac{1}{\lambda}\uu_{\delta,\varepsilon}^{\zeta}(t,\cdot) \\ 
& = -\frac{1}{\lambda}\mathrm{e}^{-\frac{t}{\lambda}}\uu_{0}^{\zeta} - \frac{1}{\lambda^{2}}\mathrm{e}^{-\frac{t}{\lambda}}\Big(\Big[\lambda\mathrm{e}^{\frac{s}{\lambda}}\uu_{\delta,\varepsilon}^{\zeta}(s,\cdot)\Big]_{s=0}^{s=t}-\lambda\int_{0}^{t}\mathrm{e}^{\frac{s}{\lambda}}\dot{\uu}_{\delta,\varepsilon}^{\zeta}(s,\cdot)\dif s \Big) + \frac{1}{\lambda}\uu_{\delta,\varepsilon}^{\zeta}(t,\cdot) \\ 
& = \frac{1}{\lambda}\int_{0}^{t}\mathrm{e}^{\frac{s-t}{\lambda}}\dot{\uu}_{\delta,\varepsilon}^{\zeta}(s)\dif s = \int_{\R}\Big(\frac{1}{\lambda}\mathbbm{1}_{(-\infty,0)}\Big(\frac{s-t}{\lambda}\Big)\mathrm{e}^{\frac{s-t}{\lambda}}\Big)(\mathbbm{1}_{(0,T)}(s)\dot{\uu}_{\delta,\varepsilon}^{\zeta}(s))\dif s \\ 
& \eqqcolon \int_{\R}g_{\lambda}(t-s)\mathbf{w}(s)\dif s,  
\end{align*}
where  
\begin{align*}
g_{\lambda}(t)\coloneqq \frac{1}{\lambda}\mathbbm{1}_{(-\infty,0)}\Big(-\frac{t}{\lambda}\Big)\mathrm{e}^{-\frac{t}{\lambda}} = \frac{1}{\lambda}g\Big(\frac{t}{\lambda}\Big)\;\;\;\;\text{with}\;\;\;\;g(t) \coloneqq  \mathbbm{1}_{(0,\infty)}(t)\mathrm{e}^{-t} 
\end{align*}
and $
\mathbf{w}(t,\cdot)\coloneqq \mathbbm{1}_{(0,T)}(t)\dot{\uu}_{\delta,\varepsilon}^{\zeta}(t,\cdot)$. In particular, we have 
\begin{align}\label{eq:normalisationofg}
\int_{\R}g(t)\dif t =1. 
\end{align}
As a consequence of \eqref{eq:normalisationofg}, {for $\lambda\searrow0$ } we have $\partial_{t}\uu_{\delta,\varepsilon}^{\zeta,\lambda}(t,\cdot)\to\partial_{t}\uu_{\delta,\varepsilon}^{\zeta}(t,\cdot)$ strongly in $\lebe^{2}(\Omega;\R^{2})$ for $\mathscr{L}^{1}$-a.e. $t\in(0,T)$
 {by \cite[Theorem~2.3.8]{HNVW16}.} Hence, by Fatou's lemma,
\begin{align}\label{eq:LSCtime}
\int_{\Omega_{T}}|\partial_{t}\uu_{\delta,\varepsilon}^{\zeta}|^{2}\dif\,(t,x) \leq \liminf_{\lambda\searrow 0}\int_{\Omega_{T}}|\partial_{t}\uu_{\delta,\varepsilon}^{\zeta,\lambda}|^{2}\dif\,(t,x). 
\end{align}
On the other hand, the function $\uu_{\delta,\varepsilon}^{\zeta}$ belongs to $\lebe^{2}(0,T;\sobo_{0}^{1,2}(\Omega;\R^{2}))\cap\sobo^{1,2}(0,T;\lebe^{2}(\Omega;\R^{2}))$, and so does $\uu_{\delta,\varepsilon}^{\zeta,\lambda}$. We let the purely spatial differential operator $\mathbb{T}$ act on \eqref{eq:zuck1} and apply $F_{\delta,\varepsilon}$ to both sides of the resulting equation. We subsequently integrate with respect to $x\in\Omega$; since $F_{\delta,\varepsilon}$ is convex, we thus obtain for $\mathscr{L}^{1}$-a.e. $0<t<T$ by Jensen's inequality applied to the probability measure $\mu_{t}$:  
\begin{align}\label{eq:zeitum}
\begin{split}
\int_{\Omega}F(\mathbb{T}\uu_{\delta,\varepsilon}^{\zeta,\lambda}(t,\cdot))\dif x & \stackrel{\eqref{eq:zuck1}}{\leq} \int_{0}^{T}\int_{\Omega}F(\mathbb{T}\uu_{\delta,\varepsilon}^{\zeta})\dif x \dif\mu_{t} \\ 
& \stackrel{\eqref{eq:mudef}}{=} \mathrm{e}^{-\frac{t}{\lambda}}\int_{\Omega}F(\mathbb{T}\mathbf{u}_{0}^{\zeta})\dif x + \frac{1}{\lambda}\int_{0}^{t}\mathrm{e}^{\frac{s-t}{\lambda}}\int_{\Omega}F(\mathbb{T}\uu_{\delta,\varepsilon}^{\zeta}(s,\cdot))\dif x\dif s. 
\end{split}
\end{align}
By its very definition, $\uu_{\delta,\varepsilon}^{\zeta,\lambda}$ satisfies  
\begin{align}\label{eq:zeitum1}
\begin{split}
&\uu_{\delta,\varepsilon}^{\zeta,\lambda}(0,\cdot) =  \uu_{0}^{\zeta}\;\;\;\text{and} \\ 
&\uu_{\delta,\varepsilon}^{\zeta,\lambda}+\lambda{\dot\uu}_{\delta,\varepsilon}^{\zeta,\lambda} = \uu_{\delta,\varepsilon}^{\zeta},\;\;\;\text{hence}\;\;\; \lambda{\dot\uu}_{\delta,\varepsilon}^{\zeta,\lambda}=\uu_{\delta,\varepsilon}^{\zeta}-\uu_{\delta,\varepsilon}^{\zeta,\lambda}.
\end{split}
\end{align}
We now use the test function $\vv\coloneqq \uu_{\delta,\varepsilon}^{\zeta,\lambda}$ in \eqref{eq:approxineq}. Noting that $\vv(0,\cdot)=\uu_{0}^{\zeta}$, we conclude 
\begin{align*}
\lambda\int_{0}^{T}\int_{\Omega}|{\dot\uu}_{\delta,\varepsilon}^{\zeta,\lambda}|^{2}\dif x \dif t & + \int_{0}^{T}\int_{\Omega}F_{\delta,\varepsilon}(\mathbb{T}\uu_{\delta,\varepsilon}^{\zeta})\dif x \dif t \leq \int_{0}^{T}\int_{\Omega}F_{\delta,\varepsilon}(\mathbb{T}\uu_{\delta,\varepsilon}^{\zeta,\lambda})\dif x \dif t \\ 
& \!\!\!\!\!\!\!\!\!\!\!\!\!\!\!\!\!\!\!\!\!\!\!\! + \lambda  \int_{0}^{T}\int_{\Omega}|\mathbf{f}|\,| \dot{\uu}_{\delta,\varepsilon}^{\zeta,\lambda}|\dif x\dif t + \lambda \int_{0}^{T}\int_{\Omega}|\tocean(\uu_{\delta,\varepsilon}^{\zeta})|\,|\dot{\uu}_{\delta,\varepsilon}^{\zeta,\lambda}|\dif x\dif t\\ 
 & \!\!\!\!\!\!\!\!\!\!\!\!\!\!\!\!\!\!\!\!\!\!\!\!\!\!\!\!  \stackrel{\eqref{eq:zeitum}}{\leq} \int_{0}^{T}\mathrm{e}^{-\frac{s}{\lambda}}\dif s \int_{\Omega}F_{\delta,\varepsilon}(\mathbb{T}\uu_{0}^{\zeta})\dif x + \frac{1}{\lambda}\int_{0}^{T}\int_{0}^{t}\mathrm{e}^{\frac{s-t}{\lambda}}\int_{\Omega}F_{\delta,\varepsilon}(\mathbb{T}\uu_{\delta,\varepsilon}^{\zeta}(s,\cdot))\dif x \dif s \dif t \\ 
& \!\!\!\!\!\!\!\!\!\!\!\!\!\!\!\!\!\!\!\!\!\!\!\! + \lambda  \int_{0}^{T}\int_{\Omega}|\mathbf{f}|^{2}\dif x\dif t + \frac{\lambda}{4} \int_{0}^{T}\int_{\Omega}|\dot{\uu}_{\delta,\varepsilon}^{\zeta,\lambda}|^{2}\dif x \dif t \\ & + \lambda \int_{0}^{T}\int_{\Omega}|\tocean(\uu_{\delta,\varepsilon}^{\zeta})|^{2}\dif x\dif t + \frac{\lambda}{4}\int_{0}^{T}\int_{\Omega}|\dot{\uu}_{\delta,\varepsilon}^{\zeta,\lambda}|^{2}\dif x\dif t\\ 
 & \!\!\!\!\!\!\!\!\!\!\!\!\!\!\!\!\!\!\!\!\!\!\!\! \leq \lambda\underbrace{(1-\mathrm{e}^{-\frac{T}{\lambda}})}_{\leq 1} \int_{\Omega}F_{\delta,\varepsilon}(\mathbb{T}\uu_{0}^{\zeta})\dif x + \int_{0}^{T}\int_{\Omega}F_{\delta,\varepsilon}(\mathbb{T}\uu_{\delta,\varepsilon}^{\zeta})\dif x\dif t \\ 
 & \!\!\!\!\!\!\!\!\!\!\!\!\!\!\!\!\!\!\!\!\!\!\!\! + \lambda  \int_{0}^{T}\int_{\Omega}|\mathbf{f}|^{2}\dif x\dif t + \lambda \int_{0}^{T}\int_{\Omega}|\tocean(\uu_{\delta,\varepsilon}^{\zeta})|^{2}\dif x\dif t \\ & \!\!\!\!\!\!\!\!\!\!\!\!\!\!\!\!\!\!\!\!\!\!\!\! + \frac{\lambda}{2}\int_{0}^{T}\int_{\Omega}|\dot{\uu}_{\delta,\varepsilon}^{\zeta,\lambda}|^{2}\dif x\dif t, 
\end{align*}
and we organize the resulting overall inequality as 
\begin{align*}
\mathrm{I}_{31} + \mathrm{I}_{32} \leq \mathrm{I}_{33} + \mathrm{I}_{34} + \mathrm{I}_{35} + \mathrm{I}_{36} + \mathrm{I}_{37}. 
\end{align*}
Since $\mathrm{I}_{32}=\mathrm{I}_{34}$, we may cancel this term on both sides of the inequality. Moreover, we may  absorb $\mathrm{I}_{37}$ into $\mathrm{I}_{31}$. Recalling  \eqref{eq:atmosphericmain}, \eqref{eq:oceanicmain}--\eqref{eq:oceanicHoelder} and  \eqref{eq:Fepsdelta}ff., dividing the resulting inequality by $\lambda$ gives us 
\begin{align*}
\int_{0}^{T}\int_{\Omega}|{\dot\uu}_{\delta,\varepsilon}^{\zeta,\lambda}|^{2}\dif x \dif t & \leq \int_{\Omega}F_{\delta,\varepsilon}(\mathbb{T}\uu_{0}^{\zeta})\dif x + \|\mathbf{f}\|_{\lebe^{2}(0,T;\lebe^{2}(\Omega))}^{2} +  c\, \|\mathbf{U}_{\mathrm{ocean}}-\uu_{\delta,\varepsilon}^{\zeta}\|_{\lebe^{2}(0,T;\lebe^{2}(\Omega))}^{2} \\ 
& \!\!\!\!\!\!\!\!\!\!\!\stackrel{\text{Lem. \ref{lem:usefulapriori}\ref{eq:unifobd1}}}{\leq}c\int_{\Omega}|\mathbb{T}\uu_{0}^{\zeta}|\dif x + \frac{\delta}{2}\int_{\Omega}|\mathbb{T}\uu_{0}^{\zeta}|^{2}\dif x + c \\ 
& \!\!\!\!\!  \stackrel{\eqref{eq:initialvalueapproximation}_{3}}{\leq} c\, |\mathbb{T}\uu_{0}|(\Omega) + c\frac{\delta}{\zeta^{2}}\|\uu_{0}\|_{\bd(\Omega)}^{2} + c \\ 
& \!\!\!\!\stackrel{\eqref{eq:allchoose}}{\leq}  c\, |\mathbb{T}\uu_{0}|(\Omega) + c\|\uu_{0}\|_{\bd(\Omega)}^{2} + c, 
\end{align*}
where $c>0$ is independent of $\delta,\varepsilon,\zeta$ and $\lambda$. This implies the claim. 
\end{proof}
We now conclude the proof of Theorem \ref{thm:main} by showing the attainment of initial values in the sense of Definition \ref{def:varsol1}\ref{item:Hiblermain2}. Here, we establish an even stronger result as follows. 
\begin{corollary}\label{cor:timeregularity}
Let $\mathbf{u}\in\lebe_{\mathrm{w}^{*}}^{1}(0,T;\bd(\Omega))\cap\lebe^{\infty}(0,T;\lebe^{2}(\Omega;\R^{2}))$ be as in the previous subsection, see Corollary \ref{cor:compa1}. Then we have that 
\begin{align}\label{eq:}
\uu\in\sobo^{1,2}(0,T;\lebe^{2}(\Omega;\R^{2}))\;\;\;\text{and so}\;\;\;\uu\in\hold([0,T];\lebe^{2}(\Omega;\R^{2})),  
\end{align}
whereby $\uu(0,\cdot)$ is a well-defined $\lebe^{2}(\Omega;\R^{2})$-map. Finally, we have 
\begin{align}\label{eq:L2initial}
\uu(0,\cdot)=\uu_{0}\qquad\text{$\mathscr{L}^{2}$-a.e. in $\Omega$}.
\end{align}
In particular, the \emph{attainment of the initial values holds in the sense of Definition \ref{def:varsol1}\ref{item:Hiblermain2}}. 
\end{corollary}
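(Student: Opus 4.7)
My plan is to upgrade the weak$^*$ convergences of $\partial_{t}\uu_{\delta,\varepsilon}^{\zeta}$ in $\lebe^{2}(0,T;\sobo^{-1,2}(\Omega;\R^{2}))$ from Corollary \ref{cor:compa1}\ref{eq:compa4},\ref{eq:compa8},\ref{eq:compa12} to genuine weak convergence in $\lebe^{2}(\Omega_{T};\R^{2})$, thereby placing $\uu$ in $\sobo^{1,2}(0,T;\lebe^{2}(\Omega;\R^{2}))$. The crucial input is Proposition \ref{prop:timebounds}, which provides uniform boundedness of $(\partial_{t}\uu_{\delta,\varepsilon}^{\zeta})$ in $\lebe^{2}(\Omega_{T};\R^{2})$ for all admissible $\delta,\varepsilon,\zeta$. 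Combined with Lemma \ref{lem:usefulapriori}\ref{eq:unifobd1}, the whole family is uniformly bounded in $\sobo^{1,2}(0,T;\lebe^{2}(\Omega;\R^{2}))$. Hence, along the nested subsequences from Corollary \ref{cor:compa1} (possibly after passing to further subsequences by Banach-Alaoglu), the time derivatives have weak $\lebe^{2}(\Omega_{T};\R^{2})$-limits. By testing against $\hold_{c}^{\infty}(\Omega_{T};\R^{2})$ and using the strong $\lebe^{1}(0,T;\lebe^{r}(\Omega;\R^{2}))$-convergences in Corollary \ref{cor:compa1}\ref{eq:compa3},\ref{eq:compa7},\ref{eq:compa11}, one identifies each such weak limit with the distributional derivative $\partial_{t}$ of the corresponding limit function. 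Iterating yields $\partial_{t}\uu\in\lebe^{2}(\Omega_{T};\R^{2})$.

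Next, the continuous 1D Sobolev embedding $\sobo^{1,2}(0,T;H)\hookrightarrow\hold([0,T];H)$ applied to the Hilbert space $H=\lebe^{2}(\Omega;\R^{2})$ gives $\uu\in\hold([0,T];\lebe^{2}(\Omega;\R^{2}))$, so $\uu(0,\cdot)$ is a well-defined element of $\lebe^{2}(\Omega;\R^{2})$. To identify it with $\uu_{0}$, I would use that the trace map $\sobo^{1,2}(0,T;\lebe^{2}(\Omega;\R^{2}))\to\lebe^{2}(\Omega;\R^{2})$, $\mathbf{w}\mapsto\mathbf{w}(0,\cdot)$, is bounded linear and hence weak-to-weak continuous. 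Therefore, along the selected subsequences, $\uu_{\delta,\varepsilon}^{\zeta}(0,\cdot)\rightharpoonup\uu(0,\cdot)$ weakly in $\lebe^{2}(\Omega;\R^{2})$. But the left-hand side equals $\uu_{0}^{\zeta}$ for every $\delta,\varepsilon>0$, and the standard mollification theory together with the embedding $\bd_{c}(\Omega)\hookrightarrow\lebe^{2}(\Omega;\R^{2})$ (cf.\ Lemma \ref{lem:poincaresobolev}\ref{item:PoincareSobolev1} in $n=2$) yields $\uu_{0}^{\zeta_{k}}\to\uu_{0}$ strongly in $\lebe^{2}(\Omega;\R^{2})$. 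By uniqueness of weak limits, $\uu(0,\cdot)=\uu_{0}$ $\mathscr{L}^{2}$-a.e. in $\Omega$; the attainment in the Lebesgue sense of Definition \ref{def:varsol1}\ref{item:Hiblermain2} is then immediate from $\hold([0,T];\lebe^{2}(\Omega;\R^{2}))$-continuity.

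The substantive work has already been carried out in Proposition \ref{prop:timebounds}; the remaining obstacle is purely bookkeeping, namely to ensure that the three successive extraction procedures are compatible with the upgraded weak $\lebe^{2}(\Omega_{T};\R^{2})$-convergence of the time derivatives at each stage. Since the uniform bound of Proposition \ref{prop:timebounds} is insensitive to the order in which $\delta,\varepsilon,\zeta$ are sent to zero, one simply extracts at each level a further subsequence realising the $\lebe^{2}(\Omega_{T};\R^{2})$-weak convergence and identifies the limit with $\partial_{t}\uu_{\varepsilon}^{\zeta}$, $\partial_{t}\uu^{\zeta}$, respectively $\partial_{t}\uu$, as indicated above. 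No further nonlinear analysis is required.
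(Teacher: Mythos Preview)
Your proposal is correct and follows essentially the same route as the paper: use the uniform $\lebe^{2}(\Omega_{T};\R^{2})$-bound on $(\partial_{t}\uu_{\delta,\varepsilon}^{\zeta})$ from Proposition~\ref{prop:timebounds} to upgrade the weak*-convergences in $\lebe^{2}(0,T;\sobo^{-1,2})$ to weak convergence in $\lebe^{2}(\Omega_{T};\R^{2})$ at each of the three levels, identify the limits as distributional time derivatives, and conclude $\uu\in\sobo^{1,2}(0,T;\lebe^{2}(\Omega;\R^{2}))\hookrightarrow\hold([0,T];\lebe^{2}(\Omega;\R^{2}))$. The only cosmetic difference is in the initial-value identification: you invoke the weak-to-weak continuity of the bounded linear trace $\mathbf{w}\mapsto\mathbf{w}(0,\cdot)$ on $\sobo^{1,2}(0,T;\lebe^{2})$, whereas the paper writes this out explicitly by testing against $\bm{\varphi}\in\hold^{\infty}([0,T]\times\Omega)$ with $\bm{\varphi}(0,\cdot)=\bm{\psi}$ and $\bm{\varphi}(T,\cdot)=0$ and integrating by parts in time---the two are equivalent.
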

\begin{proof} 
By Corollary \ref{cor:compa1}, we have  
\begin{align}\label{eq:timeweak1}
\uu_{\delta_{j},\varepsilon}^{\zeta} \rightharpoonup \uu_{\varepsilon}^{\zeta},\;\;\;\uu_{\varepsilon_{j}}^{\zeta} \rightharpoonup \uu^{\zeta}\;\;\;\text{and}\;\;\;\uu^{\zeta_{j}} \rightharpoonup \uu\qquad\text{in}\;\lebe^{2}(0,T;\lebe^{2}(\Omega;\R^{2})).
\end{align}
By Proposition \ref{prop:timebounds} and the lower semicontinuity of norms with respect to weak convergence, $(\partial_{t}\uu_{\delta,\varepsilon}^{\zeta})$ and so $(\partial_{t}\uu_{\varepsilon}^{\zeta})$ and $(\partial_{t}\uu^{\zeta})$ are uniformly bounded in $\lebe^{2}(\Omega_{T};\R^{2})$ with respect to the underlying parameters $\delta,\varepsilon$ and $\zeta$. In particular, for all admissible $\varepsilon,\zeta>0$ (see \eqref{eq:allchoose}), there exists $\vv_{\varepsilon}^{\zeta}\in\lebe^{2}(\Omega_{T};\R^{2})$ such that 
\begin{align}\label{eq:timeweak2}
\partial_{t}\uu_{\delta_{j},\varepsilon}^{\zeta}\rightharpoonup\vv_{\varepsilon}^{\zeta}\qquad\text{in}\;\lebe^{2}(\Omega_{T};\R^{2}). 
\end{align}
This implies that $\uu_{\varepsilon}^{\zeta}\in \sobo^{1,2}(0,T;\lebe^{2}(\Omega;\R^{2}))$ together with $\partial_{t}\uu_{\varepsilon}^{\zeta}=\vv_{\varepsilon}^{\zeta}$. Indeed, let $\bm{\varphi}\in\hold_{c}^{\infty}(0,T;\lebe^{2}(\Omega;\R^{2}))$. Then 
\begin{align*}
\int_{0}^{T}\langle \vv_{\varepsilon}^{\zeta},\bm{\varphi}\rangle_{\lebe^{2}(\Omega)}\dif t & = \lim_{j\to\infty} \int_{0}^{T}\langle\partial_{t}\uu_{\delta_{j},\varepsilon}^{\zeta},\bm{\varphi}\rangle_{\lebe^{2}(\Omega)}\dif t = -\lim_{j\to\infty}\int_{0}^{T}\langle\uu_{\delta_{j},\varepsilon}^{\zeta},\partial_{t}\bm{\varphi}\rangle_{\lebe^{2}(\Omega)}\dif t \\ 
& \!\!\!\! \stackrel{\eqref{eq:timeweak1}}{=} -\int_{0}^{T}\langle\uu_{\varepsilon}^{\zeta},\partial_{t}\bm{\varphi}\rangle_{\lebe^{2}(\Omega)}\dif t, 
\end{align*}
from where we deduce that $\uu_{\varepsilon}^{\zeta}\in\sobo^{1,2}(0,T;\lebe^{2}(\Omega;\R^{2}))\hookrightarrow \hold([0,T];\lebe^{2}(\Omega;\R^{2}))$ and $\partial_{t}\uu_{\varepsilon}^{\zeta}=\vv_{\varepsilon}^{\zeta}$. By \eqref{eq:timeweak1}, we also have the corresponding convergences for $(\partial_{t}\uu_{\varepsilon_{j}}^{\zeta})$ and $(\partial_{t}\uu^{\zeta_{j}})$ with the natural modifications. In particular, we infer that $\uu\in\sobo^{1,2}(0,T;\lebe^{2}(\Omega;\R^{2}))$. Since this space embeds into $\hold([0,T];\lebe^{2}(\Omega;\R^{2}))$, $\uu(0,\cdot)$ is unambiguously defined in $\lebe^{2}(\Omega;\R^{2})$. 

Now let $\bm{\psi}\in\hold_{c}^{\infty}(\Omega;\R^{2})$ be arbitrary and choose $\bm{\varphi}\in\hold^{\infty}([0,T]\times\Omega)$ such that $\bm{\varphi}(0,\cdot)=\bm{\psi}$ and $\bm{\varphi}(T,\cdot)=0$. Using that $\uu_{\delta_{j},\varepsilon}^{\zeta}(0,\cdot)=\uu_{0}^{\zeta}$ for all $\varepsilon>0$ and all $j\in\mathbb{N}$, we compute 
\begin{align}\label{eq:mrmonk}
\begin{split}
- \langle \uu_{0}^{\zeta},\bm{\psi}\rangle_{\lebe^{2}(\Omega)} & = \int_{0}^{T}\langle \mathbf{u}_{\delta_{j},\varepsilon}^{\zeta},\dot{\bm{\varphi}}\rangle_{\lebe^{2}(\Omega)} \dif t + \int_{0}^{T}\langle\dot{\uu}_{\delta_{j},\varepsilon}^{\zeta},\bm{\varphi}\rangle_{\lebe^{2}(\Omega)}\dif t \\ 
& \!\!\!\!\!\!\!\!\!\!\stackrel{\eqref{eq:timeweak1},\eqref{eq:timeweak2}}{\longrightarrow}  \int_{0}^{T}\langle\uu_{\varepsilon}^{\zeta},\dot{\bm{\varphi}}\rangle_{\lebe^{2}(\Omega)}\dif t + \int_{0}^{T}\langle\dot{\uu}_{\varepsilon}^{\zeta},\bm{\varphi}\rangle_{\lebe^{2}(\Omega)}\dif t = - \langle\uu_{\varepsilon}^{\zeta}(0,\cdot),\bm{\psi}\rangle_{\lebe^{2}(\Omega)}
\end{split}
\end{align}
as $j\to\infty$. By arbitrariness of $\bm{\psi}$, we conclude that $\uu_{\varepsilon}^{\zeta}(0,\cdot)=\uu_{0}^{\zeta}$ $\mathscr{L}^{2}$-a.e. in $\Omega$ for all admissible $\varepsilon,\zeta>0$. Using the second weak convergence displayed in \eqref{eq:timeweak1}, we similarly conclude that $\uu^{\zeta}(0,\cdot)=\uu_{0}^{\zeta}$. Finally, we have $\uu_{0}^{\zeta}\to\uu_{0}$ strongly in $\lebe^{2}(\Omega;\R^{2})$ as $\zeta\searrow 0$. In consequence, we may conclude as above to arrive at 
\begin{align*}
-\langle\uu_{0},\psi\rangle_{\lebe^{2}(\Omega)} & = - \lim_{j\to\infty} \langle \uu_{0}^{\zeta_{j}},\bm{\psi}\rangle_{\lebe^{2}(\Omega)}  = \lim_{j\to\infty}\Big(\int_{0}^{T}\langle \uu^{\zeta_{j}},\dot{\bm{\varphi}}\rangle_{\lebe^{2}(\Omega)}\dif t + \int_{0}^{T}\langle\dot{\uu}^{\zeta_{j}},\bm{\varphi}\rangle_{\lebe^{2}(\Omega)}\dif t\Big) \\ 
& = \int_{0}^{T}\langle\uu,\dot{\bm{\varphi}}\rangle_{\lebe^{2}(\Omega)}\dif t + \int_{0}^{T}\langle\dot{\uu},\bm{\varphi}\rangle_{\lebe^{2}(\Omega)}\dif t = - \langle \uu(0,\cdot),\bm{\psi}\rangle_{\lebe^{2}(\Omega)}, 
\end{align*}
since we already know at this stage that $\uu\in\sobo^{1,2}(0,T;\lebe^{2}(\Omega;\R^{2}))$. Therefore, $\uu_{0}=\uu(0,\cdot)$ $\mathscr{L}^{2}$-a.e. in $\Omega$. Because of $\uu\in\hold([0,T];\lebe^{2}(\Omega;\R^{2}))$, this particularly implies the attainment of initial values in the sense of Definition \ref{def:varsol1}\ref{item:Hiblermain2}. The proof is complete. 
\end{proof} 
\subsection{The evolutionary system for measures}
Based on the existence of variational solutions to the momentum balance equation, we now derive the associated evolution \emph{equation}. This, in turn, is strongly inspired by the stationary full gradient scenarios considered by Anzellotti \cite{Anz1}. More precisely, we have the following theorem: 
\begin{theorem}[Relaxed evolution equation]\label{thm:relaxedevoleq}
In the situation of Definition \ref{def:varsol1} and Theorem \ref{thm:main}, suppose that $F,F^{\infty}$ are differentiable in $\RR\setminus\{0\}$, and let $\uu\in\mathrm{L}_{\mathrm{w}^{*}}^{1}(0,T;\bd(\Omega))\cap\sobo^{1,2}(0,T;\lebe^{2}(\Omega;\R^{2}))$ be a variational solution of the momentum balance equation. We define the space $\mathrm{Adm}(\mathbf{u})$ of  \emph{admissible test maps} as the collection of all $\bm{\varphi}\in\lebe_{\mathrm{w}^{*}}^{1}(0,T;\bd(\Omega))\cap\sobo^{1,2}(0,T;\lebe^{2}(\Omega;\R^{2}))$ such that the following hold for $\mathscr{L}^{1}$-a.e. $0<t<T$:  
\begin{enumerate}
\item\label{item:EL1} $|\mathbb{T}^{s}\bm{\varphi}(t,\cdot)|\ll |\mathbb{T}^{s}\uu(t,\cdot)|$, 
\item\label{item:EL2} $\mathscr{T}\bm{\varphi}(t,\cdot)=0$ $\mathscr{L}^{2}$-a.e. in $\{x\in\Omega\colon\; \mathscr{T}\uu(t,\cdot)=0\}$, and 
\item\label{item:EL3} $\mathrm{tr}_{\partial\Omega}(\bm{\varphi}(t,\cdot))=0$ $\mathscr{H}^{1}$-a.e. in $\{x\in\partial\Omega\colon\;\mathrm{tr}_{\partial\Omega}(\uu(t,x))=0\}$. 
\end{enumerate}
Then $\uu$ satisfies the \emph{relaxed evolution equation} 
\begin{align}
\int_{\Omega_{T}}(\partial_{t}\uu)\cdot\bm{\varphi}\dif\,(t,x) & + \int_{\Omega_{T}}F'(\mathscr{T}\uu)\cdot\mathbb{T}\bm{\varphi}\dif\,(t,x) \notag \\ & + \int_{0}^{T}\int_{\Omega}(F^{\infty})'\Big(\frac{\dif\mathbb{T}^{s}\uu(t,\cdot)}{\dif|\mathbb{T}^{s}\uu(t,\cdot)|}\Big)\cdot\frac{\dif\mathbb{T}^{s}\bm{\varphi}(t,\cdot)}{\dif|\mathbb{T}^{s}\bm{\varphi}(t,\cdot)|}\dif|\mathbb{T}^{s}\bm{\varphi}(t,\cdot)|\dif t \label{eq:hiblerPDE}\\ 
& - \int_{0}^{T}\int_{\partial\Omega}(F^{\infty})'(-\mathrm{tr}_{\partial\Omega}(\uu(t,\cdot)\otimes_{\mathbb{T}}\nu_{\partial\Omega}))\cdot\mathrm{tr}_{\partial\Omega}(\bm{\varphi}(t,\cdot))\dif\mathscr{H}^{1}\dif t \notag \\ 
& = \int_{\Omega_{T}}\tocean(\uu)\cdot\bm{\varphi}\dif\,(t,x) + \int_{\Omega_{T}}\mathbf{f}\cdot\bm{\varphi}\dif\,(t,x)\;\;\;\text{for all}\;\bm{\varphi}\in\mathrm{Adm}(\mathbf{u}).\notag
\end{align}
\end{theorem}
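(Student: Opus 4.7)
The plan is to derive \eqref{eq:hiblerPDE} from the evolutionary variational inequality \eqref{eq:varsolmain} via a perturbation argument. Fix $\bm{\varphi}\in\mathrm{Adm}(\uu)$ and insert $\vv = \uu\pm\lambda\bm{\varphi}$ into \eqref{eq:varsolmain}; these are admissible test maps because $\uu,\bm{\varphi}\in\lebe_{\mathrm{w}^{*}}^{1}(0,T;\bd(\Omega))\cap\sobo^{1,2}(0,T;\lebe^{2}(\Omega;\R^{2}))$ by Theorem \ref{thm:main} and Corollary \ref{cor:timeregularity}. Using $\uu(0,\cdot)=\uu_0$ from Corollary \ref{cor:timeregularity}, the $\frac{\lambda^2}{2}$-boundary $\lebe^2$-term on the right precisely cancels the $\lambda^2$-contribution coming from $(\partial_t\vv)\cdot(\vv-\uu) = \lambda(\partial_t\uu)\cdot\bm{\varphi} + \frac{\lambda^2}{2}\partial_t|\bm{\varphi}|^2$ on the left. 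Dividing by $\lambda>0$ and sending $\lambda\searrow 0$, taking $s=T$ along any Lebesgue-point sequence $s_n\nearrow T$ of the EVI, \eqref{eq:varsolmain} becomes
\begin{align*}
\pm\int_{\Omega_T}\bigl(\partial_t\uu - \mathbf{f} - \tocean(\uu)\bigr)\cdot\bm{\varphi}\dif(t,x) + D^+_{\pm\bm{\varphi}}\mathscr{G}[\uu]\geq 0,
\end{align*}
where $\mathscr{G}[\ww]\coloneqq\int_0^T\mathscr{F}_0^*[\ww(t,\cdot);\Omega]\dif t$ and $D^+_{\pm\bm{\varphi}}$ is the upper one-sided directional derivative. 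Adding both gives $D^+_{\bm{\varphi}}\mathscr{G}[\uu]+D^+_{-\bm{\varphi}}\mathscr{G}[\uu]\geq 0$, which by convexity of $\mathscr{G}$ is also $\leq 0$; hence $\mathscr{G}$ is G\^{a}teaux differentiable at $\uu$ along $\bm{\varphi}$ and both inequalities collapse to the equation, \emph{provided} the derivative equals the claimed expression.

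The key step is therefore to compute $D_{\bm{\varphi}}\mathscr{G}[\uu]$ under \ref{item:EL1}--\ref{item:EL3}. Condition \ref{item:EL1} yields a density $h\coloneqq\frac{\dif\mathbb{T}^s\bm{\varphi}}{\dif|\mathbb{T}^s\uu|}\in\lebe^1(|\mathbb{T}^s\uu|)$, and writing $g_\uu\coloneqq\frac{\dif\mathbb{T}^s\uu}{\dif|\mathbb{T}^s\uu|}$ (with $|g_\uu|=1$ $|\mathbb{T}^s\uu|$-a.e.), the $1$-homogeneity of $F^\infty$ rewrites the singular part of $\mathscr{F}_0^*[\uu+\lambda\bm{\varphi}]$ as $\int_\Omega F^\infty(g_\uu+\lambda h)\dif|\mathbb{T}^s\uu|$. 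Differentiability of $F^\infty$ on $\RR\setminus\{0\}$ combined with $|g_\uu|=1$ and the Lipschitz continuity from Lemma \ref{lem:subdif}\ref{item:RobertDenk} gives the pointwise derivative $(F^\infty)'(g_\uu)\cdot h$ with majorant $L|h|$, whence dominated convergence produces the third integrand of \eqref{eq:hiblerPDE}. The bulk integral $\int_\Omega F(\mathscr{T}\uu+\lambda\mathscr{T}\bm{\varphi})\dif x$ is handled by pointwise differentiation on $\{\mathscr{T}\uu\neq 0\}$ (where $F\in\hold^1$) and vanishes trivially on the complement by \ref{item:EL2}, yielding $\int_\Omega F'(\mathscr{T}\uu)\cdot\mathscr{T}\bm{\varphi}\dif x$. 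The boundary penalization is treated in exactly the same spirit via \ref{item:EL3}, producing the $(F^\infty)'$-trace integral of \eqref{eq:hiblerPDE}. Assembling all three pieces via Fubini (valid since the Radon--Nikod\'{y}m decomposition of $\mathbb{T}\uu(t,\cdot)$ holds for $\mathscr{L}^1$-a.e.\ $t$ by $\uu\in\lebe_{\mathrm{w}^{*}}^{1}(0,T;\bd(\Omega))$) identifies $D_{\bm{\varphi}}\mathscr{G}[\uu]$ with the energetic part of \eqref{eq:hiblerPDE}.

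The main technical obstacle is the dominated-convergence step for the singular part: one must secure pointwise differentiability at $|\mathbb{T}^s\uu|$-a.e.\ point despite $h$ being unbounded. This is handled by noting that, for $|\lambda|$ small, $|g_\uu+\lambda h|\geq\tfrac{1}{2}$ $|\mathbb{T}^s\uu|$-a.e.\ on $\{|h|\leq\tfrac{1}{2|\lambda|}\}$, a set whose complement has vanishing $|\mathbb{T}^s\uu|$-measure as $\lambda\searrow 0$ by $h\in\lebe^1(|\mathbb{T}^s\uu|)$; the Lipschitz bound $\lambda^{-1}|F^\infty(g_\uu+\lambda h)-F^\infty(g_\uu)|\leq L|h|$ then supplies a $\lebe^1(|\mathbb{T}^s\uu|)$-majorant uniformly in $\lambda$. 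The other admissibility conditions play the analogous role of suppressing singularities of $F$ and $F^\infty$ at the origin in the bulk and boundary integrals. Together with the convexity-driven passage from one-sided inequalities to genuine G\^{a}teaux differentiability this yields \eqref{eq:hiblerPDE} for every $\bm{\varphi}\in\mathrm{Adm}(\uu)$.
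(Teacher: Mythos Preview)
Your proposal is correct and follows essentially the same route as the paper: test \eqref{eq:varsolmain} with $\vv=\uu\pm\lambda\bm{\varphi}$, divide by $\lambda$, compute the directional derivative of the relaxed energy term by splitting into bulk, singular, and boundary pieces (where conditions \ref{item:EL1}--\ref{item:EL3} respectively suppress the non-differentiability of $F$ and $F^{\infty}$ at the origin), and combine the two resulting one-sided inequalities into an equality.

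One remark: your sentence ``which by convexity of $\mathscr{G}$ is also $\leq 0$'' has the sign backwards---convexity gives $D^{+}_{\bm{\varphi}}\mathscr{G}[\uu]+D^{+}_{-\bm{\varphi}}\mathscr{G}[\uu]\geq 0$, not $\leq 0$, so convexity alone cannot force G\^{a}teaux differentiability here. This slip is harmless for your argument, however, because your subsequent \emph{direct} computation of the limit (via the Lipschitz majorant $L|h|$ and pointwise differentiability at $g_{\uu}$ with $|g_{\uu}|=1$) already establishes that the one-sided derivatives exist as genuine limits and satisfy $D_{-\bm{\varphi}}\mathscr{G}[\uu]=-D_{\bm{\varphi}}\mathscr{G}[\uu]$; this is exactly what the paper does, and the two inequalities then collapse to \eqref{eq:hiblerPDE} without any appeal to abstract convexity. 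Your extra care about the set $\{|h|\leq\tfrac{1}{2|\lambda|}\}$ is also unnecessary (pointwise convergence of the difference quotient only needs differentiability at the base point $g_{\uu}$), but it does no harm.
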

\begin{proof}
Let $\bm{\varphi}\in\mathrm{Adm}(\uu)$ and let $\theta>0$ be arbitrary. Then the functions $\vv_{\theta}^{\pm}\coloneqq \uu \pm\theta\bm{\varphi}$ are admissible in \eqref{eq:varsolmain}. Dividing the resulting inequality by $\theta$, we obtain 
\begin{align}
\pm\int_{0}^{s}\int_{\Omega}\partial_{t}(\mathbf{u}\pm\theta\bm{\varphi})\cdot\bm{\varphi}\dif x \dif t & + \int_{0}^{s}\frac{1}{\theta}(\mathscr{F}_{0}^{*}[(\uu\pm\theta\bm{\varphi})(t,\cdot);\Omega]-\mathscr{F}_{0}^{*}[\uu(t,\cdot);\Omega])\dif t \notag \\ & \mp   \int_{0}^{s}\int_{\Omega}\mathbf{f}\cdot\bm{\varphi}\dif x \dif t \notag \\ 
& \geq \pm \int_{0}^{s}\int_{\Omega}\tocean(\uu)\cdot\bm{\varphi}\dif x \dif t + \frac{\theta}{2}\int_{\Omega}\Big(|\bm{\varphi}(s)|^{2} - |\bm{\varphi}(0)|^{2}\Big)\dif x \label{eq:randalldisher}\\ 
& \Longleftrightarrow: \mathrm{I}_{38} + \mathrm{I}_{39} + \mathrm{I}_{40} \geq \mathrm{I}_{41}+\mathrm{I}_{42}\notag
\end{align}
for $\mathscr{L}^{1}$-a.e. $0<s<T$. By our assumptions, 
\begin{align}\label{eq:stottlemeyer}
\mathrm{I}_{38} \stackrel{\theta\searrow 0}{\longrightarrow} \int_{0}^{s}\int_{\Omega}(\partial_{t}\uu)\cdot\bm{\varphi}\dif x \dif t\;\;\;\text{and}\;\;\; \mathrm{I}_{42}\stackrel{\theta\searrow 0}{\longrightarrow} 0.
\end{align}
In order to interchange limits and time integrals, we first note that $F$ being Lipschitz yields 
\begin{align}\label{eq:timederivativemajorant}
\left\vert \underbrace{\frac{1}{\theta}(\mathscr{F}_{0}^{*}[(\uu\pm\theta\bm{\varphi})(t,\cdot);\Omega]-\mathscr{F}_{0}^{*}[\uu(t,\cdot);\Omega])}_{\eqqcolon \mathrm{II}(t)}\right\vert \leq L\, \|\bm{\varphi}(t,\cdot)\|_{\bd(\Omega)} 
\end{align}
for $\mathscr{L}^{1}$-a.e. $0<t<T$. For such $t$, we split 
\begin{align*}
\mathrm{II}(t) & = \int_{\Omega}\frac{1}{\theta}\big(F(\mathscr{T}\uu(t,x)\pm\theta\mathscr{T}\bm{\varphi}(t,x))-F(\mathscr{T}\uu(t,x))\big)\dif x \\ 
& + \frac{1}{\theta}\Big(\int_{\Omega}F^{\infty}\Big(\frac{\dif\,(\mathbb{T}^{s}\uu(t,\cdot)\pm\theta\mathbb{T}^{s}\bm{\varphi}(t,\cdot))}{\dif|\mathbb{T}^{s}(\uu(t,\cdot)\pm\theta\bm{\varphi}(t,\cdot))|} \Big)\dif |\mathbb{T}^{s}\uu(t,\cdot)\pm\theta\mathbb{T}^{s}\bm{\varphi}(t,\cdot)| \Big. \\ & \Big. \;\;\;\;\;\;\;\;\;\;\;\;\;\;\;\;\;\;\;\;\;\;\;\;\;\;\;\;\;\;\;\;\;\;\;\;\;\;\;\;\;\;\;\;\;\;\;\;- \int_{\Omega}F^{\infty}\Big(\frac{\dif\mathbb{T}^{s}\uu(t,\cdot)}{\dif|\mathbb{T}^{s}\uu(t,\cdot)|}\Big)\dif|\mathbb{T}^{s}\uu(t,\cdot)|\Big) \\ 
& + \frac{1}{\theta}\Big(\int_{\partial\Omega}F^{\infty}(-\mathrm{tr}_{\partial\Omega}(\uu(t,\cdot)\pm\theta\bm{\varphi}(t,\cdot))\otimes_{\mathbb{T}}\nu_{\partial\Omega})-F^{\infty}(-\mathrm{tr}_{\partial\Omega}(\uu(t,\cdot))\otimes_{\mathbb{T}}\nu_{\partial\Omega})\dif\mathscr{H}^{1} \Big) \\ 
& \eqqcolon \mathrm{II}_{1}(t) + \mathrm{II}_{2}(t) + \mathrm{II}_{3}(t). 
\end{align*}
For term $\mathrm{II}_{1}(t)$, we employ \ref{item:EL2}. Since $F$ is Lipschitz, $L|\mathscr{T}\bm{\varphi}(t,\cdot)|$ is an integrable majorant of the underlying integrand. We may thus interchange the limit as $\theta\searrow 0$ and the spatial integral. By \ref{item:EL2}, we may moreover focus on those point $x\in\Omega$ where $F$ is differentiable at $\mathscr{T}\uu(t,\cdot)$. Therefore, 
\begin{align*}
\lim_{\theta\searrow 0} \mathrm{II}_{1}(t) = \pm\int_{\Omega}F'(\mathscr{T}\uu(t,x))\cdot\mathscr{T}\bm{\varphi}(t,x)\dif x. 
\end{align*}
For term $\mathrm{II}_{2}(t)$, we note that 
\begin{align*}
|\mathbb{T}^{s}(\uu(t,\cdot)\pm\theta\bm{\varphi}(t,\cdot))| \leq |\mathbb{T}^{s}\uu(t,\cdot)|+|\mathbb{T}^{s}\bm{\varphi}(t,\cdot)|\stackrel{\ref{item:EL1}}{\ll}  |\mathbb{T}^{s}\uu(t,\cdot)|,
\end{align*}
and all of the Radon measures appearing in the preceding line are finite. Hence, in particular, they lead to $\sigma$-finite measure spaces. Thus, by the Lebesgue differentiation theorem for Radon measures, we find by the positive $1$-homogeneity of $F^{\infty}$ that 
\begin{align*}
\int_{\Omega}&F^{\infty}\Big(\frac{\dif\,(\mathbb{T}^{s}\uu(t,\cdot)\pm\theta\mathbb{T}^{s}\bm{\varphi}(t,\cdot))}{\dif|\mathbb{T}^{s}(\uu(t,\cdot)\pm\theta\bm{\varphi}(t,\cdot))|}\Big)\dif|\mathbb{T}^{s}\uu(t,\cdot)\pm\theta\mathbb{T}^{s}\bm{\varphi}(t,\cdot)|  \\ 
& = \int_{\Omega}F^{\infty}\Big(\frac{\dif\,(\mathbb{T}^{s}\uu(t,\cdot)\pm\theta\mathbb{T}^{s}\bm{\varphi}(t,\cdot))}{\dif|\mathbb{T}^{s}(\uu(t,\cdot)\pm\theta\bm{\varphi}(t,\cdot))|}\Big) \frac{\dif|\mathbb{T}^{s}\uu(t,\cdot)\pm\theta\mathbb{T}^{s}\bm{\varphi}(t,\cdot)|}{\dif|\mathbb{T}^{s}\uu(t,\cdot)|}\dif|\mathbb{T}^{s}\uu(t,\cdot)| \\ 
& = \int_{\Omega}F^{\infty}\Big(\frac{\dif\,(\mathbb{T}^{s}\uu(t,\cdot)\pm\theta\mathbb{T}^{s}\bm{\varphi}(t,\cdot))}{\dif|\mathbb{T}^{s}\uu(t,\cdot)|}\Big) \dif|\mathbb{T}^{s}\uu(t,\cdot)| \\ 
& = \int_{\Omega}F^{\infty}\Big(\frac{\dif\mathbb{T}^{s}\uu(t,\cdot)}{\dif|\mathbb{T}^{s}\uu(t,\cdot)|}\pm\theta\frac{\dif\mathbb{T}^{s}\bm{\varphi}(t,\cdot))}{\dif|\mathbb{T}^{s}\uu(t,\cdot)|}\Big) \dif|\mathbb{T}^{s}\uu(t,\cdot)|. 
\end{align*}
Next note that, again because $F^{\infty}$ is Lipschitz (see Lemma \ref{lem:subdif}\ref{item:RobertDenk}), we have  
\begin{align}\label{eq:ruskin}
\left\vert\frac{1}{\theta}\Big( F^{\infty}\Big(\frac{\dif\mathbb{T}^{s}\uu(t,\cdot)}{\dif|\mathbb{T}^{s}\uu(t,\cdot)|}\pm\theta\frac{\dif\mathbb{T}^{s}\bm{\varphi}(t,\cdot))}{\dif|\mathbb{T}^{s}\uu(t,\cdot)|}\Big)- F^{\infty}\Big(\frac{\dif\mathbb{T}^{s}\uu(t,\cdot)}{\dif|\mathbb{T}^{s}\uu(t,\cdot)|}\Big)\Big)\right\vert \leq L \frac{\dif|\mathbb{T}^{s}\bm{\varphi}(t,\cdot)|}{\dif|\mathbb{T}^{s}\uu(t,\cdot)|}
\end{align}
$|\mathbb{T}^{s}\uu(t,\cdot)|$-a.e. in $\Omega$, and since 
\begin{align*}
\int_{\Omega}\frac{\dif|\mathbb{T}^{s}\bm{\varphi}(t,\cdot)|}{\dif|\mathbb{T}^{s}\uu(t,\cdot)|}\dif|\mathbb{T}^{s}\uu(t,\cdot)| = |\mathbb{T}^{s}\bm{\varphi}(t,\cdot)|(\Omega), 
\end{align*}
the right-hand side of \eqref{eq:ruskin} is an integrable majorant of its left-hand side. By the polar decomposition theorem for finite Radon measures, we have 
\begin{align*}
\left\vert \frac{\dif\mathbb{T}^{s}\uu(t,\cdot)}{\dif|\mathbb{T}^{s}\uu(t,\cdot)|}\right\vert = 1\;\;\;|\mathbb{T}^{s}\uu(t,\cdot)|\text{-a.e. in $\Omega$}.
\end{align*}
By our assumptions, $F^{\infty}$ is differentiable at $\frac{\dif\mathbb{T}^{s}\uu(t,\cdot)}{\dif|\mathbb{T}^{s}\uu(t,\cdot)|}(x)$ at $|\mathbb{T}^{s}\uu(t,\cdot)|$-a.e. $x\in\Omega$. In conclusion, we arrive at 
\begin{align}\label{eq:ruskin1}
\begin{split}
\lim_{\theta\searrow 0} \mathrm{II}_{2}(t) & = \pm \int_{\Omega}(F^{\infty})'\Big(\frac{\dif\mathbb{T}^{s}\uu(t,\cdot)}{\dif|\mathbb{T}^{s}\uu(t,\cdot)|}\Big)\cdot\frac{\dif\mathbb{T}^{s}\bm{\varphi}(t,\cdot)}{\dif|\mathbb{T}^{s}\uu(t,\cdot)|}\dif|\mathbb{T}^{s}\uu(t,\cdot)| \\ 
& = \pm \int_{\Omega}(F^{\infty})'\Big(\frac{\dif\mathbb{T}^{s}\uu(t,\cdot)}{\dif|\mathbb{T}^{s}\uu(t,\cdot)|}\Big)\cdot\frac{\dif\mathbb{T}^{s}\bm{\varphi}(t,\cdot)}{\dif|\mathbb{T}^{s}\bm{\varphi}(t,\cdot)|}\dif|\mathbb{T}^{s}\bm{\varphi}(t,\cdot)|. 
\end{split}
\end{align}
By analogous means, we may employ \ref{item:EL3} to find that 
\begin{align}\label{eq:ruskin2}
\lim_{\theta\searrow 0}\mathrm{II}_{3} = \mp \int_{\partial\Omega}(F^{\infty})'(-\mathrm{tr}_{\partial\Omega}(\uu(t,\cdot))\otimes_{\mathbb{T}}\nu_{\partial\Omega}))\cdot\mathrm{tr}_{\partial\Omega}(\bm{\varphi}(t,\cdot))\dif\mathscr{H}^{1}.
\end{align}
Based on \eqref{eq:stottlemeyer}--\eqref{eq:ruskin2}, we may pass to the limit $\theta\searrow 0$ in \eqref{eq:randalldisher}. This gives us a variational inequality with the corresponding '$\pm$' signs. Keeping the inequality corresponding to '$+$' and multiplying the inequality corresponding to '$-$' by $(-1)$, we arrive at \eqref{eq:hiblerPDE}. The proof is complete. 
\end{proof}
We conclude the present subsection with two remarks.
\begin{remark}
If $F\colon\RR\to\R$ is of class $\hold^{1}(\RR)$ and the solution moreover satisfies $\uu\in\lebe^{1}(0,T;\ld_{0}(\Omega))\cap\sobo^{1,2}(0,T;\lebe^{2}(\Omega;\R^{2})$, conditions \ref{item:EL1}--\ref{item:EL3} are not required. In this case, \eqref{eq:hiblerPDE} becomes 
\begin{align*}
\int_{\Omega_{T}}(\partial_{t}\uu)\cdot\bm{\varphi}\dif\,(t,x) + \int_{\Omega_{T}}F'(\mathbb{T}\uu)\cdot\mathbb{T}\bm{\varphi}\dif\,(t,x) = \int_{\Omega_{T}}\tocean(\uu)\cdot\bm{\varphi}\,\dif\,(t,x) + \int_{\Omega_{T}}\mathbf{f}\cdot\bm{\varphi}\,\dif\,(t,x)
\end{align*}
for all $\bm{\varphi}\in\lebe^{1}(0,T;\ld_{0}(\Omega))\cap\sobo^{1,2}(0,T;\lebe^{2}(\Omega;\R^{2}))$. The latter is precisely the \emph{natural} weak formulation of \eqref{eq:motiv} subject to the higher $\ld_{0}$-regularity assumption. In the present context, this however \emph{cannot} be ensured. It is in this way that the formulation from Theorem \ref{thm:relaxedevoleq} captures the potential spatial singularity of the horizontal velocity fields. In particular, \eqref{eq:hiblerPDE} can be understood as the natural relaxed version of a weak formulation, for which Theorem \ref{thm:main} establishes the existence of a solution. 
\end{remark}
\begin{remark}
If we only test \eqref{eq:varfinal} with $\bm{\varphi}\in\hold_{c}^{\infty}(\Omega_{T};\R^{2})$ towards a distributional formulation, then the Hibler singular part $\mathbb{T}^{s}\bm{\varphi}(t,\cdot)$ vanishes globally and $\mathrm{tr}_{\partial\Omega}(\uu\pm\theta\bm{\varphi})=\mathrm{tr}_{\partial\Omega}(\uu)|$. In particular, the terms from \eqref{eq:ruskin1} and \eqref{eq:ruskin2} vanish. If, for simplicity, $F$ is differentiable, we then may rewrite \eqref{eq:hiblerPDE} as 
\begin{align}\label{eq:evol}
\partial_{t}\uu - \mathbb{T}^{*}(F'(\mathscr{T}\uu)) = \mathbf{f} + \tocean(\uu)\qquad\text{in}\;\mathscr{D}'(\Omega_{T};\R^{2}). 
\end{align}
However, \eqref{eq:evol} does not really grasp the key properties of the Hibler system. This is due to the fact that it might happen that $\mathscr{T}\uu(t,\cdot)\neq \mathbb{T}\vv(t,\cdot)$ for any $0<t<T$ and all $\vv(t,\cdot)\in\bd(\Omega)$; this is analogous to Alberti's result for $\bv$, see \cite{Alberti}. In consequence, the term $F'(\mathscr{T}\uu)$ does not represent a proper stress term and so \eqref{eq:evol} \emph{cannot} be understood as a gradient flow equation for the Hibler-type energies. This is due to the fact that \eqref{eq:evol} ignores the singular parts. The underlying deficit is overcome by \eqref{eq:hiblerPDE}, which indeed admits a gradient flow interpretation. 
\end{remark}

\subsection{Generalizations to $\mathbb{C}$-elliptic operators.}
Several of the arguments as given above do not require the specific structure of the Hibler deformations or the specific dimension $n=2$. We thus conclude this section by giving an associated result for general $\mathbb{C}$-elliptic operators $\mathbb{A}$; here, we focus on scenarios where the underlying dimension does not play an important role.
\begin{proposition}\label{prop:Ageneral}
Let $\Omega\subset\R^{n}$ be open and bounded with Lipschitz boundary oriented by the outer unit normal $\nu_{\partial\Omega}\colon\partial\Omega\to\mathbb{S}^{n-1}$, and let $\mathbb{A}$ be a $\mathbb{C}$-elliptic differential operator of the form \eqref{eq:diffopform}. Moreover, let $F\colon W\to\R$ satisfy \emph{\ref{item:Fprop1}--\ref{item:Fprop3}} with the natural modifications and put, for $\vv\in\bv^{\mathbb{A}}(\Omega)$, 
\begin{align}\label{eq:Hieber1}
\begin{split}
\mathscr{F}_{0}^{*}[\vv;\Omega]  \coloneqq \int_{\Omega}F\Big(\frac{\dif\mathbb{A}^{a}\vv}{\dif\mathscr{L}^{n}}\Big)\dif x & + \int_{\Omega}F^{\infty}\Big(\frac{\dif\mathbb{A}^{s}\vv}{\dif|\mathbb{A}^{s}\vv|}\Big)\dif|\mathbb{A}^{s}\vv| \\ &  + \int_{\partial\Omega}F^{\infty}(-\mathrm{tr}_{\partial\Omega}(\vv)\otimes_{\mathbb{A}}\nu_{\partial\Omega})\dif\mathscr{H}^{n-1}, 
\end{split}
\end{align}
where $\mathbb{A}\vv = \mathbb{A}^{a}\vv + \mathbb{A}^{s}\vv$ is the Lebesgue-Radon-Nikod\'{y}m decomposition of $\mathbb{A}\vv$ with respect to $\mathscr{L}^{n}$. Then, for any $T>0$, any $\uu_{0}\in\bv^{\mathbb{A}}(\Omega)$ with compact support in $\Omega$ and any $\mathbf{f}\in\lebe^{2}(\Omega_{T};V)$, there exists a variational solution $\uu\in\lebe_{\mathrm{w}^{*}}^{1}(0,T;\bv^{\mathbb{A}}(\Omega))\cap\sobo^{1,2}(0,T;\lebe^{2}(\Omega;V))$ of the equation 
\begin{align*}
\begin{cases}
\partial_{t}\uu - \mathbb{A}^{*}(\partial F(\mathbb{A}\uu))=\mathbf{f}&\;\text{in}\;\Omega_{T}, \\ 
\uu(0,\cdot) = \uu_{0} &\;\text{in}\;\Omega, \\ 
\uu|_{\partial\Omega}=0&\;\text{on}\;(0,T)\times\partial\Omega. 
\end{cases}
\end{align*}
More precisely, we have $\uu(0,\cdot)=\uu_{0}$ $\mathscr{L}^{n}$-a.e. in $\Omega$ and, for $\mathscr{L}^{1}$-a.e. $0<s<T$, the \emph{evolutionary variational inequality}
\begin{align}\label{eq:varsolmainLAST}
\begin{split}
\int_{0}^{s}\int_{\Omega}(\partial_{t}\mathbf{v})(\mathbf{v}-\mathbf{u})\dif t\dif x & + \int_{0}^{s}\mathscr{F}_{0}^{*}[\mathbf{v}(t,\cdot);\Omega]\dif t  \geq \int_{0}^{s}\mathscr{F}_{0}^{*}[\mathbf{u}(t,\cdot);\Omega]\dif t \\ 
& + \int_{0}^{s}\int_{\Omega}\mathbf{f}\cdot(\vv-\uu)\dif x\dif t \\ 
& + \frac{1}{2}\int_{\Omega}\Big(|\mathbf{u}(s,\cdot)-\mathbf{v}(s,\cdot)|^{2} - |\mathbf{u}_{0}-\mathbf{v}(0,\cdot)|^{2}\Big) \dif x 
\end{split}
\end{align}
holds for every $\vv\in\lebe_{\mathrm{w}^{*}}^{1}(0,T;\bv^{\mathbb{A}}(\Omega))\cap\sobo^{1,2}(0,T;\lebe^{2}(\Omega;V))$.
\end{proposition}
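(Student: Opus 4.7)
The strategy is to carry out the three-layer approximation scheme of Section \ref{sec:existence} with the Hibler operator $\mathbb{T}$ replaced by the $\mathbb{C}$-elliptic operator $\mathbb{A}$, and to exploit that the toolbox developed for $\mathbb{T}$ -- Gauss--Green \eqref{eq:gaussgreen}, trace surjectivity, weak*-compactness, Reshetnyak lower semicontinuity and, most importantly, Corollary \ref{cor:Celliptic} on the bulk approximation of the boundary penalisation -- is already available for $\bv^{\mathbb{A}}(\Omega)$ (see Remark \ref{rem:GGapplication}). The simplification compared with Theorem \ref{thm:main} is that $\tocean(\uu)$ is absent, so all technicalities tied to its sublinear/H\"older structure disappear. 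The only real conceptual point to check is that none of the reductions in Sections \ref{sec:existence} and \ref{sec:initial} secretly used the dimension $n=2$ or the specific compatibility $|T[z^{\sym}]|\simeq|z^{\sym}|$.

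First I would fix approximation parameters $0<\zeta<d_0$, $0<\delta<\zeta^{2}$ and $0<\varepsilon<1$ exactly as in \eqref{eq:allchoose}, and construct $\uu_{0}^{\zeta}\in\hold_{c}^{\infty}(\Omega;V)$ by mollifying the trivial extension of $\uu_{0}$, obtaining the same set of bounds as in \eqref{eq:initialvalueapproximation} with $\mathbb{T}$ replaced by $\mathbb{A}$; the analog of the Young-type inequality \eqref{eq:YoungRough} holds verbatim since its proof in the text only used weak*-lower semicontinuity of the total $\mathbb{A}$-variation and Jensen. I then define $F_{\delta,\varepsilon}$ as in \eqref{eq:Fepsdelta}, set $H\coloneqq\lebe^{2}(\Omega;V)$ and let
\begin{align*}
\Phi[\vv]\coloneqq\begin{cases}\displaystyle\int_{\Omega}F_{\delta,\varepsilon}(\mathbb{A}\vv)\dif x,& \vv\in W_{0}^{\mathbb{A},2}(\Omega),\\ +\infty,&\text{else},\end{cases}
\end{align*}
where $W_{0}^{\mathbb{A},2}(\Omega)$ denotes the closure of $\hold_{c}^{\infty}(\Omega;V)$ in the natural $\mathbb{A}$-Sobolev norm. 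By $\mathbb{C}$-ellipticity the Korn-type inequality $\|\nabla\vv\|_{\lebe^{2}}\leq c\|\mathbb{A}\vv\|_{\lebe^{2}}$ holds on $W_{0}^{\mathbb{A},2}(\Omega)$, so $\Phi$ verifies conditions \ref{item:hikaru1}--\ref{item:hikaru3} of Proposition \ref{prop:arendt} with $g(\vv)\coloneqq\mathbf{f}$. This produces a unique weak solution
\begin{align*}
\uu_{\delta,\varepsilon}^{\zeta}\in\lebe^{2}(0,T;W_{0}^{\mathbb{A},2}(\Omega))\cap \sobo^{1,2}(0,T;\lebe^{2}(\Omega;V))
\end{align*}
of the regularised problem. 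Testing with $\uu_{\delta,\varepsilon}^{\zeta}$ and mimicking Lemma \ref{lem:usefulapriori} yields the uniform bounds in $\lebe^{\infty}(0,T;\lebe^{2})$, in $\lebe^{1}(0,T;\bv^{\mathbb{A}}(\Omega))$, the $\sqrt{\delta}$-bound in $\lebe^{2}(0,T;W_{0}^{\mathbb{A},2})$, and, via duality, in $\lebe^{2}(0,T;W^{\mathbb{A},2}(\Omega)^{*})$.

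Next I would derive the time-derivative estimate of Proposition \ref{prop:timebounds} verbatim, using the Brezis-type test function \eqref{eq:doakesvsdexter}; the argument only relies on convexity of $F_{\delta,\varepsilon}$, linearity of $\mathbb{A}$, and the Young-type bound for $\uu_{0}^{\zeta}$, so it delivers a uniform bound for $\partial_{t}\uu_{\delta,\varepsilon}^{\zeta}$ in $\lebe^{2}(\Omega_{T};V)$. Combined with Aubin--Lions (using the compact embedding $\bv^{\mathbb{A}}(\Omega)\hookrightarrow\hookrightarrow\lebe^{1}(\Omega;V)$ provided by $\mathbb{C}$-ellipticity), I extract, along sequences $\delta_{j},\varepsilon_{j},\zeta_{j}\searrow 0$, successive limits $\uu_{\varepsilon}^{\zeta},\uu^{\zeta},\uu$ with the same convergence properties as in Corollary \ref{cor:compa1}, and moreover $\uu\in\sobo^{1,2}(0,T;\lebe^{2}(\Omega;V))$ by the uniform $\partial_{t}$-bound. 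Testing \eqref{eq:weakform} (with $\tocean\equiv 0$) against $\vv-\uu_{\delta,\varepsilon}^{\zeta}$ gives the approximate evolutionary variational inequality \eqref{eq:approxineq}. The passage $\delta_{j}\searrow 0$ uses Reshetnyak lower semicontinuity (Lemma \ref{lem:resh}) applied to $F_{\varepsilon}$, extended to $\bv^{\mathbb{A}}$ as in Lemma \ref{lem:LSC} and Corollary \ref{cor:LSCplussbdryvalues}; the analogs of items \ref{item:term1}--\ref{item:term9} are straightforward since the ocean terms are absent.

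At this stage the delicate step is to admit test maps $\vv\in\lebe_{\mathrm{w}^{*}}^{1}(0,T;\bv^{\mathbb{A}}(\Omega))\cap\sobo^{1,2}(0,T;\lebe^{2}(\Omega;V))$ that do not vanish on $\partial\Omega$. This is where Corollary \ref{cor:Celliptic} enters: I reproduce the triple mollification \eqref{eq:approximationmain} with the $\mathbb{A}$-cut-offs $\eta_{d}$ of Corollary \ref{cor:Celliptic} in place of those from Theorem \ref{thm:bdryapprox}, and establish the exact analog of Lemma \ref{lem:keyapproximation}; the only ingredients used are convexity of $F_{\varepsilon}$, the bulk approximation of the boundary term, the Jensen-type inequality for measures (Lemma \ref{lem:jensen}), and the uniform bound supplied by Corollary \ref{cor:Celliptic}. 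This is the step I expect to be the genuine obstacle, since one has to verify that the proof of Theorem \ref{thm:bdryapprox} -- in particular the reduction to $\sobo^{1,1}$-maps via the Poincar\'{e}-type estimate of Lemma \ref{lem:GSTLemma} -- transfers to $\bv^{\mathbb{A}}$ without invoking the specific structure of $\mathbb{T}$; but the statement of Corollary \ref{cor:Celliptic} asserts precisely this transfer, so I can use it as a black box. Given this, the limits $\delta_{j},\varepsilon_{j},\zeta_{j}\searrow 0$ can be carried through as in items \ref{item:term10}--\ref{item:term23}, yielding \eqref{eq:varsolmainLAST}.

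Finally, the attainment of the initial value $\uu(0,\cdot)=\uu_{0}$ $\mathscr{L}^{n}$-a.e. follows exactly as in Corollary \ref{cor:timeregularity}: the uniform $\lebe^{2}$-bound on $\partial_{t}\uu_{\delta,\varepsilon}^{\zeta}$ secures that every limit lies in $\sobo^{1,2}(0,T;\lebe^{2}(\Omega;V))\hookrightarrow\hold([0,T];\lebe^{2}(\Omega;V))$, and an integration by parts argument against $\bm{\varphi}\in\hold^{\infty}([0,T]\times\Omega;V)$ with $\bm{\varphi}(T,\cdot)=0$ identifies the initial trace. Strong convergence $\uu_{0}^{\zeta_{j}}\to\uu_{0}$ in $\lebe^{2}$ (which uses $\uu_{0}\in\bv^{\mathbb{A}}_{c}(\Omega)$) closes the argument.
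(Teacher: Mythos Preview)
Your proposal is correct and follows essentially the same approach as the paper, which confines itself to a brief sketch: perform the viscosity stabilisation as in Section~\ref{sec:existence}, obtain the requisite compactness for $\bv^{\mathbb{A}}$ from \cite{GmRa}, and invoke Corollary~\ref{cor:Celliptic} for the critical limit passage identified in Remark~\ref{rem:bulk3}. Your write-up in fact expands the paper's sketch considerably, correctly isolating the absence of $\tocean$ as the main simplification and flagging the one genuinely nontrivial transfer (the bulk approximation of boundary terms), which the paper has already packaged as Corollary~\ref{cor:Celliptic}.
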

\begin{proof}
We confine ourselves to a sketch. As in Section \ref{sec:existence}, we may perform the viscosity stabilization as in \eqref{eq:motiv1}. The corresponding compactness assertions required for the singular limit $\varepsilon\searrow 0$ then follow from \cite{GmRa}. For the critical limit passage, see Remark \ref{rem:bulk3}, we use Corollary \ref{cor:Celliptic}. Then we may argue and conclude as above. 
\end{proof}

\section{Hibler-Anzellotti pairings and weak-variational solutions}\label{sec:weakvar}
Let $\Omega\subset\R^{2}$ be open and bounded with Lipschitz boundary. In the preceding sections, we worked subject to a constant mass hypothesis. This leads to the notion of solutions as displayed in Definition \ref{def:varsol1} and admits to study a perturbed gradient flow for the relaxed Hibler energy; see Section \ref{sec:main}. The constant mass assumption can be justified in various scenarios, e.g., when considering small time intervals. If we however aim to include variable masses, the concept of solution from Definition \ref{def:varsol1} has to be modified. The chief reason for this is that the variational structure of the leading gradient terms then is lost. 

To explain this point in detail, suppose that a mass function $m\colon \Omega_{T}\to [a,b]$ with some $0<a<b<\infty$ is given. Subject to the initial condition $\uu(0,\cdot)=\uu_{0}$ for some $\uu_{0}\in\bd_{c}(\Omega)$ and boundary conditions $\uu|_{\partial\Omega}=0$, we are  interested in the system 
\begin{align}\label{eq:hiblervariablemass}
m\partial_{t}\uu - \mathbb{T}^{*}\Big(\frac{\mathbb{T}\uu}{|\mathbb{T}\uu|} \Big) = \mathbf{f} + \tocean(\uu)\qquad\text{in}\;\Omega_{T}. 
\end{align}
Imposing a suitable smoothness assumption on $m$, \eqref{eq:hiblervariablemass} by $m$ leads to 
\begin{align}\label{eq:hiblervariablemassrewrite}
\partial_{t}\uu - \mathbb{T}^{*}\Big(\frac{1}{m}\frac{\mathbb{T}\uu}{|\mathbb{T}\uu|} \Big) & + \frac{\mathbb{T}\uu}{|\mathbb{T}\uu|}\otimes_{\mathbb{T}^{*}}\nabla\frac{1}{m}= \frac{1}{m}\mathbf{f} + \frac{1}{m}\tocean(\uu)\qquad\text{in}\;\Omega_{T}. 
\end{align}
We write \eqref{eq:hiblervariablemassrewrite} in this particular form since it then becomes obvious that the second term on the left-hand side of \eqref{eq:hiblervariablemassrewrite} is of variational structure; more precisely, this terms corresponds to the (non-relaxed) \emph{variable mass} Hibler energy 
\begin{align}\label{eq:variablemassHiblerenergy}
\mathscr{E}[\uu(t,\cdot);\Omega] \coloneqq \int_{\Omega}\frac{1}{m(t,\cdot)}\dif|\mathbb{T}\uu(t,\cdot)|.
\end{align}
The third term on the left-hand side of \eqref{eq:hiblervariablemassrewrite}, however, is not of variational form. Indeed, this is a pairing involving the Hibler stress and thus a nonlinear first order expression. Whereas the existence theory from Section \ref{sec:main} circumvents the issue of giving sense to the stress term  $\bm{\sigma}=\frac{\mathbb{T}\uu}{|\mathbb{T}\uu|}$ by proposing an evolutionary  variational reformulation amenable to relaxed energies,
the variable mass scenario \eqref{eq:hiblervariablemass}, \eqref{eq:hiblervariablemassrewrite} does not immediately allow for an analogous procedure. In the remainder of this subsection, we introduce a notion of solution that allows to give \eqref{eq:hiblervariablemass}, \eqref{eq:hiblervariablemassrewrite} a precise meaning. 

\emph{A priori} and conceptually slightly different from Section \ref{sec:main}, we now interpret the stress $\bsigma$ spatially as an $\lebe^{\infty}(\Omega;\RR)$-function; this viewpoint stems from convex analysis and even applies to relaxed formulations, see, e.g., \cite{BeckSchmidtConvex,Bildhauer}. Based on this convention, \eqref{eq:hiblervariablemassrewrite} becomes 
\begin{align}\label{eq:hiblervariablemassrewrite1}
\partial_{t}\uu - \mathbb{T}^{*}\Big(\frac{1}{m}\bsigma\Big) + \bsigma\otimes_{\mathbb{T}^{*}}\nabla\frac{1}{m} = \frac{1}{m}\mathbf{f} + \frac{1}{m}\tocean(\uu)\qquad\text{in}\;\Omega_{T}. 
\end{align}
In this formulation, we have circumvented the critical nonlinear expression $\frac{\mathbb{T}\uu}{|\mathbb{T}\uu|}$. For sufficiently regular maps $\uu$ and $\bsigma$, we may thus consider \eqref{eq:hiblervariablemassrewrite1} in the sense of $\R^{2}$-valued distributions on $\Omega_{T}$. This in itself, however, ignores the constitutive law linking the Hibler operator of the horizontal ice velocity to the stress. Hence, in view of \eqref{eq:hiblervariablemass}--\eqref{eq:hiblervariablemassrewrite}, we must relate $\bsigma$ and $\mathbb{T}\uu$. If $\uu$ were smooth,  $|\mathbb{T}\uu|>0$ and $\bsigma=\frac{\mathbb{T}\uu}{|\mathbb{T}\uu|}$, then \eqref{eq:variablemassHiblerenergy} suggests 
\begin{align}\label{eq:hiblerrelate}
\mathscr{E}[\uu(t,\cdot);\Omega] = \int_{\Omega}\frac{1}{m(t,\cdot)}\bm{\sigma}(t,\cdot)\cdot\mathbb{T}\uu(t,\cdot)\dif x. 
\end{align}
Similar to the discussion in Section \ref{sec:modified}, $\uu$ must be assumed to be of spatial $\bd$-regularity. Therefore, a relaxed variant of the identity \eqref{eq:hiblerrelate} which also takes into account boundary values, needs to firstly declare the relaxed version of the left-hand side of \eqref{eq:hiblerrelate}. The latter is non-trivial, since $\bsigma(t,\cdot)\in\lebe^{\infty}(\Omega;\RR)$ (with respect to $\mathscr{L}^{2})$ and $\mathbb{T}\uu(t,\cdot)$ is, in general, a Radon measure which potentially satisfies $\mathbb{T}\uu\bot\mathscr{L}^{2}$. In particular, it is a priori not clear how to understand the pairing of $\bm{\sigma}$ against $\mathbb{T}\uu$. Inspired by \cite{Anz,KinnunenScheven}, we thus pause to introduce what shall be referred to as \emph{Hibler-Anzellotti pairing}. To this end, we recall that $\mathbb{T}^{*}$ denotes the formal $\lebe^{2}$-adjoint of the Hibler deformation tensor $\mathbb{T}$, see \eqref{eq:formaladjoint}. 
\begin{definition}[Hibler-Anzellotti pairing]\label{def:hibleranzellotti} Let $\Omega\subset\R^{2}$ be open and bounded with Lipschitz boundary, and let $\widetilde{\Omega}\subset\R^{2}$ be open and bounded with $\Omega\Subset\widetilde{\Omega}$. Moreover, let $\bsigma\in\lebe^{\infty}(\Omega;\RR)$ be such that $\mathbb{T}^{*}\bsigma\in\lebe^{p}(\Omega;\R^{2})$ with $p\geq 2$. For $\uu\in\bd(\Omega)$, we introduce the \emph{Hibler-Anzellotti pairing} $[\bsigma\cdot\mathbb{T}\uu]_{0}$ as the distribution defined by 
\begin{align}\label{eq:hibleranzellottipairing}
\langle [\bsigma\cdot\mathbb{T}\uu]_{0},\varphi\rangle & \coloneqq - \int_{\Omega}\varphi\mathbf{u}\cdot\mathbb{T}^{*}\bsigma\dif x - \int_{\Omega}\uu\cdot (\bsigma\otimes_{\mathbb{T}^{*}}\nabla\varphi)\dif x,\qquad\varphi\in\hold_{c}^{\infty}(\widetilde{\Omega}). 
\end{align}
\end{definition}
Since $\bd(\Omega)\hookrightarrow\lebe^{2}(\Omega;\R^{2})$ by Lemma \ref{lem:poincaresobolev} and $\mathbb{T}^{*}\bm{\sigma}\in\lebe^{p}(\Omega;\R^{2})$ with $p\geq 2$, the right-hand side of \eqref{eq:hibleranzellottipairing} is well-defined. Let us note that, in order to grasp boundary effects, \eqref{eq:hibleranzellottipairing} requires test functions on $\widetilde{\Omega}$ (and not $\Omega$) indeed. The key idea behind \eqref{eq:hibleranzellottipairing} is the smooth context, where \eqref{eq:hibleranzellottipairing} can be understood as 
\begin{align}\label{eq:Fever105}
\begin{split}
\int_{\widetilde{\Omega}}(\bsigma\cdot\mathbb{T}\uu)\varphi\dif x & = \int_{\widetilde{\Omega}}(\varphi\bsigma)\cdot\mathbb{T}\uu\dif x = - \int_{\widetilde{\Omega}}\uu\cdot\mathbb{T}^{*}(\varphi\bsigma)\dif x \\ & \!\!\!\! \stackrel{\eqref{eq:prodruleweak}}{=} - \int_{\widetilde{\Omega}}\uu\cdot\varphi\mathbb{T}^{*}\bsigma \dif x - \int_{\widetilde{\Omega}}\uu\cdot(\bsigma\otimes_{\mathbb{T}^{*}}\nabla\varphi)\dif x\qquad\text{for}\;\varphi\in\hold_{c}^{\infty}(\widetilde{\Omega}).
\end{split}
\end{align}
Definition \ref{def:hibleranzellotti} thus allows us to introduce the product $\bsigma\cdot\mathbb{T}\uu$ via a weak formulation motivated by integration by parts. This idea dates back to Anzellotti \cite{Anz}, and variants thereof in the full gradient case are, e.g., due to Beck \& Schmidt \cite{BeckSchmidtConvex} or Kinnunen \& Scheven \cite{KinnunenScheven}. To incorporate the coupling between $\mathbb{T}\uu$ and $\bsigma$ into \eqref{eq:hiblervariablemassrewrite} while keeping in mind the relaxed energies, it is required to identify $[\bsigma\cdot\mathbb{T}\uu]_{0}$ as a measure on $\overline{\Omega}$. This is made precise in: 
\begin{lemma}\label{lem:Radon}
In the situation of Definition \ref{def:hibleranzellotti}, $[\bsigma\cdot\mathbb{T}\uu]_{0}$ can be represented by a \emph{well-defined, signed and finite Radon measure} on $\overline{\Omega}$. For brevity, we simply say that $[\bsigma\cdot\mathbb{T}\uu]_{0}$ \emph{is} a well-defined, signed and finite Radon measure on $\overline{\Omega}$.
\end{lemma}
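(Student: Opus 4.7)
The plan is to verify that the linear functional $T:=[\bsigma\cdot\mathbb{T}\uu]_{0}$ on $\hold_{c}^{\infty}(\widetilde{\Omega})$ defined by \eqref{eq:hibleranzellottipairing} is of order zero and has support contained in $\overline{\Omega}$; once this is done, the Riesz-Markov-Kakutani representation theorem applied to the continuous extension of $T$ to $\hold_{0}(\widetilde{\Omega})$ yields the desired signed, finite Radon measure, and the support assertion automatically concentrates it on $\overline{\Omega}$. The support property is immediate: for $\varphi\in\hold_{c}^{\infty}(\widetilde{\Omega})$ with $\spt(\varphi)\cap\overline{\Omega}=\emptyset$, both $\varphi|_{\Omega}$ and $\nabla\varphi|_{\Omega}$ vanish, so both integrals in \eqref{eq:hibleranzellottipairing} vanish. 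The crux is therefore the sup-norm bound $|T\varphi|\leq C\|\varphi\|_{\sup}$ with $C$ depending only on $\bsigma$ and $\uu$.

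Since the second integral in \eqref{eq:hibleranzellottipairing} features $\nabla\varphi$, a direct estimate by $\|\varphi\|_{\sup}$ is impossible, and so the natural idea is a smooth approximation combined with an integration by parts that transfers the gradient from $\varphi$ onto $\uu$. Concretely, using Lemma \ref{lem:auxBD}\ref{item:aux2} together with $\bd(\Omega)\hookrightarrow\lebe^{2}(\Omega;\R^{2})$ from Lemma \ref{lem:poincaresobolev}\ref{item:PoincareSobolev1}, I would select $(\uu_{k})\subset\bd(\Omega)\cap\hold^{\infty}(\Omega;\R^{2})$ with $d_{s}(\uu_{k},\uu)\to 0$ and, via a concurrent mollification, $\uu_{k}\to\uu$ strongly in $\lebe^{2}(\Omega;\R^{2})$; continuity of the trace under strict convergence (Lemma \ref{lem:traceoperator}\ref{item:tracex1}) moreover gives $\mathrm{tr}_{\partial\Omega}(\uu_{k})\to\mathrm{tr}_{\partial\Omega}(\uu)$ in $\lebe^{1}(\partial\Omega;\R^{2})$.

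For each smooth $\uu_{k}$, the Lipschitz field $\varphi\bsigma\in\hold^{0,1}(\Omega;\RR)$ satisfies $\mathbb{T}^{*}(\varphi\bsigma)=\varphi\,\mathbb{T}^{*}\bsigma+\bsigma\otimes_{\mathbb{T}^{*}}\nabla\varphi$ by applying \eqref{eq:prodruleweak} to the first-order operator $\mathbb{A}=\mathbb{T}^{*}$, and the Gauss-Green formula \eqref{eq:IBPscalar} then yields
\begin{align*}
&-\int_{\Omega}\varphi\uu_{k}\cdot\mathbb{T}^{*}\bsigma\dif x-\int_{\Omega}\uu_{k}\cdot(\bsigma\otimes_{\mathbb{T}^{*}}\nabla\varphi)\dif x\\
&\qquad=\int_{\Omega}\varphi\bsigma\cdot\mathbb{T}\uu_{k}\dif x-\int_{\partial\Omega}\varphi\bsigma\cdot(\mathrm{tr}_{\partial\Omega}(\uu_{k})\otimes_{\mathbb{T}}\nu_{\partial\Omega})\dif\mathscr{H}^{1}.
\end{align*}
The right-hand side is uniformly bounded by $\|\varphi\|_{\sup}\|\bsigma\|_{\lebe^{\infty}(\Omega)}\bigl(|\mathbb{T}\uu_{k}|(\Omega)+c\|\mathrm{tr}_{\partial\Omega}(\uu_{k})\|_{\lebe^{1}(\partial\Omega)}\bigr)$. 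Passing $k\to\infty$, the left-hand side converges to $T\varphi$ via the $\lebe^{2}$-convergence of $\uu_{k}$ paired with $\mathbb{T}^{*}\bsigma\in\lebe^{p}\subseteq\lebe^{2}$ (and against the bounded field $\bsigma\otimes_{\mathbb{T}^{*}}\nabla\varphi$), while the right-hand side converges by strict convergence and $\lebe^{1}$-trace continuity; this produces $|T\varphi|\leq C\|\varphi\|_{\sup}$ with $C=\|\bsigma\|_{\lebe^{\infty}(\Omega)}\bigl(|\mathbb{T}\uu|(\Omega)+c\|\mathrm{tr}_{\partial\Omega}(\uu)\|_{\lebe^{1}(\partial\Omega)}\bigr)<\infty$, completing the proof by density and Riesz representation.

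The main obstacle is precisely the $\nabla\varphi$ in the second term of \eqref{eq:hibleranzellottipairing}: only after the Gauss-Green identity -- which is legitimate for smooth $\uu_{k}$ but not for the merely $\bd$-regular $\uu$ -- does the gradient transfer onto $\uu_{k}$ and produce, in the limit, the controllable quantities $|\mathbb{T}\uu|(\Omega)$ and the $\lebe^{1}$-trace norm. Thus, the strict-convergence smooth approximation of Lemma \ref{lem:auxBD}\ref{item:aux2} (together with simultaneous strong $\lebe^{2}$-convergence, achievable by a mollification-based construction since $\bd(\Omega)\hookrightarrow\lebe^{2}(\Omega;\R^{2})$ in the plane) is the decisive ingredient, alongside the strict-continuity of the boundary trace.
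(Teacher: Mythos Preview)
Your overall strategy---approximate $\uu$ strictly by smooth maps, integrate by parts, and obtain a sup-norm bound---is on the right track, but the argument contains a genuine gap. You assert that $\varphi\bsigma\in\hold^{0,1}(\Omega;\RR)$, and this is what licenses your use of the Gauss--Green formula \eqref{eq:IBPscalar}. However, the hypotheses of Definition~\ref{def:hibleranzellotti} only give $\bsigma\in\lebe^{\infty}(\Omega;\RR)$ with $\mathbb{T}^{*}\bsigma\in\lebe^{p}(\Omega;\R^{2})$; there is no Lipschitz regularity available. Consequently, the boundary integral
\[
\int_{\partial\Omega}\varphi\,\bsigma\cdot(\mathrm{tr}_{\partial\Omega}(\uu_{k})\otimes_{\mathbb{T}}\nu_{\partial\Omega})\dif\mathscr{H}^{1}
\]
is not well-defined as written, because an $\lebe^{\infty}$-field need not possess a pointwise trace on $\partial\Omega$. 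One could in principle repair this by developing an Anzellotti-type normal-trace theory for $\bsigma$ (yielding only a weak normal component in $\lebe^{\infty}(\partial\Omega)$, paired against $\lebe^{1}$-traces), but that is substantial extra machinery not present in the paper and not supplied in your argument.

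The paper's proof avoids the boundary issue altogether by approximating \emph{both} $\uu$ and $\bsigma$: alongside a strictly convergent sequence $(\uu_{i})\subset\hold^{\infty}(\Omega;\R^{2})\cap\bd(\Omega)$, it introduces $\bsigma_{j}\coloneqq\eta_{j}\bsigma$ with cut-offs $\eta_{j}\in\hold_{c}^{\infty}(\Omega;[0,1])$, so that $\bsigma_{j}\stackrel{*}{\rightharpoonup}\bsigma$ in $\lebe^{\infty}$ and $\mathbb{T}^{*}\bsigma_{j}\rightharpoonup\mathbb{T}^{*}\bsigma$ in $\lebe^{p}$. Since each $\bsigma_{j}$ has compact support in $\Omega$, the integration by parts against $\uu_{i}$ produces \emph{no} boundary term at all, and one arrives at the cleaner bound $|\langle[\bsigma\cdot\mathbb{T}\uu]_{0},\varphi\rangle|\leq\|\bsigma\|_{\lebe^{\infty}(\Omega)}|\mathbb{T}\uu|(\Omega)\|\varphi\|_{\sup}$ via a double limit (first $i\to\infty$, then $j\to\infty$). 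The key missing idea in your proposal is precisely this second approximation of $\bsigma$, which makes the boundary contribution disappear rather than forcing you to give it meaning.
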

\begin{proof}
The proof is an adaptation of the arguments given in \cite[Lemma 3.2]{KinnunenScheven} for the full gradient case. We directly infer from \eqref{eq:hibleranzellottipairing} that $[\bm{\sigma}\cdot\mathbb{T}\uu]_{0}$ is a distribution supported in $\overline{\Omega}$. Now let  $\varphi\in\hold_{c}^{\infty}(\widetilde{\Omega})$ be arbitrary  and choose two sequences $(\uu_{i})\subset\hold^{\infty}(\Omega;\R^{2})\cap\bd(\Omega)$ and $(\bm{\sigma}_{j})\subset\hold_{c}^{\infty}(\Omega;\R^{2})$ with the following properties: 
\begin{align}\label{eq:wheneveryouarearound}
\begin{split}
&\uu_{i}\to\uu\;\;\;\text{strictly in $\bd(\Omega)$},\\ & \bm{\sigma}_{j}\stackrel{*}{\rightharpoonup}\bm{\sigma}\;\text{weakly* in $\lebe^{\infty}(\Omega;\R^{2})$}\;\text{and}\\ & \mathbb{T}^{*}\bm{\sigma}_{j}\rightharpoonup\mathbb{T}^{*}\bm{\sigma}\;\text{weakly in $\lebe^{p}(\Omega;\R^{2})$ (weakly* if $p=\infty$)}. 
\end{split}
\end{align}
Here, $\eqref{eq:wheneveryouarearound}_{1}$ is possible due to Lemma \ref{lem:auxBD}\ref{item:aux2}, whereas $\eqref{eq:wheneveryouarearound}_{2},\eqref{eq:wheneveryouarearound}_{3}$ can be established by cut-offs. More precisely, let $(\eta_{j})\subset\hold_{c}^{\infty}(\Omega;[0,1])$ be such that $\eta_{j}\to 1$ pointwise everywhere in $\Omega$. We put $\bm{\sigma}_{j}\coloneqq\eta_{j}\bm{\sigma}$, so that 
\begin{align}\label{eq:linftyabs}
\|\bsigma_{j}\|_{\lebe^{\infty}(\Omega)} \leq \|\bsigma\|_{\lebe^{\infty}(\Omega)}\qquad\text{for all}\;j\in\mathbb{N}. 
\end{align}
For any $\bm{\varphi}\in\lebe^{1}(\Omega;\R^{2})$, we then have $\eta_{j}\bm{\varphi}\to\bm{\varphi}$ strongly in $\lebe^{1}(\Omega;\R^{2})$ by dominated convergence. Hence, $\eqref{eq:wheneveryouarearound}_{2}$ follows from 
\begin{align*}
\int_{\Omega}\bm{\sigma}_{j}\cdot\bm{\varphi}\dif x = \int_{\Omega}\bm{\sigma}\cdot(\eta_{j}\bm{\varphi})\dif x \to \int_{\Omega}\bm{\sigma}\cdot\bm{\varphi}\dif x.
\end{align*}
For $\bm{\varphi}\in\hold_{c}^{\infty}(\Omega;\R^{2})$, we have $\bm{\sigma}\cdot\mathbb{T}\bm{\varphi}\in\lebe^{1}(\Omega)$. Hence, dominated convergence gives us
\begin{align*}
\int_{\Omega}(\mathbb{T}^{*}\bm{\sigma}_{j})\cdot\bm{\varphi}\dif x = - \int_{\Omega}\eta_{j}(\bm{\sigma}\cdot\mathbb{T}\bm{\varphi})\dif x \to - \int_{\Omega}\bm{\sigma}\cdot\mathbb{T}\bm{\varphi}\dif x = \int_{\Omega}(\mathbb{T}^{*}\bm{\sigma})\cdot\bm{\varphi}\dif x. 
\end{align*}
Since $\hold_{c}^{\infty}(\Omega;\R^{2})$ is dense in $\lebe^{p'}(\Omega;\R^{2})$, this implies $\eqref{eq:wheneveryouarearound}_{3}$. In consequence, we find that 
\begin{align*}
|\langle[\bsigma\cdot\mathbb{T}\uu]_{0},\varphi\rangle| & \stackrel{\eqref{eq:hibleranzellottipairing}}{=} \left\vert - \int_{\Omega}\varphi\mathbf{u}\cdot\mathbb{T}^{*}\bsigma\dif x - \int_{\Omega}\uu\cdot (\bsigma\otimes_{\mathbb{T}^{*}}\nabla\varphi)\dif x\right\vert \\ 
& \!\!\!\stackrel{\eqref{eq:wheneveryouarearound}_{2,3}}{=} \lim_{j\to\infty}\left\vert - \int_{\Omega}\varphi\mathbf{u}\cdot\mathbb{T}^{*}\bsigma_{j}\dif x - \int_{\Omega}\uu\cdot (\bsigma_{j}\otimes_{\mathbb{T}^{*}}\nabla\varphi)\dif x\right\vert\;\text{(since $\uu\in\lebe^{2}(\Omega;\R^{2})$)} \\ 
& \!\!\stackrel{\eqref{eq:wheneveryouarearound}_{1}}{=}  \lim_{j\to\infty}\lim_{i\to\infty}\left\vert - \int_{\Omega}\varphi\mathbf{u}_{i}\cdot\mathbb{T}^{*}\bsigma_{j}\dif x - \int_{\Omega}\uu_{i}\cdot (\bsigma_{j}\otimes_{\mathbb{T}^{*}}\nabla\varphi)\dif x\right\vert \\ 
& \stackrel{\eqref{eq:Fever105}}{=} \lim_{j\to\infty}\lim_{i\to\infty} \left\vert \int_{\widetilde{\Omega}}(\bsigma_{j}\cdot\mathbb{T}\uu_{i})\varphi\dif x \right\vert \\ 
& \;\leq \limsup_{j\to\infty}\limsup_{i\to\infty} \|\bm{\sigma}_{j}\|_{\lebe^{\infty}(\Omega)}\|\mathbb{T}\uu_{i}\|_{\lebe^{1}(\Omega)}\|\varphi\|_{\sup} \\ 
& \!\!\!\!\!\!\!\!  \stackrel{\eqref{eq:wheneveryouarearound}_{1},\eqref{eq:linftyabs}}{\leq} \|\bm{\sigma}\|_{\lebe^{\infty}(\Omega)}|\mathbb{T}\uu|(\Omega)\|\varphi\|_{\sup}. 
\end{align*}
In consequence, $[\bm{\sigma},\mathbb{T}\uu]_{0}$ is a distribution of order zero, and so the preceding estimate entails that it can be represented by a signed, finite Radon measure supported in $\overline{\Omega}$. This completes the proof. 
\end{proof}
We now derive the notion of solutions in the variable mass case. To this, we firstly suppose that the underlying maps $\uu,\vv$ are smooth. For $0<t<T$, we multiply  \eqref{eq:hiblervariablemassrewrite1} by $(\uu(t,\cdot)-\vv(t,\cdot))$ and integrate over $\{t\}\times\Omega$. This gives us 
\begin{align}
\int_{\{t\}\times\Omega}\partial_{t}\uu\cdot(\uu-\vv)\dif x & - \int_{\{t\}\times\Omega}\mathbb{T}^{*}\Big(\frac{1}{m}\bm{\sigma}\Big)\cdot(\uu-\vv)\dif x \notag  \\ & +  \int_{\{t\}\times\Omega}\Big(\bm{\sigma}\otimes_{\mathbb{T}^{*}}\nabla\frac{1}{m}\Big)\cdot(\uu-\vv)\dif x \notag \\ & = \int_{\{t\}\times\Omega}\frac{1}{m}\mathbf{f}\cdot(\uu-\vv)\dif x + \int_{\{t\}\times\Omega}\frac{1}{m}\tocean(\uu)\cdot(\uu-\vv)\dif x\label{eq:itsmylife}\\ 
& \Longleftrightarrow: \mathrm{II}_{1} - \mathrm{II}_{2} + \mathrm{II}_{3} = \mathrm{II}_{4} + \mathrm{II}_{5}. \notag
\end{align}
To deal with the term $\mathrm{II}_{2}$ in the rough context, we employ the following lemma. 
\begin{lemma}\label{lem:rewrite}
In the situation of Definition \ref{def:hibleranzellotti}, let $\vv\in\bd(\Omega)$. Then the Radon measure $[\bm{\sigma}\cdot\mathbb{T}\vv]_{0}$ satisfies 
\begin{align}\label{eq:resurrection}
[\bm{\sigma}\cdot\mathbb{T}\vv]_{0}(\overline{\Omega}) = - \int_{\Omega}(\mathbb{T}^{*}\bm{\sigma})\cdot\vv\dif x. 
\end{align}
\end{lemma}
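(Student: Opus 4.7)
The strategy is to evaluate the Radon measure $[\bm{\sigma}\cdot\mathbb{T}\vv]_{0}$ on $\overline{\Omega}$ by testing the distributional definition \eqref{eq:hibleranzellottipairing} against a suitable cut-off function. Since, by the standing assumption of Definition~\ref{def:hibleranzellotti}, $\Omega\Subset\widetilde{\Omega}$, we may fix $\varphi\in\hold_{c}^{\infty}(\widetilde{\Omega})$ with $\varphi\equiv 1$ on an open neighbourhood $U$ of $\overline{\Omega}$ with $\overline{U}\Subset\widetilde{\Omega}$. In particular, $\varphi\equiv 1$ and $\nabla\varphi\equiv 0$ hold on $\Omega$.

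With this choice, the second integral on the right-hand side of \eqref{eq:hibleranzellottipairing} vanishes because $\nabla\varphi=0$ on $\Omega$, while the first integral reduces to $-\int_{\Omega}\vv\cdot\mathbb{T}^{*}\bm{\sigma}\dif x$ since $\varphi\equiv 1$ on $\Omega$. Hence
\begin{align*}
\langle[\bm{\sigma}\cdot\mathbb{T}\vv]_{0},\varphi\rangle = -\int_{\Omega}\vv\cdot\mathbb{T}^{*}\bm{\sigma}\dif x,
\end{align*}
and the right-hand side is well-defined by $\vv\in\bd(\Omega)\hookrightarrow\lebe^{2}(\Omega;\R^{2})$ and $\mathbb{T}^{*}\bm{\sigma}\in\lebe^{p}(\Omega;\R^{2})$ with $p\geq 2$, cf. Lemma~\ref{lem:poincaresobolev}\ref{item:PoincareSobolev1}.

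It remains to identify the distributional pairing on the left with the measure evaluation $[\bm{\sigma}\cdot\mathbb{T}\vv]_{0}(\overline{\Omega})$. By Lemma~\ref{lem:Radon}, the distribution $[\bm{\sigma}\cdot\mathbb{T}\vv]_{0}$ is a signed, finite Radon measure supported in $\overline{\Omega}$. For distributions of order zero represented by Radon measures, the distributional pairing with $\hold_{c}^{\infty}$-test functions coincides with integration against the measure; in particular,
\begin{align*}
\langle[\bm{\sigma}\cdot\mathbb{T}\vv]_{0},\varphi\rangle = \int_{\widetilde{\Omega}}\varphi\dif[\bm{\sigma}\cdot\mathbb{T}\vv]_{0} = \int_{\overline{\Omega}}\varphi\dif[\bm{\sigma}\cdot\mathbb{T}\vv]_{0} = [\bm{\sigma}\cdot\mathbb{T}\vv]_{0}(\overline{\Omega}),
\end{align*}
where we used that the measure is supported in $\overline{\Omega}$ and that $\varphi\equiv 1$ on $\overline{\Omega}$. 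Combining both identities yields \eqref{eq:resurrection}.

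Since the argument is entirely structural, I do not anticipate any genuine obstacle: the two potentially delicate points are (i) that $\varphi$ with the prescribed properties exists, which is immediate from $\Omega\Subset\widetilde{\Omega}$, and (ii) that the distribution-measure identification applies to our chosen $\varphi$, which is standard for order-zero distributions with compact support in $\widetilde{\Omega}$. The non-trivial analytic content has already been absorbed into Lemma~\ref{lem:Radon}.
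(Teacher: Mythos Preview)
Your proof is correct and follows essentially the same approach as the paper: both exploit that $[\bm{\sigma}\cdot\mathbb{T}\vv]_{0}$ is a finite Radon measure supported in $\overline{\Omega}$ (Lemma~\ref{lem:Radon}) and test against a cut-off that equals $1$ on $\overline{\Omega}$, so that $\nabla\varphi$ vanishes on $\Omega$ and the definition \eqref{eq:hibleranzellottipairing} collapses to the right-hand side of \eqref{eq:resurrection}. The only cosmetic difference is that the paper passes through a sequence $(\varphi_j)$ with $\varphi_j\to\mathbbm{1}_{\overline{\Omega}}$ and invokes dominated convergence, whereas you observe---correctly---that a single such $\varphi$ already suffices since the measure is supported in $\overline{\Omega}$; your version is in fact slightly more economical.
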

\begin{proof}
By Lemma \ref{lem:Radon}, $[\bm{\sigma}\cdot\mathbb{T}\vv]_{0}$ is a Radon measure on $\widetilde{\Omega}$ supported in $\overline{\Omega}$. Now let $(\varphi_{j})\subset\hold_{c}^{\infty}(\widetilde{\Omega};[0,1])$ be such that $\varphi_{j}=1$ on $\overline{\Omega}$ for all $j\in\mathbb{N}$ and $\varphi_{j}(x)\to 0$ for all $x\in\widetilde{\Omega}\setminus\overline{\Omega}$. In consequence, dominated convergence yields 
\begin{align*}
[\bm{\sigma}\cdot\mathbb{T}\vv]_{0}(\overline{\Omega})= \lim_{j\to\infty} \langle[\bm{\sigma}\cdot\mathbb{T}\vv]_{0},\varphi_{j}\rangle \stackrel{\eqref{eq:hibleranzellottipairing}}{=} - \lim_{j\to\infty}\int_{\Omega}\varphi_{j}\vv\cdot\mathbb{T}^{*}\bm{\sigma}\dif x = - \int_{\Omega}\mathbb{T}^{*}\bm{\sigma}\cdot\vv\dif x, 
\end{align*}
since $\nabla\varphi_{j}=0$ in $\Omega$ for all $j\in\mathbb{N}$. This is \eqref{eq:resurrection}, and the proof is complete. 
\end{proof}
We now consider the critical term $\mathrm{II}_{2}$. In view of \eqref{eq:hiblerrelate} and in order to link $\mathbb{T}\uu$ to the stress $\bsigma$, we now postulate 
\begin{align}\label{eq:postulate}
[\widetilde{\bsigma}(t,\cdot)\cdot\mathbb{T}\uu(t,\cdot)]_{0}(\overline{\Omega}) \stackrel{!}{=} \int_{\Omega}\frac{1}{m(t,\cdot)}\dif|\mathbb{T}\uu(t,\cdot)|.
\end{align}
In a next step, we use Lemma \ref{lem:rewrite} to write with $\widetilde{\bsigma}\coloneqq \frac{1}{m}\bm{\sigma}$:
\begin{align*}
\mathrm{II}_{2} = \int_{\{t\}\times\Omega} \mathbb{T}^{*}\widetilde{\bsigma}\cdot(\uu-\vv) \dif x & \stackrel{\eqref{eq:resurrection}}{=} [\widetilde{\bsigma}(t,\cdot)\cdot\mathbb{T}\vv(t,\cdot)]_{0}(\overline{\Omega}) - [\widetilde{\bsigma}(t,\cdot)\cdot\mathbb{T}\uu(t,\cdot)]_{0}(\overline{\Omega}) \\ 
& \stackrel{\eqref{eq:postulate}}{=} [\widetilde{\bsigma}(t,\cdot)\cdot\mathbb{T}\vv(t,\cdot)]_{0}(\overline{\Omega}) - \int_{\Omega}\frac{1}{m(t,\cdot)}\dif|\mathbb{T}\uu(t,\cdot)|. 
\end{align*}
In view of \eqref{eq:itsmylife}, the ultimate identity motivates the following notion of solutions: 
\begin{definition}[Weak-variational solutions]\label{def:weakvarsol}
Let $T>0$, let $\Omega\subset\R^{2}$ be open and bounded with Lipschitz boundary, and let $\uu_{0}\in\bd_{c}(\Omega)$. Moreover, let $m\in \lebe^{2}(0,T;\sobo^{1,2}(\Omega))$ such that 
\begin{align}\label{eq:massasses}
m\in\hold(\overline{\Omega_{T}})\;\;\;\text{and}\;\;\;\min_{\overline{\Omega}}m>0\;\;\text{for all $0<t<T$}.
\end{align}
and suppose that the data are as in Definition \ref{def:varsol1}. We say that $\uu\in\lebe_{\mathrm{w}^{*}}^{1}(0,T;\bd(\Omega))\cap\sobo^{1,2}(0,T;\lebe^{2}(\Omega;\R^{2}))$ is a \emph{weak-variational solution} of the variable mass momentum balance equation \eqref{eq:hiblervariablemass} if there exists $\bsigma\in\lebe^{\infty}(\Omega_{T};\RR)$ with $\|\bsigma(t,\cdot)\|_{\lebe^{\infty}(\Omega)}\leq 1$ for $\mathscr{L}^{1}$-a.e. $0<t<T$ such that the following hold:
\begin{enumerate}
    \item\label{item:varmass1} \emph{Initial values:} $\uu(0,\cdot)=\uu_{0}$ holds  $\mathscr{L}^{2}$-a.e. in $\Omega$, 
    \item\label{item:varmass2} \emph{Distributional velocity-stress evolution:} The equation  
\begin{align}\label{eq:hiblervariablemassrewrite2}
\partial_{t}\uu - \mathbb{T}^{*}\Big(\frac{1}{m}\bsigma\Big) + \bsigma\otimes_{\mathbb{T}^{*}}\nabla\frac{1}{m} = \frac{1}{m}\mathbf{f} + \frac{1}{m}\tocean(\uu)\qquad\text{holds in}\;\mathscr{D}'(\Omega_{T};\R^{2}). 
\end{align}
    \item\label{item:varmass3} \emph{Energy-stress coupling:} For $\mathscr{L}^{1}$-a.e. $0<t<T$ and all $\vv\in\mathrm{L}_{\mathrm{w}^{*}}^{1}(0,T;\bd(\Omega))$, we have  
    \begin{align}
\int_{\Omega\times\{t\}}\partial_{t}\uu\cdot(\uu-\vv)\dif x & + \int_{\{t\}\times\Omega}\frac{1}{m}\dif|\mathbb{T}\uu(t,\cdot)| + \int_{\Omega\times\{t\}}\Big(\bsigma\otimes_{\mathbb{T}^{*}}\nabla\frac{1}{m}\Big)\cdot(\uu-\vv)\dif x \notag \\ & = \int_{\Omega\times\{t\}}\frac{1}{m(t,\cdot)}\mathbf{f}\cdot(\uu-\vv)\dif x  + \int_{\Omega\times\{t\}}\frac{1}{m}\tocean(\uu)\cdot(\uu-\vv)\dif x \label{eq:hiblerveryweak}\\ 
& + \left[\frac{1}{m}\bsigma(t,\cdot)\cdot\mathbb{T}\vv(t,\cdot)\right]_{0}(\overline{\Omega})\notag
\end{align}
for all $\vv\in\lebe_{\mathrm{w}^{*}}(0,T;\bd(\Omega))$ and $\mathscr{L}^{1}$-a.e. $0<t<T$.
\end{enumerate}  
\end{definition}
We call this formulation \emph{weak-variational} for the following reason: Property \ref{item:varmass3} re-introduces the variable mass \emph{energy} term, but this happens at the cost of additional lower order terms that can only be dealt with \emph{weakly}.

Note that the energy-stress coupling from \eqref{eq:hiblerveryweak} is well-defined. For the first term, this follows from $\bd(\Omega)\hookrightarrow\lebe^{2}(\Omega;\R^{2})$ in $n=2$ dimensions. For the second term, we note that $\frac{1}{m}$ is continuous up to the boundary and  bounded away from zero by \eqref{eq:massasses}; in $n=2$, $\sobo^{1,2}(\Omega)\not\hookrightarrow\hold(\overline{\Omega})$, and so the continuity has to be required separately. In particular, $\frac{1}{m}\in\hold(\overline{\Omega})$, whereby the second term is well-defined too. Our assumption \eqref{eq:massasses} implies that $\nabla\frac{1}{m}=-\frac{1}{m^{2}}\nabla m\in\lebe^{2}(\Omega_{T};\R^{2})$, and since $\bm{\sigma}(t,\cdot)\in\lebe^{\infty}(\Omega;\RR)$ together with $(\uu-\vv)(t,\cdot)\in\lebe^{2}(\Omega;\R^{2})$, the third term is equally well-defined. The well-definedness of the fourth and fifth terms follow from the assumption on our data. For the sixth term, we note that $\frac{1}{m(t,\cdot)}\bm{\sigma}(t,\cdot)\in\lebe^{\infty}(\Omega;\RR)$. 
By our assumptions on $m$, see \eqref{eq:massasses}, and the underlying data, we find because of the requirement $\partial_{t}\uu\in\lebe^{2}(\Omega_{T};\R^{2})$:  
\begin{align*}
- \mathbb{T}^{*}\Big(\frac{1}{m}\bsigma\Big)= - \partial_{t}\uu - \bsigma\otimes_{\mathbb{T}^{*}}\nabla\frac{1}{m}-\frac{1}{m}\mathbf{f} - \frac{1}{m}\tocean(\uu)\in\lebe^{2}(\Omega_{T};\R^{2}), 
\end{align*}
and so, in particular, $\mathbb{T}^{*}(\frac{1}{m(t,\cdot)}\bsigma(t,\cdot))\in\lebe^{2}(\Omega;\R^{2})$ for $\mathscr{L}^{1}$-a.e. $0<t<T$. In consequence, $\frac{1}{m(t,\cdot)}\bsigma(t,\cdot)$ satisfies the assumptions from Definition \ref{def:hibleranzellotti}, and so the sixth term is well-defined too. We now briefly address the required regularity on the mass functions $m$:
\begin{remark}[Regularity of $m$]
Definition \ref{def:hibleranzellotti} offers a pairing between $\lebe^{\infty}$-maps and special Radon measures $\bm{\mu}=\mathbb{T}\uu$. Here, the differential structure of $\bm{\mu}$ is essential since it gives us access to an integration by parts-type formula. In the second term of \eqref{eq:hiblerveryweak}, this is not the case; the total Hibler deformation measure $|\mathbb{T}\uu(t,\cdot)|$ is a non-negative measure without differential structure. A priori, the spatial continuity assumption  on $m$ can only be relaxed towards requiring $m(t,\cdot)$ to be $|\mathbb{T}\uu(t,\cdot)|$-measurable for $\mathscr{L}^{1}$-a.e. $0<t<T$ and, moreover, belonging to $\lebe_{|\mathbb{T}\uu(t,\cdot)|}^{\infty}(\overline{\Omega})$ for $\mathscr{L}^{1}$-a.e. $0<t<T$. 
\end{remark}
Definition \ref{def:weakvarsol} assumes the mass function $m$ to be given. Whereas its solvability shall be the subject of future work, we now give the link to the constant mass case from Definition \ref{def:varsol1} in a simplified scenario.  
\begin{proposition}\label{prop:weakvarcomm}
In the situation of Definition \ref{def:weakvarsol}, suppose that the mass function is constant ($m=c>0$). Moreover, assume that $\mathbf{f}\equiv 0$ and that the function $\widetilde{\eta}$ from \eqref{eq:oceanicHoelder} satisfies $\widetilde{\eta}\equiv 0$; in particular, the oceanic force terms are absent. Then the variational solution from Theorem \ref{thm:main} with $F=|\cdot|$ is a weak-variational solution in the sense of Definition \ref{def:weakvarsol}. 
\end{proposition}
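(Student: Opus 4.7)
The plan is to verify, in order, the three defining properties of Definition~\ref{def:weakvarsol}. Property~\ref{item:varmass1} on the attainment of the initial value is immediate from Corollary~\ref{cor:timeregularity}. For the construction of the stress $\bsigma$, I would exploit that for $F = |\cdot|$ the regularizations \eqref{eq:Fepsdelta} take the explicit form $F_\varepsilon(z) = \sqrt{\varepsilon + |z|^2}$ and $F_\varepsilon'(z) = z/\sqrt{\varepsilon + |z|^2}$, so $|F_\varepsilon'(z)| \leq 1$ pointwise on $\RR$. Hence the approximate stresses $\bsigma_{\delta,\varepsilon}^{\zeta} := F_\varepsilon'(\mathbb{T}\uu_{\delta,\varepsilon}^{\zeta})$ are uniformly bounded in $\lebe^\infty(\Omega_T;\RR)$ by~$1$, and Banach--Alaoglu combined with the diagonal extraction already performed in Corollary~\ref{cor:compa1} produces a weak-$*$ limit $\bsigma$ with $\|\bsigma(t,\cdot)\|_{\lebe^\infty(\Omega)} \leq 1$ for $\mathscr{L}^1$-a.e.~$0 < t < T$.

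For property~\ref{item:varmass2}, I would test the weak form \eqref{eq:weakform} against $\bm{\psi}\in\hold_c^\infty(\Omega_T;\R^2)$ and pass to the triple limit. The term $F_{\delta,\varepsilon}'(\mathbb{T}\uu_{\delta,\varepsilon}^{\zeta}) = F_\varepsilon'(\mathbb{T}\uu_{\delta,\varepsilon}^{\zeta}) + \delta\mathbb{T}\uu_{\delta,\varepsilon}^{\zeta}$ decomposes into a weak-$*$-convergent piece (producing $\bsigma$) and a viscosity piece that vanishes by Lemma~\ref{lem:usefulapriori}\ref{eq:unifobd3}. The time-derivative term converges weakly in $\lebe^2$ by Proposition~\ref{prop:timebounds} together with Corollary~\ref{cor:timeregularity}. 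With $m$ constant, $\mathbf{f} = 0$ and $\tocean \equiv 0$, the limit equation reads $\partial_t\uu - \mathbb{T}^*(\tfrac{1}{m}\bsigma) = 0$ in $\mathscr{D}'(\Omega_T;\R^2)$, which coincides with \eqref{eq:hiblervariablemassrewrite2} since the $\bsigma\otimes_{\mathbb{T}^*}\nabla(1/m)$-term vanishes identically.

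The heart of the argument is property~\ref{item:varmass3}. Using Lemma~\ref{lem:rewrite} and the distributional equation from the previous step, the Anzellotti pairing on the right-hand side of \eqref{eq:hiblerveryweak} satisfies $[\tfrac{1}{m}\bsigma(t,\cdot)\cdot\mathbb{T}\vv(t,\cdot)]_0(\overline{\Omega}) = -\int_\Omega\partial_t\uu(t,\cdot)\cdot\vv(t,\cdot)\,\dif x$ for every admissible $\vv$. Substituting into \eqref{eq:hiblerveryweak} and cancelling the $\int\partial_t\uu\cdot\vv$-terms reduces the whole coupling to the single pointwise-in-time identity
\begin{align*}
\left[\tfrac{1}{m}\bsigma(t,\cdot)\cdot\mathbb{T}\uu(t,\cdot)\right]_0(\overline{\Omega}) = \int_\Omega\tfrac{1}{m}\,\dif|\mathbb{T}\uu(t,\cdot)|\qquad\text{for $\mathscr{L}^1$-a.e.~$0<t<T$},
\end{align*}
which, via a further application of Lemma~\ref{lem:rewrite}, is equivalent to the energy identity $-\int_\Omega\partial_t\uu\cdot\uu\,\dif x = \tfrac{1}{m}\mathscr{F}_0^*[\uu(t,\cdot);\Omega]$, once the total variation on $\overline{\Omega}$ is interpreted to incorporate the boundary contribution $|\mathrm{tr}_{\partial\Omega}(\uu)\otimes_{\mathbb{T}}\nu_{\partial\Omega}|\,\mathscr{H}^1|_{\partial\Omega}$. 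The direction ``$\geq$'' is obtained by Lebesgue-differentiating the integrated dissipation $\tfrac{1}{2}(\|\uu_0\|^2 - \|\uu(s)\|^2) \geq \int_0^s\mathscr{F}_0^*[\uu(t);\Omega]\,\dif t$ that follows from \eqref{eq:varsolmain} with $\vv = 0$; the reverse ``$\leq$'' is a Fenchel-type upper bound $[\bsigma\cdot\mathbb{T}\uu]_0(\overline{\Omega}) \leq \mathscr{F}_0^*[\uu;\Omega]$ asserting that any stress of unit $\lebe^\infty$-norm pairs with $\mathbb{T}\uu$ to produce at most the relaxed energy. Pinching between these two forces equality and identifies $\bsigma$ as the optimal dual to $\mathbb{T}\uu$.

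The principal obstacle is precisely the Fenchel-type upper bound in the $\bd$-context. I would establish it by approximating $\uu$ strictly in $\bd(\Omega)$ via Lemma~\ref{lem:auxBD}\ref{item:aux2}; on smooth approximants the pairing reduces to an explicit Gauss--Green computation (cf.~\eqref{eq:IBPscalar}), and strict continuity of the trace from Lemma~\ref{lem:traceoperator}\ref{item:tracex1} together with Reshetnyak's Lemma~\ref{lem:resh} transfers the bound to the limit. The delicate point is that the non-continuity of the trace operator under weak-$*$ convergence on $\bd(\Omega)$ (Remark~\ref{rem:boundarypenal}) is exactly what forces the upper bound to involve the \emph{full} relaxed energy rather than only its interior part, which in turn justifies the boundary-inclusive interpretation of $\int\dif|\mathbb{T}\uu|$ in the coupling \eqref{eq:hiblerveryweak}.
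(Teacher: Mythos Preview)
Your proposal is correct and follows precisely the strategy the paper points to: the paper omits the proof entirely and states that it is ``fully analogous'' to Kinnunen \& Scheven \cite[Theorem 5.1]{KinnunenScheven} for the total variation flow. What you have sketched---constructing $\bsigma$ as a weak-$*$ limit of the uniformly bounded approximate stresses $F_\varepsilon'(\mathbb{T}\uu_{\delta,\varepsilon}^{\zeta})$, passing to the limit in the weak formulation for~\ref{item:varmass2}, and then pinching the energy identity for~\ref{item:varmass3} between the dissipation inequality from \eqref{eq:varsolmain} with $\vv=0$ and a Fenchel-type upper bound for the Anzellotti pairing---is exactly the Kinnunen--Scheven argument transported to the $\bd$/Hibler setting.

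One remark: the subtlety you flag about the boundary-inclusive interpretation of $\int_\Omega \dif|\mathbb{T}\uu(t,\cdot)|$ in Definition~\ref{def:weakvarsol}\ref{item:varmass3} is genuine and not fully resolved by the paper's formulation. Since the Hibler--Anzellotti pairing $[\bsigma\cdot\mathbb{T}\uu]_0(\overline{\Omega})$ from Definition~\ref{def:hibleranzellotti} sees the boundary (the test functions live in $\hold_c^\infty(\widetilde{\Omega})$ with $\Omega\Subset\widetilde{\Omega}$), while $|\mathbb{T}\uu|(\Omega)$ as written does not, the postulate \eqref{eq:postulate} and hence \eqref{eq:hiblerveryweak} only make sense with the interpretation you propose---namely, reading $|\mathbb{T}\uu|$ as the total variation of the zero-extended measure on $\overline{\Omega}$, which recovers $\mathscr{F}_0^*[\uu;\Omega]$. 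Your approximation argument via Lemma~\ref{lem:auxBD}\ref{item:aux2} and strict continuity of the trace (Lemma~\ref{lem:traceoperator}\ref{item:tracex1}) is the right device for the Fenchel upper bound in this setting; it is exactly how one adapts the $\bv$-argument of \cite{KinnunenScheven} to the $\bd$-framework here.
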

Up to minor modifications, the proof of Proposition \ref{prop:weakvarcomm} is fully analogous to the corresponding statement for the total variation flow as given by Kinnunen \& Scheven in \cite[Theorem 5.1]{KinnunenScheven}. We therefore omit it and refer the reader to future work, where this implication shall be addressed for non-trivial forces $\mathbf{f}$ and $\tocean$-terms, finally dealing with variable masses. 
\section{Appendix}\label{sec:appendix}
In this appendix, we establish two auxiliary results which are of importance of the main part, yet are difficult to be traced back to a specific reference.
\subsection{Proof of Lemma \ref{lem:jensen}}\label{sec:jensen}
In this section, we give the proof of Jensen's inequality \eqref{eq:Jensen}. 
\begin{proof}[Proof of Lemma \ref{lem:jensen}] Let $x\in U$. 
We define a positive measure $\nu\coloneqq \mathscr{L}^{n}+|\bm{\mu}^{s}|$, which we think to be extended from $\Omega$ to $\R^{n}$ by zero. On $\Omega$, we then have both $\mathscr{L}^{n}\ll\nu$ and $|\bm{\mu}^{s}|\ll\nu$. Hence, in particular, 
\begin{align}\label{eq:sharonafleming}
\mathscr{L}^{n}=\frac{\dif\mathscr{L}^{n}}{\dif\nu}\nu\;\;\;\text{and}\;\;\;\bm{\mu}^{s}=\frac{\dif\bm{\mu}^{s}}{\dif\nu}\nu\qquad\text{as measures on $\Omega$}. 
\end{align}
Moreover, by the Radon-Nikod\'{y}m theorem, there exists a $\nu$-measurable set $A\subset\Omega$ such that $|\bm{\mu}^{s}|(A)=\mathscr{L}^{n}(\Omega\setminus A)=0$. In consequence, we have  
\begin{align}\label{eq:lelandstottle}
\nu=\mathscr{L}^{n}\;\;\text{on}\;A\;\;\;\text{and}\;\;\;\nu=|\bm{\mu}^{s}|\;\;\text{on}\;\Omega\setminus A. 
\end{align}
We now define a probability measure via 
\begin{align}\label{eq:sharonafleming1}
\widetilde{\nu}_{x} \coloneqq \frac{\nu}{\nu(\ball_{\varepsilon}(x))} 
\end{align}
and recall that the linear perspective integrand
\begin{align}\label{eq:sharonafleming2}
\text{$F^{\#}$ from \eqref{eq:linearperspective} is  convex and positively $1$-homogeneous}. 
\end{align}
We proceed to estimate as follows:
\begin{align*}
F(\rho_{\varepsilon}*\bm{\mu}(x)) & = F\Big(\int_{\R^{n}}\rho_{\varepsilon}(x-y)\dif\bm{\mu}(y)\Big) \\ 
& = F^{\#}\Big(\underbrace{\int_{\R^{n}}\rho_{\varepsilon}(x-y)\dif\mathscr{L}^{n}(y)}_{=1},\int_{\R^{n}}\rho_{\varepsilon}(x-y)\dif\bm{\mu}(y)\Big) \\ 
& \!\!\stackrel{\eqref{eq:sharonafleming}}{=} F^{\#}\Big(\int_{\ball_{\varepsilon}(x)}\rho_{\varepsilon}(x-y)\frac{\dif\mathscr{L}^{n}}{\dif\nu}(y)\dif\nu(y),\int_{\ball_{\varepsilon}(x)}\rho_{\varepsilon}(x-y)\frac{\dif\bm{\mu}}{\dif\nu}(y)\dif\nu(y) \Big) \\ 
& \!\!\!\!\!\!\!\! \stackrel{\eqref{eq:sharonafleming1},\eqref{eq:sharonafleming2}}{=} F^{\#}\Big(\int_{\ball_{\varepsilon}(x)}\Big(\rho_{\varepsilon}(x-y)\frac{\dif\mathscr{L}^{n}}{\dif\nu}(y),\rho_{\varepsilon}(x-y)\frac{\dif\bm{\mu}}{\dif\nu}(y)\Big)\dif\widetilde{\nu}_{x}(y)\Big)\nu(\ball_{\varepsilon}(x)) \\ &  \eqqcolon \mathrm{I}. 
\end{align*}
At this stage, we employ the classical Jensen inequality for functions and probability measures. In consequence, we arrive at 
\begin{align*}
\mathrm{I}\,\,\,\,\,\,\,\,& \!\!\!\!\!  \stackrel{\text{Jensen}}{\leq} \int_{\ball_{\varepsilon}(x)}F^{\#}\Big(\rho_{\varepsilon}(x-y)\frac{\dif\mathscr{L}^{n}}{\dif\nu}(y),\rho_{\varepsilon}(x-y)\frac{\dif\bm{\mu}}{\dif\nu}(y)\Big)\dif\nu(y)\\ 
& \!\!\! \stackrel{\eqref{eq:sharonafleming2}}{=} \int_{\ball_{\varepsilon}(x)}\rho_{\varepsilon}(x-y)F^{\#}\Big(\frac{\dif\mathscr{L}^{n}}{\dif\nu}(y),\frac{\dif\bm{\mu}}{\dif\nu}(y)\Big)\dif\nu(y)  \\ 
& \!\!\!\!\!\!\!\!\! \stackrel{\eqref{eq:sharonafleming}, \eqref{eq:lelandstottle}}{=} \int_{\ball_{\varepsilon}(x)\cap A}\rho_{\varepsilon}(x-y)F^{\#}\Big(\frac{\dif\mathscr{L}^{n}}{\dif\nu}(y),\frac{\dif\bm{\mu}}{\dif\nu}\frac{\dif\nu}{\dif\mathscr{L}^{n}}\frac{\dif\mathscr{L}^{n}}{\dif\nu}(y)\Big)\dif\nu(y) \\ & + \int_{\ball_{\varepsilon}(x)\cap(\Omega\setminus A)}\rho_{\varepsilon}(x-y)F^{\#}\Big(0,\frac{\dif\bm{\mu}}{\dif\nu}\frac{\dif\nu}{\dif|\bm{\mu}^{s}|}\frac{\dif|\bm{\mu}^{s}|}{\dif\nu}(y)\Big)\dif\nu(y) \\ 
& \!\!\!\! \stackrel{\eqref{eq:sharonafleming2}}{=} \int_{\ball_{\varepsilon}(x)\cap A}\rho_{\varepsilon}(x-y)F^{\#}\Big(1,\frac{\dif\bm{\mu}^{a}}{\dif\mathscr{L}^{n}}\Big)\dif\mathscr{L}^{n}(y) \\ & + \int_{\ball_{\varepsilon}(x)\cap(\Omega\setminus A)}\rho_{\varepsilon}(x-y)F^{\infty}\Big(\frac{\dif\bm{\mu}^{s}}{\dif|\bm{\mu}^{s}|}\Big)\dif|\bm{\mu}^{s}|(y) \\ & = \int_{\ball_{\varepsilon}(x)}\rho_{\varepsilon}(x-y)\dif F(\bm{\mu})(y) = \int_{\R^{n}}\rho_{\varepsilon}(x-y)\dif F(\bm{\mu})(y). 
\end{align*}
Now we use Fubini's theorem to conclude 
\begin{align*}
\int_{U}F(\rho_{\varepsilon}*\bm{\mu}(x))\dif x & \leq  \int_{U}\int_{\R^{n}}\rho_{\varepsilon}(x-y)\dif F(\bm{\mu})(y)\dif x = \int_{\R^{n}}\int_{U}\rho_{\varepsilon}(x-y)\dif x \dif F(\bm{\mu})(y) \\ 
& = \int_{U_{\varepsilon}}\int_{\R^{n}}\rho_{\varepsilon}(x-y)\dif y \dif F(\bm{\mu})(y) = \int_{U_{\varepsilon}} F(\bm{\mu}). 
\end{align*}
This concludes the proof. 
\end{proof}
\subsection{The predual of $\bv^{\mathbb{A}}(\Omega)$} For the space $\lebe_{\mathrm{w}^{*}}^{1}(0,T;\bd(\Omega))$ to be well-defined, it is clear that $\bd(\Omega)$ must have a (separable) predual. In view of Proposition \ref{prop:Ageneral}, we directly state and prove the following result for $\mathbb{C}$-elliptic operators $\mathbb{A}$. 
\begin{proposition}\label{prop:predual}
Let $\mathbb{A}$ be a first order, $\mathbb{C}$-elliptic differential operator of the form \eqref{eq:diffopform}, and let $\Omega\subset\R^{n}$ be open and bounded with Lipschitz boundary. Then there exists a separable space $(X,\|\cdot\|_{X})$ of continuous maps on $\Omega$ such that 
\begin{align}\label{eq:keyduality}
(X,\|\cdot\|_{X})'\cong (\bv^{\mathbb{A}}(\Omega),\|\cdot\|_{\bv^{\mathbb{A}}(\Omega)}), 
\end{align}
where $\|\uu\|_{\bv^{\mathbb{A}}(\Omega)}\coloneqq \|\uu\|_{\lebe^{1}(\Omega)}+|\mathbb{A}\uu|(\Omega)$. 
\end{proposition}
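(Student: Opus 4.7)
The plan is to realize $\bv^{\mathbb{A}}(\Omega)$ as a dual space by embedding it isometrically into the dual of a separable function space, verifying weak-$*$ closedness of the image, and recovering the predual via the standard quotient construction. Concretely, set
\[
Y\coloneqq \hold_{0}(\Omega;V)\oplus \hold_{0}(\Omega;W),
\]
endowed with the maximum of the supremum norms. Then $Y$ is separable, and by the Riesz representation theorem,
\[
Y^{*}\cong \mathrm{RM}_{\mathrm{fin}}(\Omega;V)\oplus \mathrm{RM}_{\mathrm{fin}}(\Omega;W)
\]
with the sum of total variation norms. Define $\Phi\colon\bv^{\mathbb{A}}(\Omega)\to Y^{*}$ by $\Phi(\uu)\coloneqq (\uu\mathscr{L}^{n},\mathbb{A}\uu)$. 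Since $\|\Phi(\uu)\|_{Y^{*}}=\|\uu\|_{\lebe^{1}(\Omega)}+|\mathbb{A}\uu|(\Omega)=\|\uu\|_{\bv^{\mathbb{A}}(\Omega)}$, the map $\Phi$ is a linear isometric embedding; write $E\coloneqq \Phi(\bv^{\mathbb{A}}(\Omega))$.

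The core of the argument is to establish that $E$ is weak-$*$ closed in $Y^{*}$. By the Krein--Smulian theorem, it suffices to show that $E\cap R\overline{B}_{Y^{*}}$ is weak-$*$ closed for every $R>0$. Since $Y$ is separable, the weak-$*$ topology on bounded subsets of $Y^{*}$ is metrizable, so closedness may be checked sequentially. Given a bounded sequence $(\uu_{j})\subset\bv^{\mathbb{A}}(\Omega)$ with $\Phi(\uu_{j})\stackrel{*}{\rightharpoonup}(\mu,\nu)$ in $Y^{*}$, the compact embedding $\bv^{\mathbb{A}}(\Omega)\hookrightarrow\hookrightarrow\lebe^{1}(\Omega;V)$ available through \cite{BDG,GmRa} (and, in the special case $\mathbb{A}=\mathbb{T}$, Lemma~\ref{lem:poincaresobolev}\ref{item:PoincareSobolev1}) produces a subsequence $\uu_{j(k)}\to\uu$ strongly in $\lebe^{1}(\Omega;V)$; the $\lebe_{\locc}^{1}$-lower semicontinuity of the total $\mathbb{A}$-variation (as in Lemma~\ref{lem:auxBD}\ref{item:aux0A}) forces $\uu\in\bv^{\mathbb{A}}(\Omega)$. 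The strong convergence yields $\uu_{j(k)}\mathscr{L}^{n}\stackrel{*}{\rightharpoonup} \uu\mathscr{L}^{n}$ in $\mathrm{RM}_{\mathrm{fin}}(\Omega;V)$, so $\mu=\uu\mathscr{L}^{n}$; testing against $\bm{\varphi}\in\hold_{c}^{\infty}(\Omega;W)$ gives $\mathbb{A}\uu_{j(k)}\to \mathbb{A}\uu$ in $\mathscr{D}'(\Omega;W)$, forcing $\nu=\mathbb{A}\uu$, whence $(\mu,\nu)=\Phi(\uu)\in E$. This identification step is the principal obstacle and is the place where $\mathbb{C}$-ellipticity enters: without the attendant $\mathbb{A}$-version of Rellich's compactness theorem from \cite{BDG}, the extraction of a strongly convergent $\lebe^{1}$-subsequence, and hence the identification $\mu=\uu\mathscr{L}^{n}$, would fail.

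Once weak-$*$ closedness of $E$ is in hand, put $N\coloneqq {}^{\perp}E =\{y\in Y\colon \langle\xi,y\rangle =0\text{ for every }\xi\in E\}$, a norm-closed subspace of $Y$. Standard Banach-space duality for annihilators of weak-$*$ closed subspaces then yields the isometric identification
\[
(Y/N)^{*}\cong N^{\perp}=E\cong \bv^{\mathbb{A}}(\Omega).
\]
Setting $X\coloneqq Y/N$, which is separable as a quotient of the separable space $Y$ of continuous maps, produces the desired predual and establishes \eqref{eq:keyduality}.
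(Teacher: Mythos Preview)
Your argument is correct. Both your proof and the paper's rest on the same skeleton: embed $\bv^{\mathbb{A}}(\Omega)$ isometrically into $(\hold_{0}(\Omega;V)\oplus\hold_{0}(\Omega;W))'$ via $\uu\mapsto(\uu\mathscr{L}^{n},\mathbb{A}\uu)$, identify the image with an annihilator, and invoke the quotient-dual lemma $(\mathcal{X}/\mathcal{Y})'\cong\mathcal{Y}^{\perp}$. The route to the annihilator identification, however, differs. The paper writes down an explicit subspace $\mathcal{Y}$, namely the $\hold_{0}$-closure of $\{(\mathbb{A}^{*}\bm{\psi}_{1},\bm{\psi}_{1}):\bm{\psi}_{1}\in\hold_{c}^{\infty}(\Omega;W)\}$, and verifies $\mathcal{Y}^{\perp}=E$ by hand; for the nontrivial inclusion it mollifies the representing measure pair and uses \emph{local} weak$^{*}$-compactness in $\bv^{\mathbb{A}}(\omega)$, $\omega\Subset\Omega$, followed by a diagonal argument. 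You instead show directly that $E$ is weak$^{*}$-closed via Krein--Smulian, using the \emph{global} compact embedding $\bv^{\mathbb{A}}(\Omega)\hookrightarrow\hookrightarrow\lebe^{1}(\Omega;V)$, and then recover $N^{\perp}=E$ from the bipolar theorem. Your route is slightly softer and more streamlined; the paper's is more explicit about the predual (one sees concretely which test-function pairs are quotiented out) and, by working locally, does not rely on the Lipschitz boundary for the compactness step. Both approaches use $\mathbb{C}$-ellipticity in exactly the same place, namely to secure the Rellich-type compactness that pins down $\mu=\uu\mathscr{L}^{n}$.
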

Note that it is natural to require some sort of ellipticity on $\mathbb{A}$, as otherwise Proposition \ref{prop:predual} is false. Namely, taking $\mathbb{A}\equiv 0$, $\bv^{\mathbb{A}}(\Omega)=\lebe^{1}(\Omega;V)$, and $\lebe^{1}(\Omega;V)$ does not have a predual. For the proof of Proposition \ref{prop:predual}, we require the following classical auxiliary result.
\begin{lemma}\label{lem:abstisom}
    Let $(\mathcal{X},\|\cdot\|_{\mathcal{X}})$ be a normed space and let $\mathcal{Y}\subset \mathcal{X}$ be a closed subspace. We define the annihilator of $\mathcal{Y}$ by $\mathcal{Y}^{\bot}\coloneqq \{x'\in \mathcal{X}'\colon\;x'(x)=0\;\text{for all}\;x\in \mathcal{Y}\}$. Then there holds 
    \begin{align}\label{eq:abstractisomorphy}
    (\mathcal{X}/\mathcal{Y})'\cong \mathcal{Y}^{\bot}. 
    \end{align}
More precisely, the operator $\Phi\colon (\mathcal{X}/\mathcal{Y})'\ni f \mapsto \Phi_{f}\in \mathcal{Y}^{\bot}$, where $\Phi_{f}(x)\coloneqq f(x+\mathcal{Y})$, is an  \emph{isometric isomorphism}.
\end{lemma}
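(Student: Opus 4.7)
The plan is to verify that the natural candidate $\Phi\colon (\mathcal{X}/\mathcal{Y})'\to\mathcal{Y}^\bot$, $f\mapsto \Phi_f$ with $\Phi_f(x)=f(x+\mathcal{Y})$, is simultaneously linear, a bijection, and norm-preserving. Writing $\pi\colon\mathcal{X}\to\mathcal{X}/\mathcal{Y}$ for the canonical quotient map (which is linear, surjective and satisfies $\|\pi\|\leq 1$ when $\mathcal{X}/\mathcal{Y}$ carries the quotient norm $\|x+\mathcal{Y}\|_{\mathcal{X}/\mathcal{Y}}=\inf_{y\in\mathcal{Y}}\|x+y\|_{\mathcal{X}}$), I observe that $\Phi_f=f\circ\pi$. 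This packages the well-definedness and continuity of $\Phi_f$ as a mere composition, and moreover $\Phi_f(y)=f(y+\mathcal{Y})=f(0)=0$ for $y\in\mathcal{Y}$, so $\Phi_f\in\mathcal{Y}^\bot$. Linearity of $\Phi$ is then immediate from the linearity of composition.

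Next I would check bijectivity. For injectivity: if $\Phi_f=0$, then $f(x+\mathcal{Y})=0$ for every $x\in\mathcal{X}$, and since $\pi$ is surjective, $f\equiv 0$. For surjectivity, given $x'\in\mathcal{Y}^\bot$, I define $f\colon\mathcal{X}/\mathcal{Y}\to\mathbb{K}$ by $f(x+\mathcal{Y})\coloneqq x'(x)$. This is well-defined precisely because $x'$ vanishes on $\mathcal{Y}$; linearity is clear; and boundedness follows from $|f(x+\mathcal{Y})|=|x'(x+y)|\leq \|x'\|_{\mathcal{X}'}\|x+y\|_{\mathcal{X}}$ for every $y\in\mathcal{Y}$, so $|f(x+\mathcal{Y})|\leq \|x'\|_{\mathcal{X}'}\|x+\mathcal{Y}\|_{\mathcal{X}/\mathcal{Y}}$. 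By construction $\Phi_f=x'$.

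The main point (which is the only non-routine step) is the isometry. One inequality comes from the estimate just given: taking suprema,
\begin{align*}
\|f\|_{(\mathcal{X}/\mathcal{Y})'}\leq \|\Phi_f\|_{\mathcal{X}'}.
\end{align*}
For the reverse inequality, I use that $\|\pi\|\leq 1$, giving $\|\Phi_f\|_{\mathcal{X}'}=\|f\circ\pi\|_{\mathcal{X}'}\leq \|f\|_{(\mathcal{X}/\mathcal{Y})'}\|\pi\|\leq \|f\|_{(\mathcal{X}/\mathcal{Y})'}$. The slightly subtle part is that the quotient norm is an infimum that need not be attained, so to obtain the first inequality sharply one works with approximate representatives: for any $\xi\in\mathcal{X}/\mathcal{Y}$ with $\|\xi\|_{\mathcal{X}/\mathcal{Y}}<1$, pick $x\in\mathcal{X}$ with $\pi(x)=\xi$ and $\|x\|_{\mathcal{X}}<1$, whence $|f(\xi)|=|\Phi_f(x)|\leq \|\Phi_f\|_{\mathcal{X}'}$. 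I expect no genuine obstacle; the only care required is to keep the direction of the inequalities between the two norms straight, which is why I would phrase the argument via the factorisation $\Phi_f=f\circ\pi$ and the two bounds $\|\pi\|\leq 1$ and the approximate-lift inequality on $\mathcal{X}/\mathcal{Y}$.
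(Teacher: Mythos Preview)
Your proof is correct and is the standard textbook argument. The paper itself does not prove this lemma; it introduces it explicitly as a ``classical auxiliary result'' and states it without proof, using it only as a tool in the subsequent proof of the predual construction for $\bv^{\mathbb{A}}(\Omega)$. Your verification via the factorisation $\Phi_f=f\circ\pi$, together with the approximate-lift argument for the isometry, is exactly the expected route.
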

We are now ready to give the
\begin{proof}[Proof of Proposition \ref{prop:predual}]
We aim to apply Lemma~\ref{lem:abstisom}. More precisely, we let $(\mathcal{X},\|\cdot\|_\mathcal{X})=(\hold_{0}(\Omega;V\times W),\|\cdot\|_{\hold(\Omega)})$ and construct a closed subspace $\mathcal{Y}\subset\mathcal{X}$ such that the annihilator $\mathcal{Y}^{\bot}$ is isomorphic to $\bv^{\mathbb{A}}(\Omega)$. By Lemma \ref{lem:abstisom},  $(\mathcal{X}/\mathcal{Y})'\cong\mathcal{Y}^{\bot}$, and so $\bv^{\mathbb{A}}(\Omega)$ will be shown to be isomorphic to $(\mathcal{X}/\mathcal{Y})'$. Then it suffices to put $X\coloneqq \mathcal{X}/\mathcal{Y}$ to conclude \eqref{eq:keyduality}, as $(X,\|\cdot\|_{X})$ is a separable Banach space; $X=\mathcal{X}/\mathcal{Y}$ is a separable Banach space since $\mathcal{X}$ is a separable Banach space and $\mathcal{Y}$ is closed. Throughout, we tacitly identify $V\cong V'$ and $W\cong W'$. 

By the Riesz representation theorem for vectorial Radon measures, we have 
\begin{align*}
\mathcal{X}'\cong\mathrm{RM}_{\mathrm{fin}}(\Omega;V\times W),
\end{align*}
where the finite $V\times W$-valued Radon measures are equipped with the total variation norm. We denote by $\mathscr{J}_{1}\colon\mathrm{RM}_{\mathrm{fin}}(\Omega;V\times W)\to \mathcal{X}'$ the underlying isometric isomorphism. The space $\bv^{\mathbb{A}}(\Omega)$ can be embedded isometrically into $\mathrm{RM}_{\mathrm{fin}}(\Omega;V\times W)$ by means of 
\begin{align*}
\mathscr{J}_{2}\colon \bv^{\mathbb{A}}(\Omega)\ni \uu \mapsto (\uu\,\mathscr{L}^{n}\mres\Omega,\mathbb{A}\uu)\in\mathrm{RM}_{\mathrm{fin}}(\Omega;V\times W). 
\end{align*}
By definition of the total variation norm $\|\bm{\mu}\|_{\mathrm{RM}_{\mathrm{fin}}(\Omega)}\coloneqq|\bm{\mu}|(\Omega)$ on $\mathrm{RM}_{\mathrm{fin}}(\Omega;V\times  W)$, we have with $\mathscr{J}\coloneqq\mathscr{J}_{1}\mathscr{J}_{2}$ that 
\begin{align}\label{eq:isomorphyBVdual}
\|\uu\|_{\bv^{\mathbb{A}}(\Omega)} =  \|\mathscr{J}_{2}\uu\|_{\mathrm{RM}_{\mathrm{fin}}(\Omega)} =\|\mathscr{J}\uu\|_{\mathcal{X}'}\qquad\text{for all}\;\uu\in\bv^{\mathbb{A}}(\Omega). 
\end{align}
Now define $\mathcal{Y}\subset\mathcal{X}$ to be the closure of 
\begin{align*}
Y\coloneqq \{\bm{\psi}=(\bm{\psi}_{0},\bm{\psi}_{1})\in\hold_{c}^{\infty}(\Omega;V\times W)\colon\;\bm{\psi}_{0}=\mathbb{A}^{*}\bm{\psi}_{1}\}
\end{align*}
in $\hold_{0}(\Omega;V\times W)$ with respect to $\|\cdot\|_{\hold(\Omega)}$. Here, 
\begin{align*}
\mathbb{A}^{*} \coloneqq \sum_{k=1}^{n}\mathbb{A}_{k}^{\top}\partial_{k}
\end{align*}
is the formal $\lebe^{2}$-adjoint of $\mathbb{A}$, see \eqref{eq:diffopform}. 
We now establish that $\mathcal{Y}^{\bot}=\mathscr{J}(\bv^{\mathbb{A}}(\Omega))$, and this will conclude the proof of the proposition. 

We begin with the inclusion '$\supset$'. Let $\bm{\psi}=(\bm{\psi}_{0},\bm{\psi}_{1})\in\mathcal{Y}$ and choose a sequence $(\bm{\psi}^{i})\subset\hold_{c}^{\infty}(\Omega;V\times W)$ satisfying $\bm{\psi}^{i}=(\bm{\psi}_{0}^{i},\bm{\psi}_{1}^{i})$ and $\bm{\psi}_{0}^{i}=\mathbb{A}^{*}\bm{\psi}_{1}^{i}$, such that $\|\bm{\psi}^{i}-\bm{\psi}\|_{\hold(\Omega)}\to 0$.  Given $\uu\in\bv^{\mathbb{A}}(\Omega)$, we compute the duality pairing as
\begin{align*}
\langle \mathscr{J}\uu,\bm{\psi}\rangle & = \int_{\Omega}\uu\cdot\bm{\psi}_{0}\dif x + \int_{\Omega}\bm{\psi}_{1}\cdot\dif\mathbb{A}\uu = \lim_{i\to\infty}\Big(\int_{\Omega}\uu\cdot\bm{\psi}_{0}^{i}\dif x + \int_{\Omega}\bm{\psi}_{1}^{i}\cdot\dif\mathbb{A}\uu \Big)\\ 
& = \lim_{i\to\infty} \Big(\int_{\Omega}\uu\cdot\bm{\psi}_{0}^{i}\dif x  - \int_{\Omega}(\mathbb{A}^{*}\bm{\psi}_{1}^{i})\cdot\uu\dif x \Big) \stackrel{\bm{\psi}^{i}\in Y}{=} 0, 
\end{align*}
so that $\mathscr{J}(\bv^{\mathbb{A}}(\Omega))\subset \mathcal{Y}^{\bot}$. In view of '$\subset$', let $f\in\mathcal{Y}^{\bot}$. By definition, we  have $f\in \mathcal{X}'$ and $\langle f,\bm{\psi}\rangle=0$ for all $\bm{\psi}\in\mathcal{Y}$. In particular, since $f\in\mathcal{X}'$, the Riesz representation theorem implies that there exists $\bm{\mu}=(\bm{\mu}_{0},\bm{\mu}_{1})\in\mathrm{RM}_{\mathrm{fin}}(\Omega;V\times W)$ such that 
\begin{align}\label{eq:traylorhoward}
\langle f,\bm{\psi}\rangle = \sum_{j\in\{0,1\}}\int_{\Omega}\bm{\psi}_{j}\cdot\dif\bm{\mu}_{j}\qquad\text{for all}\;\bm{\psi}=(\bm{\psi}_{0},\bm{\psi}_{1})\in\hold_{0}(\Omega;V\times W).
\end{align}
We need to identify $\bm{\mu}$ as $\mathscr{J}_{2}\uu$ for some $\uu\in\bv^{\mathbb{A}}(\Omega)$. For $\bm{\psi}=(\bm{\psi}_{0},\bm{\psi}_{1})\in Y$, the left side of \eqref{eq:traylorhoward} vanishes. Since $\bm{\psi}_{0}=\mathbb{A}^{*}\bm{\psi}_{1}$ for such $\bm{\psi}$, we thus get 
\begin{align}\label{eq:daflow}
\int_{\Omega}(\mathbb{A}^{*}\bm{\psi}_{1})\cdot\dif \bm{\mu}_{0} = \int_{\Omega}\bm{\psi}_{0}\cdot\dif\bm{\mu}_{0} = - \int_{\Omega}\bm{\psi}_{1}\dif\bm{\mu}_{1},  
\end{align}
and this identity holds for \emph{all}  $\bm{\psi}_{1}\in\hold_{c}^{\infty}(\Omega;W)$.

For an arbitrary $\omega\Subset\Omega$ and $\varepsilon>0$ sufficiently small, the set $\Omega^{\varepsilon}:=\{x\in\Omega\colon\;\mathrm{dist}(x,\partial\Omega)>\varepsilon\}$ contains $\omega$. We let $\rho_{\varepsilon}$ be the $\varepsilon$-rescaled variant of a standard mollifier $\rho$, so that the measures $\bm{\mu}_{j}^{\varepsilon}:=(\rho_{\varepsilon}*\bm{\mu}_{j}\mres\Omega_{\varepsilon})\mathscr{L}^{n}\mres\Omega$ are well-defined for $j\in\{0,1\}$. For any $\bm{\psi}_{1}\in\hold_{c}^{\infty}(\omega;W)$, we have 
\begin{align*}
\int_{\omega}(\mathbb{A}^{*}\bm{\psi}_{1})\cdot(\rho_{\varepsilon}*\bm{\mu}_{0})\dif x & = \int_{\Omega}(\mathbb{A}^{*}\bm{\psi}_{1})\cdot\dif\bm{\mu}_{0}^{\varepsilon} = \int_{\Omega}(\mathbb{A}^{*}(\rho_{\varepsilon}*\bm{\psi}_{1}))\cdot\dif\bm{\mu}_{0} \\ &  \!\!\!\stackrel{\eqref{eq:daflow}}{=} - \int_{\Omega}(\rho_{\varepsilon}*\bm{\psi}_{1})\cdot\dif\bm{\mu}_{1} = - \int_{\Omega}\bm{\psi}_{1}\cdot(\rho_{\varepsilon}*\bm{\mu}_{1})\dif x.
\end{align*}
By arbitrariness of $\bm{\psi}_{1}$, we see that
\begin{align*}
(\rho_{\varepsilon}*(\bm{\mu}_{1}\mres\Omega_{\varepsilon}))\mathscr{L}^{n}\mres\Omega=\mathbb{A}(\rho_{\varepsilon}*\bm{\mu}_{0})\qquad\text{on}\;\omega. 
\end{align*}
Let $(\varepsilon_{l})\subset\R_{>0}$ with $\varepsilon_{l}\searrow 0$ as $l\to\infty$. By arbitrariness of $\omega$, we have 
\begin{align}\label{eq:convossimo}
\begin{split}
\uu_{l}\,\mathscr{L}^{n}\mres\Omega \coloneqq (\rho_{\varepsilon_{l}}*(\bm{\mu}_{0}\mres\Omega_{\varepsilon_{l}}))\mathscr{L}^{n}\mres\Omega\stackrel{*}{\rightharpoonup}\bm{\mu}_{0}&\;\text{in}\;\mathrm{RM}(\Omega;V),\\ 
\mathbf{v}_{l}\mathscr{L}^{n}\mres\Omega \coloneqq(\rho_{\varepsilon_{l}}*(\bm{\mu}_{1}\mres\Omega_{\varepsilon_{l}}))\mathscr{L}^{n}\mres\Omega \stackrel{*}{\rightharpoonup}\bm{\mu}_{1}&\;\text{in}\;\mathrm{RM}(\Omega;W)
\end{split}
\end{align}
as $l\to\infty$. Moreover, since $\bm{\mu}_{0}\in\mathrm{RM}_{\mathrm{fin}}(\Omega;V)$ and $\bm{\mu}_{1}\in\mathrm{RM}_{\mathrm{fin}}(\Omega;W)$, we have 
\begin{align}\label{eq:boundossimo}
\sup_{l\in\mathbb{N}}\|\uu_{l}\|_{\lebe^{1}(\Omega)}=\sup_{l\in\mathbb{N}}|\uu_{l}\mathscr{L}^{n}|(\Omega)<\infty\;\;\;\text{and}\;\;\;\sup_{l\in\mathbb{N}}\|\vv_{l}\|_{\lebe^{1}(\Omega)} = 
\sup_{l\in\mathbb{N}}|\mathbf{v}_{l}\mathscr{L}^{n}|(\Omega)<\infty.
\end{align}
For an arbitrary ball $\ball\Subset\Omega$ and $l\in\mathbb{N}$ sufficiently large, $\vv_{l}\,\mathscr{L}^{n}\mres\ball$ is the $\mathbb{A}$-gradient measure of $\uu_{l}$ on $\ball$. Because of the uniform bounds in \eqref{eq:boundossimo}, the $\mathbb{C}$-ellipticity of $\mathbb{A}$ implies the existence of a subsequence  $(\uu_{l_{m}})$ and some $\uu^{\omega}\in\bv^{\mathbb{A}}(\omega)$ such that $(\uu_{l_{m}})$ converges to $\uu^{\omega}$ in the weak*-sense on $\bv^{\mathbb{A}}(\omega)$, see \cite{GmRa}. This means that $\uu_{l_{m}}\to\uu^{\omega}$ strongly in $\lebe^{1}(\omega;V)$ and $\mathbb{A}\uu_{l_{m}}\stackrel{*}{\rightharpoonup}\mathbb{A}\uu^{\omega}$ in $\mathrm{RM}_{\mathrm{fin}}(\Omega;W)$. It is precisely at this point where $\mathbb{C}$-ellipticity enters, as otherwise this compactness result does not hold. Exhausting $\Omega$ by countably many balls and passing to the  diagonal sequence, we thus find an overall subsequence $(\uu_{l(m)})$ and some $\uu\in\bv_{\locc}^{\mathbb{A}}(\Omega)$ such that $(\uu_{l(m)})$ converges to $\uu$ in the weak*-sense on $\bv^{\mathbb{A}}(\omega)$ for every $\omega\Subset\Omega$. For an arbitrary $\bm{\varphi}\in\hold_{c}^{\infty}(\Omega;W)$, we then have with $\omega=\spt(\varphi)$:
\begin{align*}
-\int_{\Omega}\bm{\varphi}\cdot\dif\mathbb{A} u & = \int_{\omega}\uu\cdot\mathbb{A}^{*}\bm{\varphi}\dif x = \lim_{m\to\infty}\int_{\omega}\uu_{l(m)}\cdot\mathbb{A}^{*}\bm{\varphi}\dif x \\ 
& = -\lim_{m\to\infty}\int_{\omega}\bm{\varphi}\cdot\dif\A\uu_{l(m)} = -\lim_{m\to\infty}\int_{\omega}\bm{\varphi}\cdot \vv_{l(m)}\dif x \stackrel{\eqref{eq:convossimo}}{=} -\int_{\Omega}\bm{\varphi}\cdot\dif\bm{\mu}_{1}.
\end{align*}
By arbitrariness of $\omega\Subset\Omega$, it follows that $\A \uu=\bm{\mu}_{1}\in\mathrm{RM}_{\mathrm{fin}}(\Omega;W)$. Similarly, we see that $\uu\mathscr{L}^{n}\mres\Omega=\bm{\mu}_{0}$, and thus $\bm{\mu}=(\bm{\mu}_{0},\bm{\mu}_{1})=(\uu\mathscr{L}^{n}\mres\Omega,\A \uu)$. In conclusion, $(\bm{\mu}_{0},\bm{\mu}_{1})=\mathscr{J}_{2}\uu$, and so $\mathcal{Y}^{\bot}\subset\mathcal{J}(\bv^{\mathbb{A}}(\Omega))$ by \eqref{eq:traylorhoward}. The proof is complete.
\end{proof}

\bibliographystyle{alpha}
\bibliography{dk}

\end{document}